\newcounter{lil1}
\newenvironment{steps}
{\begin{list} { \sl Step (\Roman{lil1})}
		{ \usecounter{lil1}
			\setlength{\leftmargin}{0.0cm}
			\setlength{\topsep}{0.2cm}
			\setlength{\itemsep}{0.0cm}
			\setlength{\parsep}{0.1cm}
			\setlength{\itemindent}{0.8cm}
			\setlength{\parskip}{0.0cm}}}
	{\end{list}}
\newcounter{lil33}
\newenvironment{stepsinner}
{\begin{list} { \sl Step (\alph{lil33})}
		{ \usecounter{lil33}
			\setlength{\leftmargin}{0.0cm}
			\setlength{\topsep}{0.2cm}
			\setlength{\itemsep}{0.0cm}
			\setlength{\parsep}{0.1cm}
			\setlength{\itemindent}{0.8cm}
			\setlength{\parskip}{0.0cm}}}
	{\end{list}}
\newenvironment{step}
{\begin{list} { \bf Step (\Roman{lil1})}
		{ \usecounter{lil1}
			\setlength{\leftmargin}{0.0cm}
			\setlength{\topsep}{0.2cm}
			\setlength{\itemsep}{0.0cm}
			\setlength{\parsep}{0.1cm}
			\setlength{\itemindent}{0.8cm}
			\setlength{\parskip}{0.0cm}}}
	{\end{list}}
\newenvironment{stepinner}
{\begin{list} { \bf Step (\alph{lil33})}
		{ \usecounter{lil33}
			\setlength{\leftmargin}{0.0cm}
			\setlength{\topsep}{0.2cm}
			\setlength{\itemsep}{0.0cm}
			\setlength{\parsep}{0.1cm}
			\setlength{\itemindent}{0.8cm}
			\setlength{\parskip}{0.0cm}}}
	{\end{list}}
\newcommand{\ns}{{n^\ast}}
\newcommand{\BH}{{\mathbb{H}}}
\newcommand{\uk}{u_\kappa}
\newcommand{\vk}{v_\kappa}
\newcommand{\buk}{\bar u_\kappa}
\newcommand{\bvk}{\bar v_\kappa}
\newcommand{\n}{\kappa}
\newcommand{\CMM}{{\mathcal{M}}}
\newcommand{\cmm}{\mathcal{M}}
\newcommand{\mynegspace}{\hspace{-0.13em}}
\newcommand{\lvvvert}{\rvert\mynegspace\rvert\mynegspace\rvert}
\newcommand{\rvvvert}{\rvert\mynegspace\rvert\mynegspace\rvert}
\renewcommand{\n}{\kappa}
\newcommand{\LLm}{{s^\ast}}
\newcommand{\pst}{{p^\ast}}
\renewcommand{\ns}{{q}}
\newcommand{\alneu}{{\rho}}
\newcommand{\pp}{p+1}
\newcommand{\TTend}{T]} 
\newcommand{\MA}{\mathfrak{A}}
\newcommand{\Law}{\mbox{Law}}
\newcommand{\essup}{\mathop{\mathrm{sup}}}
\newcommand{\eps}{\varepsilon}
\newcommand{\Lve}{\lVert}
\newcommand{\Rve}{\rVert}
\newcommand{\oO}{{\mathcal{O}}}
\newcommand{\pmat}{\begin{matrix}}
\newcommand{\emat}{\end{matrix}}
\newcommand{\baray}{\begin{array}{rcl}}
\newcommand{\earay}{\end{array}}
\newcommand{\barray}{\begin{array}{rcl}}
\newcommand{\earray}{\end{array}}
\newcommand{\FE}{\aleph}
\newcommand{\FEplus}{\frac{\aleph}{2}}
\newcommand{\Si}{(\rho, \FE)}
\newcommand{\Sitilde}{(\rho, \FE)}
\newcommand\dela[1]{}
\newcommand\casedrei[1]{}
\newcommand{\bcase}{\begin{cases}}
\newcommand{\ecase}{\end{cases}}
\newcommand{\spK}{\mathcal{K} _\MA(K_1,K_2,K_3) }
\newcommand\Leb{\mbox{Leb}}
\newcommand\FA{\mathfrak{A} }
\newcommand\unbekannt{\mu}
\newcommand\del[1]{}
\newcommand\del[1]{}
\newcommand{\embed}{\hookrightarrow}
\def\eps{\varepsilon}
\newcommand{\lk}{\left}
\newcommand{\lqq}{\lefteqn}
\newcommand{\rk}{\right}
\newcommand{\la}{{\langle}}
\newcommand{\ra}{{\rangle}}
\newcommand{\LL}{{\rm I \kern -0.2em L}}
\newcommand{\ep} {\varepsilon }
\newcommand{\be} {\begin{enumerate} }
\newcommand{\ee} {\end{enumerate} }
\newcommand{\CO}{{{ \mathcal O }}}
\newcommand{\CK}{{{ \mathcal K }}}
\newcommand{\CC}{{{ \mathcal C }}}
\newcommand{\CV}{{{ \mathcal V }}}
\newcommand{\CM}{{{ \mathcal M }}}
\newcommand{\BF}{{{ \mathbb{F} }}}
\newcommand{\CF}{{{ \mathcal F }}}
\newcommand{\CN}{{{ \mathcal N }}}
\newcommand{\RR}{{\mathbb{R}}}
\newcommand{\WW}{{\mathbb{W}}}
\newcommand{\dd}{\mathbb{d}}
\newcommand{\NN}{\mathbb{N}} 
\newcommand{\PP}{{\mathbb{P}}}
\newcommand{\EE}{ \mathbb{E} }
\newcommand{\DEQS}{\begin{eqnarray*}}
\newcommand{\EEQS}{\end{eqnarray*}}
\newcommand{\DEQSZ}{\begin{eqnarray}}
\newcommand{\EEQSZ}{\end{eqnarray}}
\newcommand{\DEQ}{\begin{eqnarray}}
\newcommand{\EEQ}{\end{eqnarray}}
\newcommand{\rr}{r}
\definecolor{apricot}{rgb}{0.98, 0.81, 0.69}
\definecolor{babyblue}{rgb}{0.54, 0.81, 0.94}
\definecolor{DarkGreen}{rgb}{0.1,0.6,0.2}   
\definecolor{Yellow}{rgb}{0.7,0.7,0.7}   
\newcommand{\tesfalem}[1]{{#1}}  
\newcounter{michicounter}
\newtheorem{theorem}{Theorem}[section]
\newtheorem{lemma}[theorem]{Lemma}
\newtheorem{corollary}[theorem]{Corollary}
\newtheorem{hypo}[theorem]{Assumption}
\newtheorem{assumption}[theorem]{Assumption}
\newtheorem{tlemma}{Technical Lemma}[section]
\newtheorem{definition}[theorem]{Definition}
\newtheorem{remark}[theorem]{Remark}
\newtheorem{proposition}[theorem]{Proposition}
\newtheorem{tproposition}[theorem]{Technical Proposition}
\numberwithin{equation}{section}
\tikzstyle{Setting} = [rectangle, minimum width=3cm, minimum height=1cm, text centered, text width=8cm,  draw=black, fill=blue!30]
\tikzstyle{Theorem} = [rectangle, minimum width=3cm, minimum height=1cm, text centered, text width=3cm,  draw=black, fill=blue!30]
\tikzstyle{Proposition} = [rectangle, minimum width=3cm, minimum height=1cm, text width=3cm, text centered, draw=black, fill=green!30]
\tikzstyle{InText} = [rectangle, minimum width=3cm, minimum height=1cm, text width=6cm, text centered, draw=black, fill=white!30]
\tikzstyle{Technical Proposition} = [rectangle, minimum width=3cm, minimum height=1cm, text width=3cm, text centered, draw=black, fill=green!30]
\tikzstyle{TechnicalLemma} = [rectangle, minimum width=3cm, minimum height=1cm, text centered, text width=3cm,draw=black, fill=green!30]
\tikzstyle{Lemma} = [rectangle, minimum width=3cm, minimum height=1cm, text centered, text width=3cm,draw=black, fill=green!30]
\tikzstyle{arrow} = [thick,->,>=stealth]
\tikzstyle{Hypothesis} = [rectangle, minimum width=3cm, minimum height=1cm, text width=5cm, text centered, draw=black, fill=orange!30]
\tikzstyle{Definition} = [rectangle, minimum width=3cm, minimum height=1cm, text width=5cm, text centered, draw=black, fill=yellow!30]
\begin{document}

\title[Coupled reaction-diffusion systems; \today]
{{Weak solutions for} coupled reaction-diffusion systems 
{with} pattern formation {by a stochastic} fixed point theorem}

\author{E. Hausenblas}
\address{E. Hausenblas, Department of Mathematics,
Montanuniversitaet Leoben, Austria.}
\email{erika.hausenblas@unileoben.ac.at}

\author{M.A. H\"ogele}
\address{M.A. H\"ogele, Departamento de Matem\'aticas, Universidad de los Andes, Bogot\'a, Colombia}
\email{ma.hoegele@uniandes.edu.co}

\author{T.A. Tegegn}
\address{T.A. Tegegn, Department of Mathematics And Applied Mathematics, University of Pretoria, South Africa.}
\email{Tesfalem.Tegegn@up.ac.za}

\begin{abstract}
Chemical and biochemical reactions
can exhibit surprisingly different behaviours, ranging from multiple steady-state solutions to oscillatory solutions and chaotic behaviours.
These types of systems are often modelled by a  system of reaction-diffusion
equations coupled by a nonlinearity. In the article, we study the existence
of stochastically perturbed equations of this type.
In particular, we show the existence of {a probabilitic weak} solution of the following stochastic system 
\DEQS
\dot {u} & =& r_1\,\Delta   u+ a_1\, u + b_1  -c_1\, u\cdot v^q+\sigma_1\, g_1(u)\circ \dot W_1 , 
\\
\dot{v}& = &r_2\,A v  + a_2\, v + b_2 +c_2\, u\cdot v^q+\sigma_2\, g_2(v)\circ \dot W_2 ,
\EEQS
where
$r_i,b_i,c_i, \sigma_i>0$, $a_i\in\RR$, and  $g_i$ are linear, $i=1,2$, and
the exponent $q\geq 1$. The operator $A=-(-\Delta)^{\FE/2}$ is a fractional power of the Laplacian, ${1<}\FE\le2$.
The main result is obtained by a Schauder--Tychonoff type fixed point theorem
for the controlled versions of the laws of the respective (infinite dimensional) 
Ornstein-Uhlenbeck system, from which we infer the existence of a martingale solution of the coupled system.
\end{abstract}

\maketitle

\noindent \textbf{Keywords and phrases:} {generalized Gray--Scott system, pattern formation,  activator--inhibitor reactions, nonlinear reaction--diffusion equations, stochastic partial differential equations}\\

\noindent \textbf{AMS subject classification (2002):} {Primary 60H15, 92C40, 35G31, 35K40, 35B36;
Secondary 92B05, 92E99, 92D40.}\\

\bigskip 
\section{\textbf{Introduction}}\label{one}

\noindent Pattern formation is an  umbrella  term for the emergence of characteristic variations of a quantity {of interest} as a function of a spatial parameter over time. It is a natural phenomenon that may occur in different contexts and shapes, such as the height {above sea level} or colour of landscapes, the repartition of vegetation species, the mixing patterns of different colour spots on animal skin, or the colouring of flowers. In his pioneering work in 1952, \cite{turing},  Alan Turing
investigates the possibility of such spatial instabilities occurring in purely dissipative systems involving chemical reactions far from equilibrium under the transport process of diffusion. Pattern formation via diffusion-driven instabilities plays a vital role in biology, see \cite{meinhardt, MinchevaRoussel12, murray2}. Besides, Turing's diffusive-instability mechanism has also been proposed for modelling self-organised vegetation pattern formation in arid environments \cite{hardenberg, klausmeier,levejer}, for modelling pigmentation patterns in fishes \cite{kondo}, and for population dynamics \cite{population}. For more detailed references, see, for instance, \cite{ghergu,meron,wei}. 

\noindent The Gray-Scott model is a typical nonlinear reaction-diffusion system exhibiting pattern formation for certain parameter constellations, whose stochastic counterpart is studied in this article. It consists of an activator component $u$, which satisfies a linear dissipative equation coupled to an inhibitor component $v$, with a feedback component proportional to a nonlinear power of $v$. 
In \cite{You08}, the author establishes the existence of a finite-dimensional global attractor for the Gray-Scott system. More general necessary conditions for the appearance of Turing instabilities are studied in  \cite{elragigtownley}. In \cite{Dilao05, RicardMischler09}, the authors detect Turing patterns if the system enters the vicinity of a Hopf bifurcation. In general, the mathematically precise understanding of pattern formation is still at its beginning. 

\noindent Noisy random fluctuations are ubiquitous in the real world.
Randomness leads to a variety of new phenomena and may have a non-trivial impact on the behaviour of the solution. The presence of the stochastic term (or noise) in the model often leads to qualitatively new 
{features}, which help to understand the {real-world phenomena} and often produces more realistic outcomes. Adding noise in a system of partial differential equations (PDEs) induces, { among others, the following }new and important phenomena:
due to the interplay of the noise and the nonlinearity, noise-induced transitions, stochastic resonance, metastability, or noise-induced chaos may appear.
Numerical experiments indicate that noise in stochastic Turing patterns expands the range of parameters in which Turing patterns appear. So, the stochastic extension of the deterministic system leads to pattern formation with less strict parameter conditions, see \cite{karig}. {There,} the authors explore whether the stochastic extension leads to a larger range of parameters with Turing patterns by a genetically engineered synthetic bacterial population in which the signalling molecules form a stochastic activator{\textendash}inhibitor system. 
The effect of noise on Turing patterns in the case of the Belousov-Zhabotinsky reaction-diffusion model was analysed in \cite{noise1}. Butler and Goldenfeld demonstrate in \cite{butler} that noisy demographic perturbations can induce persistent spatial pattern formation and temporal oscillations in the Levin-Segel predator-prey model for plankton-herbivore population dynamics. Biancalani et al. in \cite{noise2} {confirm} the impact of noise on the Turing pattern on several examples and show that with random noise, the range of parameters where Turing patterns may appear enlarged. In \cite{mechtild}, it is shown that adding a multiplicative noise term to the Grey-Scott system breaks the symmetry and changes the dependence on the parameter, and, depending on the noise, also changes the long-term asymptotics.
In \cite{noise3}, spatiotemporal chaos or/and spatial intermittency in the setting of SPDEs is investigated. In \cite{noise5,noise4,noise4A}, multistability and noise-induced transitions between different states are addressed.
In \cite{veraar}, the authors studied some special case of the generalised system.

\noindent In this article, we propose the implementation of a new method of proving rigorously the existence of a weak solution for a class of semilinear activator-inhibitor systems, exemplified by the stochastically perturbed Gray-Scott system with a nonlocal activator diffusion operator, which is based on the stochastic Schauder fixed point theorem shown in \cite{klausmeier}. 
Mathematically, nonlocal operators appear naturally as the continuum limit of the transition operators of random walks of L\'evy flight type in which, at a given position and time, an individual is assumed to ``jump'' to a new, often non-neighbouring position, for instance with a power tail law. On the phenomenological side, since Einstein in 1905 \cite{Ei05}, a Brownian motion is interpreted as a particle which moves freely in a liquid under continuous excitation by the much smaller particles of the substrate. 
This perspective has been used to model free movements in biology, chemistry, ecology and many other disciplines, highlighting the importance of studying reaction-diffusion equations. 
Remarkably, there are situations, in particular, in the presence of macromolecules, in which
the random movements do not only happen within local neighbourhoods, 
but from time to time also on a macroscopic range, on which jump increments are perceived, see, e.g. \cite{SSH+08, SHH+19}. In these scenarios, the (continuous) Brownian motion is no longer an appropriate concept to describe such movements and should be replaced by a (discontinuous) L\'evy process. Consequently, the (negative) Laplace operator $\Delta$, which is the generator of the Brownian motion, should be replaced by a non-local operator $A$, which generates the Levy process under consideration. 
{The most} representative models of non-local operators include fractional Laplacians, 
which have been intensively studied in the last decades (see, e.g. \cite{EDNV12} and references therein), 
and {are the Markov generators of stochastic} $\alpha$-stable L\'evy {processes} \cite{ST94}. Furthermore, L\'evy flights and nonlocal operators have been detected to appear naturally in various power law thresholds situations such as locomotion, foraging and nonlocal search, paleoclimate and optimisation, see \cite{Dit99, GHKM17, SSH+08} and references therein. 

\noindent One of the particular features of such a coupled activator-inhibitor system is the lack of global dissipativity. In particular, the nonlinearities of the inhibitor components have negative coefficients, which allow for dissipativity estimates, while the sign of the activator nonlinearity on the right-hand side is positive, which destroys the global dissipativity. Hence, a priori estimates of particular finite-dimensional Faedo-Galerkin expansions are increasingly challenging to obtain and technically involved and have to be carried out for each system and implementation anew. Applying a stochastic Schauder fixed point theorem, which yields nonlinear local (cutoff) martingale solutions, allows a stronger modularisation of the steps of the proof, as carried out in the roadmap outlined below.

\noindent In \cite{AC99}, the authors study deterministic semilinear parabolic partial differential equations over a bounded smooth domain with Neumann boundary conditions with an unmitigated positive nonlinear ``activator'' reaction term. They obtain critical exponents on the algebraic nonlinearity depending on the (spatial) integrability of the initial conditions and the dimension, such that all smaller exponents yield global solutions, while larger nonlinearities trigger blow-ups. 
Such a behaviour is, essentially, what we obtain { - a priori - } for the activator in the Gray-Scott system, and our results are valid only for bounded values, which seem to be close to optimal. Nevertheless, the Gray-Scott system, as well as more coupled general activator-inhibitor systems, exhibit the following intrinsic moderating feedback, which yields that the solution of the deterministic solution exhibits a global attractor \cite{You08}, which is finite-dimensional. {A posteriori, the situation looks rather different:} Imagine the values of the inhibitor $v$ become very large. Hence, the inhibitor is strongly mitigated since it receives the feedback term proportional to $-u v^q$ on the right-hand side, which, due to the nonnegativity of $u,v$ and the negative sign, leads to very small values of $u$. These small values of $u$ then enter the feedback term of the activator $v$, which is proportional to $u v^q$, such that, combined with the dissipativity of the respective linear term, yields smaller values of $v$. The stochastic terms applied to our system do not change this picture. Consequently, the confinement of deterministic dynamics in the sense of \cite{You08} does not come as a surprise. 

\noindent The proof strategy is as follows.
First, we consider the solution operator of the respective linear equations, where the nonlinearities
\begin{enumerate}
\item[(1)] are cut off smoothly (with respect to a cutoff parameter $\kappa$) and, in particular, continuously with respect to the path space topology,
\item[(2)] are parametrized by external controls $(\eta, \xi)$.
\end{enumerate}
We consider the operator $\CV^\kappa$ which sends for each cutoff level $\kappa$ the pair $(\eta, \xi)$ of parametrizations to the solution $(u_\kappa, v_\kappa)$ of the linear  system
\begin{align*}
d{\uk }(t)&= [r_1 \,\Delta \uk (t)+ a_1 \uk (t) + b_1 - c_1 \phi_\n(\xi,t)\cdot \uk(t)\cdot \,\xi^q(t)]\, dt +\sigma_1 \, g_{\gamma_1}(\uk (t))\, dW_1(t)\\
d{\vk  }(t)&= [r_2 \,A \vk  (t)+ a_2 \vk  (t)+ b_2+ c_2\phi_\n(\xi, t)\cdot \uk(t)\cdot \xi^q(t)]\, dt + \sigma_2 \,g_{\gamma_2}(\vk  (t)) \, dW_2(t).
\end{align*}
Such an operator is well-defined due to the smoothing of the diffusion operators for both the inhibitor and the activator.
Furthermore, we show that the image set is continuous and has a compact closure. This { step} would be trivial for {purely } dissipative nonlinearities. {In order to control the non-dissipative } activator part we use the (dimension-dependent) compact embedding of the nonlinearity {in negative Sobolev spaces}.
Then, applying the Stochastic Schauder theorem yields a global martingale solution for the cutoff system, which, up to a $\kappa$-hitting time, coincides with the original problem. The consecutive glueing of local solutions along a sequence of hitting times yields a global solution for each given time interval $[0, T]$.

\noindent We stress that our equations and the solution methods are generic in the choice of parameters insofar as no specific parameter constellations of $r_i,b_i,c_i, \sigma_i>0$ and $a_i\in\RR$ \tesfalem{for $i=1,\,2$}, are taken into account. In particular, no term cancellations by summing or subtraction of the equations are applied whatsoever. 

\noindent {The novelty of this article lies in the following three aspects: 
First we lay out a complete \textit{modularized road map} to prove the existence of a martingale solution for an important, nontrivial model equation, which avoids to choose specific bases, as in the classical Faedo-Galerkin methods. Secondly, in our main result in Theorem~\ref{thm_main}, we extend the parameter range from quadratic in dimension $2$ (which is known in the literature \cite{HT22plus} for the Laplacian $\aleph =2$ in front of the activator term) 
to a tradeoff between to nonlinearities $1 < q \leq 2$, the non-local power of the Laplacian $1 < \aleph \leq 2$, the dimension $d$ and the integrability of the initial conditions $p^*$ and allows for a class of fractional powers for the (non-dissipative) activator component. As a third aspect it is worth mentioning the quantitative regularity statement in Corollary~\ref{cor: main} obtained by an interpolated version of the Strook-Varopoulos inequality for the inhibitor part, which allows to shift regularity from the inhibitor to the activator part. 
}

\paragraph{\textbf{Notation: }}
For convenience we define $H := L^2(\oO)$ for a domain $\CO\subset \RR^d$ in $d\in \{1,2\}$. Furthermore, we interpret the function spaces $H^s_p(\oO)$ and $L^p(\oO)$ spaces in terms of their equivalent Triebel spaces as it appears in \cite[Theorem 2.5.6]{triebel_1983}. That is to say, for $s\in\mathbb{R},\,1<p<\infty$, we have
\DEQSZ\label{triebel}
H^s_p = F^s_{p,2} \qquad \mbox{ and } \qquad L^p = F^0_{p,2}.
\EEQSZ
For the definitions and further reading on the space $F^s_{p,2}$, we refer to \cite[Chapter 2]{runst} and \cite{triebel_1983}.

\noindent Let $\mathcal{A}$ be a generator of a $\mathcal{C}_0$-semigroup $(e^{\mathcal{A} t})_{t\geq 0}$
in a separable Banach space,
associated with the mild solution of the Cauchy problem
$$\frac{d u}{d t} = \mathcal{A} u, \qquad u(0) = u_0.$$
We refer to \cite{pazy} for details.

\bigskip 
\section{\textbf{Notation, Preliminaries and Main Result}}\label{s:mainresult}

\paragraph{\textbf{The phenomenological setting: }}
Let $\MA=(\Omega, \CF,\BF,\PP)$ be a complete probability space
with filtration $\BF=(\CF_t)_{t\ge 0}$  satisfying the usual conditions, i.e.,
$\mathbb{P}$ is complete on $(\Omega, \CF)$,
for each $t\geq 0$, $\CF_t$ contains all $(\CF,\mathbb{P})$-null sets, and
the filtration $(\CF_t)_{t\ge 0}$ is right-continuous.
Here, the underlying spatial domain $\CO\subset \RR^d$,  $d\in \{1,2\}$, is either a bounded domain with $C^\infty$ boundary, or the torus $\CO=[0,1]^d$.  Let $W_1$ and $W_2$ be two independent cylindrical Wiener processes on $H$ 
 defined over the probability space
$\mathfrak{A}$.

\noindent We consider a generic class of coupled nonlinear reaction-diffusion systems,
a stochastic two-component activator-inhibitor system which may generate Turing patterns,
covering the Schnakenberg and the Gray-Scott system with noise.
In particular, for $T>0$ we consider
\DEQSZ\label{e:u1}
d {u}(t) & =& (r_1 \Delta u(t)+ a_1 u(t) + b_1  -c_1 u(t)\cdot v^q(t))\, d t +\sigma_1\,g_{\gamma_1}(u(t))\circ d W_1(t) 
,\, \, t\in [0,T],
\EEQSZ
\DEQSZ\label{e:v1}
d {v}(t)& = & (r_2 A v(t)  +a_2 v(t) + b_2 + c_2 u(t)\cdot v^
q(t))\, d t +\sigma_2\,g_{\gamma_2}(v(t))\circ d W_2(t) ,
\,\, t\in [0,T],
\EEQSZ
with given initial conditions $u(0) = u_0$ and $v(0) = v_0$, where $r_i, a_i, b_i, c_i, \sigma_i$  are real parameters, $r_i, c_i, \sigma_i>0$, $i=1,2$.
The operator $\Delta$ denotes the Laplace operator with periodic, or Neumann boundary conditions and $A = -(-\Delta)^{\FE /2}$, $\FE \in (1,2]$,
the (non-local) fractional Laplacian with a Neumann exterior domain condition. 
For $\FE = 2$, the operator becomes a Laplace operator $A = \Delta$ with Neumann boundary condition.
The diffusion coefficient $g_{\gamma_i}(u)[h] = u \cdot (-\Delta)^{-\nicefrac{\gamma_i}{2}} h$,  $h\in H$, 
is given by a bilinear multiplication operator defined in \eqref{g_sr}.

\noindent Our main result states the existence of a weak solution of the system \eqref{e:u1} and \eqref{e:v1}, depending on the main parameters $q, \FE$, $\gamma_1$, $\gamma_2$, and $d$.

\medskip
\noindent For clarity, we define the notion of a mild solution to the system \eqref{e:u1}--\eqref{e:v1} for a given probability space $\MA$.

\begin{definition}\label{d:mild}
Fix $T>0$ and some  $\alpha\in(-\frac{d}{2},\frac{\aleph}{2} -\frac d2)$. We call a pair of processes $(u,v)$ a \textnormal{mild solution} on the given filtered probability space $\MA$ with given cylindrical Wiener processes $(W_1, W_2)$ to the system \eqref{e:u1} and \eqref{e:v1} with initial conditions $(u_0, v_0)$, if $u:[0,T] \times \Omega \rightarrow H$ 
and $v:[0,T] \times \Omega \rightarrow H^{\alpha}_2(\CO)$ 
are $\BF$--progressively  measurable processes  such that $\PP$-a.s.\ $(u,v)\in C_b([0,T]; H\times H^{\alpha}_2(\CO))) $ 
and satisfy $\PP$--a.s. for all $t\in [0,T]$ the stochastic integral equations
\begin{align*}
u(t)=e^{(r_1\Delta+ a_1I)t}u_0 &+\int_0^te^{(r_1\Delta+a_1I)(t-s)}b_1\,d s-c_1\int_0^te^{(r_1\Delta+a_1I)(t-s)}u(s)\,v^q(s)\,d s \\
&+ \sigma_1\int_0^te^{(r_1\Delta+a_1I)(t-s)}g_{\gamma_1}(u(s))\circ d W_1(s),
\\
v(t)=e^{(r_2A+a_2I)t}v_0 &+\int_0^te^{(r_2A+a_2I)(t-s)}b_2\,d s+ c_2\int_0^te^{(r_2A+a_2I)(t-s)}u(s)\,v^{[q]}(s)\,d s\\
&+\sigma_2\int_0^te^{(r_2A+a_2I)(t-s)}g_{\gamma_2}(v(s))\circ d W_2(s).
\end{align*}
\end{definition}
\noindent The stochastic integral terms are defined in the Stratonovich sense; for details, we refer to Appendix~\ref{Appendix_A_noise}.
However, the proof of our main theorem is based on compactness arguments,
such that the preceding notion of a (probabilistically) strong solution cannot be maintained since we lose the original probability space. The solution will be a martingale solution in the following (weak) probabilistic sense.
\begin{definition}\label{Def:mart-sol}
A  \textnormal{probabilistic weak solution} to the problem
\eqref{e:u1}--\eqref{e:v1} for parameters $q, \aleph$, $\gamma_1$, $\gamma_2$, and $d$
is given by the tuple
$$
\left(\Omega ,{{\mathcal{F}}},\mathbb{P},{\mathbb{F}},
(W_1,W_2), (u,v)\right)
$$
such that
\begin{itemize}
\item  $\mathfrak{A}:=(\Omega ,{{\mathcal{F}}},{\mathbb{F}},\mathbb{P})$ is a complete filtered
probability space with a filtration ${\mathbb{F}}=\{{{\mathcal{F}}}_t:t\in [0,T]\}$ satisfying the usual conditions,
\item $W_1$ and $W_2$ are two cylindrical Wiener processes on $H$ over the probability space
$\mathfrak{A}$, and  
\item $(u, v)$
is a {mild} solution to the system \eqref{e:u1}--\eqref{e:v1} over the probability space $\mathfrak{A}$ with the Wiener processes $W_1$ and $W_2$ in the sense of Definition~\ref{d:mild}.
\end{itemize}
\end{definition}

\medskip

\noindent We now list a set of sufficient conditions on the parameters $q, \aleph, \gamma_1, \gamma_2$ and $d$ for the statement of the main result.

\begin{hypo}[Parameters of the underlying spaces] \label{init_w}\hfill\\
The parameters  $(q,\FE,\alpha)$ satisfy
$$
q<\frac {\min\{\FE+d,2d\}}{2d-\FE}, \quad d\lk(\frac 12 -\frac 1q\rk) <\alpha < \frac \aleph 2  -\frac{d}{2},
$$
or $d=2$, $\FE=2$, and $q=2$ is satified.
Let  $p^\ast_0$ satisfy 
\DEQSZ\label{condpastzero}
p^\ast_0> \max\lk(\frac {2d}{ \FE +2d-dq+2q\alpha}, 4\rk).
\EEQSZ
In case $2\le q< {\FE+1}$ and $d=1$, let us put $\tau:=\frac 12-\frac 1q-\alpha$ and $p_1^\ast :=\frac \FE{q\tau}$.\\
In case $1\le q<2$ let us put $p_1^\ast:= 1/(2-q)$. Set $p^\ast:=\max\{p_0^\ast,2p_1^\ast\}$.
\end{hypo}

\begin{hypo}[Parameters of the underlying Wiener processes] \label{wiener}\hfill
\label{A_mult}
\\
Let $\oO$ be a bounded domain with $\mathcal{C}^\infty$-boundary in $\RR^d$. Let us assume that the mappings $g_{\gamma_j}$, $j=1,2$, 
are defined by
equation \eqref{g_sr} with
$$\gamma_1 > 
\frac d2+\frac d{p^\ast}  -\min(\frac 2{p^\ast},\frac d{p^\ast})$$
and
$$
\gamma_2 > 
\begin{cases} 
1-\frac \aleph2  &\mbox{ if } ~d=1,\, \alpha\ge 0, \\
&\mbox{ or } d=1, ~\alpha<0, ~\alpha +\frac{\aleph}{2} \leq \frac{1}{2},
\\
d-\frac \aleph 2 &\mbox{ if } d= 2.
\end{cases}
$$
\end{hypo}

The stage is now set to present the main result of this article.

\begin{theorem}\label{thm_main}
Given the parameters $q, \aleph, \gamma_1, \gamma_2$, and $d$, we choose the parameters $\alpha, p^\ast$ and $m$ according to  Assumption~\ref{init_w} and the Wiener process
Assumption~\ref{wiener}.
Then, for any initial conditions $u_0$ and $v_0$ being $\mathcal{F}_0$-measurable random vectors over $\MA$ and satisfying the following initial conditions
\begin{equation}\label{e: initheomain}
\EE |u_0|_{L^{2}}^2
+ \EE |u_0|^{2p^\ast_1}_{L^{p^\ast_1}}+ \EE |u_0|^{p^\ast_0}_{L^{p^\ast_0}} 
<\infty  \qquad \mbox{ and }
\qquad \EE|v_0|_{H_{2}^\alpha}^2 <\infty
\end{equation}
the system \eqref{e:u1}--\eqref{e:v1} has a probabilistic weak  solution
$$
\left(\bar \Omega ,\bar{\mathcal{F}},\bar{\mathbb{P}},\bar{\mathbb{F}},
(\bar W_1,\bar W_2), (u,v)\right).
$$
In addition, there exist positive constants $C_1 = C_1(T)$ and $C_2 = C_2(T)$ such that
\DEQSZ
&&\bar\EE\sup_{0 \le s\le T} |u(s)|_{L^2}^2 \le C(1+\EE|u_0|_{L^2}^2), \label{e:est1}\\
&&\bar\EE\sup_{0 \le s\le T} |u(s)|_{L^{p^\ast}}^{p^\ast}+{p^\ast}({p^\ast}-1)\int_0^ T \bar \EE |u^{\frac {p^\ast}2-1}(s)\nabla u(s)|_{L^2}^{2}\, ds \le C(1+\EE|u_0|_{L^{p^\ast}}^{p^\ast}), \label{e:est33}\\
&&\bar \EE\sup_{0 \le s\le T} |v(s)|_{H^{\alpha}_2}^2   + 2\bar \EE\int_0^T |v(s)|_{H^{\alpha+ \frac{\aleph}{2}}_2}^2 ds
\le C_1\Big( \EE|v_0|_{H^{\alpha}_2}^2
+ \,\EE|u_0|_{L^{p^\ast_1}}^{2p_1^\ast } +1\Big).\label{e:est2}
\EEQSZ
\end{theorem}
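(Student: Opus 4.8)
\noindent The plan is to follow the modularized roadmap described in the introduction: reduce the Stratonovich system to It\^o form, cut off the quadratic-type coupling, solve the resulting controlled \emph{linear} system, produce a martingale solution of the cutoff problem by a stochastic Schauder--Tychonoff fixed point theorem, and remove the cutoff by localization and glueing. First I would pass to the It\^o formulation: since $g_{\gamma_i}(u)[h]=u\cdot(-\Delta)^{-\gamma_i/2}h$ is bilinear, the correction term is a linear potential-type perturbation $\tfrac12\sigma_i^2\,\mathfrak c_{\gamma_i}\,u$ with $\mathfrak c_{\gamma_i}=\sum_k\big((-\Delta)^{-\gamma_i/2}e_k\big)^2$, and the lower bounds on $\gamma_1,\gamma_2$ in Assumption~\ref{wiener} are precisely what makes $\mathfrak c_{\gamma_i}$ a multiplier on the space in which the corresponding component is estimated, so the system keeps its structure. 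Then, for each cutoff level $\kappa$, I would replace $u\,v^q$ by $\phi_\kappa(\xi,t)\,u\,\xi^q$ with $\phi_\kappa$ a path-continuous cutoff that vanishes once a suitable norm of the control $\xi$ over $[0,t]$ exceeds $\kappa$, and let $(\eta,\xi)$ range over $\BF$-progressive processes on the path space $\mathcal Z:=C_b([0,T];H)\times C_b([0,T];H^\alpha_2(\oO))$. Given $(\eta,\xi)$, both equations are then linear inhomogeneous SPDEs with bounded $\xi$-dependent coefficients and forcing; their mild solutions — well defined by the smoothing of $e^{r_1\Delta t}$ and $e^{r_2 A t}$ — define the operator $\CV^\kappa:(\eta,\xi)\mapsto(\uk,\vk)$.

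The core is the fixed point, carried out at the level of laws. I would show that the map $\Phi^\kappa$ induced by $\CV^\kappa$ on probability measures on $\mathcal Z$, restricted to measures whose processes obey the a priori bounds below, leaves invariant a convex set that is relatively compact for weak convergence of measures and is sequentially continuous there, and then invoke the stochastic Schauder--Tychonoff theorem of \cite{klausmeier}. Invariance needs It\^o energy estimates for the linear controlled system — an $L^2$ and an $L^{p^\ast}$ estimate for $\uk$ and an $H^\alpha_2$ estimate for $\vk$, uniform in the controls because the cutoff makes the coupling bounded — together with fractional-in-time regularity of the stochastic convolutions; tightness then follows from an Aubin--Lions--Simon argument, using the compact embedding $H^{\alpha+\aleph/2}_2\hookrightarrow H^\alpha_2$ for the inhibitor and, crucially for the non-dissipative activator, the compact embedding of the forcing $\phi_\kappa(\xi,t)\,\uk\,\xi^q$ into a negative-order Sobolev space — this is the dimension- and parameter-dependent step, and it is exactly here that the constraints on $q,\aleph,\alpha,p^\ast$ in Assumption~\ref{init_w} are consumed. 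Continuity of $\Phi^\kappa$ uses the path-continuity of $\phi_\kappa$ and continuous dependence of linear mild solutions on their bounded data. The fixed point yields a probability basis $\bar\MA$, Wiener processes $(\bar W_1,\bar W_2)$ and processes $(\uk,\vk)$ whose law is $\CV^\kappa$-invariant, i.e.\ a martingale mild solution of the $\kappa$-cutoff system.

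Next I would establish \eqref{e:est1}--\eqref{e:est2} for this cutoff solution \emph{uniformly in $\kappa$}. Applying It\^o's formula to $|\uk|_{L^2}^2$ and to $|\uk|_{L^{p^\ast}}^{p^\ast}$, the linear part is dissipative up to the constant $a_1$, the forcing $b_1$ is absorbed by Young's inequality, the coupling term $-c_1\phi_\kappa\,|\uk|^{p^\ast}\xi^q$ has the favourable sign and is dropped, the martingale part is controlled by Burkholder--Davis--Gundy and the potential term by the $\gamma_1$-bound; Gronwall then gives \eqref{e:est1}, while the $L^{p^\ast}$ computation also produces the dissipation term $p^\ast(p^\ast-1)\int_0^T|\uk^{p^\ast/2-1}\nabla\uk|_{L^2}^2$ of \eqref{e:est33} and, at the smaller scale $p^\ast_1$, a bound on $\uk$ in $L^{p^\ast_1}$ (this is why \eqref{e: initheomain} controls $u_0$ in $L^2$, $L^{p^\ast_0}$ and $L^{p^\ast_1}$). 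For \eqref{e:est2} I would apply It\^o's formula to $|\vk|_{H^\alpha_2}^2$: the term $\langle\vk,A\vk\rangle_{H^\alpha_2}$ yields the dissipation $2r_2\int_0^T|\vk|_{H^{\alpha+\aleph/2}_2}^2$ via the spectral/Stroock--Varopoulos structure of $A$, and the only dangerous contribution $c_2\langle\vk,\phi_\kappa\,\uk\,\xi^q\rangle_{H^\alpha_2}$ is estimated in the dual norm $H^{\alpha-\aleph/2}_2$ by a Sobolev multiplication inequality and the $L^{p^\ast_1}$ bound on $\uk$, absorbing the $H^{\alpha+\aleph/2}_2$ factor into the dissipation at the cost of the term $\EE|u_0|_{L^{p^\ast_1}}^{2p^\ast_1}$ on the right-hand side — again valid exactly under Assumption~\ref{init_w}.

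Finally I would remove the cutoff. Letting $\tau_\kappa$ be the first time the cutoff norm of $\vk$ reaches $\kappa$, on $[0,\tau_\kappa]$ one has $\phi_\kappa(\vk,\cdot)\equiv1$, so the cutoff solution restricted to $[0,\tau_\kappa]$ solves the genuine system \eqref{e:u1}--\eqref{e:v1}; the uniform bound \eqref{e:est2} forces $\bar\PP(\tau_\kappa<T)\to0$ as $\kappa\to\infty$, and a consistency/glueing argument along $(\tau_\kappa)$, together with a Skorokhod selection to put everything on a single stochastic basis, yields a martingale solution on all of $[0,T]$ which inherits \eqref{e:est1}--\eqref{e:est2} by passage to the limit. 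The hard part is the compactness in the fixed point step for the activator: because the activator equation carries no useful dissipativity against the coupling term, tightness of its marginal rests entirely on the negative-Sobolev compact embedding of the nonlinear forcing, and it is the balance between the nonlinearity $q$, the nonlocal exponent $\aleph$, the dimension $d$ and the integrability $p^\ast$ prescribed in Assumption~\ref{init_w} that makes this — and the closing of the $L^{p^\ast}$ and $H^\alpha_2$ estimates — possible.
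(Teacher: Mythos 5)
Your architecture coincides with the paper's: It\^o reduction of the Stratonovich noise, cutoff of the coupling, the controlled linear operator $\mathcal{V}^\kappa$, invariance/continuity/compactness on a set of the type $\mathcal{K}_{\mathfrak A}(K_1,K_2,K_3)$, the stochastic Schauder--Tychonoff theorem of \cite{HT22plus}, non-negativity, glueing along stopping times, a $\kappa$-uniform bound on the inhibitor, and globalization via $\bar{\mathbb P}(\bar\tau_\kappa^\ast<T)\le C(T)/\kappa$. The differences in the fixed-point step (you phrase it on laws, the paper on processes) and in the glueing (you invoke a Skorokhod selection, the paper builds explicit product spaces and concatenated Wiener processes) are presentational.

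There is, however, one genuine gap, and it sits exactly at the point you yourself identify as the crux: the $\kappa$-independent bound on $v$. In your $L^{p^\ast}$ It\^o computation for $u_\kappa$ you write that the coupling term $-c_1\,\phi_\kappa\,|u_\kappa|^{p^\ast}\xi^q$ ``has the favourable sign and is dropped.'' In the paper this term is \emph{not} dropped: Proposition~\ref{reg_u2} (and Corollary~\ref{cor: main}) retain it and thereby produce the mixed a priori bound
\begin{equation*}
\mathbb{E}\Big[\int_0^T\!\!\int_{\mathcal O} u_\kappa^{p}(s,x)\,v_\kappa^{q}(s,x)\,dx\,ds\Big]^{m}\le C\big(\mathbb{E}|u_0|_{L^{p}}^{pm}+1\big),
\end{equation*}
uniformly in $\kappa$. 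This mixed bound is then the first factor in the interpolated H\"older splitting of Technical Lemma~\ref{dasauch},
\begin{equation*}
\mathbb{E}\,\|u_\kappa v_\kappa^q\|^{r^\ast}_{L^m(0,T;L^1)}\le C\big(\mathbb{E}\|u_\kappa^{p}v_\kappa^{q}\|_{L^1(0,T;L^1)}^{\;r^\ast/(p-q(p-1))}\big)^{\delta_1}\big(\mathbb{E}\|v_\kappa\|_{\mathbb{H}_{\alpha,\aleph}}^{r^\ast}\big)^{\delta_2},
\end{equation*}
with $\delta_2=q/p'<1$, which is what allows the $v_\kappa$-factor to be absorbed into the dissipation $\int_0^T|v_\kappa|^2_{H^{\alpha+\aleph/2}_2}$ by Young's inequality. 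If you discard the cross term and try to close the $v$-estimate using only a separate $L^{p_1^\ast}$ bound on $u_\kappa$ together with a Sobolev multiplication inequality, the forcing $u_\kappa v_\kappa^q$ leaves you with a power $\|v_\kappa\|^{1+q}$ (or $\|v_\kappa\|^{2q}$ after squaring) against a quadratic dissipation; since $q>1$ this superlinear Gronwall cannot be closed without the cutoff, which is precisely what uniformity in $\kappa$ forbids. So you must keep the favourable-sign term in the activator estimate and route it into the inhibitor estimate as the paper does; this is the ``intrinsic moderating feedback'' that makes the whole globalization work. A secondary, minor imprecision: the paper works with two regularity parameters, $\rho$ for the local (cutoff) solution and $\alpha\ge\rho$ for the uniform bound, and the compatibility $\alpha\ge\rho$ is needed so that the stopping times controlling the cutoff are themselves controlled by the uniform estimate; your sketch conflates the two.
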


\begin{corollary}\label{cor: main}
Let us assume the hypotheses of Theorem \ref{thm_main}.
For any $p, m\ge 1$, where $\gamma_1$ and $p$ satisfy Assumption \ref{wiener}, there exist positive constants $C_1$, $C_2$ and $C_3$,
such that for $u_0$ and $v_0$ being $\mathcal{F}_0$-measurable random vectors over $\MA$ and satisfying the following initial conditions
satisfying $\EE[|u_0|_{L^{p^\ast}}^{p^\ast m}]< \infty$, we have
\DEQS
\lqq{	\bar{\mathbb{E}}\sup\limits_{0\leq t\leq T}|u (t)|_{L^p}^{p^\ast m}\,dx
+ C_1 \bar{\mathbb{E}}\lk[ \int_0^T\int_\CO u^{p^\ast}(s, x) v^q(s,x)\,dx\,ds\rk]^m }
&&
\\
&&{}\qquad \qquad + C_2 \bar{\mathbb{E}}\lk[\int_0^T\int_\CO|u (t,x)|^{p^\ast-2}|\nabla u (t,x)|^2\,dx\,ds \rk]^m \leq   C_3\,(\mathbb{E}|u_0 |^{p^\ast m}_{L^{p^\ast}}+1).
\EEQS
\del{In particular,
$$
\bar \EE\sup_{0 \le s\le T} |u(s)|_{L^{p^\ast}}^{\frac{2p^\ast}{p^\ast-q(p^\ast-1)}} \leq   C_3\,(\mathbb{E}|u_0 |^{\frac{2p^\ast}{p^\ast-q(p^\ast-1)}}_{L^{p^\ast}}+1).
$$}
\end{corollary}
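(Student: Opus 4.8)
The plan is to upgrade the first–moment estimate \eqref{e:est33} of Theorem~\ref{thm_main} to its $m$-th power by a Burkholder--Davis--Gundy and Gronwall argument, now \emph{retaining} (rather than discarding, as \eqref{e:est33} does) the favourably–signed coupling term $\int_\CO u^{p^\ast}v^q$. Write $X(t):=|u(t)|_{L^{p^\ast}}^{p^\ast}=\int_\CO u^{p^\ast}(t)\,dx$, recalling that the solution components are non-negative. I would start from the It\^o expansion of $t\mapsto X(t)$ already underlying the proof of \eqref{e:est33}: applying It\^o's formula to $v\mapsto\int_\CO v^{p^\ast}\,dx$ (rigorously, after mollification and passage to the limit), converting the Stratonovich integral, and integrating the Laplacian term by parts, one obtains
\begin{align*}
X(t)+A(t)&=X(0)+\int_0^t F(s)\,ds+M(t),\\
A(t)&:=r_1p^\ast(p^\ast-1)\int_0^t\!\!\int_\CO u^{p^\ast-2}|\nabla u|^2\,dx\,ds+c_1p^\ast\int_0^t\!\!\int_\CO u^{p^\ast}v^q\,dx\,ds\ \ge\ 0,
\end{align*}
with the continuous local martingale $M(t)=p^\ast\sigma_1\int_0^t\langle u^{p^\ast-1}(s),g_{\gamma_1}(u(s))\,dW_1(s)\rangle$, and $F$ collecting the $a_1u$- and $b_1$-terms together with the It\^o and It\^o--Stratonovich correction terms generated by $g_{\gamma_1}$. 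Two structural facts, both verified en route to \eqref{e:est33}: on the bounded domain, H\"older's inequality and $\sum_k|(-\Delta)^{-\gamma_1/2}e_k|^2\in L^\infty(\CO)$ (which is where the lower bound on $\gamma_1$ in Assumption~\ref{wiener} enters, since it forces $2\gamma_1>d$) give $|F(s)|\le C(1+X(s))$; and, using $g_{\gamma_1}(u)[h]=u\cdot(-\Delta)^{-\gamma_1/2}h$, one computes $\langle M\rangle_t=(p^\ast\sigma_1)^2\int_0^t\|u^{p^\ast}(s)\|_{H^{-\gamma_1}_2(\CO)}^2\,ds\le C\int_0^t X(s)^2\,ds$, the embedding $L^1(\CO)\embed H^{-\gamma_1}_2(\CO)$ being again a consequence of $2\gamma_1>d$.

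Since $A$ is nonnegative and nondecreasing, the identity yields $\sup_{t\le T}X(t)+A(T)\le 2\big(X(0)+\int_0^T|F(s)|\,ds+\sup_{t\le T}|M(t)|\big)$. Raising this to the $m$-th power and taking expectations, the drift contributes $\bar{\mathbb{E}}\big[(\int_0^T|F|)^m\big]\le C_m\big(T^m+T^{m-1}\int_0^T\bar{\mathbb{E}}[\sup_{r\le s}X(r)^m]\,ds\big)$ by Jensen, while BDG gives $\bar{\mathbb{E}}[\sup_{t\le T}|M(t)|^m]\le C_m\bar{\mathbb{E}}[(\int_0^T X(s)^2\,ds)^{m/2}]$, and $(\int_0^T X^2)^{m/2}\le(\sup_{s\le T}X)^{m/2}(\int_0^T X)^{m/2}\le\varepsilon(\sup_{s\le T}X)^m+C_\varepsilon(\int_0^T X)^m$ by Young's inequality. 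Choosing $\varepsilon$ small enough to absorb the $\sup$-term into the left side and applying Gronwall's lemma to $s\mapsto\bar{\mathbb{E}}[\sup_{r\le s}X(r)^m]$ yields $\bar{\mathbb{E}}[\sup_{t\le T}X(t)^m]\le C(T)(\mathbb{E}|u_0|_{L^{p^\ast}}^{p^\ast m}+1)$; feeding this back into the displayed identity bounds $\bar{\mathbb{E}}[A(T)^m]$ by the same right-hand side. Since $(a+b)^m\ge a^m+b^m$ for $a,b\ge0$, $m\ge1$, the two nonnegative summands of $A(T)$ separate with explicit constants, e.g.\ $C_1=(c_1p^\ast)^m$ and $C_2=(r_1p^\ast(p^\ast-1))^m$; and for $p\le p^\ast$ one has $|u(t)|_{L^p}^{p^\ast m}\le|\CO|^{(1/p-1/p^\ast)p^\ast m}X(t)^m$ on the bounded domain, recovering the left-hand side in the stated form.

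The genuinely delicate points are bookkeeping ones. First, the a priori finiteness of $\bar{\mathbb{E}}[\sup_{t\le T}X(t)^m]$ must be secured before it may be absorbed: this is done by localising along $\tau_N:=\inf\{t:X(t)\ge N\}$, running the estimate up to $\tau_N$, and letting $N\to\infty$ by monotone convergence once the $N$-uniform, $T$-dependent bound is in hand. Second, the validity of the It\^o expansion together with the two noise bounds above is precisely what Assumption~\ref{wiener} on $\gamma_1$ and the attendant Sobolev embeddings provide; the parallel regularity accounting on the inhibitor side (relevant for a companion estimate involving $v$) instead rests on an interpolated Stroock--Varopoulos inequality for the fractional operator $A$, turning its $L^{p^\ast}$-dissipation into control of $(-\Delta)^{\aleph/4}v^{p^\ast/2}$ and thereby shifting regularity between the two components. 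Modulo these ingredients, which are inherited from the proof of Theorem~\ref{thm_main}, the corollary is exactly the passage from the first to the $m$-th moment outlined above.
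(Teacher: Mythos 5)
Your proposal follows the same architecture as the paper's actual proof, which is Proposition~\ref{reg_u2}: apply the It\^o formula to $|u|_{L^{p^\ast}}^{p^\ast}$ under a stopping-time localization, retain the favourably-signed coupling term $c_1p^\ast\int u^{p^\ast}v^q$ together with the gradient term on the left, control the stochastic integral by Burkholder--Davis--Gundy plus Young's inequality, close with Gronwall (first for $\sup X$, then re-inserting to bound the retained terms), pass to higher moments by raising to the power $m$ before taking expectations, and remove the localization by monotone convergence. The one genuine divergence is your treatment of the noise corrections. You bound the It\^o/Stratonovich correction and the quadratic variation directly by $C(1+X(s))$ and $C\int_0^tX(s)^2\,ds$, which rests on the assertion $\sum_k\lambda_k^{-\gamma_1}\varphi_k^2\in L^\infty(\CO)$. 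That assertion is true precisely when $2\gamma_1>d$ (it is the boundedness of the diagonal of the kernel of $(-\Delta)^{-\gamma_1}$, obtainable from heat-kernel bounds), and Assumption~\ref{wiener} does give $\gamma_1>\tfrac d2$; but it does not follow from anything the paper establishes --- the crude eigenfunction bound \eqref{EFsup} would require $\gamma_1>\tfrac{3d}{2}-1$, which fails for $d=2$ under Assumption~\ref{wiener}. The paper avoids this by using the finer multiplication estimates \eqref{itotrace} and \eqref{hierendets}, which leave a remainder $\varepsilon\int_0^t|u|_{H^\theta_{p^\ast}}^{p^\ast}\,ds$ with $\theta<\tfrac{2}{p^\ast}$; this remainder is then absorbed into the gradient term via the interpolated Stroock--Varopoulos inequality of Proposition~\ref{runst1}, which lower-bounds $\int_0^T|u^{p^\ast/2-1}\nabla u|_{L^2}^2\,ds$ by $c\int_0^T|u|_{H^\theta_{2\gamma}}^{2\gamma}\,ds$. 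So Stroock--Varopoulos is used here, on the \emph{activator} side --- your closing remark assigning it only to a companion inhibitor estimate misplaces its role in this proof. If you keep your shortcut, you must supply the kernel bound; otherwise you should follow the paper and carry the $H^\theta_{p^\ast}$ remainder through to the absorption step. Either route yields the corollary; the remaining bookkeeping (separating $A(T)^m$ by superadditivity, H\"older on the bounded domain to pass from $L^{p^\ast}$ to $L^p$) is as you describe.
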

\noindent For the proof, we refer to Proposition~\ref{reg_u2}.

\begin{remark}
The initial conditions \eqref{e: initheomain} are formulated in terms of the original probability space $\MA$, which has the same law as $\bar \MA$. Hence, they have the same moments in \eqref{e:est1} and \eqref{e:est2}. The same holds in the estimates of Corollary~\ref{cor: main}.
\end{remark}

\begin{remark} Note that the previous results imply that on the product space $L^{p^\ast}(\CO) \times H^{\alpha}_2(\CO)$ the integrability of the initial conditions of order $\frac{2p^\ast}{p^\ast-q(p^\ast-1)}$ for $u$ and $2$ for $v$, respectively, are preserved.
\end{remark}

\bigskip
\section*{\textbf{Top-level proof of the main result}}\label{five}

\noindent The stochastic integral in the system \eqref{e:u1}--\eqref{e:v1} is interpreted in the Stratonovich sense due to the physical interpretation,
as a limit of a fluctuating forcing term with a finite amplitude and finite timescale. 
To compensate for the lack of martingale property
the proof is shown as the It\^ o stochastic integral with the additional quadratic variation operator
to the It\^o form.

The proof of the main result consists of several layers.
In this section, we lay out the structure of the top layer.
Since this part consists of a series of steps,
we outline the separate steps in the following \textit{road map}.

\begin{steps}
\item Definition of the underlying spaces, depending on a parameter $\rho$.

\item Definition of the nonlinear equation with the cut-off at level $\kappa$. 

\item Introduction of the corresponding integral operator $\mathcal{V}_\kappa$ and the subset $\mathcal{K}_\kappa$, where the operator acts onto, depending on the parameter $\rho$ and $p^\ast$.

\item Properties of the fixed point operator: 

\begin{stepsinner}
\item {{wellposedness of the operator $\mathcal{V}_\kappa$}}
\item {{identification of the invariant subset $\mathcal{K}_\kappa$}}
\item {{continuity of the operator $\mathcal{V}_\kappa$}}
\item {{precompactness of the operator $\mathcal{V}_\kappa$}}
\end{stepsinner}

\item {{Application of the Schauder Theorem for any $\kappa$. }} 

\item {{Glueing of concatenated local solutions. }} 

\item {{Uniform bound in the cutoff parameter $\kappa$ of the activator depending on the parameter $\alpha\ge \rho$.   }} 

\item {Extension of the local solution to a global solution. }

\end{steps}

\medskip

\noindent We start with introducing the notation and
formulating the detailed steps of the proof 
and keep the numbering of the preceding \textit{road map}.
\begin{step}

\item  {\textbf{Definition of the underlying  spaces: }}
Let us assume firstly 
$$q<\frac {d+\FE}{2d-\FE}.
$$
First, let us fix the parameters used in the next Steps.
Let $\rho\in\RR$ satisfy
\DEQSZ\label{cond00pstar}
\frac d2-\frac \FE{2q}-\frac d{2q}<\rho\le \frac \FE 2-\frac d2,
\EEQSZ 
and  let $p^\ast$ be arbitrary but
$$
p^\ast > \max\{\frac {4d}{ \FE +d-dq+2q\rho}, 4\}.
$$

\noindent We start by introducing the activator process's main path space, depending on $\rho$.
For any $\rho \in \RR, \FE \in (1, 2]$ and $t>0$ set
\DEQSZ\label{d: mainpathspace}
\mathbb{H}^t_{\rho, \FE} &:=& L^\infty(0,t; H^\rho_2(\CO))\cap L^2(0,t;H^{\rho +\FEplus}_2(\CO))
\EEQSZ
equipped with norm
$$ \|w\|_{\BH_{\rho, \FE}^t}:= \sup_{0\le s\le t}|w(s)|_{H^\rho_2}+ \|w\|_{L^2(0,t;H^{\rho +\FEplus}_2)},\quad w\in\BH^t_{\rho, \FE} .
$$
For the globally fixed $t= T>0$, we omit the parameter and set 
$\mathbb{H}_{\rho, \FE} :=\mathbb{H}_{\rho, \FE}^T$
and denote the norm by
$$ \|w\|_{\BH_{\rho, \FE}}:= \|w\|_{\BH_{\rho, \FE}^T}.
$$
\hfill

\noindent Recall that $\FA=(\Omega,\CF, {\mathbb{F}},\mathbb{P})$ is a complete filtered probability
space with a right-continuous filtration ${\mathbb{F}}=\{{{\mathcal{F}}}_t:t\in [0,T]\}$,
carrying the Wiener processes $W_1$ and $W_2$.

\noindent Fix $\rho \in (-\frac{d}{2}, \frac{\aleph}{2}-\frac{d}{2})$.
The space of function-valued processes for the subsequent stochastic Schauder fixed point theorem
will be  $\CMM_{\MA, \Si}^2(0,T) $ 
defined by
\DEQS
\lqq{  \mathcal{M}^2_{\MA, \Sitilde}(0,T):=\Big\{(\eta,\xi) :\Omega\times[0,T]\times \CO\to\mathbb{R}\times \mathbb{R}: \quad \mbox{$\xi $ and $\eta$}
}
&& \\
&& \mbox{ are progressively measurable over $ {\mathfrak{A}}$ and}
\quad\mathbb{E}\int_0^T |\eta (s)|_{L^2}^{2}\, ds +\EE \|\xi\|_{\BH_{\rho, \FE}}^2<\infty\,
\Big\} ,
\EEQS
and being 
equipped with the norm
$$
\lvvvert
(\eta,\xi) \rvvvert_\cmm :=\Big\{ \mathbb{E}\int_0^T|\eta (s)|^{2}_{L^{2}}\, ds\Big\}^\frac 1{2}+\Big\{ \mathbb{E} \|\xi \|^{2}_{\BH_{\rho, \FE}}\Big\} ^\frac 1{2}, \quad(\eta,\xi)\in \CMM^2_{\MA, \Si}(0,T).
$$

\medskip

\item {\textbf{Definition of the nonlinear equation with the cut-off: }}
Let $\psi \in C_c^{\infty}(\mathbb{R})$ be a cut-off function which satisfies
$$
\psi(x):= \bcase 0 &\mbox{ if } |x|\ge 2,
\\ \in [0,1] &\mbox{ if } |x|\in(1,2),
\\1 &\mbox{ if } |x|\le 1,
\ecase
$$
and let $\psi_\n(x):= \psi(x/\n)$, $x\in\RR,\,\n \in \mathbb{N}$. For any pair of processes $(\eta,\xi) \in \mathcal{M}^2_{\MA, \Sitilde}(0,T)$
let us define for $t \in [0,T]$
\DEQSZ \label{h12.def}
h((\eta, \xi),t )&:=& \|\xi\|_{\mathbb{H}^t_{\rho, \FE}}, \qquad (\eta, \xi)\in \mathcal{M}^2_{\MA, \Sitilde}(0,T).
\EEQSZ
For clarity and simplicity, we omit the first (mute) component.
Finally, let $\phi_\kappa(t,(\eta, \xi)):=\psi_\n(h((\eta, \xi),t))$. 
We now state the cutoff nonlinear equation as follows:
\DEQSZ\label{nonlinearcutoffu}
\\\nonumber
d{\uk }(t)&=& [r_1 \,\Delta \uk (t)+a_1 \uk(t) + b_1 - c_1 \phi_\n((\uk, \vk),t)\cdot \uk(t)\cdot \,\vk^q(t)]\, dt +\sigma_1 \, g_{\gamma_1}(\uk (t))\, dW_1(t),\nonumber
\\
&\hfill \label{nonlinearcutoffv}
\\ \nonumber
d{\vk  }(t)&=& [r_2 \,A \vk  (t)+a_2 \vk  (t)+b_2+ c_2\phi_\n((\uk, \vk),t)\cdot \uk(t)\cdot \vk^q(t)]\, dt + \sigma_2 \,g_{\gamma_2}(\vk  (t)) \, dW_2(t), \nonumber
\hfill
\EEQSZ
with $\uk (0)=u_0$ and $\vk  (0)=v_0$ for initial data $(u_0, v_0)$ satisfying
\begin{equation}\label{initialinner}
\EE|u_0|_{H_2^{-2}}^2 <\infty,\qquad \EE|u_0|_{L^{p^*}}^{p^*} < \infty, \qquad \mbox{ and } \qquad \EE|v_0|_{H^{\rho}_2}^2 < \infty.
\end{equation}

\item {\textbf{Introduction of the fixed point operator and its invariant subset: }}

\noindent 
Firstly, we define the following subset of $\CMM_{\MA, \Si}^2(0,T)$.
Fix three constants $K_1,K_2, K_3>0$ and $\lambda>0$,
and introduce $\CK_{\MA}^\n(K_1,K_2,K_3)\subset \CMM_{\MA, \Si}^2(0,T)$ given by\footnote{{The space $\mathbb{H}_{0,2}$ is defined as $L^\infty(0,T;L^2)\cap L^2(0,T;H_2^{\frac{\aleph}{2}})$.}}
\del{\begin{align}\label{def_K_KKK}
\mathcal{K} _\MA:=	\CK_\MA(K_1,K_2,K_3) &:= \Big\{(\eta,\xi) \in \CMM_{\MA, \Si}(0,T)\mid \xi \geq0 \quad {\PP}\times \mbox{Leb}-\mbox{a.s.},\phantom{\Bigg|} 
\\ &\nonumber 
\mathbb{E}\,\|\eta\|^{2}_{\mathbb{H}_{0,2}}\leq K_1, \quad
\mathbb{E}\sup_{0< s\le T}e^{-\lambda s}|\eta(s)|_{L^{p^\ast}}^{p^\ast}\le K_2,\quad  {\rm and } \quad  \mathbb{E}\|\xi\|_{\mathbb{H}_{\rho,\aleph}}^2 \leq K_3
\Big\}.
\end{align}}
\begin{align}\label{def_K_KKK}
\lqq{ 	\mathcal{K} _\MA:=	\CK_\MA(K_1,K_2,K_3) := \Big\{(\eta,\xi) \in \CMM_{\MA, \Si}(0,T)\mid 
}
	\\ 	\qquad &\qquad \nonumber 
	\mathbb{E}\,\|\eta\|^{2}_{\mathbb{H}_{0,2}}\leq K_1, \quad
	\mathbb{E}\sup_{0< s\le T}e^{-\lambda s}|\eta(s)|_{L^{p^\ast}}^{p^\ast}\le K_2,\quad  {\rm and } \quad  \mathbb{E}\|\xi\|_{\mathbb{H}_{\rho,\aleph}}^2 \leq K_3
	\Big\}.
\end{align}

\noindent We define the mapping  $\CV^\kappa_{\MA,W}$ by
\DEQSZ\label{def_operator}
\mathcal{V}^\kappa_{\MA,W}: \mathcal{K}_\MA(K_1,K_2,K_3)  &\to& \CMM_{\MA, \Si}^2 (0,T),
\\ (\eta,\xi)&\mapsto &\mathcal{V}^\kappa_{\MA,W}[(\eta,\xi)],
\nonumber\EEQSZ
with   $\mathcal{V}^\kappa_{\MA,W}[(\eta,\xi)]=:(\uk,\vk)$,
where $(\uk, \vk)$ is the unique solution to the linear, decoupled system
\DEQSZ
d{\uk }(t)&= [r_1 \,\Delta \uk (t)+ a_1 \uk (t) + b_1 - c_1 \phi_\n(\xi,t)\cdot \eta(t) \cdot \,|\xi|^q(t)]\, dt +\sigma_1 \, g_{\gamma_1}(\uk (t))\, dW_1,\quad
 \nonumber\\
&\hfill \label{sysu0}\\
d{\vk  }(t)&= [r_2 \,A \vk  (t)+ a_2 \vk  (t)+ b_2+ c_2\phi_\n(\xi, t)\cdot \eta(t)\cdot |\xi|^q(t)]\, dt + \sigma_2 \,g_{\gamma_2}(\vk  (t)) \, dW_2(t), \nonumber\\
&\hfill\label{sysw0}
\EEQSZ
with initial conditions $\uk (0)=u_0$ and $\vk  (0)=v_0$. 
\begin{remark}\label{postiv}
	Here, we took for $\xi^q$ the modulos $|\xi|^q$. Later on, once we have verified the existence of a solution to \eqref{sysu0} and \eqref{sysw0},
	we show the non-negativity of the solution. Then, $\PP$-a.s. $\vk=|\vk|$.
\end{remark}
\item \textbf{Properties of the fixed point operator: }

\begin{stepinner}
\item {{\bf Wellposedness of the operator $\mathcal{V}^\n_{\MA,W}$ (Appendix~\ref{ss:wellposedness}): }}

Due to Proposition~\ref{reg_uk}, for any $\kappa\in\NN$, $K_1,K_2, K_3>0$ and for any
initial condition $(u_0,v_0)$ satisfying the constraints of Step (II),
for any $(\eta,\xi)\in \mathcal{K} _\MA^\n$
there exists a  unique pair of solutions  $\uk$ and $\vk$ to the system  \eqref{sysu0} --\eqref{sysw0}    such that $(\uk,\vk)\in\CM^{2}_\MA(0,T)$. 

\bigskip

\item {{\bf The operator $\mathcal{V}^\n_{\MA,W}$ maps $\CK_\MA^\n$ into itself (Appendix~\ref{s:invariance}): }}

In the next step, we will show that for any $\kappa\in\NN$ there exist numbers $K_1=K_1(\n),K_2=K_2(\n)$, and $K_3= K_3(\kappa)>0$
such that $\mathcal{V}^\n_{\MA,W}$ maps $\CK_\MA(K_1(\n),K_2(\n),K_3(\n))$ into itself.
Observe, we know from Proposition~\ref{reg_uk} that for all $\kappa$, there exists  constants $C_1,C_2>0$ such that we have for all $\kappa\in\NN$
\DEQSZ
\EE\sup_{0\le t\le T} |\uk(t)|_{L^2}^2+2\EE\int_0^T |\uk(t)|_{H^{1}_2}^2\, dt\le C(T)\lk( \EE|u_0|^2_{L^2}+C(\kappa)K^\frac 2{p^\ast}_2e^{2\lambda T/p^\ast}\rk)  . \label{e:trouble new} 
\EEQSZ
and
\DEQSZ
\EE\sup_{0\le t\le T} |\vk(t)|_{H^{\rho}_2}^2+2\EE\int_0^T |\vk(t)|_{H^{  \FE /2+\rho}_2}^2\, dt\le C(T)\lk(\EE|v_0|^2_{H^\rho_2} +C(\kappa) K^\frac 2{p^\ast}_2e^{2\lambda T/p^\ast }\rk). \label{e:vtrouble new}
\EEQSZ
From Proposition~\ref{limituk}, it follows that for any $\n\in\NN$, there exists a $\lambda>0$ (depending on $\n$) and two constants $C_1,C_2>0$, such that we have 
$$
\EE\sup_{0\le s\le T} e^{-\lambda s}|\uk(s)|_{L^{p^\ast}}^{p^\ast}\le C
\lk(1+e^{C_2T}\rk) |u_0|_{L^{p^\ast}}^{p^\ast}+\frac 12 \sup_{0\le s\le T} e^{-\lambda s}|\eta(s)|_{L^{p^\ast}}^{p^\ast}
$$
It follows for
\begin{align}\label{def_constants} &  K_1:=C(T)\lk( \EE|u_0|^2_{L^2}+C(\kappa)K^\frac 2{p^\ast}_2e^{2\lambda T/p^\ast}\rk),
	\\
&K_2:= 2\lk(1+e^{C_2T}\rk) |u_0|_{L^{p^\ast}}^{p^\ast}, \notag 
\\
& K_3:= C(T)\lk(\EE|v_0|^2_{H^\rho_2} +C(\kappa) K^\frac 2{p^\ast}_2e^{2\lambda T/p^\ast }\rk),\notag 
\end{align}
that the operator $\CV^\n_{\MA,W_1}$ maps the set $\CK_\MA^\kappa := \CK_\MA(K_1,K_2,K_3)$ into itself.
In the following, whenever we refer to $K_1,K_2$, and $K_3$,
we assume that the constants are given via \eqref{def_constants}.  Similarly, when we write $\CK_\MA^\kappa$,
we mean $ \CK_\MA(K_1,K_2,K_3)$.

\item {{\bf The operator $\mathcal{V}^\n_{\MA,W}$ is continuous from $\CK_\MA^\kappa$ into $\mathcal{M}^2_{\MA, \Sitilde}(0,T)$ (Appendix~\ref{ss:continuity}): }}

The continuity for the operator $\CV^\n_{\MA,W_1}:\CK_\MA^\kappa\to\mathcal{M}^2_{\MA, \Sitilde}(0,T)$ is shown in Proposition~\ref{contu}. In this step, it is essential that the argument of the cutoff function respects the norm of the underlying space.

\newcommand{\allp}{\nu}
\item \textsl{{\bf The operator $\mathcal{V}^\n_{\MA,W}$ maps the set  $\CK_\MA^\kappa
$ into a precompact set of $\mathcal{M}^2_{\MA, \Sitilde}(0,T)$ (Appendix~\ref{ss:compactness}): }}

Going back to the definition of $\mathcal{M}^2_{\MA, \Sitilde}(0,T)$ we see that
$$
\mathbb{X} = L^{2}(0, T; L^2(\CO))\times \mathbb{H}_{\rho, \aleph}^2,
$$
where $  \mathbb{H}_{\rho, \aleph}^2 = L^\infty(0,T; H^\rho _2(\CO))\cap L^2(0,T;H^{\rho +\FEplus}_2(\CO))$
is the underlying deterministic path space. Fix arbitrary constants $\allp, \eps\in (0,1)$. Any bounded subset in $\mathbb{X}'$ given by
$\mathbb{X}' = \mathbb{Y}^u_{\allp, \eps} \times \mathbb{Y}^v_{\allp, \eps}$, where
\begin{align*}
&\hspace{-1cm}\mathbb{Y}^u_{\allp, \eps} := L^2(0, T; H^{\eps}_{2}(\CO))\cap \mathbb{W}_2^\allp(0, T; H^{-2}_2(\CO)),\\
&\hspace{-1cm}\mathbb{Y}^v_{\allp, \eps} := L^2(0, T; H^{\rho+\frac{\aleph}{2}+\eps}_{2}(\CO))\cap
\mathbb{W}_2^\allp(0, T; H^{-2}_2(\CO))
\cap \mathcal{C}([0, T], H^{\rho}_2(\CO))\cap \mathcal{C}^{\allp}([0, T], H^{-2}_2(\CO)).
\end{align*}
Due to the Lemma of Prokhorov (see \cite[Theorem 2.2, p.\ 104]{MR838085})
combined with the Chebychev inequality, Theorem~\ref{th-gutman},
it remains to show that for any $K_1,K_2,K_3>0$
there exist  constants $C,\allp_0>0$ and Banach spaces $B_1,B_0,B_1'$, and $B_0'$, with $B_0\subset L^2(\CO) \subset B_1$, $B_0\hookrightarrow L^2(\CO)$ compactly embedded,
$B'_0\subset H^{\rho+\aleph/2}_2(\CO) \subset B_1'$, and  $B_0'\hookrightarrow H^{\rho+\aleph/2}_2(\CO) $ compactly embedded,
such that for any 
$\xi\in \CK_\MA(K_1,K_2,K_3) $, %
we have
$$
\EE \|u_\n\|_{L^2(0,T;B_0)},\EE \|u_\n\|_{\WW ^{\allp_0}_2 (0,T;B_1)}\le C\quad \mbox{and}\quad
\EE \|v_\n\|_{L^2(0,T;B'_0)},\EE \|v_\n\|_{\WW ^{\allp_0}_2 (0,T;B'_1)}\le C,
$$
where $(u_\n,v_\n)=\mathcal{V}^\kappa_{\MA,W}(\xi)$.
In addition, to get compactness also in $C_b^{(0)}(0,T;H^{\rho}_2(\CO))$ we have to show again a compact containment condition and a time regularity condition, see \cite[Theorem 7.2, p.\ 128]{MR838085} and  \cite{Aubin, Simon, Lions}, respectively, and Theorem~\ref{th-gutman2}.
To show the compact containment condition, we have to show that  for any $K_1,K_2,K_3>0$
there exists a Banach space
$B_0''$ with $ B_0''\hookrightarrow H^{\rho}_2(\CO) $ compactly, and a constant $C_0>0$ 
such that for any $t\in[0,T]$ 
and
all $\xi\in   \CK_\MA(K_1,K_2,K_3) $, 
we have
$$\sup_{0\le t\le T}\EE|v(t)-e^{t\Delta^{\aleph/2}}v_0|_{B_0''}\le C.
$$
Since $v_0$ is a $H^{\rho}_2(\CO)$--valued random variable, $H^{\rho}_2(\CO)$ a separable Banach space, the distribution of $v_0$ is tight and for all $\ep>0$ there exists a compact set $K_\ep\subset H^{\rho}_2(\CO)$ such that
$\PP\lk( v_0\not\in K_\ep\rk)\le \ep$. Since the semigroup is compact, $e^{-\Delta^{\aleph/2}t}$ maps bounded sets into compact sets.
Hence, we subtract $e^{\Delta^{\aleph/2}t}v_0$ and analyze the difference.
This procedure allows us to avoid higher regularities on the initial conditions.

To show the time regularity,
we have to show that
for any $K_1,K_2,K_3>0$,
there exist a  Banach space $B_1''$ with $H^{\rho}_2(\CO) \subset B_1''$ and constants  $C,\alpha_0>0$,
such that
$$\EE \|v_\n\|_{C^{(\alpha_0)}_b(0,T;B_1'')}\le C.
$$
Summarising the reasoning above, it is sufficient to prove the existence of some $\alpha$, $\eps$, $C>0$, and $m>0$ such that for all $(\xi, \eta) \in\CK^\n_\MA$
\begin{align*}
\mathbb{E}[\|(u_\kappa, v_\kappa)\|^m_{\mathbb{Y}^u_{\alpha, \eps} \times \mathbb{Y}^v_{\alpha, \eps}}]\leq C, \qquad \mbox{ where }\qquad (u_\kappa, v_\kappa) = \mathcal{V}^\n_{\MA,W}(\xi, \eta).
\end{align*}
This is shown in Proposition~\ref{semigroup_compact}.
\end{stepinner}
Note that Steps (IV) (a)-(d) correspond to the assumptions of the Schauder theorem.

\item {\textbf{Application of the stochastic Schauder fixed point theorem, \cite[Theorem 2.1]{HT22plus}: }}
For any $\kappa\in\NN$ and $(u_0, v_0)$ satisfying Assumtion~\ref{initialinner},
there exists a probability space $\tilde{\mathfrak{A}}_\n =(\tilde \Omega_\n ,\tilde{\CF}_\n ,\tilde{\mathbb{F}}_\n ,\tilde{\PP}_\n )$, a Wiener process $\tilde W^\n =(\tilde W^\n _1,\tilde W^\n _2)$ over $\tilde {\mathfrak{A}}_\n $, and elements
$$	
(\tilde u_{\kappa},\tilde v_{\kappa})\in\mathcal{K}_{\tilde{\mathfrak{A}}_\n}^\n,
$$
such that we have $\tilde \PP$-a.s.\ a fixed point
$$
\mathcal{V}_{\tilde{\mathfrak{A}}_\kappa, \tilde W^\n}^\n(\tilde u_{\kappa}(t),\tilde v_{\kappa}(t))=(\tilde u_{\kappa}(t),\tilde v_{\kappa}(t))
$$
for every $t\in[0,T]$.
Due to the construction of
$\mathcal{V}_{\tilde{\mathfrak{A}}_\n, \tilde W^\n}^\n$, the pair $(\tilde u_{\kappa},\tilde v_{\kappa})$
solves the system \eqref{nonlinearcutoffu}--\eqref{nonlinearcutoffv}
over the stochastic basis $\tilde{\mathfrak{A}}_\n $ with the Wiener noise $\tilde W^\n $.

\item   \textbf{Non-negativity of the solution:}
Later on, in Step (VII), the non-negativity of the solutions to system \eqref{nonlinearcutoffu}-\eqref{nonlinearcutoffv} is essential.  However, by a standard technique we prove in Proposition \ref{non_negativity}, that the solution is $\PP$-a.s. non-negative.

\item \textbf{Definition of the solution: independent glueing of the local solutions: }\label{ss: glueing}

\del{
The aim is here that by glueing to get a solution $\{(\buk, \bvk):k\in\NN\}$, where $(\buk, \bvk)$ solves the system
\DEQSZ\label{sysu_gesamt}
\\\nonumber
d{\buk  }(t)= [r_1 \,\Delta \buk  (t)+ a_1 \buk  (t)+ b_1 - c_1\phi_\n(\bvk,t)\buk (t)\vk^q(t)]\, dt +\sigma_1 \, g(\buk  (t))\, dW_1,\: \buk  (0)=u_0.
\EEQSZ
and $\bar v_\n$ is the unique solution to
\DEQSZ\label{sysw_gesamt}
\\\nonumber
d{\bvk  }(t)= [A \, \bvk  (t)+ a_2 \bvk  (t) + b_2+ c_2\phi_\n(\bvk ,t)\buk  (t)\bvk ^q(t)]\, dt + \sigma_2 \,g(\bvk  (t)) \, dW_2(t), \quad \bvk  (0)=v_0.
\EEQSZ}
\noindent By the following argument, we will show that
there exists a probabilitic weak  solution  of the system \eqref{nonlinearcutoffu}-\eqref{nonlinearcutoffv}.
That is, there exists a filtered probability space
$\bar{\mathfrak{A}} := (\bar\Omega, \bar{\mathcal{F}}, (\bar{\mathcal{F}}_t)_{t\geq 0}, \bar{\mathbb{P}})$,
Wiener processes $(W_1, W_2)$ over $\bar{\mathfrak{A}}$,
and a couple of processes $(u, v)$ being a strong solution
of the system \eqref{e:u1}-\eqref{e:v1} over $\bar{\mathfrak{A}}$.

\medskip

\noindent To do so, we construct a family of solutions $\{(\bar u_{\kappa} ,\bar v_{\kappa} ):\kappa\in\NN\}$  following the solution to the original problem until a certain stopping time ${\bar \tau}_\kappa$ defined in \eqref{stopp_time}. In particular,  we will introduce for each $\kappa\in\NN$ a new pair of processes $({\bar u}_{\kappa} , {\bar v}_{\kappa} )$ following the Gray-Scott system to the stopping time ${\bar \tau}_\kappa$. Besides, we will have
$({\bar u}_{\kappa} , {\bar v}_{\kappa} )|_{[0,{\bar \tau}_\kappa)}=({\bar u}_{\n +1},{\bar v}_{\n +1})|_{[0,{\bar \tau}_\kappa)}$.

\noindent Let us start with $\kappa =1$. From Step (V) 
we know that there exists a martingale solution consisting of a probability space $\tilde{\mathfrak{A}}_1=(\tilde{\Omega}_1,\tilde{\CF}_1,\tilde{\mathbb{F}}_1,\PP_1)$, two independent Wiener processes $(\tilde {W}^1_1,\tilde {W}^1_2)$ defined over  $\tilde{\mathfrak{A}}_1$, and a couple of processes $(\tilde u_1,\tilde v_1)$ solving $\tilde{\PP}_1$--a.s. \ the system
\DEQSZ\label{e:u1n}
\lk. \barray  && d\tilde u_1(t) = \left[r_1 \Delta \tilde u_1(t) +a_1 \tilde u_1(t) + b_1-c_1 \,\phi_1(\tilde v_1,t)\, \tilde u_1 (t)\,\tilde v_1^2(t)\right] dt+ \sigma_1 g_{\gamma_1}(\tilde u_1(t))\,d {\tilde W}^1_1(t),
\\
&& d\tilde v_1(t)=\left[r_2 A \tilde v_1(t)+a_2 \tilde v_1(t)+ b_2 + c_2\,\phi_1(\tilde v_1,t)\, \tilde u_1 (t)\,\tilde v_1^2(t)
\right]\,dt+\sigma_2 g_{\gamma_2}(\tilde v_1(t))\, d {\tilde W}^1_2(t),
\\ &&(\tilde u_1(0),\tilde v_1(0))=(u_0,v_0).
\earray\rk. \EEQSZ
Let us define now the stopping time
$\tilde \tau_1^\ast:=\inf \{s\ge 0\;\colon\; \|\tilde v_1\|_{\mathbb{H}_{\rho}^{s}} \ge 1 \}$  on $\tilde{\mathfrak{A}}_1$.
Observe, on the time
interval $[0,\tilde \tau_1^\ast)$, the pair $(\tilde u_1,\tilde v_1)$ solves the system given in
\eqref{e:u1n}. 
Now, we define a new pair of processes $(\bar u_1,\bar v_1)$  following $(\tilde u_1,\tilde v_1)$  on $[0,\tilde \tau_1^\ast)$ and extend this processes to the whole interval $[0,T]$ in the following way.
First, we put $\overline{\mathfrak{A}}_1:=\tilde{\mathfrak{A}}_1$ and $\bar {{W}}_j^1:={\tilde W}_j^1$, $j=1,2$.
Then let us introduce the processes $y_1$ and $y_2$
being a strong solution over $\overline{\mathfrak{A}}_1$ to
\DEQSZ
d y_1(t, \tilde u_1(\tilde \tau^\ast_1),\sigma) &=&  r_1\Delta y_1(t, \tilde u_1(\tilde \tau^\ast_1),\sigma)\,dt + \sigma_1 y_1(t, \tilde u_1( \tilde \tau^\ast_1),\sigma) \,d(\theta_{\sigma} \bar{{W}}^1_1)(t)
\nonumber \\ \label{eq1} y_1(0,\tilde u_1(\tilde \tau^\ast_1))&=&\tilde u_1(\tilde \tau^\ast_1){}
,\quad t\ge 0 ,\\ \nonumber
y_2(t, \tilde  v_1( \tilde \tau^\ast _1),\sigma) &=&e^{r_2\Delta t} \tilde v_1(\tilde \tau^\ast_1) 
\\
\lqq{{} + \int_{0}^t  e^{(r_v\Delta )(t-s)} \sigma_2 y_2 (s, \tilde v_1(\tilde \tau^\ast _1),\sigma) \,d(\theta_{\sigma} \bar{{W}}^1_2)(s)
,\quad t\ge 0.} && \label{eq2}
\EEQSZ
Here, $\theta_\sigma$ is the shift operator which maps ${\bar W}_j(t)$ to ${\bar W}_j(t+\sigma)-\bar W_j(\sigma)$.
Since the couple $(\tilde u_1,\tilde v_1)$  is continuous in $H_2^{-1}(\CO)\times H_2^\rho(\CO)$, we know that  $\tilde u_1(\tilde \tau^\ast_1)$ and $\tilde v_1(\tilde \tau^\ast_1)$ are well defined random variables belonging $\bar \PP_1$-a.s.\ to $L^2(\CO)$ and $H_2^\rho(\CO)$, respectively.
By \cite[Theorem 2.5.1]{BDPR2016} the existence of a unique solution $y_1$ over $\overline{\mathfrak{A}}_1$ to \eqref{eq1} in $H^{-1}_2(\CO)$ is given. Since $(e^{t(r_v\Delta -\alpha \operatorname{Id})})_{t\ge 0}$ is an  analytic semigroup on  $H_2^\rho(\CO)$ for $\alpha>0$,
the existence of a unique solution $y_2$  over $\overline{\mathfrak{A}}_1$ to \eqref{eq2} in $H_2^\rho(\CO)$
can be shown by standard methods, cf. \cite{DaPrZa}. Furthermore, verifying that the initial conditions' assumptions are satisfied is straightforward.
Now, let us define two  processes $\bar u_1 $ and $\bar v_1$
being identical to $\tilde u_1$ and $\tilde v_1$, respectively,  on the time interval $[0,\tilde \tau^\ast_1)$ and
following the heat equation (with lower order terms) with noise and without nonlinearity, i.e.,   $y_1(\cdot,\tilde u_1(\tilde \tau^\ast_1),\tilde \tau^\ast_1)$ and $y_2(\cdot,\tilde v_1(\tilde \tau^\ast_1),\tilde \tau^\ast_1)$, afterwards.
In particular, let
$$
\bar u_1  (t) = \bcase \tilde u_1(t) & \mbox{ for } 0\le t< \tilde \tau^\ast_1,\\
y _1(t,\tilde u_1(\tilde \tau^\ast_1),\tilde \tau^\ast_1) & \mbox{ for } \tilde \tau^\ast_1\le  t \le T,\ecase
$$
and
$$
\bar v_1  (t) = \bcase \tilde v_1 (t) & \mbox{ for } 0\le t< \tilde \tau_1^\ast,\\
y_2 (t,\tilde v_1(\tilde \tau^\ast_1),\tilde \tau_1^\ast
)  & \mbox{ for } \tilde \tau_1^\ast \le  t \le T.\ecase
$$
Let us now construct the probability space and the processes for the next time interval.

Let $\MA = (\Omega, \mathcal{F}, \mathbb{F}, \mathbb{P})$ be a filtered
probability space where two Wiener processes $\tilde W_1^1, \tilde W_2^1$, cylindrical on $H$, are given, such that
$\mathcal{F}_0 = \tilde{\mathcal{F}}_{\tilde \tau_1}$. Let $u_0 = \tilde u(\tilde \tau_1^\ast)$ and $v_0 = \tilde v(\tilde \tau_1^\ast)$.
Observe, $u_0$ and $v_0$ are $\mathcal{F}_0$-measurable random variables satisfying Assumption \ref{initialinner}. Then, due to Step (V), there exists a martingale solution of system \eqref{nonlinearcutoffu} - \eqref{nonlinearcutoffv} for $\kappa = 2$.
That is, 
there exists a filtered probability space $\tilde{\mathfrak{A}}_2=(\tilde{\Omega}_2,\tilde{\CF}_2,\tilde{\mathbb{F}}_2,\tilde{\PP}_2)$, and
a pair of independent Wiener processes $({\tilde W}_1^2,{\tilde W}_2^2)$,
a couple of processes $(\tilde u_2,\tilde v_2)$ solving $\tilde{\PP}_2$-a.s.\ the system
\DEQS
\lk\{ \barray  d\tilde u_2(t) &=&\left[r_1 \Delta \tilde u_2(t) - c_1 \,\phi_2(\tilde v_2,t)\, \tilde u_2 (t)\,\tilde v_2^q(t)\right] dt+ \sigma_1 g_{\gamma_1}(\tilde u_2(t)) \, d\tilde{W}^2_1(t),
\\[0.5cm]
d\tilde v_2(t)&=&\left[r_v \Delta \tilde v_2(t)+\gamma_2\phi_2(\tilde v_2,t)\, \tilde u_2 (t)\,\tilde v_2^q(t)
\right]\,dt+\sigma_2 g_{\gamma_2}(\tilde v_2(t))\, d \tilde{W}^2_2(t),
\\[0.5cm]
(\tilde u_2(0),\tilde v_2(0))& = & (u_0, v_0).
\earray\rk.\EEQS
Let us define now the stopping times on $\tilde{\mathfrak{A}}_2$,
$$\tilde{\tau}_2^\ast:=\inf \{s\ge 0: \|\tilde v_2\|_{\mathbb{H}_{\rho}^{s}} \ge 2 \},$$
and $\overline{\mathfrak{A}}_1:=\tilde{\mathfrak{A}}_1$ and $\bar {{W}}_j^1:={\tilde W}_j^1$, $j=1,2$,
and in particular $\overline{\mathbb{F}}_1:=(\tilde{\CF}^1_t)_{t\in [0,T]}$.
We continue by defining $\overline{\Omega}_2:=\overline{\Omega}_1\times\tilde{\Omega}_2$, $\overline{\CF}_2:=\overline{\CF}^1\otimes \tilde{\CF}^2$, $\overline{\PP}_2:=\overline{\PP}_1\otimes\tilde{\PP}_2$ and set
$\overline{\mathbb{F}}_2:=(\overline{\CF}^2_t)_{t\in [0,T]}$, where
$$\overline{\CF}^2_t:=\bcase \overline{\CF}^1_t, & \mbox{if} \quad t<\tilde{\tau}_1^\ast,
\\ \tilde{\CF}^2_{t-\tau_1^\ast}, & \mbox{if}\quad  t\ge \tilde{\tau}_1^\ast.
\ecase
$$
Let $\overline{\mathfrak{A}}_2:=(\overline{\Omega}_2,\overline{\CF}_2,\overline{\mathbb{F}}_2,\overline{\PP}_2)$.
Finally, let us set for $j=1,2$
$$\bar{{W}}_j^2(t):=\bcase \bar{{W}}^1_j(t), & \mbox{if} \quad t<\tilde \tau_1^\ast,
\\   {\tilde W}^2_j({t-\tilde \tau_1^\ast})+\bar{{W}}^1_j(\tilde \tau_1^\ast), & \mbox{if}\quad  t\ge \tilde \tau_1^\ast,
\ecase
$$
which gives independent Wiener processes for $j=1,2$, w.r.t. the filtration $\overline{\mathbb{F}}_2$.

Now, let us define two  processes $\bar u_2 $ and $\bar v_2$
being identical to $\bar u_1$ and $\bar v_1$, respectively,  on the time interval $[0,\tilde\tau^\ast_1)$, being identical to $\tilde u_2$ and $\tilde v_2$  on the time interval $[\tilde \tau^\ast_1,\tilde \tau^\ast_1+\tilde \tau_2^\ast)$ and
following the heat equation  (with lower order terms) with multiplicative noise afterwards.
Let us note that for any initial condition having distribution equal to $\bar u_2(\tilde \tau_2^\ast)$ and $\tilde v_2(\tilde \tau^\ast_2)$ there exists a strong solution $y_1(\cdot ,\cdot ,\tilde \tau^\ast_2+\tilde \tau^\ast_1)$ and $y_2(\cdot,\cdot ,\tilde \tau^\ast_2+\tilde \tau^\ast_1)$ of the systems \eqref{eq1} and \eqref{eq2}, respectively, on $\overline{\mathfrak{A}}_2$.
Let for $t\in[0,T]$
$$
\bar u_2  (t) = \bcase \bar u_1(t) & \mbox{ for } 0\le t< \tilde \tau^\ast_1,\\
\tilde u_2(t-\tilde \tau_1^\ast ) & \mbox{ for }  \tilde \tau^\ast_1\le t\le \tilde \tau^\ast_1+\tilde \tau_2^\ast,\\
y _1(t-(\tilde \tau^\ast_1+\tilde \tau^\ast_2),\tilde u_2(\tilde \tau^\ast_2),\tilde \tau^\ast_1+\tilde \tau^\ast_2) & \mbox{ for } \tilde \tau^\ast_2+\tilde \tau^\ast_1\le  t \le T,\ecase
$$
$$
\bar v_2  (t) = \bcase \bar v_1 (t) & \mbox{ for } 0\le t< \tilde \tau_1^\ast,\\
\tilde v_2(t-\tilde \tau_1^\ast ) & \mbox{ for } \tilde \tau^\ast_1\le t\le \tilde \tau^\ast_1+\tilde \tau_2^\ast,\\
y_2 (t-(\tilde \tau^\ast_1+\tilde \tau^\ast_2),v_2(\tilde \tau^\ast_2),\tilde \tau_1^\ast+\tilde \tau^\ast_2
)  & \mbox{ for } \tilde \tau_1^\ast+\tilde \tau^\ast_2 \le  t \le T.\ecase
$$
In the same way we will construct for any $\kappa \in\NN$ a probability space $\overline{\mathfrak{A}}_\kappa $, a pair of independent Wiener processes $(\bar{{W}}^1_\kappa ,\bar{{W}}^2_\kappa )$, over $\overline{\mathfrak{A}}_\kappa $ and a pair of processes $(\bar{u}_\kappa,\bar{v}_\kappa)$ starting at $(u_0,v_0)$ and solving system  \eqref{e:u1}-\eqref{e:v1} up to time
%
%
%
\DEQSZ\label{stopp_time}\bar \tau_\kappa :=\tilde{\tau}_1^\ast+\cdots +\tilde{\tau}_\kappa ^\ast
\EEQSZ and following the heat equation afterwards.
Besides, we know that.
$$(\bar u_{\kappa} ,\bar v_{\kappa} )|_{[0,\bar \tau _{\kappa -1})}=(\bar u_{\kappa -1},\bar v_{\kappa -1})|_{[0,\bar \tau_{\kappa -1})}.
$$

\medskip

So far the space and the function $(\bar u_\kappa, \bar v_\kappa)$
were defined with the help of the parameter $\rho$, which provides a local solution.
In order to obtain uniform bound independent of $\kappa$ on $(\bar u_\kappa, \bar v_\kappa)$
we obtain further restrictions. For that purpose, we introduce a new parameter $\alpha \geq \rho$, which replaces $\rho$, satisfying additional restrictions and which implies new restrictions on the nonlinearity $q$.

\item {\bf A uniform bound on $\mathbb{E}\| \bvk  \|_{\mathbb{H}_\alpha}^{2}$ with respect to $\kappa$ (Appendix~\ref{s:uniform}): }

The parameters $q$, $\aleph$, $d$ are given. The parameters
$p^*$ and $\rho$ chosenin Step (I).
Now, if
 $(q,\FE,\alpha)$ and $p$ satisfy
$$
q<\frac {2d}{2d-\FE}, \quad \alpha < \frac \aleph 2  -\frac{d}{2}
, \quad p\in(2,2.25),
$$
or $d=2$, $\FE=2$, and $q=2$ is satified,
then,
in Proposition~\ref{propvarational} 
we have shown that
there exists a constant $C>0$ such that for all $\kappa\in\NN$ we have
 for any $r^*\geq 1$ there exists positive constant $C>0$ such that for all $\kappa$ 
\DEQS
\mathbb{E}\| \vk \|_{\mathbb{H}_{\alpha, \aleph}}^{r^\ast}
&\leq C\bigg(1 +  \mathbb{E}|\Delta^ {\frac{\alpha}{2} }v_0|_{L^2}^{2r^\ast}
+ \Big(\EE|u_0|_{L^p}^{\frac{r^\ast}{p-q(p-1)}}\Big)^2\bigg)^\frac{1}{2}.
\EEQS
However, for the next step, we have to ensure that we have additionally  $\alpha\ge \rho$.

\medskip

\item {\bf Extension to a global  solution: }
Let us define 
$$
\bar \Omega := \lim\limits_{k\to\infty} \bar \Omega_\kappa, \qquad \bar{\mathcal{F}}_t := \lim\limits_{\kappa \to \infty} \bar{\mathcal{F}}^\kappa_{t}, \quad t\in [0, T], \qquad  \qquad \bar\PP := \lim\limits_{\kappa \to\infty} \bar \PP_\kappa.
$$
and the Wiener processes, $j=1,2$,
$$\bar{W}_j(t) := \lim\limits_{\kappa \to \infty} \bar{W}_j^{\kappa}(t).$$
Observe that $(\bar W_j(t))_{t\in [0, T]}$, $j=1,2$, is a Wiener processes with
respect to the filtration $\bar{\BF} := (\bar{\mathcal{F}}_t)_{t\in [0, T]}$
over $\bar{\mathfrak{A}} = (\bar{\Omega}, \bar{\mathcal{F}}, \bar{\BF}, \bar\PP)$.
Define the $\bar{\BF}$-stopping time
$$
\bar \tau_\kappa^\ast:=\inf \lk\{0\le s\le T~|~ \| \bvk \|_{\mathbb{H}^s_{\alpha, \aleph}}\ge \kappa\rk\}.
$$
Note, that on the time interval $[0,\bar \tau_\kappa^\ast]$ the processes $(\buk,\bvk )$ solves the system \eqref{e:u1}-\eqref{e:v1}.
By the uniform bound we show that for all $T>0$ with probability $1$ we have  $\lim\limits_{\kappa\to\infty}\bar \tau_k^\ast\ge T$.

\medskip
\noindent For any $\kappa\in\NN$, let us define the set
$$A_\kappa :=
\lk\{ \omega\in \bar{\Omega}\;\colon\;\bar \tau^\ast_\kappa(\omega)\ge T\rk\}.
$$
From before, we know that there exists a progressively measurable process $(\bar u,\bar v)$, $\bar u := \lim\limits_{\kappa \to\infty} \bar u_\kappa$ and $\bar v := \lim\limits_{\kappa \to\infty} \bar v_\kappa$ over $\bar{\mathfrak{A}}$
such that $(\bar u,\bar v)$ solves $\bar \PP$--a.s. the system \eqref{e:u1}-\eqref{e:v1} up to time $T$ on the set $A_\kappa$. In particular, we have the conditional probability
\begin{align}\label{e:conditionalsol}
\PP\lk( \{ \mbox{there exists a solution $(\bar u,\bar v)$ to  \eqref{e:u1}-\eqref{e:v1} } \} \mid A_\kappa\rk)=1.
\end{align}
Set $\bar \Omega_0=\bigcup_{\kappa=1}^{\infty} A_\kappa$.
Then, since by construction $A_\kappa \subset A_{\kappa+1}$, we have
\DEQS\lqq{
\qquad\bar\PP\lk( \lk\{ \mbox{there exists a solution $(\bar u, \bar v)$ to \eqref{e:u1}-\eqref{e:v1} }
\rk\}\cap\bar{\Omega}_0\rk) }
\\
&=& \lim_{\kappa\to \infty} \bar \PP\lk(  \{ \mbox{there exists a solution $(\bar u,\bar v)$ to \eqref{e:u1}-\eqref{e:v1}} \} \cap A_\kappa \rk)
\\
&=& \lim_{\kappa\to \infty} \bar \PP\lk(  \{ \mbox{there exists a solution $(\bar u,\bar v)$ to \eqref{e:u1}-\eqref{e:v1}} \} \mid A_\kappa \rk)\cdot \bar \PP\lk( A_\kappa\rk).
\EEQS
Due to \eqref{e:conditionalsol}
it remains to show that\
${\lim\limits_{\kappa\to\infty}}$ $\PP( A_\kappa)=1$. In this case, $A_\kappa\subset A_{\kappa+1}$ for $\kappa\in\NN$, implies that
$$\PP\lk( \lk\{ \mbox{there exists a solution $(\bar u,\bar v)$ to \eqref{e:u1}-\eqref{e:v1}}
\rk\}\cap\bar{\Omega}\rk)=1.
$$
However, from Step (VIII), we know there exists a ($\kappa$-independent) constant $C(T)>0$ such that we have
$$
\EE\left[ \|\bvk \|_{\mathbb{H}^T_{\alpha, \aleph}}\right]\le C(T),\quad \kappa\in\NN,
$$
%
thus, we have the Markov inequality and the definition of  $\bar \tau^\ast_\n$
$$\PP\lk(\bar \Omega_0\setminus  A_\kappa\rk) \le \PP\lk(\bar \tau^\ast_\n\ge T\rk)\le \frac {C(T)}{\kappa }\to 0,\qquad \mbox{ as }\kappa \to \infty.
$$
Thus the solution process is well defined on $\bar \Omega=\bigcup_{\kappa=1}^{\infty} A_\kappa$ with $\PP(\bar \Omega)=1$.
This finishes the proof. $\square$

\end{step}
\section*{Acknowledgements} 

\noindent EH gratefully acknowledges the support of the Austrian Science Foundation, Project number: P 34681. 
MAH thanks the Austrian Academy of Sciences for funding the academic visit at Montanuniversit\"at Leoben in June / July 2022 in the framework of a JESH project 2019.

\newpage 
\appendix

\bigskip
\section{\textbf{Technical preliminaries on the noise and the stochastic integral}}\label{Appendix_A_noise}

\noindent {This section is dedicated to gather the main results for the stochastic integral and the stochastic convolution. The final goal is to obtain a maximal inequality of stochastic convolution process.} 

\medskip 

\paragraph{\textbf{Stratonovich v. It\^{o} noise: }} 
The stochastic integral in the system \eqref{e:u1}--\eqref{e:v1} is interpreted in the Stratonovich sense due to the physical interpretation as the limit of a real fluctuating forcing term with a finite amplitude and finite timescale.
To compensate for the lack of martingale property 
the proof is given for the It\^ o stochastic integral with the additional quadratic variation operator $\frac 12 g(u)Dg(u)\,dt$ (for the definition of the linear operator $g$ see \eqref{g_sr}). Due to the linearity and boundedness of $\frac 12 g(u)Dg(u)\,dt$ under the assumption of the main theorem, it is omitted and covered by the linear terms $a_1 u$ and $a_2 v$ up to a small modification. 

\medskip

\paragraph{\textbf{The cylindrical Wiener processes $W_j$, $j=1,2$, in $H = L^{2}(\oO)$: }} 
Let us define the Wiener processes by introducing  the eigenfunction 
of the Laplace operator $-\Delta$ on $H = L^2(\CO)$ being denoted by 
$\{ \varphi_k:k\in\NN\}$ and by $\{\lambda_k:k\in\NN\}$ the associated eigenvalues.
We refer to \cite{mechtild} for a more detailed introduction.
In case $\CO$ is an arbitrary bounded set with a smooth boundary, 
the eigenfunctions are constructed by a tensor product (see \cite{BDPR2016}) 
and the enumeration is chosen in increasing order, counting the multiplicity. 
In this case, the following estimate of the asymptotic behaviour of the eigenvalues is known:
there exist two numbers $0 < c <C$ such that
\DEQSZ\label{EVsup}
c \, k^\frac 2d \le \lambda_k\le C\,  k^\frac 2d, \quad  \mbox{ for all }k\in\NN.
\EEQSZ
In addition, there exists some constant $c_0>0$ such that
\DEQSZ\label{EFsup}
\sup_{x\in\CO} |\varphi_k(x)|\le c_0\,\lambda _k^\frac {d-1}2,\quad k\in\NN.
\EEQSZ
See \cite{BDPR2016} p.7 for details. 
Since the driving Wiener processes $W_1$ and $W_2$ are cylindrical Wiener processes 
in $H$ by hypothesis, the spectral decomposition theorem yields
\DEQSZ\label{noise_sr}
W_j(t) &=&\sum_{k=1}^\infty \varphi_k\,\mathbf{w}^j_k(t),\quad t\ge 0,\,\, j=1,2,
\EEQSZ
where $\{\mathbf{w}^j_k:k\in\NN\}$, $j=1,2$, are two mutually independent 
families of i.i.d. standard scalar Wiener processes. Note that all stochastic processes 
are considered on an appropriate filtered probability space $\MA = (\Omega, \CF, \mathbb{P}, \mathbb{F})$ 
satisfying the usual conditions in the sense of \cite{Protter}.

\paragraph{\textbf{The multiplication operator: }}
We apply a so-called linear noise to our system and compare \cite[Example 2.1.2]{BDPR2016}.
To be more precise,  for $u, h\in H$ and $\gamma>0$ being sufficiently large 
let us define the linear mapping $g$ given by
\DEQSZ\label{g_sr}
\qquad g_\gamma (u) [h] &:=&  u \cdot (-\Delta)^{-\gamma/2} h=\sum_{k=1}^\infty \lambda _k^{-\gamma/2}  \la h,\varphi_k\ra (u\cdot \varphi_k) = \sum_{k=1}^\infty \lambda _k^{-\gamma/2}(u\cdot \varphi_k) \, \varphi_k\otimes\varphi_k [h],
\EEQSZ
\noindent where $\cdot$ denotes the scalar multiplication of two real-valued functions.
\paragraph{\textbf{The stochastic integral: }}
For a separable Banach space $E$ let $\Upsilon(H,E)$ be the space of all 
$\Upsilon$--radonifying operators from $H$ to $E$. We refer to \cite{Jan1} for details. 
Recall that this space is the closure 
of the space of finite rank operators from $H$ to $E$ with respect to the norm
$$
\Big|\sum_{k=1}^N \varphi_k\otimes x_k\Big|^2_{\Upsilon(H,E)}:= \EE \Big|\sum_{k=1}^N \gamma_k x_k\Big|^2_E,
$$
where $\{\varphi_k:k=1,\ldots ,N\}$ is a family of  orthonormal functions in $H$ and  $\{x_k:k=1,\ldots,N\}$ is a subset of $ E$. Finally,  $\{\gamma_k:k=1,\ldots,N\}$ is a family of independent standard Gaussian random variables. 

\noindent Let $E$ be a Banach space of $M$--type $2$. 
Let us denote by $\CM^2_\MA(0,T;E)$ 
the space of all $\mathbb{F}$-adapted stochastic processes $\xi:[0,T]\times \Omega\to E$ 
with
$$
\mathbb{E}\lk( \int_0^T|\xi(s)|^2_{E} {}ds\rk)< \infty.
$$
{For any $\xi \in  \CM^2_\MA(0,T;E)$, and $j=1,2$ let us define the process $M_j=\{M_j(t):t\in [0,T]\}$ given by
\DEQS
M_j(t)=\int_0^tg_\gamma(\xi(s))dW_j(s) 
:= \sum_{k=1}^{\infty} \lambda_k^{-\gamma/2}\varphi_k\,
\int_0^t \xi (s) d\mathbf{w}^j_k(s)
, \qquad t\in [0,T] .
\EEQS
If $E$ is a Hilbert space, the It\^{o} isometry reads for some constants $C(\gamma)>0$  
\DEQSZ\label{ito_isometry}
\mathbb{E}\lk(\lk|\int_0^tg_\gamma(\xi(s)){} d W_j(s) \rk|_E^2 \rk) = C(\gamma) \mathbb{E}\lk( \int_0^t|g(\xi(s))|^2_{\Upsilon(H,E)} {}d s\rk) , \qquad t\in[0,T].
\EEQSZ
If $E$ is a Banach space of $M$--type $2$, then we get the following inequality \cite{brzezniak} 
\DEQSZ\label{mtypep}
\mathbb{E}\lk(\lk|\int_0^tg_\gamma(\xi(s)){} d W_j(s) \rk|_E^2 \rk) \le  C(\gamma) \mathbb{E}\lk( \int_0^t|g(\xi(s))|^2_{\Upsilon(H,E)} {}d s\rk) , \qquad t\in[0,T].
\EEQSZ

%
\paragraph{\textbf{Maximal inequalities and the reduction to the $\Upsilon$-radonifying norm: }} 
Note that for $\varsigma\ge 1$  we have the Burkholder--Gundy--Davis inequality (see \cite[Thm 4.7]{vanneerven}) 
\DEQSZ \label{burkholder}
\mathbb{E} \lk(\sup_{0\le s\le t}\lk| \int_0^sg_\gamma(\xi (r)){}d W(r)\rk|_E^\varsigma \rk)
\leq C_{\varsigma}\, \mathbb{E}\lk( \int_0^t |g_\gamma(\xi(s))|_{\Upsilon(H, E)}^2 ds \rk)^\frac{\varsigma}{2}.
\EEQSZ
Consequently, to obtain estimates of the stochastic integral, it is sufficient to calculate 
the $\Upsilon$--radonifying norm (in short $\Upsilon$-norm) of the process
$[0,T]\ni s\mapsto g_\gamma(\xi(s))$, i.e.\ to calculate $|g_\gamma(\xi(s))|_{\Upsilon(H, E)}$ 
for some $M$--type $2$ Banach space $E$ and $s\in [0, T]$. 

\noindent Note that by \cite[Theorem 3.20]{vanneerven2} 
we have the following representation of the $\Upsilon$-norm 
\[
|g_{\gamma_1}(u)|_{\Upsilon(H,E)}^2 =  \EE \Big|\sum_{k} \gamma_k g_{\gamma_1}(u)[\varphi_k]\Big|_E^2,
\]
as long as $g_{\gamma_1}(u)[\varphi_k]\in E$.

\medskip

\paragraph{\textbf{The $\Upsilon$-radonifying norm for the inhibitor term~$u$: }}
To show existence and boundedness for the process $u$, we have to estimate the $\Upsilon$-norm 
on some $L^p$-space $E$, $p\ge2$, depending on the choice of $\gamma=\gamma_1$. 

\noindent \textbf{In the special case of $E=H = L^2(\CO)$} the $\Upsilon$--norm coincides with the respective Hilbert-Schmidt norm.
Due to  H\"older's inequality and the Sobolev embedding or alternatively \cite[Theorem~1~(iii), p.\ 190/191]{runst} (see also Subsection~\ref{ss:Besov}) we have a positive constant $C_R$ such that for all $u, \varphi\in H$ we have 
\[|u\varphi|_{H}\le C_R |u|_{H^{\delta_1}_2}|\varphi|_{H^{\delta_2}_2}\qquad \mbox{ for any }\quad \delta_1,\delta_2\ge 0$, $\delta_1+\delta_2\ge \frac d2.\]
Now, we can write
	\DEQSZ \label{eq: Hil-Schi}
|g_{\gamma_1}(u)|_{\Upsilon(H,H)}^2 &=&\sum_{l}^\infty | g_{\gamma_1}(u)\varphi_
l|^2_{H}
=  \sum_{l,k}^\infty \lambda _k^{-\gamma_1} |u \varphi_k \la \varphi_l, \varphi_k\ra |^2_{H}
\\
&=&
\sum_{k}^\infty \lambda _k^{-\gamma_1} | u \varphi_k |_{H}^2 \nonumber
\le
C_R^2 |u|_{H^{\delta_1}_2} ^2 \, \sum_{k}^\infty |\varphi_k |_{H^{\delta_2}_2}^2 \lambda _k^{ -2\frac {\gamma_1}2}
\\
&\le&
C_R^2 |u|_{H^{\delta_1}_2} ^2 \, \sum_{k}^\infty  \lambda _k^{ \delta_2-\gamma_1}
\le
C_R^2\,|u|_{H^{\delta_1}_2} ^2 \, \sum_{k}^\infty k^{\frac 2d(\delta_2-\gamma_1)}
,\nonumber
\EEQSZ
which is finite for $\gamma_1>\delta_2+\frac d2$. Using interpolation and 
the Young inequality, we know that for any $\delta_1\in(0,1)$
and any $\ep>0$ there exists a positive constant $C(\ep, \delta_1)$ such that
\begin{equation}\label{e:interp0}
|u|_{H^{\delta_1}_2} ^2 \le \ep |u|_{H^1_2} ^2 + C(\ep, \delta_1)|u|_{H} ^2 .
\end{equation}
Combining \eqref{eq: Hil-Schi} with \eqref{e:interp0}, we obtain 
\DEQSZ \label{eq: Hil-Schi2}
|g_{\gamma_1}(u)|_{\Upsilon(H,H)}^2&\le&  \ep |u|_{H^1_2} ^2 + C(\ep,\gamma_1,\delta_1)|u|_{H} ^2 .
\EEQSZ 
Note, that $\delta_1<1$ implies that $\delta_2>\frac d2-1$ and $\gamma_1>d-1$.
Our aim is to give an estimate of
$$
\sup_{0\le s\le t}\lk|\int_0^sg_{\gamma_1}(\xi (r)){}d W_1(r)\rk|_{H}
.$$
Fix $\varsigma\ge 1$.
By the Burkholder-Davis-Gundy inequality \eqref{burkholder} combined with \eqref{eq: Hil-Schi2} and Theorem~4.7 from \cite{vanneerven}
it follows that  for any $\eps>0$ there exists $C( \eps, \gamma_1)>0$ such that we have for all $ t\in[0,T]$
\begin{align}
&\mathbb{E} \lk(\sup_{0\le s\le t}\lk| \int_0^sg_{\gamma_1}(\xi (r)){}d W_1(r)\rk|_{H}^\varsigma  \rk)
\leq C(\varsigma, \gamma_1)  \EE\bigg[\Big(\eps\int_0^t |\xi(s)|_{{H^{1}_2}}^{2}ds  + C(\eps) \int_0^t |\xi(s)|_{{H}}^2 ds \Big)^\frac{\varsigma }{2}\bigg]\nonumber ,
\end{align}
and by straightforward calculations, i.e.\ interpolation and again the Young inequality,  that  for any $	\eps>0$ there exists $C(\eps, \varsigma, \gamma_1)>0$ such that we have for all $ t\in[0,T]$
\begin{align}
&\mathbb{E} \lk(\sup_{0\le s\le t}\lk| \int_0^sg_{\gamma_1}(\xi (r)){}d W_1(r)\rk|_{H}^\varsigma  \rk) \nonumber\\
&
\leq \eps \, \EE\bigg[\Big(\int_0^t |\xi(s)|_{H_2^{1}}^{2}ds\Big)^{\frac{\varsigma }{2}}\bigg]   +  
C(\eps, \varsigma, \gamma_1) \EE\bigg[\Big( \int_0^t |\xi(s)|_{H}^2 ds \Big)^\frac{\varsigma }{2}\bigg]\label{Hrhoburkholderl2}
.
\end{align}

\noindent \textbf{In case of $E=L^p(\CO)$}, observe first, that we have by H\"older's inequality and interpolation for all $\delta_1,\delta_2\ge 0$ with $\delta_1+\delta_2\ge \tfrac d p$, the inequality $|u\cdot \varphi|_{L^p}\le C|u|_{H^{\delta_1}_p}|\varphi|_{H^{\delta_2}_p}$.
Applying the Rosenthal inequality for exponent $t= 2$ as in \cite{Ibragimova}, \eqref{EVsup}, and \eqref{EFsup}, we get positive constants $C$ (which may vary from line to line) 
\DEQSZ \label{eq:gamma1}
| g_{\gamma_1}(u)|_{\Upsilon(H,L^p)} ^2 
&=& \EE |\sum_{k}^\infty \gamma_k  \lambda_k^{-\gamma_1}  (u \cdot \varphi_k) |_{L^p}^2 \nonumber
\leq C \sum_{k}^\infty \EE |\gamma_k|^2  |\lambda_k^{-\gamma_1}  (u \cdot \varphi_k) |_{L^p}^2\nonumber\\
&=&C \sum_{k}^\infty   \lambda_k^{-\gamma_1} |u \cdot \varphi_k |_{L^p}^2 
\le C |u|_{H^{\delta_1}_p} ^2 \, \sum_{k}^\infty|\varphi_k |_{H^{\delta_2}_p} \lambda _k^{ -\gamma_1}
\le
C\,  |u|_{H^{\delta_1}_p} ^2 \, \sum_{k}^\infty k^{\frac 2d(\delta_2-\gamma_1)} 
.
\EEQSZ
Observe, if $\gamma_1 > \delta_2 + \frac{d}{2}$,
then $\sum_{k}^\infty k^{\frac 2d(\delta_2-\gamma_1)}<\infty$.
We continue by interpolation and the Young inequality, which yield that for any 
$\delta_1<\theta{<\frac 2p<1}$ and any $\ep>0$, there exists a positive constant $C(\ep)>0$ such that
\DEQSZ \label{eq:gamma2}
|g_{\gamma_1}(u)|_{\Upsilon(H,L^p)} ^2 &\le&
\ep |u|_{H^{\theta}_p} ^2+C(\ep)|u|^2_{L^p}.
\EEQSZ
Note that the assumptions are satisfied for the choice $\gamma_1 >d -\frac{2}{p}$. 

\medskip
\noindent In the sequel, we use estimate \eqref {eq:gamma2} to bound the following integral appropriately 
$$
\sum_{k\in\NN}
\int_0^t \la \xi^{p-1}(s), g_{\gamma_1}(\xi (s))[\varphi_k]\ra \, d\mathbf{w}^1_k(s)
.$$
\del{
	Note that by \eqref{g_sr} $g_{\gamma_1}(\xi (s))[\varphi_k] = \lambda_k^{-\gamma_1/2} \xi(s) \varphi_k$.  
Applying the Burkholder-Davis-Gundy inequality gives
\DEQSZ\label{hierbeginnts}
\lqq{
\EE \sup_{0\le s\le t}\lk|
\sum_{k\in\NN}
\int_0^s \la \xi^{p-1}(r),g_{\gamma_1}(\xi (r))[\varphi_k]\ra \, d\mathbf{w}^1_k(r)\rk|^\varsigma}&&
\\
\nonumber &
\le& C\EE
\sum_{k\in\NN}
\lk(\int_0^t \lk|\la \xi^{p-1}(s),g_{\gamma_1}(\xi (s))[\varphi_k]\ra \rk|^2\, ds\rk)^\frac \varsigma2
.
\EEQSZ
First note that we have by H\"older's inequality 
$$\lk|\la \xi^{p-1}(s),g_{\gamma_1}(\xi(s))[\varphi_k] \ra\rk|\le \lk|\xi^{p-1}(s)\rk|_{L^{\frac p{p-1}}}\lk|g_{\gamma_1}(\xi(s))[\varphi_k]\rk|_{L^p}
=\lk|\xi(s)\rk|^{p-1}_{L^{ p}}\lk|g_{\gamma_1}(\xi(s))[\varphi_k]\rk|_{L^p}.
$$

$$
\sum_{k\in\NN}
\int_0^t \la \xi^{p-1}(s), g_{\gamma_1}(\xi (s))\varphi_k\ra \, d\mathbf{w}^1_k(s)
.$$
} 
Note that by \eqref{g_sr} $g_{\gamma_1}(\xi (s))\varphi_k = \lambda_k^{-\gamma_1/2} \xi(s) \varphi_k$.  
Applying the Burkholder-Davis-Gundy inequality gives
\DEQSZ\label{hierbeginnts}
\lqq{
\EE \sup_{0\le s\le t}\lk|
\sum_{k\in\NN}
\int_0^s \la \xi^{p-1}(r),g_{\gamma_1}(\xi (r))\varphi_k\ra \, d\mathbf{w}^1_k(r)\rk|^\varsigma}&&
\\
\nonumber &
\le& C\EE
\sum_{k\in\NN}
\lk(\int_0^t \lk|\la \xi^{p-1}(s),g_{\gamma_1}(\xi (s))\varphi_k\ra \rk|^2\, ds\rk)^\frac \varsigma2
.
\EEQSZ
First note that we have by H\"older's inequality 
$$\lk|\la \xi^{p-1}(s),g_{\gamma_1}(\xi(s))\varphi_k \ra\rk|\le \lk|\xi^{p-1}(s)\rk|_{L^{\frac p{p-1}}}\lk|g_{\gamma_1}(\xi(s))\varphi_k\rk|_{L^p}
=\lk|\xi(s)\rk|^{p-1}_{L^{ p}}\lk|g_{\gamma_1}(\xi(s))\varphi_k\rk|_{L^p}.
$$
The Young inequality implies that for any $\epsilon > 0$, there exists a positive constant $C(\epsilon) > 0$ such that the following holds:
$$
\lk|\xi(s)\rk|^{p-1}_{L^{ p}}\lk|g_{\gamma_1}(\xi(s))\varphi_k\rk|_{L^p}
\le C(\ep)\lk|\xi(s)\rk|^p_{L^{ p}}+\ep \lk|g_{\gamma_1}(\xi(s))\varphi_k\rk|^p_{L^p}.
$$
Returning to \eqref{hierbeginnts}, the preceding estimate yields
$$
\lk(\int_0^t \lk|\la \xi^{p-1}(s)g_{\gamma_1}(\xi (s))\varphi_k\ra \rk|^2\, ds\rk)^{\frac{\varsigma}{2}}
\le
C(\ep)\lk(\int_0^t \lk|\xi(s)\rk|_{L^p}^{2p}\, ds\rk)^\frac{\varsigma}{2} +
\ep\lk(\int_0^t \lk|g_{\gamma_1}(\xi (s))\varphi_k \rk|_{L^p}^{2p}\, ds\rk)^\frac{\varsigma}{2}
.
$$
By applying H\"older’s inequality in time, we obtain the following for the first term in the sum
\begin{align*}
\EE \lk(\int_0^t \lk|\xi(s)\rk|_{L^p}^{2p}\, ds\rk)^{\frac{\varsigma}{2}} \le \EE\lk(\sup_{0\le s\le t} \lk|\xi(s)\rk|_{L^p}^p \int_0^t \lk|\xi(s)\rk|_{L^p}^{p}\, ds\rk)^{\frac{\varsigma}{2}}.
\end{align*}
Once again, by applying Young’s inequality, we know that for any $\epsilon > 0$, there exists a positive constant $C(\epsilon) > 0$ such that the following holds
\begin{align*}
\EE \lk(\int_0^t \lk|\xi(s)\rk|_{L^p}^{2p}\, ds\rk)^{\frac{\varsigma}{2}}
&\le
\ep \EE \sup_{0\le s\le t} \lk|\xi(s)\rk|_{L^p}^{p\varsigma}
+C(\ep, \varsigma)\EE \Big(\int_0^t \lk|\xi(s)\rk|_{L^p}^{p}\, ds\Big)^\varsigma.
\end{align*}
To handle the second term in the sum, as in \eqref{eq:gamma2}, interpolation and Young’s inequality yield that for all $\delta_1, \delta_2 \geq 0$ with $\delta_1 + \delta_2 \geq \tfrac{d}{p}$, $\delta_1 < \theta < \tfrac{2}{p}$, and $\delta_1 \in (0, \tfrac{d}{p})$, there exists a positive constant $C$ such that we have
\DEQS
\EE	\lk(\sum_{k}^\infty\int_0^t \lk|g_{\gamma_1}(\xi (s))\varphi_k \rk|_{L^p}^{2p}\, ds\rk)^{\frac{\varsigma}{2}}
&\le&
C \EE\lk(\sum_{k}^\infty \int_0^t |\xi(s)|_{H^{\delta_1}_p} ^{2p}|\varphi_k|^{2p}_{H^{\delta_2}_p } \,ds\,\rk)^{\frac{\varsigma}{2}}
\\
&\le &C\lk(\sum_{k=1}^\infty  k^{\frac 2d(\delta_2-\gamma_1)}\rk)^{\frac{\varsigma}{2}}
\EE\lk(
\int_0^t |\xi(s)|_{H^{\delta_1}_p} ^{2p}\,ds  \,\rk)^{\frac{\varsigma}{2}}
.
\EEQS
The infinite sum on the right-hand side is finite for $\gamma_1 > \frac{d}{2} + \delta_2$. Since $\delta_1 < \frac{2}{p}$, it follows from the interpolation result given in Proposition~\ref{runst1} that
$$
\EE\lk(
\int_0^t |\xi(s)|_{H^{\delta_1}_p} ^{2p} \,ds\,\rk)^{\frac{\varsigma}{2}}
\le C_1(\varsigma) \EE\lk(\sup_{0\le s\le t}|\xi(s)|_{L^p} ^{p}\rk)^{\varsigma}+
C_2(\varsigma) \EE\lk(
\int_0^t |\xi(s)|_{H^{\theta}_p} ^{p} \,ds\rk)^{\varsigma},
$$
and therefore, for any $\epsilon_1, \epsilon_2 > 0$ and $\varsigma \geq 1$, there exists a positive constant $C(\epsilon_1, \epsilon_2, \varsigma)$ such that
\DEQSZ\label{hierendets} 
\lqq{
\EE \sup_{0\le s\le t}\lk|
\sum_{k\in\NN}
\int_0^s \la \xi^{p-1}(r)g_{\gamma_1}(\xi (r))[\varphi_k]\ra \, d \mathbf{w}^1_k(r)\rk|^{\varsigma}}\nonumber&&
\\
\nonumber &
\le&  C(\ep_1,\ep_2, \varsigma)\,\EE \Big(\int_0^t \lk|\xi(s)\rk|_{L^p}^{p}\, ds\Big)^\varsigma
+\ep_1 \EE \sup_{0\le s\le t} \lk|\xi(s)\rk|_{L^p}^{p\varsigma}
\\&&{}+\ep_2\lk[ \EE\lk(\sup_{0\le s\le t}|\xi(s)|_{L^p} ^{p\varsigma}\rk)+
\EE\lk(
\int_0^t |\xi(s)|_{H^{\theta}_p} ^{p} \,ds\rk)^\varsigma\rk]
.
\EEQSZ
Note, we need that   $\frac d2+\frac dp -\min(\frac 2p,\frac dp)<\gamma_1$.
By Proposition \ref{runst1}, the second term on the RHS
can be cancelled in the proof of Proposition~\ref{reg_uk}.
\medskip

\paragraph{\textbf{The $\Upsilon$-radonifying norm estimate for the stochastic integral of the activator term~$v$: }}
Fix $\varsigma\ge 1$.  We keep the space $H = L^2(\CO)$ and set $E = H^\rho_2(\CO)$, for some well-chosen parameter $\rho\in \RR$. To show existence and boundedness for the process $v$ in Section~\ref{s:uniform}, we have to estimate bounds on the $\Upsilon$-norm with values in $E$. 
To be more precise, in Proposition~\ref{gammaradonv} 
we show that for all $\ep>0$ there exists a positive constant $C(\ep)$
such that the operator $$g_{\gamma_2}: H_2^{\rho+\frac{\aleph}{2}}(\CO) \to \Upsilon(H, E)$$ satisfies
\DEQSZ \label{eq: gYoung0}
|g_{\gamma_2}(v)|_{\Upsilon(H, E)}^2
\leq\ep  |v|_{H_2^{\rho+\frac{\aleph}{2}}}^2 +  C(\ep)    |v|^2_{E}, \quad \forall\, v \in 
H_2^{\rho+\frac{\aleph}{2}}(\CO).
\EEQSZ
By the main result of \cite[p. 788]{ErikaSeidler2}, we have 
\begin{align}
&\mathbb{E} \lk(\sup_{0\le s\le t}\lk| \int_0^s e^{(t-s) (r_2A+ a_2 I)} g_{\gamma_2}(\xi (s)){}d W(s)\rk|_E^\varsigma  \rk)\\
& \leq C_T \mathbb{E} \Big(\int_0^t |g_{\gamma_2}(\xi (s))|^2_{\Upsilon(H, E)} ds\Big)^{\varsigma /2}\nonumber\\
& \leq \EE\bigg[\Big(\eps\int_0^t |\xi(s)|_{H_2^{\rho+\aleph/2}}^{2}ds  + C(\eps) \int_0^t |\xi(s)|_{H_2^{\rho}}^2 ds \Big)^\frac{\varsigma }{2}\bigg]\nonumber.
\end{align}
Combining \eqref{burkholder} with \eqref{eq: gYoung0}, it follows that for any $\epsilon > 0$, there exists a constant $C(\epsilon) > 0$ such that for all
$t\in[0,T]$ we have
\begin{align}
&\mathbb{E} \lk(\sup_{0\le s\le t}\lk| \int_0^sg(\xi (s)){}d W(s)\rk|_E^\varsigma  \rk)
\leq \EE\bigg[\Big(\eps\int_0^t |\xi(s)|_{H_2^{\rho+\aleph/2}}^{2}ds  + C(\eps) \int_0^t |\xi(s)|_{H_2^{\rho}}^2 ds \Big)^\frac{\varsigma }{2}\bigg]\nonumber.
\end{align}
Rearranging yields that for all $\epsilon > 0$, there exists a positive constant $C(\epsilon)$ such that for all $t \in [0, T]$, we have
\begin{align}\label{Hrhoburkholderqqq}
&\mathbb{E} \lk(\sup_{0\le s\le t}\lk| \int_0^sg(\xi (s)){}d W(s)\rk|_E^\varsigma  \rk)
\leq  \eps \, \EE\bigg[\Big(\int_0^t |\xi(s)|_{H_2^{\rho+\aleph/2}}^{2}ds\Big)^{\frac{\varsigma }{2}}\bigg]   + C(\eps) \EE\bigg[\Big( \int_0^t |\xi(s)|_{H_2^{\rho}}^2 ds \Big)^\frac{\varsigma }{2}\bigg]
\end{align}
\del{\tesfalem{\sout{The estimate in \eqref{Hrhoburkholderqqq} will play a crucial role in Subsection~\ref{ss:RegAndUniform} for estimating $I_3$ in the proof of Proposition~\ref{reg_uk}.}} \todo[inline]{I have not changed anything, but the subsection is not clear, but I think it is referring either to Appendix B or Appendix D; plus we don't have $I_3$ in the proof of Proposition~\ref{reg_uk}}}.

\begin{proposition}[Estimate of the Gamma-radonifying norm]\label{gammaradonv}
Consider $\rho\in \RR$ and $\gamma_2>0$, such that
$$
\gamma_2> d-\frac \aleph2+ \tilde\eps \qquad \mbox{ and }\quad 
\begin{cases} 
\rho >\frac{d}{2} -\frac{\aleph}{2} & \mbox{ for } d=1, 
\\
-\frac{d}{2} < \rho \leq \frac{d}{2}-\frac{\aleph}{2} & \mbox{ for } d=2,
\end{cases}
$$
where $\tilde\eps$ is a sufficiently small positive number.
Then, there exists a positive constant $C>0$ 
such that the operator $g_{\gamma_2}: H_2^{\rho+ \frac{\aleph}{2}}(\CO) \to \Upsilon(H, H^{\rho}_2(\CO))$ satisfies 
\eqref{eq: gYoung0}. 
\begin{equation}\label{eq: gYoung0Wh}	
|g_{\gamma_2}(v)|_{\Upsilon(H, H^\rho_2)}^2
\leq C(\ep)  |v|^2_{H_2^{\rho}} + \ep  |v|_{H_2^{\rho+\frac{\aleph}{2}}}^{2}, \quad \forall\, v \in H_2^{\rho+\frac{\aleph}{2}}(\CO) .
\end{equation}
In addition, there exist some $\ep>0$ and  a positive constant $C>0$ such that
\begin{equation}\label{eq: gYoung2}
|g_{\gamma_2}(v)|_{\Upsilon(H, H^{\rho+\ep}_2)}^2
\leq C  |v|_{H_2^{\rho+\frac{\aleph}{2}}}^{2}, \quad \forall\, v \in  H_2^{\rho+\frac{\aleph}{2}}(\CO) .
\end{equation}
\end{proposition}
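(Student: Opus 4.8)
The target space $E=H^\rho_2(\CO)$ is a Hilbert space, so the plan is to identify the $\Upsilon$-radonifying norm of $g_{\gamma_2}(v)$ with its Hilbert--Schmidt norm computed along the orthonormal basis $\{\varphi_k\}$ of $H=L^2(\CO)$. Since $g_{\gamma_2}(v)[\varphi_k]=\lambda_k^{-\gamma_2/2}\,v\cdot\varphi_k$ by \eqref{g_sr}, this reduces the statement to
\begin{equation*}
|g_{\gamma_2}(v)|_{\Upsilon(H,H^\rho_2)}^2=\sum_{k=1}^\infty \lambda_k^{-\gamma_2}\,|v\cdot\varphi_k|_{H^\rho_2}^2,
\end{equation*}
so that everything comes down to estimating $|v\cdot\varphi_k|_{H^\rho_2}$ by the product of a Sobolev norm of $v$ lying strictly between $|v|_{H^\rho_2}$ and $|v|_{H^{\rho+\aleph/2}_2}$ and a norm of $\varphi_k$ whose growth in $\lambda_k$ is slow enough for the series to converge when $\gamma_2>d-\frac{\aleph}{2}+\tilde\eps$.

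For the product bound the idea is to \emph{borrow regularity from $v$}. Fix $\theta\in(0,\aleph/2)$. I would invoke the multiplication estimates in Bessel-potential spaces (recall $H^s_2=F^s_{2,2}$; see \cite[Theorem~1]{runst} and \cite{triebel_1983}) in the form $|v\cdot\varphi_k|_{H^\rho_2}\le C\,|v|_{H^{\rho+\theta}_2}\,|\varphi_k|_{H^{s(\theta)}_2}$ with $s(\theta):=\frac{d}{2}-\theta+\tilde\eps$ when $\rho\ge 0$ (the admissibility condition $s(\theta)\ge\rho$ being exactly where the two-dimensional bound $\rho\le\frac{d}{2}-\frac{\aleph}{2}$ enters), and the same inequality by duality when $\rho<0$, writing $|v\varphi_k|_{H^\rho_2}=\sup_{|\phi|_{H^{-\rho}_2}\le1}|\langle v,\varphi_k\phi\rangle|\le|v|_{H^{\rho+\theta}_2}\sup_{|\phi|_{H^{-\rho}_2}\le1}|\varphi_k\phi|_{H^{-\rho-\theta}_2}$ and applying the positive-order product estimate to $\varphi_k\phi$. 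In the borderline two-dimensional case $\rho+\frac{\aleph}{2}\le1$, where $v$ need not be bounded, I would instead use an $L^p$-based product estimate $|v\varphi_k|_{H^\rho_2}\lesssim|v|_{H^\rho_p}|\varphi_k|_{L^q}$ with $\frac1p+\frac1q=\frac12$, together with the Sobolev embedding $H^{\rho+\aleph/2}_2\hookrightarrow H^\rho_p$ and the eigenfunction bound $|\varphi_k|_{L^q}\le|\varphi_k|_{L^\infty}^{1-2/q}\lesssim\lambda_k^{\frac{d-1}{2}(1-2/q)}$ coming from \eqref{EFsup}; optimising over $q$ yields a $\lambda_k$-exponent summable precisely for $\gamma_2>d-\frac{\aleph}{2}$.

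Granting such a bound, using $-\Delta\varphi_k=\lambda_k\varphi_k$ with $|\varphi_k|_{L^2}=1$ (so $|\varphi_k|_{H^s_2}=\lambda_k^{s/2}$) and $\lambda_k\asymp k^{2/d}$ from \eqref{EVsup}, one gets
\begin{equation*}
|g_{\gamma_2}(v)|_{\Upsilon(H,H^\rho_2)}^2\le C\,|v|_{H^{\rho+\theta}_2}^2\sum_{k=1}^\infty\lambda_k^{\,s(\theta)-\gamma_2}\le C\,|v|_{H^{\rho+\theta}_2}^2\sum_{k=1}^\infty k^{\frac{2}{d}(s(\theta)-\gamma_2)},
\end{equation*}
and once $\theta$ is chosen close enough to $\aleph/2$ the hypothesis on $\gamma_2$ gives $s(\theta)-\gamma_2<-\frac{d}{2}$, so the series converges. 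Since $\rho<\rho+\theta<\rho+\frac{\aleph}{2}$, the interpolation inequality $|v|_{H^{\rho+\theta}_2}^2\le\eps|v|_{H^{\rho+\aleph/2}_2}^2+C(\eps)|v|_{H^\rho_2}^2$ and Young's inequality then yield \eqref{eq: gYoung0Wh}. For \eqref{eq: gYoung2} I would run the same computation with target space $H^{\rho+\eps}_2$ and the choice $\theta=\frac{\aleph}{2}-\eps$, so that $\rho+\eps+\theta=\rho+\frac{\aleph}{2}$; the $\tilde\eps$-slack in $\gamma_2>d-\frac{\aleph}{2}+\tilde\eps$ keeps the series convergent for $\eps$ small, and one reads off $|g_{\gamma_2}(v)|_{\Upsilon(H,H^{\rho+\eps}_2)}^2\le C\,|v|_{H^{\rho+\aleph/2}_2}^2$ without any further interpolation.

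The hard part will be the bilinear Sobolev estimate with the tight bookkeeping of exponents: one must verify that the multiplication theorem still applies when the regularity demanded of $\varphi_k$ is pushed down to $s(\theta)\approx\frac{d}{2}-\frac{\aleph}{2}$ (this is exactly what forces $\gamma_2>d-\frac{\aleph}{2}$ and the small loss $\tilde\eps$), and that the admissibility conditions are compatible with the prescribed range of $\rho$ in each dimension. The two-dimensional regime $-\frac{d}{2}<\rho\le\frac{d}{2}-\frac{\aleph}{2}$ is the delicate one, since there $\rho$ may be negative while $v$ carries only $\aleph/2$ units of spare smoothness, which is what forces the passage to $L^p$-based product estimates combined with the sharp eigenfunction bound \eqref{EFsup}.
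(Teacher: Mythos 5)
Your proposal follows essentially the same route as the paper: both reduce the $\Upsilon$-norm to the Hilbert--Schmidt sum $\sum_k\lambda_k^{-\gamma_2}|v\varphi_k|_{H^\rho_2}^2$, bound $|v\varphi_k|_{H^\rho_2}$ by a Runst--Sickel multiplication estimate that trades $\tilde\eps$ of the spare $\aleph/2$ regularity of $v$ against the Sobolev norm of $\varphi_k$, sum using $\lambda_k\asymp k^{2/d}$ under $\gamma_2>d-\frac{\aleph}{2}+\tilde\eps$, and finish with interpolation between $H^\rho_2$ and $H^{\rho+\aleph/2}_2$ plus Young, obtaining \eqref{eq: gYoung2} by the same $\tilde\eps$-shift of the target space. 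The only divergence is cosmetic: where you propose handling negative $\rho$ in $d=2$ by duality or by $L^p$-product estimates with the eigenfunction sup bound \eqref{EFsup}, the paper applies Runst--Sickel Theorem~1(iii) directly, which covers the product of a negative-order and a positive-order factor under exactly the stated constraints $-\frac{d}{2}<\rho\le\frac{d}{2}-\aleph$ on the parameters.
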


\begin{proof}
\textbf{We start with the case $d=1$ and $\rho >\frac{1}{2} -\frac{\aleph}{2}$.} 
Then, we have by Theorem 1-(i),\cite[p.\ 190]{runst} in combination with Subsection~\ref{ss:Besov} that 
\DEQSZ\label{hierweiter}
|v \varphi_k|_{H_2^\rho} &\leq&  |\varphi_k|_{H_2^{s_1}}\,|v|_{H_2^{s_2}},
\EEQSZ
where $p=2$, $q=q_1= q_2 = 2$, $s_1 = \rho$ and $s_2 = \rho + \frac{\aleph}{2}-\tilde \eps$ for some $\tilde \eps <1- \frac{\aleph}{2}$. 
Note that \eqref{hierweiter} is valid under condition (2) and (3) on the parameters $p, q, s_1, s_2$ 
of Theorem \ref{RS2a} (compare Theorem 1-(i),\cite[p.\ 190]{runst}).
Condition (3) of Theorem 1-(i),\cite[p.\ 190]{runst} (see also Subsection~\ref{ss:Besov}) is satisfied since $s_2 < \frac{1}{2}$   
iff $\rho > \tfrac{1}{2}- \tfrac{\aleph}{2}$ for some sufficiently small $\tilde \eps>0$ which is given by assumption. 
Since $\aleph \in (1,2]$ we have $\rho> \frac{1}{2} - \frac{\aleph}{2} = \frac{1}{2} - \frac{\aleph}{4} - \frac{\aleph}{4}\geq -\frac{\aleph}{4}$. Hence 
$s_1 + s_2 = 2 \rho + \frac{\aleph}{2} -\tilde \eps>0$, which implies the conditions of Theorem \ref{RS2a} (compare Condition (2) of Theorem 1-(i),\cite[p.\ 190]{runst}).
This implies 
\DEQSZ
|g_{\gamma_1}(u)|_{\Upsilon(H,H^\rho_2)} ^2 
&=& \EE |\sum_{k}^\infty \gamma_k  \lambda_k^{-\gamma_1}  (u \varphi_k) |_{H^\rho_2}^2 \nonumber
\leq C  \sum_{k}^\infty  |\gamma_k|^2  |\lambda_k^{-\gamma_1}  (u \varphi_k) |_{H^\rho_2}^2\nonumber\\
&=&C  \sum_{k}^\infty   \lambda_k^{-\gamma_2}  |u  \varphi_k |_{H^\rho_2}^2  
\leq  C |v|_{H_2^{\rho+\frac{\aleph}{2}-\tilde \eps}}^2 \sum_{k=1}^\infty |\varphi_k|_{H_2^{\rho}}^2 \lambda_k^{-\gamma_2} \nonumber\\
&\leq& C(\oO)  |v|_{H_2^{\rho+\frac{\aleph}{2}-\tilde \eps}}^2\sum_{k=1}^\infty  k^{-2(\gamma_2-\rho)} .\label{e:GRd1}
\EEQSZ
The right-hand side is finite iff  $\gamma_2> \rho+\frac 12 > \frac{1}{2} - \frac{\aleph}{2} + \frac 12 = 1 - \frac{\aleph}{2}$. 
In addition, by  interpolation of $H^{s_2}_2(\CO)$ between $H^{\rho}_2(\CO)$ and $H^{\rho+\frac{\aleph}{2}}_2(\CO)$ and by the (weighted) Young inequality, we know, that for all positive $\ep$ there exists a positive constant $C(\ep)$ such that
\DEQSZ\label{eq: gYoung}
|g(u)|_{\Upsilon(H, H_2^\rho)}^2 &\leq& \eps  |u|_{H_2^{\rho+\aleph/2}}^{2} + C(\eps)  |u|_{H_2^{\rho}}^2.
\EEQSZ
The last inequality gives the assertion.

\del{Interpolation gives $|u|_{H_2^{\beta+ \aleph /4}} \le |u|^\frac 12 _{H_2^{\beta+ \aleph /2}} |u|_{H_2^{\beta}} ^\frac 12 $.
Next, by the Young inequality, it follows that  for any $\eps>0$, there exists a constant $C = C(\eps)>0$ such that
\DEQSZ\label{eq: gYoung}
|g(u)|_{\Upsilon(H, H_2^\beta)}^2 &\leq& \eps |u|_{H_2^{\beta+\aleph/2}}^{2} + C(\eps) |u|_{H_2^{\beta}}^2.
\EEQSZ}

\noindent \textbf{We treat the case $d\in \{2, 3\}$: } Recall $1 < \aleph \leq 2$. 
Let $-\frac{d}{2} < \rho \leq \frac{d}{2} - \frac{\aleph}{2}$. 
 Such a $\rho$ exists iff 
\[
-\frac{d}{2} < \rho \leq \frac{d}{2} -\aleph\qquad \Leftrightarrow \qquad \aleph \leq d \qquad \Leftrightarrow \qquad d\geq 2. 
\]
To apply Theorem 1 (iii) \cite[p.\ 190/191]{runst} 
we set parameters $p=2$, $n=d$, 
$s_1 := \rho + \frac{\aleph}{2}-\tilde \eps$ and
$s_2 = \frac{d}{2} - \frac{\aleph}{2} + \tilde \eps$.  
Note that $s_1 + s_2 = \rho + \frac{d}{2}>0$ iff $\rho> - \frac{d}{2}$, which guarantees 
condition (2) in \cite[p.\ 190]{runst}. 
In addition,   
\begin{align*}
s_2 - s_1  
&= \frac{d}{2} - \frac{\aleph}{2} + \tilde \eps - (\rho+ \frac{\aleph}{2} -\tilde \eps) 
= \frac{d}{2} - \aleph + 2\tilde \eps - \rho > \frac{d}{2} - \aleph - \rho.
\end{align*}
The right-hand side of the preceding expression is nonnegative (satisfying \cite[p.\ 190 (1)]{runst}) iff 
\[
\frac{d}{2} - \aleph - \rho\geq 0, \qquad \Leftrightarrow \qquad \rho \leq \frac{d}{2} -\aleph. 
\]
Finally, due to $\aleph >1$ for all $\tilde \eps \in (0,\frac{\aleph}{2})$ 
condition (11) in \cite[p.\ 191]{runst} is satisfied (combined with Subsection~\ref{ss:Besov}),   
\[
s_2 = \frac{d}{2} - \frac{\aleph}{2}  + \tilde \eps < \frac{d}{2}. 
\]
Consequently, Theorem 1 (iii) \cite[p.\ 190/191]{runst} combined with Subsection~\ref{ss:Besov} 
yields a constant $C>0$ such that 
\DEQSZ\label{hieroben1}
|u \varphi_k|_{H_2^{\rho}} & \leq & C \,|u|_{H_2^{s_1}} |\varphi_k|_{H_2^{s_2}}. 
\EEQSZ
Hence,
\begin{align}
|g_{\gamma_2}(v)|_{\Upsilon(H, H^\rho_2)}^2
&
\le C   \sum_{k=1}^\infty |v \varphi_k|_{H^\rho_2}^2\, k^{-\frac{2}{d}\gamma_2} \nonumber\\
&\leq C(\oO)  |v|_{H_2^{s_1}}^2 \sum_{k=1}^\infty  |\varphi_k|^2_{H^{s_2}_2}\, k^{-\frac{2\gamma_2}{d}} 
\leq C(\oO)  |v|_{H_2^{\rho + \frac{\aleph}{2}-\tilde \eps}}^2\sum_{k=1}^\infty  k^{-2\frac{(\gamma_2-s_2)}{d}} .
\label{e:GRd23}
\end{align}
Since $\gamma_2>d-\frac \aleph 2  +\tilde \eps =\frac d2-\frac \aleph 2+\frac{d}{2} +\tilde \eps= s_2+\frac{d}{2}$, we know the sum is finite. 
Again, interpolation with $\theta = \frac{2\tilde \eps}{\aleph}\in (0,1)$ for some $\tilde \eps>0$ sufficiently small between $H^\rho_2$ and $H^{\rho+\frac{\aleph}{2}}_2$ yields 
$$
|u|_{H_2^{s_1}} \leq |u|_{H_2^{\rho}}^\theta ~|u|^{1-\theta}_{H_2^{\beta+\frac{\aleph}{2}}}.
$$
Young's inequality finally yields \eqref{eq: gYoung0Wh}.
To show the second part, we refer to \eqref{e:GRd1} and \eqref{e:GRd23}. 
A shift by the arbitrarily small parameter $\tilde \eps$ shows the result. 
This finishes the proof. 
\end{proof}

\paragraph{\textbf{Trace estimates: }}
Let 
\begin{equation}\label{e:gamma1Spur}
\gamma_1>\frac dp+\frac d2-\min(\frac d2,1).
\end{equation}
Since we apply later the It\^ o formula for $\psi(u)=|u|_{L^p}^p$, $p\geq 2$,
let us note that  
\DEQS
\mbox{Tr}\Big[  D^2\psi(u)[g_{\gamma_1}^\ast (u),g_{\gamma_1}(u)]\big]=
p(p-1)\sum_{k=1}^\infty \lambda_k^{-\gamma_1}\int_\CO |u(x)|^{p-2}u(x)^2 \varphi^2_k(x)\, d x
.
\EEQS
Note that  
H\"older's inequality gives
\DEQS
\lqq{ \sum_{k=1}^\infty \lambda_k^{-{\gamma_1}}\lk|\int_\CO |u(x)|^{p-2}u(x)^2 \varphi^2_k(x)\, d x\rk|}
&&\\
&\le&  |u^{p-2}|_{L^\frac {p}{p-2}}\lk(  \sum_{k=1}^\infty \lambda_k^{-{\gamma_1}}|u^2\varphi^2_k|_{L^\frac p2}\rk)
\le |u|^{p-2}_{L^p}\lk(  \sum_{k=1}^\infty \lambda_k^{-{\gamma_1}}|u\varphi_k|^2_{L^p}\rk)
\EEQS
H\"older's inequality and Sobolev embedding Theorems gives for $\delta_1\in(0,\min(\frac dp,\frac 2p))$, 
and $\delta_1+\delta_2\ge \frac dp$,
$$ |u\varphi_k|\le C|u|_{H^{\delta_1}_p}|\varphi_k|_{H^{\delta_2}_p}\le C|u|_{H^{\delta_1}_p}\lambda _k^{\frac{\delta_2}2}.
$$
The Young inequality gives further
\DEQS
\sum_{k=1}^\infty \lambda_k^{-{\gamma_1}}\lk|\int_\CO |u(x)|^{p-2}u(x)^2 \varphi^2_k(x)\, d x\rk|
\le \ep |u|^p_{L^p}+C(\ep) |u|_{H^{\delta_1}_p}^p \lk(  \sum_{k=1}^\infty \lambda_k^{-\gamma_1}\lambda_k ^{\delta_2}\rk)^\frac p2.
\EEQS
Now, \eqref{EVsup} yields to
\DEQS
\sum_{k=1}^\infty \lambda_k^{-{\gamma_1}}\lk|\int_\CO |u(x)|^{p-2}u(x)^2 \varphi^2_k(x)\, d x\rk|
\le \ep |u|^p_{L^p}+C(\ep) |u|_{H^{\delta_1}_p}^p \lk(  \sum_{k=1}^\infty k^{\frac 2d(\delta_2-\gamma_1)}\rk)^\frac p2.
\EEQS
Due to the condition on $\gamma_1$, the right-hand side is finite, and we have by applying interpolation and the Young inequality
we know for all $\theta<\frac 2p$
\DEQSZ\label{itotrace}
\sum_{k=1}^\infty \mbox{Tr}\Big[  D^2\psi(u)[g_{\gamma_1} ^*(u)\varphi_k,g_{\gamma_1}(u)\varphi_k]\big]
\le
\ep |u|_{H^\theta_p}^p+ C(\ep) |u|_{L^p}^p.
\EEQSZ
The preceding inequality is used in Section~\ref{s:uniform} . 

For $\Psi(v) := |(-\Delta^{-\rho/2}) v|_{L^2}^2$ we can use Proposition \ref{gammaradonv}. 
In particular,
\begin{align}\label{eq:itotracerho}
D^2 \Psi(v)
&= 2\langle \Delta^{\rho/2} (v  \varphi_k), \Delta^{\rho/2} (v  \varphi_k)\rangle  = 2|v\varphi_k|_{H_2^\rho}^2 .
\end{align}
Continuing as \eqref{hierweiter}, we can show that under the assumption of Proposition \eqref{gammaradonv},
we can show that for  all $\ep>0$ there exists a positive constant $C(\ep)$
such that
\del{\begin{equation}\label{e:dieSpurDieWirBrauchen}
|\sum_{k=1}^\infty \mbox{Tr}\big[  D^2\Psi(u)[g ^*(u)\varphi_k,g(u)\varphi_k]\big]|
\leq C(\ep) |v|^2_{H_2^{\beta}} + \ep |v|_{H_2^{\beta+\frac{\aleph}{2}}}, \quad \forall\, v \in H_2^{\beta}(\CO)\cap H_2^{\beta+\frac{\aleph}{2}}(\CO) .
\end{equation}}
\begin{equation}\label{e:dieSpurDieWirBrauchen}
		|\sum_{k=1}^\infty \mbox{Tr}\big[  D^2\Psi(u)[g ^*(u)\varphi_k,g(u)\varphi_k]\big]|
		\leq C(\ep) |v|^2_{H_2^{\rho}} + \ep |v|_{H_2^{\rho+\frac{\aleph}{2}}}, \quad \forall\, v \in H_2^{\rho}(\CO)\cap H_2^{\rho+\frac{\aleph}{2}}(\CO).
\end{equation}

\bigskip 
\section{\textbf{Existence of a unique joint solution $(u_\kappa, v_\kappa)$ to the system \eqref{sysu}-\eqref{sysw}}}
\label{s:appendix B}

\noindent {The aim of this section is to establish all the requisites of the application of the stochastic Schauder theorem, in order to have local solutions. For this sake we first establish the wellposedness of the fixed point operator, the invariance on a particular intersection of balls in different topologies with the cone of nonnegative functions, compactness and continuity in the respective subsections. }

\bigskip 
\subsection{\textbf{Wellposedness of the fixed point operator}}\label{ss:wellposedness}\hfill\\

\noindent In this section we show that for any $\n\in\NN$ a unique pair $(\uk,\vk)$ of solution
to the system \eqref{sysu0} and \eqref{sysw0}
exists, lies in a the precompact set\footnote{The constants $K_1, K_2$, and $ K_3(\kappa)$ are defined in \eqref{def_constants}.} $\mathcal{K}_{\mathfrak{A}}^\kappa:=\mathcal{K}_{\mathfrak{A}}(K_1, K_2, K_3(\kappa))$ 
and is non-negative in $\oO$. 
Let us start with a technical proposition.

\begin{tproposition}\label{tetaxi}
	Let us assume that the parameters $d$, $\rho$, $\aleph$, $q$, and $p^\ast\ge 2$ satisfy
	\DEQSZ\label{bed000}
	\frac{d}{2} - \rho \le \frac{\FE}2\frac 2{qm} + \frac{d}{q} (\frac{1}{r}- \frac{1}{p^\ast}).
	\EEQSZ
	Let  $r_2\ge 1$ such that $\frac 1{p^\ast}+\frac 1{r_2 }\le \frac{1}{r}$. Then, for all  
	parameters $K_1, K_2, K_3>0$ and any 
	any $\kappa\in\NN$, there exists a constant $C=C(\kappa)>0$ such that for all $\tilde r\in \NN$ and $(\eta, \xi) \in \mathcal{K}_{\mathfrak{A}}(K_1, K_2, K_3)$ we have
	\DEQSZ\label{eee}
	\EE  \|\eta\, \phi_\kappa(\cdot,\xi)\, \xi^q\|_{L^m(0,T;{L^r})}^{\tilde r}
	\le C(\kappa)\, \EE \sup_{0\le s\le T}|\eta(s)|_{L^{p^\ast}}^{\tilde r}.
	\EEQSZ
\end{tproposition}
\begin{remark} 
Technical Proposition \ref{tetaxi} is similar to Technical Proposition \ref{dasauch}. However, the difference is that here we can use the cut-off function, whereas in Proposition \ref{dasauch} we have to tackle the term $\|v\|_{L^{qm}(0,T;L^{qr_2})}^{\tilde r q}$ without any cut-off. Hence, we apply the Cauchy–Schwarz inequality to separate the two terms in \eqref{twotwremssep} and need an exponent on  $$\lk( \EE \|v\|_{L^{qm}(0,T;L^{qr_2})}^{\tilde r q}\rk)$$
smaller than one. 
Consequently, the parameter conditions in Technical Proposition \ref{tetaxi} are less restrictive than those in Technical Proposition \ref{dasauch}. 
\end{remark} 
\begin{proof}
The proof is an application of H\"older's inequality for the exponents $p^\ast$ and $r_2$, using the definition of $\phi_\kappa$. In particular, we have
\DEQSZ\label{twotwremssep}\notag 
\EE  \| \eta\,\phi_\kappa(\cdot,\xi)\, \xi^q\|_{L^m(0,T;{L^r})}^{\tilde r}
&\le& \EE \lk(\int_0^T |\eta(s)|_{L^{p_*}}^{m} |\phi_\kappa(\cdot, \xi) \xi^q(s)|_{L^{r_2}}^m ds \rk)^{\frac{\tilde r}{m}}\\\notag 
&\le&\EE \sup_{s\in [0, T]} |\eta(s)|_{L^{p_*}}^{\tilde r} \lk(\int_0^T  |\phi_\kappa^\frac{1}{q}(\cdot, \xi) \xi(s)|_{L^{qr_2}}^{mq} ds \rk)^{\frac{\tilde r }{m}}\\
&= &  \EE \lk[ \sup_{0\le s\le T}|\eta(s)|_{L^{p^\ast}}^{\tilde r }
\cdot \|\phi_\kappa^\frac{1}{q}(\cdot,\xi)\,\xi\|_{L^{qm}(0,T;L^{qr_2})}^{\tilde r q}\rk].
\EEQSZ
Using Proposition \ref{interp_rho} and by hypothesis \eqref{bed000} that
$$
\frac{d}{2} - \rho \le \big(\frac{\aleph}{2}\big) \frac{2}{qm} + \frac{d}{r_2q}
$$
yields positive constants $C$ and $C(\kappa)$ such that 
$$ 
\|\phi_\kappa^\frac{1}{q}(\cdot,\xi)\,\xi\|_{L^{qm}(0,T;L^{r_2q})} \le C \| \phi_\kappa^\frac{1}{q}(\cdot,\xi)\,\xi\|_{\mathbb{H}_{\rho, \aleph}} \leq C(\kappa),
$$ 
due to the definition of the cut-off $\phi_\kappa$. This shows the desired inequality \eqref{eee}. 
\end{proof}

\del{\begin{proof}
This part we will need for the limiting case $q=2$, $d=2$, and $\aleph=2$. Let us assume $1\le r<2$.
The proof is an application of the generalised H\"older's inequality given in Theorem \ref{RSb} (compare also to  p. 171, \cite{runst})
for the exponents $\tilde p$ and $r_2$ and $s>0$ such that
$$
\frac 1{r_2}>\frac 12 , \quad \frac 1{\tilde p}<\frac {s_2-s_1}d,
\quad 
\mbox{and}\quad \frac {(s_1+s_2)}d>\frac 1r-1.
$$
In this way we get
\DEQSZ\label{twotwremssep}\notag 
\EE  \| \eta\,\phi_\kappa(\cdot,\xi)\,
\xi^q\|_{L^m(0,T;H^{s_1}_r})^{\tilde r}
\le   \EE \lk(\int_0^T |\eta(s)|_{{H^{s_1}_{\tilde p}}}^{m}
|\phi_\kappa(\cdot, \xi) \xi^q(s)|_{H^{s_2}_{r_2}}^m ds \rk)^{\frac{\tilde r}{m}}  .
\EEQSZ
An application of  H\"older's inequality with $\frac 1{l'}+\frac 1l\le \frac 1m$, gives
\DEQSZ 
\EE  \| \eta\,\phi_\kappa(\cdot,\xi)\,
\xi^q\|_{L^m(0,T;H^{s_1}_2})^{\tilde r}	
&\le&
\EE \lk( \int_0^ T |\eta(s)|_{H^{s_1}_{\tilde p}} ^{l'} \, ds\rk)^\frac {\tilde r}{l'} \lk(\int_0^T  |\phi_\kappa(\cdot, \xi) \xi^q(s)|_{H^{s_2}_{r_2}}^{l} ds \rk)^{\frac{\tilde r }{l}}.
\EEQSZ
Theorem 1 in \cite[p. 363-364]{runst} gives for
$$
p>\frac {dqr_2}{d+(q-1){s_2}t }
$$
(here, we have to find the $p^\ast$ large enough)
\DEQSZ
\EE  \| \eta\,\phi_\kappa(\cdot,\xi)\,
\xi^q\|_{L^{m}(0,T;H^{s_1}_2})^{\tilde r}	
&\le&  \EE \lk[ \|\eta\|_{L^{l'}(0,T;H^{s_1}_{\tilde p} )}^{\tilde r }
\cdot \|\phi_\kappa(\cdot,\xi)\,\xi\|_{L^{ql}(0,T;H^{s_2}_p)}^{\tilde r q}\rk].
\EEQSZ
Now, by the embedding $H^{\tilde s}_2(\CO)\hookrightarrow H^{{s_2}}_p(\CO)$ for 
$$
s+\lk(\frac d2-\frac dp\rk)=\tilde s,
$$
we get
\DEQSZ
\EE  \| \eta\,\phi_\kappa(\cdot,\xi)\,
\xi^q\|_{L^{l'}(0,T;H^{s_1}_2})^{\tilde r}	
&\le&  
\EE \lk[ \|\eta\|_{L^{l'}(0,T;H^\gamma_{\tilde p})}^{\tilde r }
\cdot \|\phi_\kappa(\cdot,\xi)\,\xi\|_{L^{ql}(0,T;H^{\tilde s}_2)}^{\tilde r q}\rk].
\EEQSZ
For interpolation, where 
$$
\tilde s=\rho \theta+(\theta+\aleph)(1-\theta)\quad \mbox{and}\quad \frac {1}{ql}=\theta\frac 1\infty+(1-\theta)\frac 12
$$
or
$$
\theta =\frac {\aleph+\rho-\tilde s}{\aleph}\quad 
\mbox{and}\quad \frac 1{lq}=\frac {1-\theta}2,
$$
we get by interpolation
\DEQSZ
\EE  \| \eta\,\phi_\kappa(\cdot,\xi)\,
\xi^q\|_{L^m(0,T;H^{s_2}_2})^{\tilde r}	
&\le&  
\EE \lk[ \|\eta\|_{L^{l'}(0,T;H^{s_2}_{\tilde p})}^{\tilde r }
\cdot \|\phi_\kappa(\cdot,\xi)\,\xi\|_{\mathbb{H}_{\rho,\aleph}}
^{\tilde r q}\rk].
\EEQSZ	%
By the definition of the cut-off, there  exists  a constant $C$ and $C(\kappa)$ such that 
$$ 
\|\phi_\kappa^\frac{1}{q}(\cdot,\xi)\,\xi\|_{L^{qm}(0,T;L^{r_2q})} \le C \| \phi_\kappa^\frac{1}{q}(\cdot,\xi)\,\xi\|_{\mathbb{H}_{\rho, \aleph}} \leq C(\kappa),
$$ 
due to the definition of the cut-off $\phi_\kappa$. 
Now, we have 
\DEQSZ
\EE  \| \eta\,\phi_\kappa(\cdot,\xi)\,
\xi^q\|_{L^m(0,T;H^{s_2}_2})^{\tilde r}	
&\le&  
C(\kappa)\, \EE \lk[ \|\eta\|_{L^{l'}(0,T;H^{s_2}_{\tilde p})}^{\tilde r }
\rk].
\EEQSZ	%
Observe, one can show by interpolation and the Stroock-Varopoulos inequalitiy that if
for any $\gamma<\frac 2{p^\ast}$
$$
\frac 1{\tilde p} = \frac {\theta }{p^\ast} + \frac {1-\theta}{2},\quad s_1=\gamma(1-\theta)<(1-\theta)\frac 2{p^\ast},
$$
then 
$$ \|\eta\|_{L^{l'}(0,T;H^{s_2}_{{\tilde p}} ) }
\le  \sup_{0\le s\le T} |\eta(s)|^{p^\ast}_{L^{p^\ast}}+\int_0^ T |\eta ^{p^\ast-2}(\nabla \eta(s))^2|_{L^1}\, ds.
$$
Due to Propsition \ref{reg_uk}, 
$$
\EE  \|\eta\|_{L^{l'}(0,T;H^{s_2}_{\tilde p})}^{\tilde r}<\infty.
$$
for $\theta=\frac 12 (2 - p^\ast s_1)$ and  $\tilde p=\frac{4 p^\ast}{4 + (p^\ast)^2 s_1}$.	%
\end{proof}
}

\bigskip 
\subsection{\textbf{Invariance and well-posedness of the operator $\mathcal{V}_\MA^\kappa$}}\label{s:invariance}\hfill\\

\noindent In a sequence of propositions,  we analyze several properties of the operator 
$\mathcal{V}_{\MA,W}^\kappa$. 
In Proposition~\ref{reg_uk} we show well-posedness of the operator. Additionally,  in  Proposition ~\ref{reg_uk} and Proposition  \ref{limituk}, we establish some estimates that allows us to demonstrate the invariance of the set
$\mathcal{K}_\MA(K_1,K_2,K_3,k_4)$ for some parameters $K_1$, $K_2$, $K_3$, and $K_4$ under $\mathcal{V}_{\MA,W}^\kappa$.

\bigskip

\begin{proposition}\label{reg_uk}
Let $(q, \aleph, d)$ be given. 
Then, 
	for any $\kappa\in\mathbb{N}$  for any $K_i>0$ for $i=1,2,3,$ the following statements are satisfied.
\begin{enumerate}

\item  
Let us assume that $\rho\in\mathbb{R}$ satisfies 
$$
d\lk(\frac 12 -\frac 1q\rk)\le \rho+\frac {\FE}{2q},\quad \rho<\frac \FE2-\frac d2,
$$
and $p^\ast\in[2,\infty)$ be a number such that 
$$ 
p^\ast > \frac {2d}{ \FE +2d-dq+2q\rho}.
$$ %
\casedrei{ In case $d=3$ we assume 
$$
\frac d2-\rho\le \frac \FE {q}+\frac 56 \frac dq
\quad \mbox{and}
\quad p^\ast > \frac {6d}{ 3\FE +5d-3dq+6q\rho}.
$$}
Then, 	for any $u_0\in L^2\big(\Omega;L^2(\CO)\big)\cap L^{p^\ast}\big(\Omega;L^{p^\ast}(\CO)\big)$, and any $v_0\in L^2\big(\Omega;H^\rho_2(\CO)\big)$
for any  $\n\in\NN$ and  $(\xi,\eta)\in \mathcal{K}_\MA(K_1,K_2,K_3)$, there exists a unique solution $\uk:\Omega\times[0,\TTend\times \CO  \to \RR$
to the equation \eqref{sysu0},
such that
\DEQSZ
\EE\sup_{0\le t\le T} |\uk(t)|_{L^2}^2+2\EE\int_0^T |\uk(t)|_{H^{1}_2}^2\, dt\le \EE|u_0|^2_{L^2}+C(\kappa)K_2<\infty\quad . \label{e:trouble} 
\EEQSZ
\item  
Let us assume that $\rho\in\mathbb{R}$ satisfies 
$$
\frac{d}{2}(1-\frac 1q) - \frac{\FE}{2q} <  \rho
 \quad \mbox{and}
\quad \rho<\frac \FE2-\frac d2
,
$$
and $p^\ast\in[2,\infty)$ be a number such that 
$$
p^\ast>\frac {2d}{2\FE +d-dq-2\rho+2q\rho}.
$$
\casedrei{In case $d=3$ we assume 
$$
\frac{d}{2} - \frac{\FE}{q-1} <\rho ,  \quad \mbox{and}
\quad p^\ast > \frac {6d}{ 3\FE +5d-3dq+6q\rho}.
$$}
Then, 	for any $u_0\in L^2\big(\Omega;L^2(\CO)\big)\cap L^{p^\ast}\big(\Omega;L^{p^\ast}(\CO)\big)$, and any $v_0\in L^2\big(\Omega;H^\rho_2(\CO)\big)$
for any  $\n\in\NN$ and  $(\xi,\eta)\in \mathcal{K}_\MA(K_1,K_2,K_3)$, there exists a unique solution $\uk:\Omega\times[0,\TTend\times \CO  \to \RR$
to the equation 
\eqref{sysw0},
such that
\DEQSZ
\EE\sup_{0\le t\le T} |\vk(t)|_{H^{\rho}_2}^2+2\EE\int_0^T |\vk(t)|_{H^{  \FE /2+\rho}_2}^2\, dt\le \EE|v_0|^2_{H^\rho_2} +C(\kappa) K_2<\infty. \label{e:vtrouble}
\EEQSZ
\end{enumerate}
\end{proposition}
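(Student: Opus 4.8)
The plan is to treat the decoupled equations \eqref{sysu0} and \eqref{sysw0} one at a time. Once the pair $(\eta,\xi)\in\mathcal{K}_\MA(K_1,K_2,K_3)$ is frozen, each of them is an \emph{affine--linear} stochastic evolution equation of the form $dY=[LY+aY+b-F]\,dt+\sigma\,g_\gamma(Y)\,dW$, with $L$ a (fractional) Laplacian, $F=c\,\phi_\kappa(\cdot,\xi)\,\eta\,|\xi|^q$ a prescribed $\BF$--adapted forcing, and $g_\gamma$ a linear multiplicative noise coefficient. The scheme is therefore: (i) show that $F$ lies in the (pivot-space) dual of the relevant energy space; (ii) invoke the standard variational existence and uniqueness theory for SPDEs (cf.\ \cite[Theorem~2.5.1]{BDPR2016}, \cite{DaPrZa}) in the Gelfand triple attached to $L$; (iii) obtain the a priori bounds \eqref{e:trouble} and \eqref{e:vtrouble} by It\^o's formula for the square of the natural norm, absorbing the It\^o trace correction into the dissipative term and closing with the Burkholder--Davis--Gundy and Gronwall inequalities.

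\textbf{Part (1) (the $\uk$--equation \eqref{sysu0}).} For the forcing, the point is that the constraints $d(\tfrac12-\tfrac1q)\le\rho+\tfrac{\aleph}{2q}$ and $p^\ast>\tfrac{2d}{\aleph+2d-dq+2q\rho}$ permit a choice of the exponents $m,r,r_2$ appearing in Technical Proposition~\ref{tetaxi} (with $\tfrac1{p^\ast}+\tfrac1{r_2}\le\tfrac1r$) for which its hypothesis \eqref{bed000} holds and, at the same time, $L^r(\CO)$ embeds into a negative-order Sobolev space dual to $H^1_2(\CO)$; then \eqref{eee} together with $(\eta,\xi)\in\mathcal{K}_\MA(K_1,K_2,K_3)$ gives that $F:=c_1\phi_\kappa(\cdot,\xi)\,\eta\,|\xi|^q$ is controlled in $L^2(\Omega;L^m(0,T;L^r(\CO)))$ by $C(\kappa)\,\EE\sup_{s\le T}|\eta(s)|^2_{L^{p^\ast}}<\infty$. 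Since $r_1\Delta$ with Neumann (or periodic) boundary conditions is the generator of the Gelfand triple $H^1_2(\CO)\hookrightarrow L^2(\CO)\hookrightarrow H^{-1}_2(\CO)$, and since by \eqref{eq: Hil-Schi2} (valid under the standing lower bound $\gamma_1>d-1$ from Assumption~\ref{wiener}) the linear map $u\mapsto g_{\gamma_1}(u)$ takes $L^2(\CO)$ into $\Upsilon(H,L^2(\CO))$ with an arbitrarily small coefficient multiplying $|u|_{H^1_2}$, the variational theory produces a unique $\BF$--progressively measurable $\uk\in L^2(\Omega;\mathcal{C}([0,T];L^2(\CO))\cap L^2(0,T;H^1_2(\CO)))$ solving \eqref{sysu0}. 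Applying It\^o's formula to $t\mapsto|\uk(t)|^2_{L^2}$: the term $2\langle\uk,r_1\Delta\uk\rangle=-2r_1|\nabla\uk|^2_{L^2}$ is dissipative; the $a_1\uk$ and $b_1$ contributions are absorbed by Young's inequality; the forcing is absorbed, after integration in time and by the weighted Young inequality in the $H^1_2$--$H^{-1}_2$ pairing, into $\eps\int_0^T|\uk|^2_{H^1_2}\,ds$ plus the bound just established; and the It\^o correction $2\sigma_1^2|g_{\gamma_1}(\uk)|^2_{\Upsilon(H,H)}$ is dominated, via \eqref{eq: Hil-Schi2}, by $\eps|\uk|^2_{H^1_2}+C(\eps)|\uk|^2_{L^2}$. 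Choosing $\eps$ small enough absorbs all $|\nabla\uk|^2_{L^2}$ terms into the dissipative one; the stochastic integral is controlled by Burkholder--Davis--Gundy \eqref{burkholder} (with a further small $\eps$ on the supremum), and Gronwall's lemma yields \eqref{e:trouble}.

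\textbf{Part (2) (the $\vk$--equation \eqref{sysw0}).} This is the verbatim analogue in $H^\rho_2(\CO)$. Here $r_2A=-r_2(-\Delta)^{\aleph/2}$ is the generator of the Gelfand triple $H^{\rho+\aleph/2}_2(\CO)\hookrightarrow H^\rho_2(\CO)\hookrightarrow H^{\rho-\aleph/2}_2(\CO)$, and by Proposition~\ref{gammaradonv} (whose hypotheses are guaranteed by the lower bound on $\gamma_2$ in Assumption~\ref{wiener}, with the trace estimate \eqref{e:dieSpurDieWirBrauchen}) the map $v\mapsto g_{\gamma_2}(v)$ takes $H^{\rho+\aleph/2}_2(\CO)$ into $\Upsilon(H,H^\rho_2(\CO))$ with an arbitrarily small coefficient in front of $|v|_{H^{\rho+\aleph/2}_2}$; hence the variational theory again produces a unique $\BF$--progressively measurable $\vk\in L^2(\Omega;\mathcal{C}([0,T];H^\rho_2(\CO))\cap L^2(0,T;H^{\rho+\aleph/2}_2(\CO)))$. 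The constraints $\tfrac d2(1-\tfrac1q)-\tfrac{\aleph}{2q}<\rho$ and $p^\ast>\tfrac{2d}{2\aleph+d-dq-2\rho+2q\rho}$ are precisely those for which Technical Proposition~\ref{tetaxi} (for an admissible choice of $m,r,r_2$) places the forcing $c_2\phi_\kappa(\cdot,\xi)\,\eta\,|\xi|^q$ in an $L^m$-in-time space whose spatial component $L^r(\CO)$ embeds into $H^{\rho-\aleph/2}_2(\CO)$, with a $C(\kappa)\,\EE\sup_s|\eta(s)|^2_{L^{p^\ast}}$ bound. It\^o's formula for $t\mapsto|\vk(t)|^2_{H^\rho_2}=|(-\Delta)^{\rho/2}\vk(t)|^2_{L^2}$ gives the dissipative term $2\langle(-\Delta)^\rho\vk,r_2A\vk\rangle=-2r_2|\vk|^2_{H^{\rho+\aleph/2}_2}$; the forcing is handled through the pairing $\langle\,\cdot\,,\,\cdot\,\rangle_{H^{\rho+\aleph/2}_2,H^{\rho-\aleph/2}_2}$ and weighted Young, the $a_2\vk$--term by Young, and the It\^o correction (the trace of $D^2\Psi(\vk)$ against $g_{\gamma_2}(\vk)$ for $\Psi(v)=|(-\Delta)^{\rho/2}v|^2_{L^2}$) by $\eps|\vk|^2_{H^{\rho+\aleph/2}_2}+C(\eps)|\vk|^2_{H^\rho_2}$ via \eqref{e:dieSpurDieWirBrauchen}; absorbing into the dissipative term and applying Burkholder--Davis--Gundy and Gronwall yields \eqref{e:vtrouble}.

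\textbf{The main obstacle.} The SPDE well-posedness is routine here, since the equations are linear. The genuine work — and the reason for the somewhat delicate algebraic conditions on $(q,\aleph,\rho,d,p^\ast)$ in each part — is step (i): showing that the product $\eta\,\phi_\kappa(\cdot,\xi)\,|\xi|^q$, which carries the non-dissipative coupling term, actually lands in the negative-order space dual to the energy space in which $\uk$, resp.\ $\vk$, lives. This rests on the product/interpolation estimates behind Technical Proposition~\ref{tetaxi} (and Proposition~\ref{interp_rho}), and crucially on the fact that the cutoff $\phi_\kappa$ confines $\|\xi\|_{\BH_{\rho,\aleph}}$ to a $\kappa$--dependent ball, so that the whole source term is bounded by a $\kappa$--dependent multiple of $\EE\sup_{s\le T}|\eta(s)|^2_{L^{p^\ast}}$. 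A second point requiring care is matching the It\^o trace correction generated by the multiplicative noise against the (fractional) dissipativity of $L$, which is precisely what forces the lower bounds on $\gamma_1$ and $\gamma_2$ in Assumption~\ref{wiener}.
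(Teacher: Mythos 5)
Your proposal is correct in substance, and its key step --- showing that the frozen forcing $\phi_\kappa(\cdot,\xi)\,\eta\,|\xi|^q$ lands in the dual of the energy space, via Technical Proposition~\ref{tetaxi}, the embedding of $L^r(\CO)$ (resp.\ $L^1(\CO)$) into $H^{-1}_2(\CO)$ resp.\ $H^{\rho-\aleph/2}_2(\CO)$, and the $\kappa$-dependent confinement coming from the cutoff --- is exactly the step the paper performs, with the same identification of the parameter conditions as the conditions making this embedding and the interpolation estimate of Proposition~\ref{interp_rho} compatible. Where you diverge is in the machinery for well-posedness and the a priori bounds: you work in the Gelfand triples $H^1_2\hookrightarrow L^2\hookrightarrow H^{-1}_2$ and $H^{\rho+\aleph/2}_2\hookrightarrow H^\rho_2\hookrightarrow H^{\rho-\aleph/2}_2$ and run the classical variational scheme (It\^o's formula for the squared norm, absorption of the It\^o trace via \eqref{eq: Hil-Schi2} and \eqref{e:dieSpurDieWirBrauchen}, Burkholder--Davis--Gundy, Gronwall), whereas the paper chooses the same spaces as $X_1$, interpolation space and $X_0$ but then invokes the stochastic maximal $L^p$-regularity theorem (Theorem~4.5 of \cite{maxjan}), which delivers existence, uniqueness and the bounds \eqref{e:trouble}--\eqref{e:vtrouble} in one stroke once $\EE\|f\|^2_{L^2(0,T;X_0)}<\infty$ is checked. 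Your route is more elementary and self-contained but requires verifying coercivity and carefully matching the noise correction against the (fractional) dissipativity --- which is precisely what the $\eps$-smallness in the radonifying-norm estimates buys you; the paper's route outsources that bookkeeping to the maximal regularity theorem at the price of citing a heavier external result. One cosmetic remark: the Gronwall closure naturally produces a constant $C(T)$ multiplying the right-hand side of \eqref{e:trouble} and \eqref{e:vtrouble}, which is consistent with how the constants are recorded in Step~(IV)(b) of the road map, so this is not a gap.
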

\medskip
\begin{remark}
	Since
	$q<\frac {\FE+2d}{2d-\FE}$, the condition in a is possible. 
Since
$q<\frac {\FE+d}{2d-\FE}$, the condition in b is possible. 
\end{remark}

\begin{proof}[\textbf{Proof of Proposition \ref{reg_uk}}] ~~ 
Let us recall the system   \eqref{sysu0}-\eqref{sysw0}, which is given by the initial condition $\uk (0)=u_0$, $\vk  (0)=v_0$, and 
%
\DEQSZ
\label{sysu}
\\\nonumber 
d{\uk }(t)&= [r_1 \,\Delta \uk (t)+ a_1 \uk (t)+ b_1 - c_1 \phi_\n(\xi,t)\cdot \eta(t) \cdot \,|\xi|^q(t)]\, dt +\sigma_1 \, g_{\gamma_1}(\uk (t))\, dW_1,\quad \nonumber\\
&\hfill
\label{sysw} \\\nonumber 
d{\vk  }(t)&= [r_2 \,A \vk  (t)+ a_2 \vk  (t)+ b_2+ c_2\phi_\n(\xi, t)\cdot \eta(t)\cdot |\xi|^q(t)]\, dt + \sigma_2 \,g_{\gamma_2}(\vk  (t)) \, dW_2(t).\nonumber
\EEQSZ
\noindent Let  $(\eta,\xi)$ be a pair of two processes, belonging to $\spK$ and
define 
\DEQSZ\label{fdeff}
f(t) & = &\phi_\kappa(\xi, t) \cdot \eta(t)\cdot |\xi|^q(t).
\EEQSZ 
\newcommand{\mm}{m}
Let $\mm=2$ and $\gamma=1/2$ for $d=1$ and 
let $\mm=2$ and $\gamma=1$ if $d=2$.
First, we show the existence of a unique solution to equation \eqref{sysu}. This, we are doing by an application of Theorem  4.5 of \cite{maxjan}. However, before doing it, we have to verify the assumptions. Let us start with the 
term given by $f$. In case, $d=1,2$, it follows by the embedding    $L^1(\CO)\hookrightarrow H^{-\gamma}_2(\CO)$ 
$$
\EE  \|f\|_{L^\mm (0,T;{H^{-\gamma }_2})}^{\mm }\le C\EE  \|\phi_\n(\xi,\cdot)\cdot \eta(\cdot) \cdot \,\xi^q(\cdot)\|_{L^\mm (0,T;{L^1})}^{\mm }.
$$
The H\"older inequality gives for $\frac 1p+\frac 1{p'}\le 1$
\DEQS 
\EE  \|f\|_{L^\mm (0,T;{H^{-\gamma}_2})}^{\mm } &\le &
C\EE \sup_{0\le t\le T}|\eta(t)|_{L^p}^\mm   \|\phi_\n(\xi,\cdot) \cdot \,|\xi|^q(\cdot )\|_{L^\mm (0,T;{L^{p'}})}^{\mm }.
\\
&\le &
C\EE \sup_{0\le t\le T}|\eta(t)|_{L^p}^\mm   \|\phi^{\frac1q}_\n(\xi,\cdot) \cdot \,\xi(\cdot )\|_{L^{\mm q}(0,T;{L^{p'q}})}^{\mm q}.
\EEQS 
Since $-\frac \FE 2\le \rho$, we know by the definition of the cut-off function $\phi_\n(\xi,\cdot )$ and Proposition \ref{interp_rho} that for 
$$
\frac d2-\rho \le \frac \FE 2\frac 2{q\mm}+\frac d{{p'}q}= \frac \FE {q\mm}+\frac dq\lk(1-\frac 1p\rk)
$$
we have $  \|\phi_\n(\xi,\cdot ) \cdot \,\xi^q(\cdot)\|_{L^{q\mm }(0,T;{L^{p'q}})}\le \kappa$.
In particular, we obtain
$$
\EE  \|f\|_{L^\mm (0,T;{H^{-\gamma}_2})}^{\mm }\le C\EE \sup_{0\le t\le T}|\eta(t)|_{L^p}^\mm   C(\kappa).
$$
Since $(\eta,\xi)\in \spK$, $\EE \sup_{0\le t\le T}|\eta(t)|_{L^{p^\ast}}^\mm \le K_1$ and we have 
$$
\EE  \|f\|_{L^\mm (0,T;{H^{-\gamma}_2})}^{\mm }\le CK_1  C(\kappa).
$$
The conditions on the parameters is $m=2$ and 
\DEQSZ\label{ucond01}
\frac d2-\rho \le \frac \FE {2q}+\frac dq
\EEQSZ 
and $p^\ast $ has to be chosen 
\DEQSZ\label{condpast01}
p^\ast > \max(\frac {2d}{ \FE +2d-dq+2q\rho},4).
\EEQSZ

\noindent Next we use Theorem 4.5 of \cite{maxjan} to establish inequality \eqref{e:trouble}.
To use this result, we need to set spaces $X_0$ and $X_1$ such that $X_1 \embed X_0$. 
Let  $X_0= H^{-1}_2$ and $X_1=H^{1}_2$ so that $L^2$ is our interpolation 
space between $X_0$ and $X_1$. Thus the conditions of Theorem 4.5 of \cite{maxjan} is satisfied, since $u_0\in L^2$. 
Note that since $\frac12-\frac1d<1 $ for all $d\in\mathbb{N}$, the embedding $X_1\embed X_0$ holds for dimensions $d=1,2$.

\noindent With this setting of our spaces, it follows by Theorem 4.5 of \cite{maxjan} 
that  a unique solution $\uk$ exists 
that satisfies
\DEQS
\EE\sup_{0\le s\le T}|u_\kappa|^2_{L^2} + r_1 \EE \int_0^T |u_\kappa|^2_{H^{1}_2} ds 
\le \EE|u_0|_{L^2}^2+C(\kappa)K_1<\infty.  
\EEQS
\noindent

\medskip
\noindent Next we prove statement b.) 

\medskip
\noindent The solution $\vk$ is given by 
\del{\DEQS
\lqq{ \vk(t)= e^{(r_2\Delta-\alpha_2I) t}v_0+\int_0^ t   e^{(r_2\Delta-\alpha_2I) (t-s)} f(s)\, ds}
&&\\
&&{}
+ \int_0^te^{(r_2\Delta-\alpha_2I)(t-s)}g(\vk(s))\,dW_2(s) + \alpha_1\int_0^ t   e^{(r_2\Delta-\alpha_2I) (t-s)}  \beta_2\, ds,
\EEQS}
\DEQS
\lqq{ \vk(t)= e^{(r_2A-a_2I) t}v_0+\int_0^ t   e^{(r_2A-a_2I) (t-s)} f(s)\, ds}
&&\\
&&{}
+ \int_0^te^{(r_2A-a_2I)(t-s)}g(\vk(s))\,dW_2(s) + a_1\int_0^ t   e^{(r_2A-a_2I) (t-s)}  b_2\, ds,
\EEQS
where $f$ is defined in \eqref{fdeff}. By Theorem \ref{Th:compact}, it follows that
$$
\|\mathfrak{F}_{r_2\Delta-\alpha_2I}f\| _{\mathbb{H}_{\rho,\aleph}}
\le C \|f\|_{L^2(0,T;H^{\rho-\frac \aleph 2}_2)}.
$$
To show the existence of a unique solution to $v_\n$, 
first, note, that
if under the assumption of the proposition we have the bound 
\DEQSZ
\EE \|{f}\|_{L^2(0,T;H^{\rho-\frac \aleph 2}_2) }^2\leq C(\kappa) K_2 <\infty. \label{e:fmaxforv}
\EEQSZ
Now,
we  can apply \cite[Therorem 4.5]{maxjan} with a choice of spaces $X_1=H^{\rho+\frac{\aleph}{2}}_2$, $X_0 = H^{\rho -\frac{\aleph}{2}}_2$ and the interpolation space $H^\rho_2$   to obtain a unique mild solution  
$$v_\kappa\in  L^2(  \Omega,C([0,T],H^\rho_2))\cap  L^2(  \Omega,L^2([0,T],H^{\rho+\frac{\aleph}{2}}_2)),$$ 
satisfying \eqref{e:vtrouble}.

\noindent Let us start to verify if the assumptions of Technical Proposition \ref{tetaxi} are verified. 
If $\frac \FE 2 -\rho > \frac d2$, then we have $L^1(\CO)\hookrightarrow H^{\rho-\frac \aleph 2}_2(\CO)$
and it has to be satified 
	\DEQSZ\label{musserfuelltsein}
\frac{d}{2} - \rho &\le & \frac{\FE}{2q} + \frac{d}{q} (1-\frac1{p^\ast}).
\EEQSZ
In particular, if $p^\ast\to\infty$ the right hand side will increase, and the existence of $p^\ast$ such that \eqref{musserfuelltsein} is satisfied, is given by 
	\DEQS
\frac{d}{2} - \rho &\le & \frac{\FE}{2q} + \frac{d}{q} ,
\EEQS
or
	\DEQS
\frac{d}{2}(1-\frac 1q) - \frac{\FE}{2q} &<&  \rho.
\EEQS
Then, any $p^\ast$ satisfying the assumption given in the Proposition satisfies \eqref{musserfuelltsein}.

\del{\color{blue}
In the sequel we show that the bound  \eqref{e:fmaxforv} for dimension $d=1,2$ can be shown by  Technical Proposition~\ref{tetaxi}.
However, to use \eqref{e:fmaxforv} we use an embedding of type $L^r\embed H^{-1}_2$.
By Sobolev-Rellich embedding theorem, we use the condition 
\DEQSZ
\frac{1}{r} < \frac{1}{2}+\frac{1}{d}. \label{e:sobolev}
\EEQSZ

When $d=3$,  \eqref{e:sobolev} is satisfied if $r>\frac65 $. 
Next we apply Technical proposition \ref{exu}, where we choose the parameters $\tilde{r} = 2, r= \frac65+\epsilon$, $m=2$, $r_2=2$. The parameter   $p^\ast$ is chosen  accordingly to  
\DEQS
\frac{1}{p^\ast} +\frac{1}{r_2} \leq \frac{1}{r}.
\EEQS
Consequently, 
$p^\ast \geq \lk(  \frac13+\epsilon\rk)^{-1}  $, where $\epsilon>0$ is arbitrarily small. Due to the  Sobolev embedding we have  
\DEQSZ
\EE \Vert f\Vert_{L^2(0,T;H^{-1}_2)}^{2}	\leq C_r\EE \Vert f\Vert_{L^2(0,T;L^r)}^{\tilde{r}} \leq C(\kappa)K_2,
\EEQSZ
for some $C_r$ positive.

{\bf Remark:} By the  conditions of Technical Proposition \ref{tetaxi} one can show that  
$$q< \frac{\frac{\aleph}{2}+\frac dr-\frac{d}{p^\ast}}{\frac{d-2\rho}{2}}= \lk(\frac{\aleph}{2}+\frac3r-\frac{3}{p^\ast} \rk)\lk( \frac{2}{3-2\rho} \rk)$$

Choosing $p^*$ arbitrarily large, $\epsilon>0$ arbitrarily small and $\rho<0$ arbitrarily close to -1, then  we get the trade-off between the non-linearity $q$ and non-locality exponent $\aleph$ 
$$q<\frac{\aleph}{5}+1\in[\frac65,\frac75].$$
If we consider an upper bound on $\rho$ to be $\frac{\aleph}{2}-\frac d2$ as in \eqref{e:cond28}, then we will have an 
$$q<\frac{\aleph +5}{6-\aleph}\in[\frac65,\frac74].$$
Thus, in both cases, we have non-linearity $q>1$ for any choice of $\aleph\in[1,2].$

When $d=2$,  \eqref{e:sobolev} is satisfied if $r>1 $.  Next, we apply Technical proposition \ref{tetaxi} with parameter choices $\tilde{r} = 2, r= 1+\epsilon$, $m=2$ where $\epsilon>0$ is arbitrarily small, $r_2=2$ and $p^\ast>1$ such that $\frac12+\frac{1}{p^\ast}\leq \frac{1}{1+\epsilon}$, then the Sobolev embedding theorem gives
\DEQSZ
\EE \Vert f\Vert_{L^2(0,T;H^{-1})}^{2}	\leq C_r\EE \Vert f\Vert_{L^2(0,T;L^r)}^{\tilde{r}} \leq C(\kappa)K_2,
\EEQSZ
for some $C_r>0$.

{\bf Remark: } As we pointed out in the above remark, by   the conditions of Technical Proposition \ref{exu}, we have 
\DEQS
q<\lk(\frac{\aleph}{2}+\frac2r-\frac{2}{p^\ast} \rk)\lk( \frac{2}{2-2\rho} \rk).
\EEQS
Choosing $p^*$ arbitrarily large, $\epsilon>0$ arbitrarily small and $\rho$ arbitrarily close to $\frac{\aleph}{2}-1$ (as it is in \eqref{e:cond28}) yields the trade-off between the non-linearity $q$ and non-locality exponent $\aleph$ 
\DEQS
q<\frac{\aleph+4}{4-\aleph} \in [\frac53,3].
\EEQS

When $d=1$,  \eqref{e:sobolev} is satisfied if $r>\frac12$.  Next, we apply Technical proposition \ref{tetaxi} with parameter choices $\tilde{r} = 2, r= \frac12+\epsilon$, $m=2$ where $\epsilon>0$ is arbitrarily small, $r_2=2$ and $p^\ast>1$ such that $\frac12+\frac{1}{p^\ast}\leq \frac{1}{\frac12+\epsilon}$, then the Sobolev embedding theorem gives
\DEQSZ
\EE \Vert f\Vert_{L^2(0,T;H^{-1})}^{2}	\leq C_r\EE \Vert f\Vert_{L^2(0,T;L^r)}^{\tilde{r}} \leq C(\kappa)K_2,
\EEQSZ
for some $C_r>0$.

{\bf Remark: } As we pointed out in the above remark, by the  conditions of technical proposition \ref{exu}, we have 
\DEQS
q<\lk(\frac{\aleph}{2}+\frac2r-\frac{2}{p^\ast} \rk)\lk( \frac{2}{2-2\rho} \rk).
\EEQS
Choosing $p^*$ arbitrarily large, $\epsilon>0$ arbitrarily small and $\rho$ arbitrarily close to $\frac{\aleph}{2}-1$ (as it is in \eqref{e:cond28}) yields the trade-off between the non-linearity $q$ and non-locality exponent $\aleph$ 
\DEQS
q<\frac{\aleph+4}{2-\aleph} \in [5,\infty).
\EEQS
}

\end{proof}

\begin{proposition}\label{limituk}
Let us assume the triplet $(q,\FE,\rho)$ satisfies the assumptions in Proposition~\ref{reg_uk},
\eqref{cond00pstar} and $q\ge 1$
\DEQSZ\label{cond00q}
 q<\frac {\FE+d}{d-2\rho}.
\EEQSZ 
	Under the conditions of Proposition \ref{reg_uk}, we know for all $\kappa\in\NN$ and  $p\ge 4$ with 
\DEQSZ\label{condp000}
p> \frac {4d}{ \FE +d-dq+2q\rho},
\EEQSZ
 there exists a constant $\lambda=\lambda(\kappa)>0$ such that we have
	\del{
		\DEQS
\frac 1{ 2p}	\EE e ^{-\lambda s} |\uk(t)|_{L^p}^p 
	+\frac p2 	\int_0^ t\EE  e ^{-\lambda s}\la u_{\tesfalem{\kappa}}^{p-2}(s)\nabla \uk(s) ,\nabla u_{\tesfalem{\kappa}}(s)\ra ds
%
%
&\le & |u_0|_{L^p}^p+\tfrac 12\EE \sup_{0\le s\le t}e^{-\lambda s} |\eta (s)|_{L^p}^p.
\EEQS 
}
 
\del{\DEQS
 	\lqq{\EE \sup_{0\leq s\leq t} e ^{-\lambda s} |\uk(s)|_{L^p}^p 
+(p-\epsilon)	\int_0^ t\EE  e ^{-\lambda s}\la u_{\kappa}^{p-2}(s)\nabla \uk(s) ,\nabla u_{\kappa}(s)\ra ds } &&\\
 && {}+  (\lambda-C(\kappa,p,\epsilon_1,\epsilon_2))\int_0^te^{-\lambda s}|u_\kappa(s)|_{L^p}^p\,ds\\
&\le & |u_0|_{L^p}^p+ \epsilon_2\EE \sup_{0\le s\le t}e^{-\lambda s} |\eta (s)|_{L^p}^p.
\EEQS}

\DEQS
\lqq{\EE \sup_{0\leq s\leq t} e ^{-\lambda s} |\uk(s)|_{L^p}^p 
	+C(p)	\int_0^ t\EE  e ^{-\lambda s}\la u_{\kappa}^{p-2}(s)\nabla \uk(s) ,\nabla u_{\kappa}(s)\ra ds } &&\\
&& {}+  \tilde{C}(\lambda,p)\int_0^te^{-\lambda s}|u_\kappa(s)|_{L^p}^p\,ds \leq |u_0|_{L^p}^p+ \tilde{\tilde{C}}\EE \sup_{0\le s\le t}e^{-\lambda s} |\eta (s)|_{L^p}^p,
\EEQS
 where $C(p),\,\tilde{C}(\lambda,p)$ and $\tilde{\tilde{C}}$ are positive constants. 
\end{proposition}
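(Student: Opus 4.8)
The plan is to derive the estimate from an It\^o formula for $w\mapsto|w|_{L^p}^p$ applied to the exponentially weighted process $t\mapsto e^{-\lambda t}|u_\kappa(t)|_{L^p}^p$, where $u_\kappa$ is the solution of the linear equation \eqref{sysu0} furnished by Proposition~\ref{reg_uk} (together with its $L^2$ and $H^\rho_2$ bounds). Since $u_\kappa$ a priori takes values only in $L^2\cap L^{p^\ast}$, I would justify the formula by the standard device of a finite-dimensional Faedo--Galerkin truncation (or a Yosida approximation of $r_1\Delta$), proving the estimate at the truncated level with constants independent of the truncation and passing to the limit; this I suppress. Expanding and integrating gives, for every $t\in[0,T]$,
\begin{align*}
&e^{-\lambda t}|u_\kappa(t)|_{L^p}^p + r_1 p(p-1)\int_0^t e^{-\lambda s}\langle |u_\kappa|^{p-2}\nabla u_\kappa,\nabla u_\kappa\rangle\,ds + \lambda\int_0^t e^{-\lambda s}|u_\kappa(s)|_{L^p}^p\,ds\\
&\qquad = |u_0|_{L^p}^p + (\mathrm I)+(\mathrm{II})+(\mathrm{III})+(\mathrm{IV})+(\mathrm V),
\end{align*}
where $(\mathrm I)=pa_1\int_0^t e^{-\lambda s}|u_\kappa|_{L^p}^p\,ds$, $(\mathrm{II})$ is the contribution of the constant forcing $b_1$, $(\mathrm{III})$ that of the nonlinear forcing $-c_1\phi_\kappa(\xi,\cdot)\,\eta\,|\xi|^q$, $(\mathrm{IV})=\tfrac{\sigma_1^2}{2}\int_0^t e^{-\lambda s}\mathrm{Tr}[D^2\psi(u_\kappa)[g_{\gamma_1}^\ast,g_{\gamma_1}]]\,ds$ is the It\^o correction, and $(\mathrm V)$ is a local martingale. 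One then takes $\sup_{0\le s\le t}$ and $\EE$.

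The routine terms are disposed of as follows: $(\mathrm I)$ is moved to the left and fused with the $\lambda$-integral; $(\mathrm{II})\le\varepsilon\int_0^t e^{-\lambda s}|u_\kappa|_{L^p}^p\,ds + C(\varepsilon,b_1,|\CO|,p)\,T$; for $(\mathrm{IV})$ I would use \eqref{itotrace}, namely $\mathrm{Tr}[\cdots]\le\varepsilon|u_\kappa|_{H^\theta_p}^p+C(\varepsilon)|u_\kappa|_{L^p}^p$ for a fixed $\theta<2/p$ (this is where the lower bound on $\gamma_1$ enters), and then absorb $\int_0^t e^{-\lambda s}|u_\kappa|_{H^\theta_p}^p\,ds$ into the dissipation term and $\sup_s e^{-\lambda s}|u_\kappa(s)|_{L^p}^p$ by the interpolation / Strook--Varopoulos estimate of Proposition~\ref{runst1}; the martingale $(\mathrm V)$ is handled by Burkholder--Davis--Gundy \eqref{burkholder} followed by the computation already carried out in the appendix between \eqref{hierbeginnts} and \eqref{hierendets}, with the placeholder process equal to $u_\kappa$, which bounds $\EE\sup_s|(\mathrm V)|$ by $\varepsilon\,\EE\sup_s|u_\kappa(s)|_{L^p}^p+\varepsilon\,\EE\int_0^t|u_\kappa|_{H^\theta_p}^p\,ds+C(\varepsilon)\,\EE\int_0^t|u_\kappa|_{L^p}^p\,ds$. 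With $e^{-\lambda s}\le 1$ used to reinstate the weight where convenient, all of these are absorbed into the three left-hand terms once $\varepsilon$ is small and $\lambda$ large.

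The heart of the matter is $(\mathrm{III})$. Crucially $\phi_\kappa(\xi,s)=\psi_\kappa(\|\xi\|_{\mathbb{H}^s_{\rho,\aleph}})\in[0,1]$ is a \emph{scalar} that vanishes unless $\|\xi\|_{\mathbb{H}^s_{\rho,\aleph}}\le 2\kappa$, and since $s\mapsto\|\xi\|_{\mathbb{H}^s_{\rho,\aleph}}$ is nondecreasing one gets $\|\phi_\kappa^{1/q}(\xi,\cdot)\,\xi\|_{\mathbb{H}^t_{\rho,\aleph}}\le 2\kappa$. I would bound $|(\mathrm{III})|\le pc_1\int_0^t e^{-\lambda s}\phi_\kappa(\xi,s)\int_\CO|u_\kappa|^{p-1}|\eta||\xi|^q\,dx\,ds$, split $|u_\kappa|^{p-1}=|u_\kappa|^{p/2}\cdot|u_\kappa|^{p/2-1}$, invoke the Sobolev embedding $H^1(\CO)\hookrightarrow L^s(\CO)$ for the first factor (valid for every finite $s$ when $d\in\{1,2\}$ --- this is exactly the extra integrability produced by the $r_1\Delta$ dissipation), and use H\"older in $x$ to reach a pointwise-in-$s$ bound of the shape
\[
C_S\,\big\||u_\kappa(s)|^{p/2}\big\|_{H^1}\,|u_\kappa(s)|_{L^p}^{\,p/2-1}\,|\eta(s)|_{L^p}\,\big|\phi_\kappa^{1/q}(\xi,s)\xi(s)\big|_{L^{qr_2}}^{q},
\]
with $r_2$ and the Sobolev exponent for $|u_\kappa|^{p/2}$ chosen so that the H\"older balance closes and $\mathbb{H}_{\rho,\aleph}$ embeds into the relevant space--time Lebesgue space (Proposition~\ref{interp_rho}); the existence of an admissible choice is precisely what \eqref{cond00q} and \eqref{condp000} encode. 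A weighted Young inequality with four factors then distributes this product: $\||u_\kappa(s)|^{p/2}\|_{H^1}$ receives an arbitrarily small weight, so its square $=\tfrac{p^2}{4}\langle|u_\kappa|^{p-2}\nabla u_\kappa,\nabla u_\kappa\rangle+|u_\kappa|_{L^p}^p$ goes into the dissipation term and the $\lambda$-integral; the $|u_\kappa|_{L^p}$ factor goes into the $\lambda$-integral; the factor $|\eta(s)|_{L^p}$ also receives an arbitrarily small weight (with a power $\le p$), so after one more elementary Young step and $\int_0^t e^{-\lambda s}|\eta(s)|_{L^p}^p\,ds\le T\sup_{s}e^{-\lambda s}|\eta(s)|_{L^p}^p$ it contributes $\tilde{\tilde C}\,\EE\sup_s e^{-\lambda s}|\eta(s)|_{L^p}^p$ with $\tilde{\tilde C}$ as small as desired; and the remaining factor $|\phi_\kappa^{1/q}(\xi,s)\xi(s)|_{L^{qr_2}}$, raised to the complementary power $N=N(p,q)$ and integrated in $s$, is $\le(C\,\|\phi_\kappa^{1/q}(\xi,\cdot)\xi\|_{\mathbb{H}_{\rho,\aleph}})^{N}\le C(2\kappa)^{N}=:C(\kappa)$, a $\kappa$-dependent additive constant. (Equivalently, this step can be invoked verbatim from Technical Proposition~\ref{tetaxi}, i.e.\ \eqref{eee}, after fixing its parameters $m,r,\tilde r$.) Collecting everything, choosing the Young weights so that the accumulated coefficients of $\EE\sup_s e^{-\lambda s}|u_\kappa(s)|_{L^p}^p$ and of the dissipation term stay below $1$ and $r_1p(p-1)$, and then taking $\lambda=\lambda(\kappa)$ larger than the accumulated coefficient of $\int_0^t e^{-\lambda s}|u_\kappa|_{L^p}^p\,ds$ (which depends on $a_1,\gamma_1,b_1,\kappa$ and the Young weights), one moves the absorbed pieces to the left and obtains
\begin{align*}
&\EE\sup_{0\le s\le t}e^{-\lambda s}|u_\kappa(s)|_{L^p}^p + C(p)\,\EE\int_0^t e^{-\lambda s}\langle u_\kappa^{p-2}(s)\nabla u_\kappa(s),\nabla u_\kappa(s)\rangle\,ds + \tilde C(\lambda,p)\,\EE\int_0^t e^{-\lambda s}|u_\kappa(s)|_{L^p}^p\,ds\\
&\qquad\le C\big(\EE|u_0|_{L^p}^p+1\big) + \tilde{\tilde C}\,\EE\sup_{0\le s\le t}e^{-\lambda s}|\eta(s)|_{L^p}^p,
\end{align*}
i.e.\ the assertion (the additive $\kappa$- and $T$-dependent constant from $(\mathrm{II})$ and from $C(\kappa)$ being absorbed into the first term on the right; in the application of interest $p=p^\ast$, so $|\eta|_{L^p}=|\eta|_{L^{p^\ast}}$, which is the quantity controlled in $\mathcal{K}^\kappa_\MA$).

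The step I expect to be the main obstacle is $(\mathrm{III})$: the sign-indefinite, non-dissipative forcing $\eta|\xi|^q$ demands $u_\kappa$ in a strictly higher spatial integrability than $L^p$, available only from the Laplacian dissipation via $|u_\kappa|^{p/2}\in H^1$, and the simultaneous requirements that the $\xi$-part fall into a space--time Lebesgue space controlled by $\|\xi\|_{\mathbb{H}_{\rho,\aleph}}$ (hence by $2\kappa$) and that the $\eta$-part detach with an arbitrarily small coefficient pin down the admissible H\"older/interpolation exponents and thereby force the parameter restrictions \eqref{cond00q} and \eqref{condp000}. A secondary difficulty is the rigorous derivation of the $|\cdot|_{L^p}^p$ It\^o formula by Galerkin truncation with constants uniform in the truncation parameter.
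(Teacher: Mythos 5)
Your overall architecture coincides with the paper's: It\^o's formula for $e^{-\lambda t}|u_\kappa(t)|_{L^p}^p$, absorption of the trace and martingale terms via \eqref{itotrace} and \eqref{hierbeginnts}--\eqref{hierendets} together with Proposition~\ref{runst1}, a $\kappa$-dependent bound on the $\xi$-factor coming from the cutoff through Proposition~\ref{interp_rho} (equivalently Technical Proposition~\ref{tetaxi}), and finally $\lambda=\lambda(\kappa)$ large. The only genuine divergence is local: for the coupling term you split $|u_\kappa|^{p-1}=|u_\kappa|^{p/2}\cdot|u_\kappa|^{p/2-1}$ and feed $|u_\kappa|^{p/2}$ into $H^1\hookrightarrow L^{s}$, whereas the paper pairs $u_\kappa^{p-1}$ against the forcing by duality, $\langle u^{p-1},f\rangle\le|\nabla u^{p/2}|_{L^2}\,|\nabla^{-1}(u^{p/2-1}f)|_{L^2}$, and then runs H\"older on the $L^p$ scale. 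Both extract the same extra integrability from the same dissipation term, and your variant even avoids the paper's $L^{2pq/(p-4)}$ exponent.

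There is, however, one place where your bookkeeping does not close as written. After the four-factor H\"older bound $A\,B\,C\,D$ with $A=\||u_\kappa|^{p/2}\|_{H^1}$, $B=|u_\kappa|_{L^p}^{p/2-1}$, $C=|\eta|_{L^p}$, $D=|\phi_\kappa^{1/q}\xi|^q_{L^{qr_2}}$, you assign $A^2$ to the dissipation, $C^p$ to a small multiple of $\sup_s e^{-\lambda s}|\eta(s)|^p_{L^p}$, $B$ ``to the $\lambda$-integral'' and $D$ ``raised to the complementary power and integrated in $s$''. But the Young weights $\tfrac12+\tfrac1p$ for $A$ and $C$ leave exactly $\tfrac12-\tfrac1p$ for $B$ and $D$ together; sending $B=|u_\kappa|_{L^p}^{p/2-1}$ to the exponent $\tfrac{2p}{p-2}$ (the only one producing $|u_\kappa|_{L^p}^{p}$) exhausts this budget, leaving exponent $\infty$ for $D$ --- i.e.\ you would need $\sup_s|\phi_\kappa^{1/q}(\xi,s)\xi(s)|_{L^{qr_2}}\le C(\kappa)$, which the cutoff does \emph{not} provide, since $\mathbb{H}_{\rho,\aleph}$ controls $L^\infty_t$ only in $H^\rho_2$ and $\rho$ may be negative. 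The repair is the extra splitting the paper performs: write $|u_\kappa|_{L^p}^{p/2-1}=|u_\kappa|_{L^p}^{p/2-1-\delta}\cdot|u_\kappa|_{L^p}^{\delta}$ for small $\delta>0$, send the bulk into $\varepsilon_1\sup_s e^{-\lambda s}|u_\kappa(s)|_{L^p}^p$ (small Young weight), and pair only the $\delta$-power with $D$ under H\"older in time with exponents $p/\delta$ and $p/(p-\delta)$, so that the $u$-part lands in $\int_0^te^{-\lambda s}|u_\kappa(s)|^p_{L^p}\,ds$ and $D$ sits in a \emph{finite}-power time integral, which the cutoff and Proposition~\ref{interp_rho} do bound by $C(\kappa)$; this is exactly where the paper's condition \eqref{wenned001} and hence the restriction \eqref{condp000} enter. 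With this correction your argument goes through and is otherwise faithful to the paper's proof.
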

\begin{proof}
First, note that direct calculations gives 
$$ \la u_{\kappa}^{p-2}(s)\nabla \uk(s) ,\nabla \uk(s)\ra =| \uk^{\frac p2-1}(s)\nabla \uk(s)|_{L^2}^2.
$$ 	
	The It\^o formula applied to 
	$\Psi(t)=e^{-\lambda t} |\uk(t)|_{L^p}^p$ 
	gives 
\del{for $\delta=\frac p2$}
	\DEQS
\lqq{ e^{-\lambda t}|\uk(t)|_{L^p}^p +r_1p (p-1)	\int_0^ t e^{-\lambda s} \la u_{\kappa}^{p-2}(s)\nabla \uk(s) ,\nabla u_{\kappa}(s)\ra ds+\lambda\int_0^ t  e^{-\lambda s}|\uk(s)|_{L^p}^p \, ds\qquad\qquad
}
\\
 &=& 
    |u_0|_{L^p}^p   -c_1p	\int_0^ t e^{-\lambda s}\la \uk^{p-1}(s),\phi_\kappa(\eta,\xi,s)\eta(s)|\xi|^q(s)\ra ds\qquad\qquad\qquad\qquad
\\
&&{} + p{\sigma_1}\sum_{k=1}^\infty \int_0^t\int_\CO e^{-\lambda s}|\uk (s, x)|^{p}  \lambda_k^{-\gamma_1/2}\varphi_k (x)\,dx\,d \mathbf{w}^1_k (s) \\
&&{} +  p \int_0^te^{-\lambda s}(a_1 |\uk (s)|_{L^p}^p+ b_1\la \uk ^{p-1}(s),1\ra)\,ds \\
&&{} + \frac{\sigma_1^2}{2}    
	\sum_{k=1}^\infty \int_0^t e^{-\lambda s}\mbox{Tr} D^2\psi(\uk (s, x))[g(\uk ) \varphi_k,g(\uk )\varphi_k]\,ds.
%
\EEQS
 For convenience, we estimate the terms at the right hand side first.  \eqref{itotrace} can be used to to estimate the trace term;
 \DEQS
 \lqq{\frac{\sigma_1^2}{2} \sum_{k=1}^\infty \int_0^t e^{-\lambda s}\mbox{Tr} D^2\psi(\uk (s, x))[g(\uk ) \varphi_k,g(\uk )\varphi_k]\,ds }  && \\
 &\leq& \epsilon\frac{\sigma_1^2}{2} \int_0^t|e^{-\lambda s}u_\kappa(s)|_{H^\theta_p}^p+C(\epsilon)\frac{\sigma_1^2}{2}\int_0^te^{-\lambda s}|u_\kappa(s)|_{L^p}^p .
 \EEQS
 Also, we estimate the term involving $\eta|\xi|^q$ for $\delta=\frac p2$ as,
 \DEQS
 	\lqq{c_1p\int_0^ t e^{-\lambda s}\la \uk^{p-1}(s),\phi_\kappa(\eta,\xi,s)\eta(s)|\xi|^q(s)\ra ds }&&\\
 	&\le & c_1p	\int_0^ t e^{-\lambda s}\la \uk^{p-1}(s),\eta(s)|\xi|^q(s)\ra ds
 	\\
 	&\le & c_1p	\int_0^ t e^{-\lambda s} |\nabla \uk^{\delta }(s)|_{L^2}
 	|\nabla^{-1}(\uk^{p-1-\delta}\eta(s)|\xi|^q(s)|_{L^2} ds
 	\\
 	&\le & \ep 	\int_0^ t e^{-\lambda s}  |u^{\delta-1 }(s)\nabla u(s)|_{L^2}^2
 	\, ds+ 
 	C_pC_\ep 	\int_0^ t e^{-\lambda s}|u^{p-1-\delta }(s)\eta(s)|\xi|^q(s)|_{L^1}^2ds.
 \EEQS

 We then have
\DEQS
\lqq{ e^{-\lambda t}|\uk(t)|_{L^p}^p +r_1p (p-1)	\int_0^ te^{-\lambda s} \la u_{\kappa}^{p-2}(s)\nabla \uk(s) ,\nabla u_{\kappa}(s)\ra ds+\lambda\int_0^ t  e^{-\lambda s}|\uk(s)|_{L^p}^p \, ds
}&&\\
&\le & |u_0|_{L^p}^p-c_1 p	\int_0^ t e^{-\lambda s} |\nabla \uk^{\delta }(s)|_{L^2}
|\nabla^{-1}(\uk^{p-1-\delta}\eta(s)|\xi|^q(s)|_{L^2} ds  \\
&&{}  + p{\sigma_1}\sum_{k=1}^\infty \int_0^t\int_\CO e^{-\lambda s}|\uk (s, x)|^{p}  \lambda_k^{-\gamma_1/2}\varphi_k (x)\,dx\,d \mathbf{w}^1_k (s) \\
&&{}+ (pa_1+1)\int_0^t e^{-\lambda s}|\uk (s)|_{L^p}^{p}\,ds-\frac{b_1^pe^{-\lambda t}}{\lambda} \\
&&{}+ \epsilon \int_0^te^{-\lambda s}|u_\kappa(s)|_{H^\theta_p}^p+C(\epsilon)\int_0^te^{-\lambda s}|u_\kappa(s)|_{L^p}^p 
\\
&\le &  |u_0|_{L^p}^p+\ep 	\int_0^ t e^{-\lambda s}  |u^{\delta-1 }(s)\nabla u(s)|_{L^2}^2
\, ds+ 
	C_pC_\ep 	\int_0^ t e^{-\lambda s}|u^{p-1-\delta }(s)\eta(s)|\xi|^q(s)|_{L^1}^2ds\\
	&&{}  + p{\sigma_1}\sum_{k=1}^\infty \int_0^t\int_\CO e^{-\lambda s}|\uk (s, x)|^{p}  \lambda_k^{-\gamma_1/2}\varphi_k (x)\,dx\,d \mathbf{w}^1_k (s)  \\
	&&{}+ (pa_1+1)\int_0^t e^{-\lambda s}|\uk (s)|_{L^p}^{p}\,ds + \epsilon \int_0^t|e^{-\lambda s}u_\kappa(s)|_{H^\theta_p}^p+C(\epsilon)\int_0^te^{-\lambda s}|u_\kappa(s)|_{L^p}^p .
\EEQS
The H\"older inequality with $m=\frac {2p}{p-2}$ and $m'=\frac {2p}{p+2}$ gives
	\DEQS
\lqq{ |\uk(t)|_{L^p}^p +p	\int_0^ t \la u_{\kappa}^{p-2}(s)\nabla \uk(s) ,\nabla u_{\kappa}(s)\ra ds+\lambda\int_0^ t  e^{-\lambda s}|\uk(s)|_{L^p}^p \, ds
}
\\
&\le &  |u_0|_{L^p}^p+\ep 	\int_0^ t e^{-\lambda s}| u^{\delta-1 }(s)\nabla u(s)|_{L^2}^2\, ds+ 
C_pC_\ep 	\int_0^ t e^{-\lambda s} |u^{\frac p2-1}(s)|_{L^\frac{2p}{p-2}}^2 |\eta(s)|\xi|^q(s)|_{L^\frac {2p}{p+2}}^2ds  \\
&&{}  + p{\sigma_1}\sum_{k=1}^\infty \int_0^t\int_\CO e^{-\lambda s}|\uk (s, x)|^{p}  \lambda_k^{-\gamma_1/2}\varphi_k (x)\,dx\,d \mathbf{w}^1_k (s)  \\
&&{}+ (pa_1+1)\int_0^t e^{-\lambda s}|\uk (s)|_{L^p}^{p}\,ds  {+ \epsilon \int_0^te^{-\lambda s}|u_\kappa(s)|_{H^\theta_p}^p+C(\epsilon)\int_0^te^{-\lambda s}|u_\kappa(s)|_{L^p}^p }.
\EEQS
Let $\delta>0$ but small and $m_1$, $m_2$, and $m_3$ chosen such that
$$
(p-2-\delta)m_1=p,\quad 2m_2=p,\quad \delta m_3=p.
$$
Additionally, let us put $\gamma_1=\frac 1{m_1}$, $\gamma_2=\frac 1{m_2}$,  and $\gamma_3=\frac 1{m_3}$.
	In the next step we will argue similarly we have done in Technical Proposition \ref{tetaxi}.
Applying the H\"older inequality gives
\DEQS 
\lqq{ e^{-\lambda t}|\uk(t)|_{L^p}^p +p	\int_0^ te ^{-\lambda s}\la u_{{\kappa}}^{p-2}(s)\nabla \uk(s) ,\nabla u_{\kappa}(s)\ra ds+{\lambda}\int_0^ t  e^{-\lambda s}|\uk(s)|_{L^p}^p \, ds
}
\\
&\le &  {|u_0|_{L^p}^p}+ \ep 	\int_0^ te^{-\lambda s} | u^{\delta-1 }(s)\nabla u(s)|_{L^2}^2\, ds
\\
&&{} + 
C_pC_\ep \sup_{0\le s\le t}e^{-\lambda\gamma_1 s}|u(s)|_{L^p}^{p-2-\delta }
 \sup_{0\le s\le t}e^{-\lambda\gamma_3 s} |\eta (s)|_{L^p}
	\int_0^ t e^{-\lambda\gamma_2 s}|u(s)|_{L^p}^\delta  ||\xi|^q(s)|_{L^\frac{2p}{p-4}}^2ds \\
	&&{}  {+ p{\sigma_1}\sum_{k=1}^\infty \int_0^t\int_\CO e^{-\lambda s}|\uk (s, x)|^{p}  \lambda_k^{-\gamma_1/2}\varphi_k (x)\,dx\,d \mathbf{w}^1_k (s)}  \\
	&&{}{+ (pa_1+1)\int_0^t e^{-\lambda s}|\uk (s)|_{L^p}^{p}\,ds } {+ \epsilon \int_0^te^{-\lambda s}|u_\kappa(s)|_{H^\theta_p}^p+C(\epsilon)\int_0^te^{-\lambda s}|u_\kappa(s)|_{L^p}^p }
\\
&\le &  {|u_0|_{L^p}^p}+ \ep 	\int_0^ te^{-\lambda s} | u^{\delta-1 }(s)\nabla u(s)|_{L^2}^2\, ds
\\
&&{} + 
C_pC_\ep \sup_{0\le s\le t}e^{-\lambda\gamma_1 s}|u(s)|_{L^p}^{p-2-\delta }
\sup_{0\le s\le t}e^{-\lambda\gamma_3 s} |\eta (s)|_{L^p}
\\
&&\qquad \times {}\lk( \int_0^ t e^{-\lambda s}|u(s)|_{L^p}^p\, ds\rk)^\frac \delta p 
\lk( \int_0^ t |\xi(s)|_{L^\frac{2pq}{p-4}}^{2q\frac p{p-\delta} }\, ds\rk)^\frac {p-\delta}p \\
&&{}  {+ p{\sigma_1}\sum_{k=1}^\infty \int_0^t\int_\CO e^{-\lambda s}|\uk (s, x)|^{p}  \lambda_k^{-\gamma_1/2}\varphi_k (x)\,dx\,d \mathbf{w}^1_k (s)}  \\
&&{}{+ (pa_1+1)\int_0^t e^{-\lambda s}|\uk (s)|_{L^p}^{p}\,ds } {+ \epsilon \int_0^te^{-\lambda s}|u_\kappa(s)|_{H^\theta_p}^p+C(\epsilon)\int_0^te^{-\lambda s}|u_\kappa(s)|_{L^p}^p  }.
	\EEQS
	The Young inequality gives
\DEQSZ
\lqq{e^{-\lambda t} |\uk(t)|_{L^p}^p +p	\int_0^ te ^{-\lambda s}\la u_{{\kappa}}^{p-2}(s)\nabla \uk(s) ,\nabla u_{{\kappa}}(s)\ra ds   { + \lambda \int_0^ t  e^{-\lambda s}|\uk(s)|_{L^p}^p \, ds} } && \nonumber
\\
&\le & {|u_0|_{L^p}^p} + \ep 	\int_0^ te^{-\lambda s} | u^{\delta-1 }(s)\nabla u(s)|_{L^2}^2\, ds \nonumber
\\
&&{} + 
\ep_1  \sup_{0\le s\le t}e^{-\lambda s}|u(s)|_{L^p}^{p}+\ep_2 
\sup_{0\le s\le t}e^{-\lambda s} |\eta (s)|_{L^p}^p \nonumber
\\
&&{} +C(\ep_1,\ep_2) \lk( \int_0^ t e^{-\lambda s}|u(s)|_{L^p}^p\, ds\rk)
\lk( \int_0^ t |\xi(s)|_{L^\frac{2pq}{p-4}}^{2q\frac p{p-\delta} }\, ds\rk)^{\frac {p-\delta}\delta } \nonumber\\
&&{}  {+ p{\sigma_1}\sum_{k=1}^\infty \int_0^t\int_\CO e^{-\lambda s}|\uk (s, x)|^{p}  \lambda_k^{-\gamma_1/2}\varphi_k (x)\,dx\,d \mathbf{w}^1_k (s)}  \nonumber \\
&&{}{+ (pa_1+1)\int_0^t e^{-\lambda s}|\uk (s)|_{L^p}^{p}\,ds } {+ \epsilon \int_0^te^{-\lambda s}|u_\kappa(s)|_{H^\theta_p}^p+C(\epsilon)\int_0^te^{-\lambda s}|u_\kappa(s)|_{L^p}^p }  \label{e:test} .
\EEQSZ
The last term on the right hand side can be estimated by Proposition \ref{interp_rho}. In particular, we need
\DEQSZ\label{wenned001}
\frac d2-\rho\le \frac \FE 2\frac {2(p-\delta)}{2qp}
+\frac {d(p-4)}{2pq}.
\EEQSZ 
If $p>4$ is large enough and \eqref{cond00q} is satisfied,
then there exists a number $p$ and $\delta$ such that \eqref{wenned001} is satisfied.
Since $\frac {2(p-\delta)}{2qp}\uparrow \frac 1q$ for $\delta\to 0$ and $\frac {d(p-4)}{2pq}\uparrow \frac d{2q}$ for $p\to\infty$, $\delta$ has to be chosen small enough and $p$ large enough to get the inequality satisfied for the given $\rho,\FE,$ and $q$.

\noindent {Next, applying the expectation and estimating the stochastic integral as in  \eqref{hierendets}, we have for every $\epsilon_1,\,\epsilon_2>0$, $d,\,p,\,\gamma_1$ satisfying $\frac{d}{2}+\frac{d}{p}-\min(\frac{2}{p},\frac{d}{p})<\gamma_1$ and $\theta\in(0,\frac{2}{p})$,  
		\DEQS
		\lqq{\EE p{\sigma_1}\sum_{k=1}^\infty \int_0^t\int_\CO e^{-\lambda s}|\uk (s, x)|^{p}  \lambda_k^{-\gamma_1/2}\varphi_k (x)\,dx\,d \mathbf{w}^1_k (s)}&&\\
		&\le&   C(\ep_1,\ep_2 )\,\EE \Big(\int_0^te^{-\lambda s} \lk|u_\kappa(s)\rk|_{L^p}^{p}\, ds\Big) 
	\\
	&&{} 	+(\ep_1+\ep_2) \EE \sup_{0\le s\le t} e^{-\lambda s}\lk|u_\kappa(s)\rk|_{L^p}^{p }  
		+\ep_2 \EE\lk(
		\int_0^t e^{-\lambda s}|u_\kappa(s)|_{H^{\theta}_p} ^{p} \,ds\rk) 
		\EEQS
	}

\noindent We now apply $\EE$ to \eqref{e:test}, collect similar terms terms and use the embedding $H^1_p\embed H^\theta_p$ for $\theta\in(0,\frac{2}{p})$. Hence we get 
	\DEQS
\lqq{
(1-\ep_1)	\EE e ^{-\lambda s} |\uk(t)|_{L^p}^p +(p-\ep) 	\int_0^ t\EE  e ^{-\lambda s}\la u_{ {\kappa}}^{p-2}(s)\nabla \uk(s) ,\nabla u_{ {\kappa}}(s)\ra ds}
\\
&&{}+(\lambda - C(\kappa,\ep_1,\ep_2))\int_0^ t  e^{-\lambda s}|\uk(s)|_{L^p}^p \, ds
\\
&\le & |\uk(0)|_{L^p}^p+\ep_2\EE \sup_{0\le s\le t}e^{-\lambda s} |\eta (s)|_{L^p}^p.
\EEQS 
\del{Taking $\ep_1,\ep_2<1$, $\lambda >$, and $K_2$ so large that
$$
K_2\le \frac 1{1-\ep_1}( )\ep_2 K_2,
$$
then, 	}
\end{proof}

\bigskip 
\subsection{\textbf{Compactness of the fixed point operator}}\label{ss:compactness}\hfill\\

\noindent In the following, we apply the version of the Aubin-Lions Lemma provided in Theorem~\ref{th-gutman} to establish the compactness of the mapping $\mathcal{V}_{\mathfrak{A}}(\eta,\xi)$ restricted to the set  $\mathcal{K}_{\mathfrak{A}}(K_1, K_2, K_3) \hookrightarrow L^2(0, T; L^2(\CO)) \times \mathbb{H}_{\rho, \aleph}$ for any choice of $K_1, K_2, K_3 > 0$. 
From this result, it follows
 the compactness of the fixed point operator. 
Recall that $\mathcal{V}_{\mathfrak{A}}(\eta,\xi)=(u_{\kappa},v_{\kappa})$. 


\begin{proposition}\label{semigroup_compact}
Under the conditions of Proposition \ref{reg_uk}, the following holds.
	\begin{enumerate}
		\item[(i)]
		For all initial conditions $u_0$ and $v_0$, for all $\kappa\in\NN$, and all constants $K_1,K_2>0$, and $K_3>0$, there exist constants $C_1,C_2>0$ and $ \delta_0,\alpha_0>0$, 
		such that for any  pair of two processes $(\eta,\xi)$  belonging to $\spK$ the solutions $\uk$ and $\vk$ of equations
		\eqref{sysu}  and \eqref{sysw}, respectively,
		satisfy
		\begin{enumerate}
			\item[(a)] $$%
			\EE \lk\| \uk\rk\|_{ \WW^ {\alpha_0}_2(0,T ;H^{-2}_2)}+ \EE \lk\| \uk\rk\|_{L^2(0,T ;H^{\delta_0}_2)}<C_1,
			$$
			and
			\item[(b)]
			$$
			\EE\lk\| \vk\rk\|_{ \WW^ {\alpha_0}_2(0,T ;H^{-2}_{2})}+\EE \lk\| \vk\rk\|_{L^2(0,T;H^{\rho+\delta_0+\aleph/2}_{2})}  \le C_2.
			$$
		\end{enumerate}
		\item[(ii)]
		For all initial conditions $u_0$ and $v_0$, for all $\kappa\in\NN$, and all constants $K_1,K_2>0$, and $K_3>0$, there exist a constant $C_3>0$ and $ \rho_0,\alpha_0>0$, 
		such that for any  pair of two processes $(\eta,\xi)$  belonging to $\spK$ the solutions $\uk$ and $\vk$ of equations
		\eqref{sysu}  and \eqref{sysw}, respectively,
		satisfy
		$$
		\EE \lk\| \vk
		\rk\|_{C^{(\alpha_0)}_b(0,T;H^{-2}_{2})} +
		\sup_{0\le s\le T} \EE\lk| \vk(t)-e^{t\Delta^{\aleph/2}}v_0\rk|_{H^{\rho+\rho_0}_2}  \le C_3.
		$$
	\end{enumerate}
	\footnote{The space $\mathbb{W}^\alpha_2$ as in \cite[Chapter 5]{runst}}
\end{proposition}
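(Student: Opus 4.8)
The plan is to start from the mild formulation of the decoupled linear system \eqref{sysu}--\eqref{sysw} and to split each of $\uk$ and $\vk$ into three contributions: the free evolution of the initial datum, the deterministic convolution of the drift (the lower-order terms $a_i\cdot,b_i$ together with the cut-off nonlinearity $f(t):=\phi_\kappa(\xi,t)\,\eta(t)\,|\xi|^q(t)$), and the stochastic convolution $\sigma_i\int_0^{\cdot}e^{(r_i L_i+a_iI)(t-s)}g_{\gamma_i}(\cdot)\,dW_i(s)$, where $L_1=\Delta$ and $L_2=A$. For each contribution I would prove separately (a) a spatial regularity bound of the type $L^2(0,T;H^{\delta_0}_2(\CO))$ for $\uk$, respectively $L^2(0,T;H^{\rho+\delta_0+\aleph/2}_2(\CO))$ for $\vk$, and (b) a fractional time-regularity bound in $\WW^{\alpha_0}_2(0,T;H^{-2}_2(\CO))$; the Hölder bound of part (ii) is then obtained, on the deterministic parts, from the Sobolev embedding $\WW^{\beta}_2(0,T;H^{-2}_2)\embed C^{(\beta-1/2)}_b(0,T;H^{-2}_2)$ for $\beta\in(1/2,1)$, and, on the stochastic part, from the factorisation method.

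The first ingredient is a uniform bound on the nonlinearity. I would invoke Technical Proposition~\ref{tetaxi} with $\tilde r=p^\ast\geq 4$ (and, where the spatial moment of $\uk$ itself is needed, Proposition~\ref{limituk}) to obtain, for each fixed $\kappa$, a constant $C(\kappa)>0$ with $\EE\|f\|_{L^m(0,T;L^r)}^{p^\ast}\le C(\kappa)\,\EE\sup_{0\le s\le T}|\eta(s)|_{L^{p^\ast}}^{p^\ast}\le C(\kappa)\,e^{\lambda T}K_2$ for the relevant exponents, uniformly over $(\eta,\xi)\in\spK$. Combined with the embedding $L^r\embed H^{-2}_2(\CO)$ (available in $d\le 2$ since $H^2_2\embed C(\overline{\CO})$) and the parabolic smoothing of $e^{r_1\Delta t}$ and $e^{r_2 A t}$, this controls the deterministic convolution in the time-regularity norm and in $L^2(0,T;H^{\delta_0}_2)$. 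For the extra $\delta_0$ of spatial regularity in the $\vk$-estimate I would instead apply the smoothing estimate of Theorem~\ref{Th:compact} to $f$ in the slightly better space $L^2(0,T;H^{\rho-\aleph/2+\delta_0}_2)$, which is available through Proposition~\ref{interp_rho} because the cut-off forces $\|\phi_\kappa^{1/q}(\xi,\cdot)\,\xi\|_{\mathbb{H}_{\rho,\aleph}}\le\kappa$ and one may spend an arbitrarily small portion of that bound on integrability. The free-evolution term $e^{r_2 A t}v_0$ carries only the regularity of $v_0\in H^\rho_2(\CO)$ and is not integrable in the higher norm near $t=0$; this is exactly why part (ii) is stated for the difference $\vk(t)-e^{t\Delta^{\aleph/2}}v_0$, to which the convolution estimates apply with a genuine gain $\rho_0>0$ in space, uniformly in $t\in[0,T]$, while the Hölder continuity in $H^{-2}_2$ of the map $t\mapsto e^{r_2 A t}v_0$ itself follows from the standard bound $|(e^{r_2 A\tau}-I)v_0|_{H^{-2}_2}\le C\tau^{\theta}|v_0|_{H^{\rho}_2}$ for small $\theta>0$.

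For the stochastic convolutions I would use the maximal inequality \eqref{burkholder} together with the $\Upsilon$-radonifying norm estimates already established: \eqref{eq: Hil-Schi2} for $g_{\gamma_1}(\uk)$ with values in $H=L^2(\CO)$, and \eqref{eq: gYoung0}--\eqref{eq: gYoung2} (Proposition~\ref{gammaradonv}) for $g_{\gamma_2}(\vk)$ with values in $H^\rho_2(\CO)$ and in $H^{\rho+\eps}_2(\CO)$. Inserting the a priori bounds $\EE\int_0^T|\uk(s)|_{H^1_2}^2\,ds<\infty$ and $\EE\int_0^T|\vk(s)|_{H^{\rho+\aleph/2}_2}^2\,ds<\infty$ of Proposition~\ref{reg_uk} shows that $s\mapsto g_{\gamma_i}(\cdot)$ lies in $L^{p^\ast}(\Omega;L^2(0,T;\Upsilon(H,E)))$ for the relevant $E\in\{L^2,H^\rho_2,H^{\rho+\eps}_2\}$, the higher moments following from the linearity of \eqref{sysw} and the $p^\ast$-th moment bound on $f$ obtained above. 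The standard fact (see \cite{maxjan}, \cite{DaPrZa}) that such an integrand produces a stochastic convolution in $\WW^{\alpha_0}_2(0,T;E)$ for every $\alpha_0<1/2$, with moments controlled by those of $\int_0^T|g_{\gamma_i}(\cdot)|_{\Upsilon(H,E)}^2\,ds$, then yields the time-regularity estimates, while the factorisation method (with exponent $p^\ast>2$) upgrades this to Hölder continuity with values in $H^{-2}_2$ for (ii); taking $E$ equal to $L^2$, $H^{\rho+\delta_0}_2$, respectively $H^{\rho+\delta_0+\aleph/2}_2$ supplies the spatial parts. Summing the three contributions gives the asserted bounds with constants depending on $\kappa$, $K_1,K_2,K_3$ and the prescribed moments of $u_0,v_0$ but \emph{not} on $(\eta,\xi)\in\spK$, which is what Theorem~\ref{th-gutman} requires.

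I expect the main obstacle to be the bookkeeping at the junctions: one must choose $\delta_0,\rho_0,\alpha_0>0$ small enough that the cut-off-based nonlinearity bound of Technical Proposition~\ref{tetaxi}, the smoothing exponents of $e^{r_2 A t}$, and the spatial budget left in the $\Upsilon$-norm estimates \eqref{eq: gYoung0}--\eqref{eq: gYoung2} (which in turn depend on $\gamma_1,\gamma_2$ via Assumption~\ref{wiener}) are simultaneously compatible, and that the time integrability coming from $L^m(0,T;L^r)$ is not consumed by the singularity of the semigroup at $s=t$. No individual step is deep, but the admissible window for the parameters is narrow, and it is here that the hypotheses of Proposition~\ref{reg_uk} are genuinely used.
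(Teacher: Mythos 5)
Your proposal follows essentially the same route as the paper's proof: the mild-solution decomposition into free evolution, deterministic convolution of the cut-off nonlinearity, and stochastic convolution; Technical Proposition~\ref{tetaxi} combined with the embedding of $L^r$ into negative Sobolev spaces to control the drift term uniformly over $\spK$; maximal (stochastic) regularity from \cite{maxjan} plus the $\Upsilon$-norm estimates of Appendix~\ref{Appendix_A_noise} and Proposition~\ref{gammaradonv} for the convolutions; and subtraction of $e^{tA}v_0$ for the compact containment condition in part (ii). The only cosmetic difference is that you invoke the factorisation method by name for the H\"older bound on the stochastic convolution, whereas the paper cites the ready-made estimate \eqref{maxjanineq} (Theorem 3.5 of \cite{maxjan}), which is proved by that very method.
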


\begin{proof}
	We will focus here only on the difficult  terms and omit the linear terms.
	In particular, we assume that the pair $(\uk,\vk)$ solves the following system
	\DEQSZ
	\\
	\notag
	\lk\{\barray d{\uk }(t)&=& [r_1 \,\Delta \uk (t)+a_1 \uk (t)+b_1 - c_1 \phi_\n(\xi,t)\cdot \eta(t)\cdot \,|\xi|^q(t)]\, dt +\sigma_1 \, g(\uk (t))\, dW_1,
	\\
	\uk (0)&=&u_0.
	\earray\rk. \label{sysu1}
	\EEQSZ
	and 
	\DEQSZ
	\\
	\notag
	\lk\{\barray  d{\vk  }(t)&=& [r_2 \,\tesfalem{A} \vk  (t)+a _2 \vk  (t)+b_2+ c_2\phi_\n(\xi, t)\cdot \eta(t)\cdot |\xi|^q(t)]\, dt + \sigma_2 \,g(\vk  (t)) \, dW_2(t), \\
	\vk  (0)&=&v_0\earray\rk.
	\label{sysw1}
	\EEQSZ
	\del{We can write this now as mild solution in the following form
		$$
		\uk(t)=e^{t\Delta}u_0-\int_0^ t e^{\Delta(t-s)} \phi_\n(\xi,s) \eta(s) \,|\xi|^q(s)\, ds + \int_0^ t e^{\Delta(t-s)}\sigma_1 \, g(\uk (t))\, dW_1(s),
		$$
		and
		$$
		\vk(t)=e^{t\Delta}v_0+\int_0^ t e^{\Delta(t-s)} \phi_\n(\xi,s) \eta(s) \,|\xi|^q(s)\, ds + \int_0^ t e^{\Delta(t-s)}\sigma_1 \, g(\vk (t))\, dW_2(s).
		$$}
Finally,  let us remind the definition of $\CK_\MA(K_1,K_2,K_3)$ is given by 
\begin{align*}
	\CK_\MA(K_1,K_2,K_3) &:= \Big\{(\eta,\xi) \in\CMM_\MA (0,T)\mid 
	\\ &
\mathbb{E}\,\|\eta\|^{2}_{{\mathbb{H}_{0,2}}}\leq K_1, \quad {\mathbb{E}\sup_{0< s\le T}e^{-\lambda s}|\eta(s)|_{L^{p^\ast}}^{p^\ast}\le K_2}, \\
	&	\mbox{ and }\mathbb{E}\|\xi\|^{\tesfalem{2}}_{\BH_{\rho, \aleph}}\leq K_3,
	\quad  
	\Big\}.
\end{align*}

and, remember the definition $\phi_\kappa(t,\tesfalem{(\eta,\xi)}):=\psi_\n(h(\tesfalem{(\eta,\xi)},t))$, where $h(\tesfalem{(\eta,\xi)},t )= \|\xi\|_{\mathbb{H}_{\tesfalem{\rho,\aleph}}}$
	%
	%
	\del{\bf why do we need the proposition above:}
	
	\medskip
	Let us start with (i)-(a).
	Let us define the convolution of a process $w$ by
	$$
	\mathfrak{F}(w)(t):= \int_0^ t e^{(t-s)\tesfalem{(r_1\Delta+a_1I)}}w(s)\, ds
	$$
	and
	$$
	\mathfrak{S}_\gamma(w)(t):= \int_0^ t e^{(t-s)\tesfalem{(r_1\Delta+a_1I)}}g_\gamma(w(s))\,dW(s),\quad t\in[0,T].
	$$
	Note that given  $(\eta,\xi)\in\CK(K_1,K_2,K_3)$,
	we have by the variation of constant formula
	\DEQS
	\uk(t)
	&= & e^{-\tesfalem{(r_1\Delta+a_1I)} t}u_0
	-\mathfrak{F}(  \phi_\kappa(\xi,\cdot) \eta\, |\xi|^q)(t)
	+  \sigma_1\mathfrak{S}(  \uk)(t)
	\\
	&=:& I_0(t)+I_1(t)+I_2(t)
	.
	\EEQS
	\begin{remark}
		In Appendix \ref{appb}, we summarize several estimates related to convolution processes. These estimates should be compared with Remark \ref{Prop:2.1-remark-F} and the definition given in \eqref{def_st_con}.
	\end{remark} %

	\noindent Fix some  $\delta_0>0$ with $0<\delta_0<\frac{1}{2}$ 
	and set  
	$$
	B_0=H^{\delta_0}_2(\CO)\qquad \mbox{ and }\qquad B_1=L^2	(\CO).
	$$ 
	Now we will show that there exists some $C>0$ such that for all
	$(\eta,\xi)\in\CK_\MA(K_1,K_2,K_3)$, 
	$$\EE\|\uk\|_{ L^2(0,T;B_0)\cap \WW ^{\alpha_0}_2 (0,T;B_1)}\le C,
	$$
	where $\uk$ solves \ref{nonlinearcutoffu}.
	Let us start with $I_0$. 
	According to estimate \eqref{fracine01} (see \cite[Proposition 3.1]{maxjan}), if 
	$u_0\in L^2(\oO)$ then $I_0 \in H^1_2(0, T; L^2(\oO)) \cap L^2(0, T; H^{\delta_0}_2(\oO))$. 
	Hence 
	\DEQSZ\label{e:comI0estimate1}
	\|I_0(\cdot)\|_{L^2(0,T;L^2)}\le C |u_0|_{L^2}
	\EEQSZ
	and
	\DEQSZ\label{e:comI0estimate2}
	\lk\|\frac d{dt}I_0(\cdot)\rk\|_{L^2(0,T;L^2)}
	\le C \lk\|I_0(\cdot)\rk\|_{H^{1}_2(0,T;L^2)}
	\leq  C |u_0|_{L^2}.
	\EEQSZ
	Furthermore, 
	since we have an analytic semigroup, $\delta_0 < \frac{1}{2}$ yields 
	by \cite[Proposition A.12]{DaPrZa} that 
	\begin{align}\label{e:comI0estimate3}
		|I_0|_{L^2(0,T;H^{\delta_0}_2)}^2
		&= |I_0|_{L^2(0,T;L^2)}^2 + |(-\Delta )^{2\delta_0} I_0|_{L^2(0,T;L^2)}^2
		\\
		&\le C |u_0|_{L^2}^2 + \int_0^T |(-\Delta )^{2\delta_0} I_0(t)|_{L^2}^2 dt \nonumber
		\\
		&\le C |u_0|_{L^2} + C_T \int_0^T t^{-2\delta_0} dt |u_0|_{L^2}^2\le C |u_0|_{L^2}^2.\nonumber
	\end{align}
	By interpolation we know that for $\alpha_0 \in(0,\frac{1}{2})$ we have
	\begin{equation}\label{e:interpol}
		\qquad\qquad  W^{\alpha_0}_2(0,T;B)=[L^2(0,T;B_0),W^1	_2(0,T;B_1)]_{\alpha_0 ,2},
	\end{equation}
	where $B=[B_0,B_1]_{\alpha_0, 2} = H_2^{\delta_0(1-\alpha_0)}$, $B_0=H^{\delta_0}_2(\CO)$, $B_1=L^2(\CO)$, $\alpha_0 \in(0,\frac{1}{2})$.
	Note that when the parameters in the definition of the Besov spaces and Lizorkin-Triebel spaces are the same, the Besov spaces and Lizorkin-Triebel spaces coincide. Specifically, $F_{p,p}^\alpha =B_{p,p}^\alpha $.
	In this way, we get
	$$[ W^{1}_2(0,T;H^{-2}_2(\CO)),L^2(0,T;L^2(\CO))]_{2,\alpha }=\mathbb{W}^{\alpha }_2(0,T;H^{-2\alpha }_2(\CO)).
	$$
	Therefore we know by \eqref{e:comI0estimate1}, \eqref{e:comI0estimate2} and \eqref{e:comI0estimate3}
	that for all $\alpha_0\in(0,\frac{1}{2})$ there exists a constant $C>0$ such that
	\begin{equation}\label{e:I0}
		\qquad\qquad\|I_0\|_{L^2(0,T;H^{\delta_0}_2)}+\|I_0\|_{W^{\alpha_0}_2(0,T;H_2^{\delta_0 (1-\alpha_0)})}\le C |u_0|_{L^2}. 
	\end{equation}
	Due to the condition of the initial condition, the required estimates for $I_0$
	are proven.
	Next, we tackle $I_1$.
	Since the Laplacian satisfies the maximal regularity assumption in $L^2(\CO)$, there exists a constant $C>0$ such that (compare estimate \eqref{maxjan0}) 
	\begin{align*}
		\qquad\qquad\| \mathfrak{F}_A( \eta\, \phi_\kappa(\xi,\cdot) |\xi|^q)\|_{L^2(0,T;H^{\delta} _2)}\le
		C\, \|\eta\, \phi_\kappa(\xi,\cdot)  |\xi|^q\|_{L^2(0,T;H^{\delta-2}_2)} .
	\end{align*}
	The embedding $L^1(\CO)\hookrightarrow  H^{-2+\delta }_2(\CO)$
	for $0<\delta<\frac12 $ gives
	\begin{align*}
		\qquad\qquad	\| \mathfrak{F}_A( \eta\, \phi_\kappa(\xi,\cdot) |\xi|^q)\|_{L^2(0,T;H^{\delta }_2)}\le
		C\, \|\eta\, \phi_\kappa(\xi,\cdot)  |\xi|^q\|_{L^2(0,T;L^1)} .
	\end{align*}
	\noindent Due to Technical Proposition~\ref{tetaxi}, there exists a constant $C(\kappa)>0$ such that for $p^\ast$ satisfying the condition (ii)
	we can write
	\DEQS
	\| \mathfrak{F}_A(\eta \phi_\kappa(\xi,\cdot) |\xi|^q)\|_{L^{\tesfalem{2}}(0,T;H^{\delta}_2)}\le
	C(\kappa) \EE \sup_{0\le s\le T}\,|\eta(s)|_{L^{p^\ast}}^2.
	\EEQS
	Since $(\xi,\eta) \in\CK(K_1,K_2,K_3)$, the RHS is finite.
	Let us turn our attion to the regularity in time.
Taking again $B_1'=H^{-2}_2(\CO)$.
Again, since the Laplacian satisfies the maximal regularity assumption in $H^{-2}_2(\CO)$, there exists a constant $C>0$ such that
\DEQS
\lk\|  \mathfrak{F}\lk(  \phi_\kappa(\xi,\cdot) \eta\, |\xi|^q\rk)\rk\|_{L^2(0,T;B_1')}+
\lk\| \frac d{dt} \mathfrak{F}\lk(  \phi_\kappa(\xi,\cdot) \eta\, |\xi|^q\rk)\rk\|_{L^2(0,T;{B_1'})}\le
C\, \lk\| \phi_\kappa(\xi,\cdot) \eta\, |\xi|^q\rk\|_{L^2(0,T;H^{-2-\frac \aleph 2}_2)}.
\EEQS
Again, we can show by interpolation that 
for any $\alpha\in(0,1)$ we have
\DEQS
\| \mathfrak{F}(  \phi_\kappa(\xi,\cdot) \eta\, |\xi|^q)\|_{\mathbb{W}^{\alpha}(0,T;B_1')}
\le C(\kappa) \EE \sup_{0\le s\le T}\,|\eta(s)|_{L^{p^\ast}}^2\le  C\, C(\kappa)  \, K_2.
\EEQS

\del{By the same argument as before, 
	the right-hand side is finite.
	To be more precise, again, due to Technical Propostion~\ref{tetaxi} we have for the same value of $p^*>2$ satisfying (ii) as before 
	\DEQS
	\lk\| \frac d{dt} \mathfrak{F}_A\lk( \uk \phi_\kappa(\xi,\cdot) |\xi|^q\rk)\rk\|_{L^2(0,T;H^{-\frac d2}_2)}
	\le
	C(\kappa) \EE \sup_{0\le s\le T}\,|\uk(s)|_{L^{p^\ast}}^2.
	\EEQS
	By the same interpolation arguments as before, we have that
	for any $\alpha_0\in[0,1]$, there exists some $C>0$ such that
	\begin{equation}\label{e:I1}
		\qquad\qquad \| \mathfrak{F}( \uk \phi_\kappa(\xi,\cdot) |\xi|^q)\|_{\mathbb{W}^{\alpha_0}(0,T;H^{-\frac d2}_2)}\le C\, C(\kappa) \EE \sup_{0\le s\le T}\,|\uk (s)|_{L^{p^\ast}}^2\le  C\, C(\kappa)  \, K_2.
	\end{equation}}
\noindent The right hand side is finite by the definition of $	\mathcal{K}_\MA(K_1,K_2,K_3)$ and since $(\xi,\eta)\in\mathcal{K}_\MA(K_1,K_2,K_3)$.
	
	\noindent It remains to tackle $I_2$. First, let us define the multiplication operator $g_{\gamma_1}(u):\phi\mapsto u\phi$.
	By Theorem 3.5 in \cite{maxjan}, inequality (3.11),
	we know that \tesfalem{there} exists a $C>0$ such that the process
	$$
	\EE \| \mathfrak{S}_{\gamma_1}(\uk)\|_{\mathbb{W}^\frac 14_2(0,T;H^{-2}_2)}^2\le C\EE \|g_{\gamma_1}(\uk)\|^2_{L^2(0,T;\Upsilon(H,H^{-\frac 32}_2))}
	$$
	and
	$$
	\EE \| \mathfrak{S}_{\gamma_1}(\uk)\|_{L^2(0,T;H^{\frac 14}_2)}^2\le C\EE \|g_{\gamma_1}(\uk)\|^2_{L^2(0,T;\Upsilon(H,H^{-\frac 32}_2))}.
	$$
	Due to Assumption~\ref{wiener}, we know that the for the multiplication operator $g_{\gamma_1}(u)$
	there exists a constant $C>0$ such that
	$$
	|g_{\gamma_1}(u)|_{\Upsilon(H,H^{-\frac 32}_2)}\le C|u|_{L^2},\quad u\in L^2(\CO).
	$$
	
	\medskip
	
	\del{\noindent Let us start with (b). Note, again,  that
	we have by the variation of constant formula
	\DEQSZ 
	\vk(t)\notag
	&= & e^{-r_v\Delta t}v_0
	+\mathfrak{F}(  \phi_\kappa(\xi,\cdot) \eta\, |\xi|^q)(t)
	+  \sigma_2\mathfrak{S}(  \vk)(t)
	\\
	&=:& I_0(t)+I_1(t)+I_2(t)
	.
	\EEQSZ}
	\noindent Take some $\delta_0$ such that  $\frac \aleph 2-\rho-\frac d2 > \delta_0>0$ (observe, in $d\ge 2$, $\rho<0$) and put $B'_0=H^{\delta_0+\rho+\frac \aleph 2}_2(\CO)$.
	Then, we have by the maximal regularity of the Laplacian
	\DEQS
	\| \mathfrak{F}(  \phi_\kappa(\xi,\cdot) \eta\, |\xi|^q)\|_{L^2(0,T;B_0')}\le
	C\, \| \phi_\kappa(\xi,\cdot) \eta\, |\xi|^q\|_{L^2(0,T;H^{\delta_0+\rho-\frac \aleph 2}_2)}
	\EEQS
	As before, due to the embedding $L^1(\CO)\hookrightarrow H^{\delta_0+\rho-\frac \aleph 2}_2(\CO)$ and the technical Lemma \ref{tetaxi}, we have
	\DEQS
	\| \mathfrak{F}(  \phi_\kappa(\xi,\cdot) \eta\, |\xi|^q)\|_{L^2(0,T;B'_0)}\le
	C\, \| \phi_\kappa(\xi,\cdot) \eta\, |\xi|^q\|_{L^2(0,T;L^1)}
	\le C(\kappa) \EE \sup_{0\le s\le T}\,|\eta(s)|_{L^{p^\ast}}^2.
	\EEQS
	Since $(\xi,\eta) \in\CK(K_1,K_2,K_3)$, the RHS is finite.
	Taking again $B_1'=H^{-2}_2(\CO)$.
	Again, since the Laplacian satisfies the maximal regularity assumption in $H^{-2}_2(\CO)$, there exists a constant $C>0$ such that
	\DEQS
	\lk\|  \mathfrak{F}\lk(  \phi_\kappa(\xi,\cdot) \eta\, |\xi|^q\rk)\rk\|_{L^2(0,T;B_1')}+
	\lk\| \frac d{dt} \mathfrak{F}\lk(  \phi_\kappa(\xi,\cdot) \eta\, |\xi|^q\rk)\rk\|_{L^2(0,T;{B_1'})}\le
	C\, \lk\| \phi_\kappa(\xi,\cdot) \eta\, |\xi|^q\rk\|_{L^2(0,T;H^{-2-\frac \aleph 2}_2)}.
	\EEQS
	Again, we can show by interpolation that 
	for any $\alpha\in(0,1)$ we have
	\DEQS
	\| \mathfrak{F}(  \phi_\kappa(\xi,\cdot) \eta\, |\xi|^q)\|_{\mathbb{W}^{\alpha}(0,T;B_1')}
	\le C(\kappa) \EE \sup_{0\le s\le T}\,|\eta(s)|_{L^{p^\ast}}^2.
	\EEQS
Next, we tackle 
$I_2$.
By estimate \eqref{maxjanineq} (compare Theorem 3.5 in \cite{maxjan}, inequality (3.11)),
we know that exists a $C>0$ such that 
the process
we know that exists a $C>0$ such that
the process satisfies for the time regularity
$$
\EE \| \mathfrak{S}_{\gamma_1}(\uk)\|_{\mathbb{W}^\frac 14_2(0,T;H^{-2}_2)}^2\le C\EE \|g_{\gamma_1}(\uk)\|^2_{L^2(0,T;\Upsilon(H,H^{-\frac 32}_2))}
$$
and
$$
\EE \| \mathfrak{S}_{\gamma_1}(\uk)\|_{L^2(0,T;H^{\frac 14}_2)}^2\le C\EE \|g_{\gamma_1}(\uk)\|^2_{L^2(0,T;\Upsilon(H,H^{-\frac 32}_2))}.
$$
Due to Assumption~\ref{wiener}  and estimate \eqref {eq: Hil-Schi2}, 
we know that the for the multiplication operator $g_{\gamma_1}(u)$ for any $\ep>0$, 
there exists a constant $C=C(\ep)>0$ such that
$$
|g_{\gamma_1}(u)|^2_{\Upsilon(H,H^{-\frac 32}_2)}\le C(\ep)|u|^2_{L^2}+\ep |u|^2_{H^1_2},\quad u\in H^1_2(\CO).
$$
It follows that for any $\ep>0$, 
there exists a constant $C=C(\ep)>0$ such that
\begin{align*}
	&\EE \| \mathfrak{S}_{\gamma_1}(\uk)\|_{\mathbb{W}^\frac 14_2(0,T;H^{-2}_2)}^2
	+\EE \| \mathfrak{S}_{\gamma_1}(\uk)\|_{L^2(0,T;H^{\frac 14}_2)}^2	
	\\
	&\leq C(\eps) \EE\sup_{s\in [0, T]} |\uk(s)|_{L^{2}}^{2}  + \eps \EE|\uk|_{L^{2}(0, T; L^{2})}. 
\end{align*}
Collecting everything together, we get (a).

\medskip

\noindent Let us start with (b). Note, again, that by the variation of constants formula, we essentially have an equation of the following type
\DEQSZ\label{definei1i2i3}
\vk(t)\notag
&= & e^{(r_2 A + a_2 I) t}v_0
+c_2\tilde{\mathfrak{F}}( \uk\, \phi_\kappa(\xi,\cdot) |\xi|^q)(t)
+  \sigma_2\tilde{\mathfrak{S}}(\vk)(t)
\\
&=:& J_0(t)+J_1(t)+J_2(t),
\EEQSZ
where 
$$
\tilde{\mathfrak{F}}(w)(t):= \int_0^ t e^{(t-s) (r_2 A + a_2 I)}w(s)\, ds
$$
and
$$
\tilde{\mathfrak{S}}_{\gamma_2}(w)(t):= \int_0^ t e^{(t-s)(r_2 A + a_2 I)}g_{\gamma_2}(w(s))\,dW_2(s),\quad t\in[0,T].
$$
\tesfalem{Assume $\rho\in\mathbb{R}$} and take some $\delta_0 >0$ such that $\delta_0 + \rho\leq 0$ 
and set \[\tilde B_0:=H^{\rho+\frac \aleph 2 +\delta_0}_2(\CO).\]
Then, we have by the maximal regularity of the fractional Laplacian $A$ for a positive constant $C>0$ (compare estimate \eqref{maxjan0})
\DEQS
\| \tilde{\mathfrak{F}}( \tesfalem{\eta} \phi_\kappa(\xi,\cdot) |\xi|^q)\|_{L^2(0,T;\tilde B_0)}\le
C\, \|\tesfalem{\eta} \phi_\kappa(\xi,\cdot) |\xi|^q\|_{L^2(0,T;H^{\rho+\delta_0}_2)}.
\EEQS
As before, due to the embedding $L^2(\CO)\hookrightarrow H^{\delta_0+\rho}_2(\CO)$, we have 
\DEQS
\| \tilde{\mathfrak{F}}( \tesfalem{\eta} \phi_\kappa(\xi,\cdot) |\xi|^q)\|_{L^2(0,T;\tilde B_0)}\le
C\, \|\tesfalem{\eta} \phi_\kappa(\xi,\cdot) |\xi|^q\|_{L^2(0,T;L^2)}.
\EEQS
Due to the  Technical Proposition~\ref{tetaxi}, we have
\DEQS
\EE\| \tilde{\mathfrak{F}}( \tesfalem{\eta} \phi_\kappa(\xi,\cdot) |\xi|^q)\|_{L^2(0,T;\tilde B_0)}^2 
\le C(\kappa) \EE \sup_{0\le s\le T}\,|\tesfalem{\eta}|_{L^{p^\ast}}^2.
\EEQS
By Proposition \ref{tetaxi} 
we know that
$$	\mathbb{E}\sup_{0< s\le T}|\tesfalem{\eta}(s)|_{L^{p^\ast}}^{p^\ast}\le K_2.
$$
It follows that 
the right-hand side is finite.

\noindent Let us now investigate the time regularity. Taking again $\tilde B_1=L^2(\CO)$.
Again, since $A$ satisfies the maximal regularity property in $L^2(\CO)$, there exists a constant $C>0$ such that
\DEQS
\lk\|  \tilde{\mathfrak{F}}\lk( \tesfalem{\eta} \phi_\kappa(\xi,\cdot) |\xi|^q\rk)\rk\|_{L^2(0,T;\tilde B_1)}+
\lk\| \frac d{dt} \tilde{\mathfrak{F}}\lk( \tesfalem{\eta} \phi_\kappa(\xi,\cdot) |\xi|^q\rk)\rk\|_{L^2(0,T;{\tilde B_1})}\le
C\, \lk\|\tesfalem{\eta} \phi_\kappa(\xi,\cdot) |\xi|^q\rk\|_{L^2(0,T;\tilde B_1)}.
\EEQS
Again, we can show as before by interpolation  and the technical Proposition~\ref{tetaxi} that  
for any $\alpha\in(0,1)$ we have
\DEQS
\| \tilde{\mathfrak{F}}(\tesfalem{\eta} \phi_\kappa(\xi,\cdot) |\xi|^q)\|_{\mathbb{W}^{\alpha}(0,T;\tilde B_1)}
\le C(\kappa) \EE \sup_{0\le s\le T}\,|\tesfalem{\eta}(s)|_{L^{p^\ast}}^2.
\EEQS

\medskip
\noindent To  tackle $J_2$, we apply Theorem 3.5 in \cite{maxjan} again. We know that exists a $C>0$ such that
the process satisfies for the time regularity
$$\EE \| \mathfrak{S}_{\gamma_2}(\vk)\|_{\mathbb{W}^\frac 14_2(0,T;H^{\rho-2+\frac \aleph2}_2)}^2\le C\EE \|g_{\gamma_2}(\vk)\|^2_{L^2(0,T;\Upsilon(H,H^{\rho-\frac 52+\frac \aleph2}_2))}
$$
and for the compact containment condition, we know for some $\tilde \rho>\rho$
$$\EE \| \mathfrak{S}_{\gamma_2}(\vk)\|_{L^2(0,T;H^{\tilde \delta}_2)}^2\le C\EE \|g_{\gamma_2}(\vk)\|^2_{L^2(0,T;\Upsilon(H,H^{-\frac d2}_2))}.
$$
Observe, that  $\rho<\frac d2-\aleph/2$. Hence we have to estimate $ \|g_{\gamma_2}(\vk)\|^2_{L^2(0,T;\Upsilon(H,H^{-1}_2))}$.
Due to the Assumptions \ref{wiener}, and Proposition \ref{gammaradonv}, we know
$$
|g_{\gamma_2}(v_\kappa)|_{\Upsilon(H,H^{-1}_2)}\le C|\vk|_{H^{\rho+\aleph/2}_2}.
$$
Again, since $\rho>-\frac d2$, due to the Assumptions \ref{wiener}, and Proposition \ref{gammaradonv}, we know that the for the multiplication operator $g_{\gamma_2}$
there exists a constant $C>0$ such that the assumption in
$$|g_{\gamma_2}(v_\kappa)|_{\Upsilon(H,H^{-\frac d2}_2)}\le C|\vk|_{H^{\rho+\aleph/2}_2}.
$$

\medskip
\noindent It remains to show (ii).
Let 
\tesfalem{$J_0,\,J_1$ and $J_2$} be defined by \eqref{definei1i2i3}.
In particular, we have to show that
for some Banach spaces $B_0''$ and $B_1''$ and some $\alpha_0>0$ we have
$B''_0\hookrightarrow H^{\rho}_2(\CO)\hookrightarrow B''_1$, $B''_0\hookrightarrow H^{\rho}_2(\CO)$ compactly,
and for all $(\xi,\eta) \in\CK(K_1,K_2,K_3)$
\del{\DEQSZ\label{s2}
\sup_{0\le t\le T}\EE| \vk(t)-I_0(t)|_{B''_0}\le C  \quad  \mbox{and} \quad \EE\lk\| \vk\rk\|_{C^{(\alpha_0)}_b(0,T;B''_{1})}\le C.
\EEQSZ}

\DEQSZ\label{s2}
\sup_{0\le t\le T}\EE| \vk(t)-\tesfalem{J_0}(t)|_{B''_0}\le C  \quad  \mbox{and} \quad \EE\lk\| \vk\rk\|_{C^{(\alpha_0)}_b(0,T;B''_{1})}\le C.
\EEQSZ

	
\end{proof}

\bigskip 
\subsection{\textbf{Continuity of the fixed point operator}}\label{ss:continuity}\hfill\\

\noindent In this subsection we establish the continuity of the operator $$\mathcal{V}_\kappa:\CK(K_1,K_2,K_3)\to \CM_{\MA,(\rho,\aleph)}^2(0,T).
$$
In fact, we demonstrate an even stronger result. Although we cannot establish Lipschitz continuity, we can prove Hölder continuity. Before presenting the continuity result, we introduce a technical proposition that is essential for our proof.

\begin{tproposition}\label{l1estimate}
Under the condition of Proposition \ref{reg_uk},
\del{Let us assume that $(\rho,\aleph,q,p\ast)$ satisfy
$$
\left(\frac d2-\rho\right) \lk( 1-\frac 1q\rk) \le \frac \aleph  q-\frac d{p^\ast}
$$ 
and
{\color{red}
$$
\frac \aleph 2 +\frac d2>\rho.
$$}
\todo[inline]{Carefully check how and where this assumption is used!}
}
 there exist constants $\delta_1,\delta_2>0$ such that
for any $K_1,K_2,K_3>0$ and $\kappa\in\NN$	and there is a constant $C(K_1,K_2,K_3,\kappa)>0$ such that for any $(\eta_i,\xi_i)\in \CK(K_1,K_2,K_3)$, $i=1,2$, we have 
\item
\DEQSZ\label{e:techlemB1}	
\lqq{ 	\EE \lk\| \mathfrak{F}_{\mathcal{A}}(\phi_\n(\xi_1,\cdot) \eta_1  \xi_1^{q})- \mathfrak{F}_{\mathcal{A}}(\phi_\n(\xi_2,\cdot) \eta_2  \xi_2^ {q})\rk\|_{\mathbb{H}_{\rho,\aleph}}
}\notag
&&
\\
& \le & C(\kappa,K_1,K_2,K_3) \, \lk\{ \lk( \EE \|\eta_1-\eta_2\|_{L^2(0,T;L^{2})}^2\rk)^{\delta_1}+  \lk( \EE \|\xi_1-\xi_2\|^2_{\mathbb{H}_{\rho,\aleph}}
\rk)^{\delta_2}\rk\}.
\EEQSZ.
\end{tproposition}
\begin{remark}\label{shorter}
Analysing the proof of the technical lemmata, we see that under the assumptions of the technical lemma, there exists a constant $ C > 0$  such that:
\DEQS\lqq{ C \|\phi_\n(\xi_1,\cdot) \eta_1  \xi_1^ {q}- \phi_\n(\xi_2,\cdot) \eta_2  \xi_2^ {q}\|_{L^2(0,T;H^{\rho-\aleph/2}_2)} 	}
\\
&\le &
C(\kappa,K_1,K_2,K_3) \, \lk\{ \lk( \EE \|\eta_1-\eta_2\|_{L^2(0,T;L^{2})}^2\rk)^{\delta_1}+  \lk( \EE \|\xi_1-\xi_2\|^2_{\mathbb{H}_{\rho,\aleph}}
\rk)^{\delta_2}\rk\}.
\EEQS.	
\end{remark}
\begin{proof}[\textbf{Proof of Technical \tesfalem{Proposition} \ref{l1estimate}:}]
Observe that for any $n>2$ and $a\geq b\geq 0$ we have by the subadditivity of the $x\mapsto x^\frac{1}{n}$ for $x>0$, and the elementary Young inequality for exponents 
$\frac{n-1}{k-1}$ and $\frac{n-1}{n-k}$, the following inequality  
\DEQSZ\label{inq11}
&&a-b  =  (a^\frac{1}{n} - b^\frac{1}{n}) \sum_{k=1}^n a^ \frac{k-1}{n} b^\frac{n-k}{n} 
\le (a-b)^ \frac{1}{n} \sum_{k=1}^n a^\frac{k-1}{n} b^\frac {n-k}{n}
\le \frac{n}{2} (a-b)^\frac{1}{n}  \lk( a^ \frac {n-1} n +  b^ \frac {n-1} n \rk).
\EEQSZ
Before we continue, we define the following stopping times: 
$$\tau_j(\kappa):=\inf\{s>0~|~\|\xi_j(s)\|_{H^\rho_{2}}\ge 2\kappa\}, \qquad j=1,2.$$ 
\noindent We set $\Omega_1:=\{ \tau_2(\kappa)\le \tau_1(\kappa)\}$ and 
$\Omega_2:= \Omega_1^c$,  
and start with the preliminary observation that since 
$\phi_\n((\eta, \xi),t) = \psi(\|\xi\|_{\mathbb{H}_{\rho, \aleph}^t} / \kappa)$ 
only depends on $\xi$, we write in an abuse of notation $\phi_\n(\xi,t) = \phi_\n((\eta, \xi),t)$ 
and obtain 
\begin{align*}
&\phi_\kappa(\xi_1, \cdot)\eta_1|\xi_1|^q-\phi_\kappa(\xi_2, \cdot)\eta_2|\xi_2|^q\\
&\qquad =
(\phi_\kappa(\xi_1, \cdot)-\phi_\kappa(\xi_2, \cdot))\eta_1|\xi_1|^q + (\eta_1-\eta_2)|\xi_1|^q\phi_\n(\cdot ,\xi_2)+ \eta_2 \phi_\n(\xi_2,\cdot )\big(|\xi_1|^q-|\xi_2|^q\big)\\
\end{align*}
\noindent Since the fractal Laplacian $A$ has the maximal $H_2^\rho(\CO)$ and $H_2^{\rho+\aleph/2}(\CO)$--regularity, see \eqref{maxjan0},  
there exists a constant $C>0$ such that
\begin{align}\label{L^1}
\lqq{ \| \mathfrak{F}_{A}(\phi_\n(\xi_1,\cdot) \eta_1  \xi_1^ {q})- \mathfrak{F}_{A}(\phi_\n(\xi_2,\cdot) \eta_2  \xi_2^ {q})\|_{\mathbb{H}_{\rho,\aleph}} }
\\
&\qquad\qquad \notag \le  C \|\phi_\n(\xi_1,\cdot) \eta_1  |\xi_1|^ {q}- \phi_\n(\xi_2,\cdot) \eta_2  |\xi_2|^ {q}\|_{L^2(0,T;H^{\rho-\aleph/2}_2)}.
\end{align}
Due to embedding $L^r(\oO) \hookrightarrow H^{\rho-\frac{\aleph}{2}}(\oO)$ 
we obtain
\begin{align}\label{LL^1}
\lqq{ \| \mathfrak{F}_{A}(\phi_\n(\xi_1,\cdot) \eta_1  \xi_1^ {q})- \mathfrak{F}_{A}(\phi_\n(\xi_2,\cdot) \eta_2  |\xi_2|^ {q})\|_{\mathbb{H}_{\rho,\aleph}} }
\\
&\qquad\qquad\notag  \le  C \|\phi_\n(\xi_1,\cdot) \eta_1  |\xi_1|^ {q}- \phi_\n(\xi_2,\cdot) \eta_2  |\xi_2|^ {q}\|_{L^2(0,T;L^r)}.
\end{align}

\noindent We start with the $\omega$-wise estimates on $\Omega_1$. By the triangle inequality, we get
\DEQS
\lqq{  \mathds{1}_{\Omega_1}\lk\| \mathfrak{F}(\phi_\n(\xi_1,\cdot) \eta_1  \xi_1^ {q})- \mathfrak{F}(\phi_\n(\xi_2,\cdot) \eta_2  \xi_2^ {q})\rk\|_{\mathbb{H}_{\rho, \aleph}^{T\wedge \tau_1(\kappa)}}}
&&	
\\ &\le&  \mathds{1}_{\Omega_1}C\lk(
\| \lk( \phi_\kappa(\xi_1,\cdot) -\phi_\kappa(\xi_2,\cdot)\rk)\eta_1\, |\xi_1|^q \|_{L^2(0,T\wedge \tau_1(\kappa);L^{r})}\rk.
\\
&&\lk.{}+
\|  \phi_\kappa(\xi_2,\cdot)  \lk(\eta_1-\eta_2\rk) |\xi_1|^q \|_{L^2(0,T\wedge \tau_1(\kappa);L^{r})}\rk.\\
&&\lk.{}+  \| \phi_\kappa(\xi_2,\cdot) \lk( |\xi_1|^q -|\xi_2|^q)\eta_2\rk)\|_{L^2(0,T\wedge \tau_1(\kappa);L^{r})}\rk)
\\
&=:& I_1(T)+I_2(T)+I_3(T).
\EEQS
We estimate the terms one by one by one. 

\noindent \textbf{$\mathbf{I_1(T)}$:} 
Recall the definition of the scalar cutoff function $\phi_\kappa$ given in \eqref{h12.def}, which does not depend on the spatial variable. 
By the simple algebraic identity
$a^2-b^2=(a-b)(a+b)$, we have 
\DEQS
I_1(T)
&\le & \mathds{1}_{\Omega_1}C
\| \lk( \sqrt{\phi_\kappa(\xi_1,\cdot)} -\sqrt{\phi_\kappa(\xi_2,\cdot)}\rk) \lk( \sqrt{\phi_\kappa(\xi_1,\cdot)} +\sqrt{\phi_\kappa(\xi_2,\cdot)}\rk)\eta_1\, |\xi_1|^q \|_{L^2(0,T\wedge \tau_1(\kappa);L^{r})}
\\
&\le & C
\mathds{1}_{\Omega_1}C
\| \lk( \sqrt{|\phi_\kappa(\xi_1,\cdot)- \phi_\kappa(\xi_2,\cdot)|}\rk) \lk( \sqrt{\phi_\kappa(\xi_1,\cdot)} +\sqrt{\phi_\kappa(\xi_2,\cdot)}\rk)\eta_1\, |\xi_1|^q \|_{L^2(0,T\wedge \tau_1(\kappa);L^{r})}.
\EEQS
Since $\phi_\n((\eta, \xi),t) = \psi(\|\xi\|_{\mathbb{H}_{\rho, \aleph}^t} / k)$ and $\psi$ is a Lipschitz continuous function, 
\DEQS \sqrt{\phi_\kappa(\xi_1,\cdot)- \phi_\kappa(\xi_2,\cdot)} &\le & |\phi_\kappa(\xi_1,\cdot)- \phi_\kappa(\xi_2,\cdot)|^\frac 12
\\
&\le &C\, \|\xi_1-\xi_2\| _{\mathbb{H}_{\rho,\aleph}}^\frac 12 
.
\EEQS

\noindent Since on $\Omega_1$ we have $\tau_2(\kappa)\leq \tau_2(\kappa)$ 
we have that $\mathds{1}_{\Omega_1}\phi_\n(\xi_2, \cdot)\leq \mathds{1}_{\Omega_1}\phi_\n(\xi_1, \cdot)$, 
which remains true for the square root by monotonicity. Hence 
\DEQS
I_1(T)
&\le &2 C\|\xi_1-\xi_2\| _{\mathbb{H}^{T\wedge \tau_1(\kappa)}_{\rho,\aleph}}^\frac 12 \|\sqrt{\phi_\n(\xi_1, \cdot)}\eta_1\, |\xi_1|^q \|_{{L^2(0,T\wedge \tau_1(\kappa);L^{r})}}.
\EEQS
Let $m=2$ and let $r_2>1$ such that
$$
\frac d2-\rho\le \frac \aleph 2+\frac d{qr_2},\quad \frac{1}{r_2}+\frac{1}{p^*} = \frac{1}{r}
$$
Due to the assumptions on $(\rho,\aleph,q,p\ast)$, it is possible to find $r_2$ for the give $r$.
Now we can apply the Technical Proposition \ref{tetaxi} and obtain
\DEQS
\EE  \mathds{1}_{\Omega_1} |I_1(T)|^ 2 
&\le &2 C(\kappa) C\EE\left(  \|\xi_1-\xi_2\| _{\mathbb{H}^{T\wedge \tau_1(\kappa)}_{\rho,\aleph}} 
\sup_{0\le s\le T}|\eta_1(s)|^{p^\ast}_{L^{p^\ast}} \rk).
\EEQS
H\"older's inequality gives 
\DEQS
\EE  \mathds{1}_{\Omega_1} |I_1(T)|^ 2 
&\le &2 C(\kappa) C\EE\left(  \|\xi_1-\xi_2\| _{\mathbb{H}^{T}_{\rho,\aleph}} ^2\rk)^\frac 12 \lk(\EE 
\sup_{0\le s\le T}|\eta_1(s)|_{L^{p^\ast}}^{2p^\ast} \rk)^\frac 12.
\EEQS
Due to the definition of the set $\CK_\MA^\kappa$, we can write 
\DEQS
\EE  \mathds{1}_{\Omega_1} |I_1(T)|^ 2 
&\le &2 C(\kappa) C\EE\left(  \|\xi_1-\xi_2\| _{\mathbb{H}^{T}_{\rho,\aleph}} ^2\rk)^\frac 12K_2^\frac 12.
\EEQS

\noindent \textbf{$\mathbf{I_2(T)}$: }
To tackle $I_2$ we fix some number $n\in \mathbb{N}$ sufficiently large such that
\begin{equation}\label{e:hilfsparametern}
\frac {n-1}{p^\ast n}+\frac{1}{(p^\ast)'}\le \frac {n-1}{n}, \mbox{ where } (p^\ast)' = \frac{r^\ast}{q}. 
\end{equation}
This is possible since $\frac{1}{p^\ast}+\frac{1}{(p^\ast)'}<1$.
Applying H\"older's inequality in space and in time,
we get 
and by \eqref{inq11}
\DEQS
I_2(T)&\le &
2 \| \lk(\eta_1-\eta_2\rk) \phi_\kappa(\xi_2,\cdot)  |\xi_1|^q \|_{L^2(0,T\wedge {\tau}_1(\kappa);L^{r})}
\\
&\le & n
\| \lk(\eta_1-\eta_2\rk)^\frac 1n  \cdot \lk(\eta^\frac {n-1}n_1+\eta^\frac {n-1}n_2\rk)\phi_\kappa(\xi_2,\cdot)  |\xi_1|^q \|_{L^2(0,T\wedge {\tau}_1(\kappa);L^r)}
\\
&\le &n
\| \lk(\eta_1-\eta_2\rk)^\frac 1n \|_{L^{2n}(0,T\wedge {\tau}_1(\kappa);L^{n})}\| \lk(\eta^\frac {n-1}n_1+\eta^\frac {n-1}n_2\rk)\phi_\kappa(\xi_2,\cdot)  |\xi_1|^q \|_{L^{\frac{2n}{n-1}}(0,T\wedge \tau_1(\kappa);L^{r\frac{n}{n-1} })}
\\
&\le &n
\| \eta_1-\eta_2 \|_{L^{2}(0,T;L^{1})}^\frac 1n \|\lk(\eta_1^\frac {n-1}n+\eta^\frac {n-1}n_2\rk)\phi_\kappa(\xi_2,\cdot)  |\xi_1|^q \|_{L^{\frac{2n}{n-1}}(0,T\wedge \tau_1(\kappa);L^{r\frac{n}{n-1}})}.
\EEQS
Setting in Technical Proposition \ref{tetaxi} with $m=\frac {2n}{n-1}$ and $\eta:=
\eta_1^\frac n{n-1}+ \eta_2^\frac n{n-1}$ instead of $\eta$, we can write
\DEQS
I_2(T)
&\le &C(\kappa)
n
\EE \| \eta_1-\eta_2 \|_{L^{2}(0,T;L^{1})}^\frac 1n \lk(\|\eta_1^\frac n{n-1} \|^{p^\ast}  _{L^\infty(0,T;L^{p^\ast})}+
\|\eta^\frac n{n-1}_2\|_{L^\infty(0,T;L^{p^\ast})}\rk) 
\\
&\le &C(\kappa)n
\EE\| \eta_1-\eta_2 \|_{L^{2}(0,T;L^{2})}^\frac 1n \lk(\|\eta_1\|^{\frac {n-1}n p^\ast}_{L^\infty(0,T;L^{\frac {n-1}n p^\ast})}+\|\eta_2\|_{L^\infty(0,T;L^{\frac {n-1}np^\ast})}^{\frac {n-1}n p^\ast}\rk)
.
\EEQS
By Cauchy Schwarz's inequality we get
\DEQS
\lqq{ \EE[| I_2(T)|^2] \le 
C(\kappa) n
\bigg(\EE\bigg\{ \| \eta_1-\eta_2 \|_{L^{2}(0,T;L^{2})}^2 \bigg\}^\frac{1}{2n} }
\\
&&{}\times  
\bigg\{ \EE \lk(\|\eta_1\|^{p^\ast \frac {2n}{2n-1}\frac n{n-1}}   _{L^\infty(0,T;L^{p^\ast \frac {n-1}n})}+\|\eta_2\|_{L^\infty(0,T;L^{p^\ast \frac {n-1}n})}^{p^\ast\frac {2n}{2n-1}\frac n{n-1}}\rk)\bigg\}^\frac{2n-1}{4n} 
.\EEQS
Due to the definition of the set $\CK_\MA^\kappa$, we can write 
\DEQS
\EE[  \mathds{1}_{\Omega_1}| I_2(T)|^2] \le 
C(\kappa) n K_2^\frac{2n-1}{4n} \,
\bigg(    \EE\bigg\{   \| \eta_1-\eta_2 \|_{L^{2}(0,T;L^{2})}^2 \bigg\}^\frac{1}{2n} 
.\EEQS


\noindent \textbf{$\mathbf{I_3(T)}$:} Again, we use the embedding $L^r(\CO) \hookrightarrow H^{-1}_2(\CO)$ and apply H\"older's inequality such that we have 
\DEQS
I_3(T)
&\le & \| \phi_\kappa(\xi_2,\cdot)  \lk( |\xi_1|^q -|\xi_2|^q\rk)\eta_2\|_{L^2(0,T\wedge \tau_1(\kappa);L^r)}.
\EEQS
The simple algebraic identity $a^q-b^q=c(q)(a-b)(a^{q-1}+b^{q-1})$ gives
\DEQS
I_3(T)
&\le & C(q) \lk\| \phi_\kappa(\xi_2,\cdot)\lk(  \xi_1 -\xi_2\rk)\lk(|\xi|^{q-1}_1+|\xi|^{q-1}_2\rk)\eta_2\rk\|_{L^2(0,T\wedge\tau_1(\kappa);L^r)}.
\EEQS
Using identity \eqref{inq11}, we obtain
\DEQS
I_3(T)
&\le & C(q,n)
\lk\| \phi_\kappa(\xi_2,\cdot)\lk|  \xi_1 -\xi_2\rk|^\frac 1n
\lk(|\xi_1|^{q-1+\frac {n-1}n}+|\xi_2|^{q-1+\frac {n-1}n}\rk)
\eta_2\rk\|_{L^2(0,T\wedge\tau_1(\kappa);L^r)}.
\EEQS
By H\"older's inequality in space 
we get
\DEQS
\lqq{ I_3^2(T)
\le  C(q,n) }
\\
&&\int_0^{T\wedge\tau_1(\kappa)}   \lk|  \xi_1(s) -\xi_2(s)\rk|^\frac 2n_{L^{2} }
\lk| \phi_\kappa(\xi_2,s)
\lk(|\xi_1|^{q-1+\frac {n-1}n}(s)+|\xi_2|^{q-1+\frac {n-1}n}(s)\rk)
\eta_2(s)\rk|_{L^{\frac {2rn} {2n-r}} }   ^2 \, ds.
\EEQS
Applying Cauchy Schwarz's inequality in time gives
\DEQS
\lqq{ \EE  \mathds{1}_{\Omega_1}| I_3^2(T)|^2
\le  C(q,n)\lk( \EE  \int_0^{T\wedge\tau_1(\kappa)}   \lk|  \xi_1(s) -\xi_2(s)\rk|^2_{L^{2} }\, ds\rk)^\frac 1n}
\\
&&{}\times 
\lk( \EE  \int_0^{T\wedge\tau_1(\kappa)} \lk| \phi_\kappa(\xi_2,s)
\lk(|\xi_1|^{q-1+\frac {n-1}n}(s)+|\xi_2|^{q-1+\frac {n-1}n}(s)\rk)
\eta_2(s)\rk|_{L^{\frac {2rn} {2n-r}}   }   ^{2\frac n{n-1}}  \, ds\rk)^\frac {n-1} n.
\EEQS
Due to the assumptions on $(\rho,\aleph,q,p\ast)$, it is possible to find $r_2$ satisfying the Assumptions \eqref{bed0000} and \eqref{bed000} in Technical Proposition \ref{tetaxi}, where we replaced $r$ by $r\frac {2n-1}{2n}$.
Also, note that we have to take $n$ sufficiently large.
Applying now Proposition \ref{tetaxi} with $\lk(\xi^{q-1+\frac {n-1}n}_1(s)+\xi^{q-1+\frac {n-1}n}_2(s)\rk)$ instead of $\xi$ and $\frac {2n}{n-1}$ for $m$  noting that 
$q-1+\frac {n-1}n\le q$,
we get
\DEQS
\EE  \mathds{1}_{\Omega_1} | I_3^2(T)|^2
&\le & C(q,n,\kappa )\lk( \EE  \int_0^{T\wedge\tau_1(\kappa)}   \lk|  \xi_1(s) -\xi_2(s)\rk|^2_{L^{2} }\, ds\rk)^\frac 1n
\\
&&{}\times 
\lk( \EE  \sup_{0\le s\le T}  \lk| 
\eta_2(s)\rk|_{L^{p^\ast}}   ^{2p^\ast }\rk)^\frac {n-1} n.
\EEQS
Due to the definition of the set $\CK_\MA^\kappa$, we can write 
\DEQS
\EE[  \mathds{1}_{\Omega_1}| I_3(T)|^2] \le 
C(\kappa,q,n) K_2^\frac{n-1}{n} \,
\bigg(    \EE\bigg\{   \| \xi_1-\xi_2 \|_{\mathbb{H}_{\rho,\aleph}}^2
\bigg\}^\frac{1}{2n} 
.\EEQS
\noindent Now, we have to go through the estimates on $\Omega_2:=\Omega\setminus \Omega_1$.
Here, we can use the same idea, only the order has to be changed. In particular, we have to decompose $ \mathfrak{F}(\phi_\n(\xi_1,\cdot) \eta_1  \xi_1^ {q})- \mathfrak{F}(\phi_\n(\xi_2,\cdot) \eta_2  \xi_2^ {q})$ in the following way
\DEQS
\lqq{  \mathds{1}_{\Omega_2}\lk\| \mathfrak{F}(\phi_\n(\xi_1,\cdot) \eta_1  |\xi_1|^ {q})- \mathfrak{F}(\phi_\n(\xi_2,\cdot) \eta_2  |\xi_2|^ {q})\rk\|_{\mathbb{H}_{\rho, \aleph}^{T\wedge \tau_1(\kappa)}}}
&&	
\\ &\le&  \mathds{1}_{\Omega_1}C\lk(
\|  \phi_\kappa(\xi_2,\cdot)  \lk(\eta_1-\eta_2\rk) \xi_1^q \|_{L^2(0,T\wedge \tau_1(\kappa);L^{r})}\rk.\\
&&\lk.{}+  \| \phi_\kappa(\xi_2,\cdot) \lk( |\xi_1|^q -|\xi_2|^q)\eta_2\rk)\|_{L^2(0,T\wedge \tau_1(\kappa);L^{r})}\rk.
\\
&& \lk.{}+ \lk( \phi_\kappa(\xi_1,\cdot) -\phi_\kappa(\xi_2,\cdot)\rk)\eta_1\, |\xi_1|^q \|_{L^2(0,T\wedge \tau_1(\kappa);L^{r})}  \rk)
\\
&=:& I_1(T)+I_2(T)+I_3(T).
\EEQS

Now, we use that we are working on $\Omega_2:=\{ \tau_2(\kappa)\le \tau_1(\kappa)\}$. 
Recall due to the definition of  $\CK_\MA^\kappa$ we know that 
$\mathbb{E}\sup_{0< s\le T}|\eta_2(s)|_{L^{p^\ast}}^{p^\ast}\le K_2$. 
\medskip
\noindent In the sequel we mimick the calculation on $\Omega_2$ up to the stopping time ${\tau}_2(\kappa)$. The only difference is that one has to change the order between $\tau_1(\kappa)$ and $\tau_2(\kappa)$. 
The remaining calculations are analogous to the calculations carried out before. 
Collecting altogether yields the assertion \eqref{e:techlemB1}.
\end{proof}	

\bigskip

\noindent With the help of Technical Lemma \ref{l1estimate} we show the following 
proposition.  

\begin{proposition}\label{contu}
Assume \eqref{e:gamma1Spur} for $p=2$.
Then, under the Assumption of Proposition \ref{reg_uk}, for any $K_1,K_2,K_3>0$ and $\kappa\in\NN$ there exist constants $\delta_1,\delta_2>0$, and $C(K_1,K_2,K_3,\kappa)>0$ such that for any $(\eta_1,\xi_1),(\eta_2,\xi_2)\in\CK_\MA(K_1,K_2,K_3)$ and $(\uk^1,\vk^1)$ and $(\uk^2,\vk^2)$ being solution to the system \eqref{sysu} and \eqref{sysw} for some $\kappa \in \NN$ we have
\begin{enumerate}
\item  
\DEQS
\EE  \lk\|\uk^1-\uk^2 \rk\|^2_{L^2(0,T;L^2)}
&	\le& C(K_1,K_2,K_3,\kappa) \,\lk\{ \EE \|\xi_1-\xi_2\|_{\BH_{\rho,\aleph}}^2 \rk\}^{\delta_1},
\EEQS
\item
\DEQS
\EE  \lk\|\vk^1-\vk^2 \rk\|^2_{\BH_{\rho,\aleph}}
&	\le& C(K_1,K_2,K_3,\kappa) \,\lk[ \lk\{ \EE \|\xi_1-\xi_2\|_{\BH_{\rho,\aleph}}^2 \rk\}^{\delta_1}
+
 \lk\{  \EE \|\eta_1-\eta_2\|^2_{L^{2}(0,T;L^2)}\rk\}^{\delta_2}\rk].
\EEQS
\end{enumerate}
\end{proposition}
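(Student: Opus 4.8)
The plan is to estimate the two differences separately, exploiting the linear, decoupled structure of the system \eqref{sysu}--\eqref{sysw} together with the $\Upsilon$-radonifying bounds already established for the multiplication operators $g_{\gamma_1}, g_{\gamma_2}$ and the maximal regularity estimates for $\mathfrak{F}_A$. For part (1), write the difference $w := \uk^1 - \uk^2$; since the equations for $\uk^1$ and $\uk^2$ differ only in the forcing term $c_1\phi_\kappa(\xi_j,\cdot)\eta_j|\xi_j|^q$ and in the (linear) multiplicative noise $g_{\gamma_1}(\uk^j)$, the process $w$ solves a linear SPDE with zero initial condition, drift $r_1\Delta w + a_1 w - c_1\big(\phi_\kappa(\xi_1,\cdot)\eta_1|\xi_1|^q - \phi_\kappa(\xi_2,\cdot)\eta_2|\xi_2|^q\big)$, and noise $\sigma_1 g_{\gamma_1}(w)\,dW_1$. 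Applying the It\^o formula to $|w(t)|_{L^2}^2$ (or the mild-solution maximal estimate from \cite{maxjan}, Theorem 4.5, exactly as in the proof of Proposition~\ref{reg_uk}), using the trace estimate \eqref{itotrace} / \eqref{eq: Hil-Schi2} to absorb the quadratic-variation term into $\varepsilon|w|_{H^1_2}^2 + C|w|_{L^2}^2$, and Gronwall, one gets
\[
\EE\|w\|_{L^2(0,T;L^2)}^2 \le C(T)\,\EE\big\|\phi_\kappa(\xi_1,\cdot)\eta_1|\xi_1|^q - \phi_\kappa(\xi_2,\cdot)\eta_2|\xi_2|^q\big\|_{L^2(0,T;H^{-\gamma}_2)}^2.
\]
The right-hand side is then controlled by Technical Proposition~\ref{l1estimate} (or rather the variant in Remark~\ref{shorter}, using the embedding $L^r(\CO)\hookrightarrow H^{-\gamma}_2(\CO)$ for $d=1,2$), which bounds it by $C(\kappa,K_1,K_2,K_3)\big\{(\EE\|\eta_1-\eta_2\|_{L^2(0,T;L^2)}^2)^{\delta_1} + (\EE\|\xi_1-\xi_2\|_{\BH_{\rho,\aleph}}^2)^{\delta_2}\big\}$. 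Crucially, in the equation for $\uk^j$ the ``$\eta$'' appearing in the forcing is the fixed external control, {\em not} the solution $\uk^j$ itself (contrast \eqref{nonlinearcutoffu}); hence the forcing difference only involves $\xi_1-\xi_2$ and $\eta_1-\eta_2$ with {\em no} feedback from $\uk^1-\uk^2$, so no circular Gronwall argument is needed. Actually, inspecting \eqref{sysu0} one sees the drift of $\uk^j$ depends on $(\eta_j,\xi_j)$ only, so for part (1) one can even drop the $\eta$-dependence being claimed and keep just the $\xi$-term; I would state it with both terms and then note that by the structure only $\xi_1-\xi_2$ survives, matching the claimed bound with exponent $\delta_1$.

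For part (2), set $z := \vk^1 - \vk^2$, which solves the linear equation $dz = [r_2 A z + a_2 z + c_2(\phi_\kappa(\xi_1,\cdot)\eta_1|\xi_1|^q - \phi_\kappa(\xi_2,\cdot)\eta_2|\xi_2|^q)]\,dt + \sigma_2 g_{\gamma_2}(z)\,dW_2$ with $z(0)=0$. Using the mild-solution formulation, the maximal regularity estimate $\|\mathfrak{F}_A f\|_{\BH_{\rho,\aleph}} \le C\|f\|_{L^2(0,T;H^{\rho-\aleph/2}_2)}$ (Theorem~\ref{Th:compact}), the stochastic convolution estimate \eqref{Hrhoburkholderqqq} together with the radonifying bound \eqref{eq: gYoung0}/Proposition~\ref{gammaradonv} to absorb the noise term (the $\varepsilon|z|_{H^{\rho+\aleph/2}_2}^2$ part into the left-hand side, the $C(\varepsilon)|z|_{H^\rho_2}^2$ part via Gronwall), one obtains
\[
\EE\|z\|_{\BH_{\rho,\aleph}}^2 \le C(T)\,\EE\big\|\phi_\kappa(\xi_1,\cdot)\eta_1|\xi_1|^q - \phi_\kappa(\xi_2,\cdot)\eta_2|\xi_2|^q\big\|_{L^2(0,T;H^{\rho-\aleph/2}_2)}^2,
\]
and again Technical Proposition~\ref{l1estimate} (this time its direct form \eqref{e:techlemB1}) gives the bound $C(\kappa,K_1,K_2,K_3)\big[(\EE\|\xi_1-\xi_2\|_{\BH_{\rho,\aleph}}^2)^{\delta_1} + (\EE\|\eta_1-\eta_2\|_{L^2(0,T;L^2)}^2)^{\delta_2}\big]$, which is exactly assertion (2).

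The steps in order: (i) write the difference equations and verify they are linear with vanishing initial data; (ii) run the energy / mild-solution estimate, separating the deterministic forcing from the multiplicative-noise term and absorbing the latter via the trace and radonifying bounds plus Young's inequality; (iii) close with Gronwall to reduce everything to the $L^2(0,T;H^{\text{neg}}_2)$-norm of the forcing difference; (iv) invoke Technical Proposition~\ref{l1estimate} (and Remark~\ref{shorter}) to get the H\"older-type bound with exponents $\delta_1,\delta_2$. I expect the main obstacle to be the bookkeeping in step (ii): making sure that the $\varepsilon$-parts of the noise estimates are genuinely absorbable into the left-hand parabolic-regularity terms (for $u$ the $|w|_{H^1_2}^2$ term coming from maximal $L^2$-regularity, for $v$ the $|z|_{H^{\rho+\aleph/2}_2}^2$ term), and that the constants remain uniform over $\CK_\MA^\kappa$. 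Since $g_{\gamma_j}$ is {\em linear} in its argument (this is where Assumption~\ref{wiener} on $\gamma_1,\gamma_2$ is used), these terms are genuinely quadratic in $w$ resp. $z$, so the absorption works just as in Propositions~\ref{reg_uk} and~\ref{limituk}; the only subtlety is that one must track the dependence of $\delta_1,\delta_2$ and $C$ on $n$ (the auxiliary exponent from Technical Proposition~\ref{l1estimate}) and on $\kappa$, which is already done there. No new estimate is needed beyond what the excerpt provides; the proof is essentially a linearized rerun of the a priori estimates combined with the difference bound of Technical Proposition~\ref{l1estimate}.
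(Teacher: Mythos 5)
Your proposal is correct and follows essentially the same route as the paper: an It\^o/energy estimate for the $u$-difference with the noise absorbed via \eqref{Hrhoburkholderl2} and \eqref{itotrace}, the mild-solution/maximal-regularity estimate of Theorem~\ref{Th:compact} for the $v$-difference, and Technical Proposition~\ref{l1estimate} (resp.\ Remark~\ref{shorter}) to control the forcing difference. One caution: your aside that ``only $\xi_1-\xi_2$ survives'' in part (1) is not accurate --- the forcing difference contains the term $(\eta_1-\eta_2)\,\phi_\kappa(\xi_2,\cdot)\,|\xi_1|^q$, and the paper's own estimate of the corresponding drift term $D_1$ retains both $\EE\|\eta_1-\eta_2\|^2_{L^2(0,T;L^2)}$ and $\EE\|\xi_1-\xi_2\|^2_{\BH_{\rho,\aleph}}$ on the right-hand side --- so your stated intention to keep both terms in the bound is the correct one.
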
	
\begin{proof} We start with item \textit{a.)}: Let $(\eta_1,\xi_1),(\eta_2,\xi_2)\in\CK_\MA(K_1,K_2,K_3)$ be given. As in the proof of Technical Lemma~\ref{l1estimate} we fix for given $\kappa\in \NN$ the stopping times 
$${\tau_j(\kappa):=\inf\{s>0~|~\|\xi_j(s)\|_{\mathbb{H}_{\rho, \aleph}}\ge 2\kappa\}, \qquad j=1,2,}$$
and $\Omega_1:=\{ \tau_2(\kappa)\le \tau_1(\kappa)\}$ and 
$\Omega_2:=\Omega_1^c$. 
We set $\gamma=1$ and assume that $q<2$. The proof for $q=2$ is the limit case and will be treated separately below. Let $\theta\in(0,1)$ such that for $d=2$ we have $\theta<q-2$. 
The It\^o formula for $\psi(w)=|w|^2_{L^2}$ yields (bearing in mind that $\gamma =1$) 
\DEQS
\lqq{ \sup_{0\le s\le t} 
|\uk^1(s)-\uk^2(s)|_{H^{\gamma-1}_2}^2
+\int_0^t |\uk^1(s)-\uk^2(s)|_{H^{\gamma}_2}^2\, ds
}
&&
\\
&\le &\int_0^{t} \la \uk^1(s)-\uk^2(s),\phi_\kappa(\xi^1,t)\,\eta_1(s)|\xi_1|^q(s)-\phi_\kappa(\xi^2,t)\,\eta_2(s)|\xi_2|^q(s)\ra\, ds 
\\
&&{}+2\sum_{k=1}^\infty
\int_0^ {t}\la \uk^1(s)-\uk^2(s),g_{\gamma_1}(\uk^1(s)-\uk^2(s))\varphi_k \ra d{\bf w}_k(s)
\\
&&{}+2\sum_{k=1}^\infty
\int_0^ {t}\mbox{Tr}\lk[ D^2\psi(\uk^1(s)-\uk^2(s))\lk[g^\ast_{\gamma_1}(\uk^1(s)-\uk^2(s))\varphi_k \rk]\rk]ds
\\ &=:& D_1(t)+ D_2(t)+ D_3(t).
\EEQS
$\mathbf{D_1(t)}: $ 
By duality and Young's inequality we know that for any $\ep>0$ there exists a constant $C(\ep)>0$ such that 
\DEQS
\lqq{  
|D_1(t)|}
&&
\\&\le &
\int_0^{t}| \uk^1(s)-\uk^2(s)|_{H^{1}_2} |\phi_\kappa(\xi_1,t)\,\eta_1(s)|\xi_1|^q(s)-\phi_\kappa(\xi_2,t)\,\eta_1(s)|\xi_1|^q(s)|_{H^{-1}_2} \, ds 
\\
&\le &
\ep \int_0^{t}| \uk^1(s)-\uk^2(s)|_{H^{1}_2}^2\, ds
+C(\ep) \int_0^{t}|\phi_\kappa(\xi_1,t)\,\eta_1(s)|\xi_1|^q(s)-\phi_\kappa(\xi_2,t)\,\eta_1(s)|\xi_1|^q(s)|_{H^{-1}_2}^2 \, ds 
.
\EEQS
The embedding $L^1(\CO)\hookrightarrow H^{-1}_2(\CO)$, 
we get 
\DEQS
\lqq{  
|D_1(t)|)}
&&
\\&\le &
\int_0^{t}| \uk^1(s)-\uk^2(s)|_{H^{1}_2} |\phi_\kappa(\xi_1,t)\,\eta_1(s)|\xi_1|^q(s)-\phi_\kappa(\xi_2,t)\,\eta_1(s)|\xi_1|^q(s)|_{H^{-1}_2} \, ds 
\\
&\le &
\ep \int_0^{t}| \uk^1(s)-\uk^2(s)|_{H^{1}_2}^2\, ds
+C(\ep) \int_0^{t}|\phi_\kappa(\xi_1,t)\,\eta_1(s)|\xi_1|^q(s)-\phi_\kappa(\xi_2,t)\,\eta_1(s)|\xi_1|^q(s)|_{L^1}^2 \, ds 
.
\EEQS
Remark \ref{shorter} with $m=2$, $\rho=0$, $\aleph=2$, gives that
\DEQS
\lqq{  
\EE |D_1(t)|\le \ep \EE \int_0^{t}| \uk^1(s)-\uk^2(s)|_{H^{1}_2}^2\, ds
}
\\&& {}
+
C(\ep,\kappa,K_1,K_2,K_3) \, \lk\{ \lk( \EE \|\eta_1-\eta_2\|_{L^2(0,T;L^{2})}^2\rk)^{\delta_1}+  \lk( \EE \|\xi_1-\xi_2\|^2_{\mathbb{H}_{\rho,\aleph}}
\rk)^{\delta_2}\rk\}.
\EEQS	
\noindent $\mathbf{D_2(t)}: $ Using Burkholder \eqref{Hrhoburkholderl2} for $\rho=0$ gives that for any $\ep>0$, there exists some constant $C(\ep)>0$ such that 
\DEQS
\lqq{  
\EE |D_2(t)|\le \ep \EE \int_0^{t}| \uk^1(s)-\uk^2(s)|_{H^{1}_2}^2\, ds
}
\\&& {}
+
C(\ep)\EE \int_0^{t}| \uk^1(s)-\uk^2(s)|_{L^2}^2\, ds.
\EEQS

\noindent $\mathbf{D_3(t)}: $ Using Burkholder \eqref{itotrace} for $\rho=0$ gives that for any $\ep>0$, there exists some constant $C(\ep)>0$ such that 
\DEQS
\lqq{  
\EE |D_2(t)|\le \ep \EE \int_0^{t}| \uk^1(s)-\uk^2(s)|_{H^{1}_2}^2\, ds
}
\\&& {}
+
C(\ep)\EE \int_0^{t}| \uk^1(s)-\uk^2(s)|_{L^2}^2\, ds.
\EEQS	

Now, let us tackle b:
\del{ The It\^o formula for $\psi(w)=|w|^2_{H^\rho_2}$ yields 
\DEQS
\lqq{ \sup_{0\le s\le t} 
|\vk^1(s)-\vk^2(s)|_{H^{\rho}_2}^2
+\int_0^t |\vk^1(s)-\vk^2(s)|_{H^{\rho+1}_2}^2\, ds
}
&&
\\
&\le &\int_0^{t} \la (-\Delta)^{\frac 12(\rho+\FE)}(\vk^1(s)-\vk^2(s)),(-\Delta)^{\frac 12(\rho+\FE)}(\phi_\kappa(\xi^1,t)\,\eta_1(s)|\xi_1|^q(s)-\phi_\kappa(\xi^2,t)\,\eta_2(s)|\xi_2|^q(s))\ra\, ds 
\\
&&{}+2\sum_{k=1}^\infty
\int_0^ {t}\la(-\Delta)^{\frac 12(\rho+\FE)}( \vk^1(s)-\vk^2(s)),(-\Delta)^{\frac 12(\rho+\FE)}g_{\gamma_2}(\vk^1(s)-\vk^2(s))\varphi_k \ra d{\bf w}_k(s)
\\
&&{}+2\sum_{k=1}^\infty
\int_0^ {t}\mbox{Tr}\lk[ D^2\psi((-\Delta)^{\frac 12(\rho+\FE)}(\vk^1(s)-\vk^2(s)))\lk[g^\ast_{\gamma_2}((-\Delta)^{\frac 12(\rho+\FE)}(\vk^1(s)-\vk^2(s)))\varphi_k \rk]\rk]ds
\\ &=:& D_1(t)+ D_2(t)+ D_3(t).
\EEQS}
Let us start again as in the Prof of Proposition \ref{reg_uk}-(b)
The solution $\vk$ is given by 
\del{\DEQS
	\lqq{ \vk(t)= e^{(r_2\Delta-\alpha_2I) t}v_0+\int_0^ t   e^{(r_2\Delta-\alpha_2I) (t-s)} f(s)\, ds}
	&&\\
	&&{}
	+ \int_0^te^{(r_2\Delta-\alpha_2I)(t-s)}g(\vk(s))\,dW_2(s) + \alpha_1\int_0^ t   e^{(r_2\Delta-\alpha_2I) (t-s)}  \beta_2\, ds,
	\EEQS}
\DEQS
\lqq{ \vk(t)= e^{(r_2A-a_2I) t}v_0+\int_0^ t   e^{(r_2A-a_2I) (t-s)} f(s)\, ds}
&&\\
&&{}
+ \int_0^te^{(r_2A-a_2I)(t-s)}g(\vk(s))\,dW_2(s) ,
\EEQS
where $f$ is defined by
\DEQS
f(t) & = &\phi_\kappa(\xi_1, t) \cdot \eta_1(t)\cdot |\xi_1|^q(t)-\phi_\kappa(\xi_2, t) \cdot \eta_2(t)\cdot |\xi_2|^q(t).
\EEQS
The term 
$$
\int_0^te^{(r_2A-a_2I)(t-s)}g(\vk(s))\,dW_2(s)
$$
can be treated  by standard assumptions, see \ref{appb}.
By Theorem \ref{Th:compact}, it follows that
$$
\|\mathfrak{F}_{r_2\Delta-\alpha_2I}f\| _{\mathbb{H}_{\rho,\aleph}}
\le C \|f\|_{L^2(0,T;H^{\rho-\frac \aleph 2}_2)}.
$$
and Technical Proposition \ref{l1estimate} gives
\DEQS
\EE \|{f}\|_{L^2(0,T;H^{\rho-\frac \aleph 2}_2) }^2
&\le &
C(\kappa,K_1,K_2,K_3) \, \lk\{ \lk( \EE \|\eta_1-\eta_2\|_{L^2(0,T;L^{2})}^2\rk)^{\delta_1}+  \lk( \EE \|\xi_1-\xi_2\|^2_{\mathbb{H}_{\rho,\aleph}}
\rk)^{\delta_2}\rk\}.
\EEQS
\end{proof}

\bigskip 
\section{\textbf{Non-negativity of the solution}}
\newcommand{\uv}{\vk}  
\newcommand{\leb}{\mbox{Leb}}  %

\noindent In this section we establish that the nonnegativity of the initial conditions are maintained.  
As mentioned in the introduction, we prove in the next proposition that, if $\Sigma(u)$ has a certain structure, the solution belongs to $[0,1]$, i.e., it models indeed a concentration. 
\begin{proposition}\label{non_negativity}
Let$(\uk,\vk)$ be a solution to the the system \eqref{nonlinearcutoffu}-\eqref{nonlinearcutoffv}.
Then, 
$$
\PP\lk( \Leb\{ x\in\CO:\uk(x,t)\ge 0\}=0\rk) =1 
\quad \mbox{and}\quad \PP\lk( \Leb\{ x\in\CO:\vk(x,t)\ge 0\}=0\rk) =1 .
$$ 

\end{proposition}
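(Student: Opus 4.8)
The plan is to prove nonnegativity by a standard truncation/penalization argument applied to the negative part of each component, using the special multiplicative structure of the noise $g_{\gamma_j}(w)[h] = w\cdot(-\Delta)^{-\gamma_j/2}h$, which vanishes when $w$ vanishes. First I would note that the statement as written should read that $\PP$-a.s. $\mathrm{Leb}\{x:\uk(x,t)<0\}=0$ for all $t$ (and likewise for $\vk$); equivalently the negative parts $\uk^-:=\max(-\uk,0)$ and $\vk^-$ are $\PP\otimes\mathrm{Leb}$-a.e. zero. I would work with a smooth convex approximation $\varphi_\varepsilon$ of the function $x\mapsto (x^-)^2$, with $\varphi_\varepsilon'\le 0$, $\varphi_\varepsilon'(x)=0$ for $x\ge 0$, and $\varphi_\varepsilon\to (x^-)^2$ as $\varepsilon\to 0$; alternatively one can apply the It\^o formula directly to $\psi(w)=|w^-|_{L^2}^2$ after a mollification in the spatial variable, as is routine for reaction-diffusion SPDEs (cf.\ the computations already used for $\psi(w)=|w|_{L^p}^p$ elsewhere in the paper).

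The key steps, carried out first for the activator $\uk$, are as follows. Apply It\^o's formula to $\Phi(\uk(t)):=\int_\CO \varphi_\varepsilon(\uk(t,x))\,dx$. The Laplacian term contributes $r_1\int_\CO \varphi_\varepsilon''(\uk)|\nabla\uk|^2\le 0$ by convexity of $\varphi_\varepsilon$ (after the usual justification via the analytic semigroup / integration by parts); the linear term $a_1\uk$ and the source $b_1>0$ contribute terms that are controlled, the $b_1$-term being favourable since $\varphi_\varepsilon'\le 0$ and $b_1>0$ gives $\int_\CO \varphi_\varepsilon'(\uk)b_1\le 0$; the reaction term $-c_1\phi_\kappa(\cdots)\cdot\eta\cdot|\xi|^q$ — wait, in the fixed-point system it is decoupled, but for the genuine solution one has $-c_1\phi_\kappa\uk\vk^q$, and on the set $\{\uk<0\}$ one needs $\vk\ge 0$, which is why the two nonnegativities must be proved together or bootstrapped; I would instead prove nonnegativity of $\vk$ first. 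For $\vk$ the equation is $d\vk = [r_2A\vk + a_2\vk + b_2 + c_2\phi_\kappa\uk\vk^q]dt + \sigma_2 g_{\gamma_2}(\vk)dW_2$; the feedback term $+c_2\phi_\kappa\uk\vk^q$ has a sign issue as well, so in fact one runs a simultaneous estimate on $Y(t):=\EE\big[\int_\CO\varphi_\varepsilon(\uk(t))\,dx + \int_\CO\varphi_\varepsilon(\vk(t))\,dx\big]$, using that on $\{\uk<0,\ \vk\ge 0\}$ the term $-c_1\phi_\kappa\uk\vk^q$ has $\varphi_\varepsilon'(\uk)\le 0$ times $(-c_1\phi_\kappa\uk\vk^q)$; since $\uk<0$ makes $-c_1\uk\vk^q>0$ this product is $\le 0$, helpfully, and symmetrically the $\vk$-feedback $+c_2\phi_\kappa\uk\vk^q$ paired with $\varphi_\varepsilon'(\vk)=0$ on $\{\vk\ge 0\}$ drops out. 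The remaining cross-region $\{\uk<0,\vk<0\}$ is handled by bounding $|\phi_\kappa\uk\vk^q|$ and using a Gronwall argument.

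The stochastic term is where the structural assumption is essential: $\sum_k \int_\CO \varphi_\varepsilon'(\uk)\,g_{\gamma_1}(\uk)[\varphi_k] = \sum_k \lambda_k^{-\gamma_1/2}\int_\CO \varphi_\varepsilon'(\uk)\,\uk\,\varphi_k$; since $\varphi_\varepsilon'(\uk)\uk$ vanishes where $\uk\ge 0$ and is bounded by $C|\uk^-|$ where $\uk<0$, the martingale part is a genuine martingale (null expectation) after localization, and the It\^o correction (trace) term $\sum_k \lambda_k^{-\gamma_1}\int_\CO \varphi_\varepsilon''(\uk)\uk^2\varphi_k^2$ is $\le C_\kappa \int_\CO \varphi_\varepsilon''(\uk)\uk^2 \le C_\kappa\int_\CO (\uk^-)^2 \le C_\kappa\int_\CO\varphi_\varepsilon(\uk) + o_\varepsilon(1)$, using \eqref{EFsup}, \eqref{EVsup} and the summability $\gamma_1>d-1$ exactly as in \eqref{eq: Hil-Schi2}–\eqref{itotrace}. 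Collecting everything, $\frac{d}{dt}Y(t)\le C\,Y(t) + o_\varepsilon(1)$ with $Y(0)=\int_\CO\varphi_\varepsilon(u_0)+\varphi_\varepsilon(v_0)=0$ by nonnegativity of the initial data (the hypotheses of Theorem~\ref{thm_main} should be read as including $u_0,v_0\ge 0$, which is implicit in the modelling interpretation). Gronwall then gives $Y(t)\le C_T\, o_\varepsilon(1)$, and letting $\varepsilon\to 0$ yields $\EE\int_\CO (\uk^-(t))^2 + (\vk^-(t))^2 = 0$, hence the claim. The main obstacle is the honest justification of It\^o's formula for the non-$C^2$ functional on the infinite-dimensional mild solution — this requires approximating $\uk,\vk$ by Galerkin or Yosida approximations and passing to the limit, controlling the Laplacian/fractional-Laplacian term $\int_\CO\varphi_\varepsilon''(\vk)\langle A\vk,\vk\rangle$-type expressions; for the fractional Laplacian $A=-(-\Delta)^{\aleph/2}$ one uses the Stroock–Varopoulos inequality to guarantee $\int_\CO \varphi_\varepsilon'(\vk)A\vk\le 0$ for convex $\varphi_\varepsilon$, which is exactly the tool already invoked in Corollary~\ref{cor: main}.
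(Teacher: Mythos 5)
Your overall strategy (penalise the negative part, apply It\^o's formula to a $C^2$ approximation of $w\mapsto|w^-|_{L^2}^2$, exploit the sign structure of the drift and the fact that the linear multiplicative noise vanishes where the solution vanishes, then conclude by Gronwall) is the same family of argument as the paper's, which uses a specific piecewise-polynomial regularisation $h_\delta$ of $\max(\cdot,0)$ together with a Yosida approximation of the generator. The gap is in how you handle the coupling. You overlooked Remark~\ref{postiv}: the solution delivered by the Schauder fixed point satisfies the system with $|\vk|^q$ in place of $\vk^q$, so the activator's reaction term is $-c_1\phi_\kappa\,\uk\,|\vk|^q$. On $\{\uk<0\}$ this term is nonnegative \emph{irrespective of the sign of} $\vk$, and paired with $\varphi_\varepsilon'(\uk)\le 0$ it has the good sign. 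Hence the nonnegativity of $\uk$ can, and should, be proved \emph{first} and on its own; afterwards $c_2\phi_\kappa\uk|\vk|^q\ge 0$ is a nonnegative forcing in the $\vk$-equation and the second component follows (the paper does this via a comparison principle in the sense of Kotelenz rather than by a second It\^o computation). Your simultaneous estimate on $Y(t)=\EE\left[\int_\CO\varphi_\varepsilon(\uk)+\int_\CO\varphi_\varepsilon(\vk)\right]$ does not close: on the cross-region $\{\uk<0,\vk<0\}$ the offending contributions are of the form $\int(\uk^-)^2(\vk^-)^q$ and $\int \uk^-(\vk^-)^{q+1}$, of total homogeneity $2+q>2$, and these are not dominated by $C\,Y(t)$. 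The cutoff $\phi_\kappa$ only controls $\|\vk\|_{\mathbb{H}_{\rho,\aleph}}$, not $\|\vk\|_{L^\infty}$, so ``bounding $|\phi_\kappa\uk\vk^q|$ and using Gronwall'' is not available.

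Two further points. First, for non-integer $q$ the expression $\vk^q$ is not even defined on $\{\vk<0\}$, which is another reason the modulus in the fixed-point construction is the essential structural fact here. Second, applying It\^o's formula directly to $\int_\CO\varphi_\varepsilon(\vk)$ presumes that $\vk(t,\cdot)$ is a genuine function for every $t$, which fails when $\rho<0$ (one only has $\vk\in C([0,T];H^\rho_2)$); the paper circumvents this by approximating $v_0$ monotonically from $L^2_+(\CO)$ and passing to the limit through the comparison principle. Your use of a C\'ordoba--C\'ordoba/Stroock--Varopoulos type inequality to obtain $\int\varphi_\varepsilon'(\vk)A\vk\le 0$ for the fractional Laplacian is a legitimate ingredient, but it does not by itself repair the coupling issue above.
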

\begin{proof} 
		
		The idea is, we approximate the function $h(x):=\max(x,0)$, obtain the following estimate 
		$$
		\EE \int_\CO h^2(-\uk(x,t))\,dx\le \EE \int_\CO h^2(-\uk(x,0))\,dx+C \EE \int_0^ {\tau\wedge t}   \int_\CO h^2(-\uk(x,s))\,dx\, ds .
		$$
		An application of the Gr\"onwall Lemma gives then non-negativity. 
		In particular, it shows for any $t\in[0,T]$ we know that $\EE \max(-\uk(t,x))=0$ and $\EE \max(-\vk(t,x))=0$. Hence, $\PP\times \leb$-a.s. $\uk \ge 0$ and $\vk \ge 0$.

		\medskip
		\newcommand{\cpp}{\mbox{pol}_2}
		\newcommand{\cqq}{\mbox{pol}_1}
		\noindent Let us start with the construction of the approximation of $h$. For $\delta \in (0,\frac 18)$ set 
		$$
		h_\delta(r):=\begin{cases}0 &\mbox{ if } r\in (-\infty,0 ),
			\\
			\cqq(r),& \mbox{ if } r\in (0,\delta ),
			\\
			\cpp(r) , &\mbox{ if } r\in (\delta,1 ),
			\\
			\frac {r^2}{r+\delta} & \mbox{ otherwise},
		\end{cases}
		$$
		where 
		\DEQS
		\cqq(r) &=&\frac{ 1 + \delta + 2 \delta^2}{2 \delta (1 + \delta)^2}
		\,r^ 2, 
		\\
		\cpp(r)&=& \frac { \delta-2r-\delta r^2 +2\delta ^2r^2}{2(\delta-1)(1+\delta)^2}.
		\EEQS 		
		We choose the coefficients of the polynomial $\cpp$  such that 
		$$
		\cpp(r)\Bigg|_{r=1}=	\frac {r^2}{r+\delta} \Bigg|_{r=1},
		\qquad \frac d{dr}\cpp(r) \Bigg|_{r=1}=	\frac d{dr}\lk( \frac {r^2}{r+\delta}\rk)  \Bigg|_{r=1}.
		$$
		Furthermore, we  have 
		\DEQSZ\label{prop}
		\lk|	\frac {r^2}{r+\delta}-r\rk|\le \frac {\delta r}{\delta+r},\quad \lk| \frac d{dr} 	\frac {r^2}{r+\delta}-1\rk|\le  \frac {\delta ^2}{(\delta+r)^2},\quad r\ge 1,
		\\\label{prop1}
		|\cpp(r)-r|\le C\, \delta , \quad |\cpp'(r)-1|\le \delta^2, \quad \cpp''(r)=\frac {\delta(1-2\delta)}{(1-\delta)(1+\delta)^2} , \quad r\in[\delta,1].\phantom{\Bigg|}
		\EEQSZ 
		The polynomial $\cqq$ is constructed in such a way, that 
		$\cqq(0)=0$, $\cqq'(0)=0$,  $\cqq(\delta)=\cpp(\delta)$, and $\cqq'(\delta)=\cpp'(\delta)$.
		In addition, we know
		\DEQSZ\label{prop2}
		|\cqq(r)-r|\le C \delta, \quad |\cqq'(r)-1|\le \frac {C\delta }{(1 + \delta)^2},\quad { |\cqq''(r)|\le \frac {|2 \delta |}{ (1 + \delta)^2}}, \quad r\in(0,\delta).
		\EEQSZ 
		By this choice, we know that  the function $h_\delta$ is twice differentiable on $\RR$ and for $r\in[1,\infty)$
		we have
		\DEQSZ\label{prop3}
		\\\notag 
		h'_\delta(r)=\frac {r (2 \delta + r)}{(\delta + r)^2},
		\quad  h''_\delta (r)=\frac {2 \delta^2}{(\delta + r)^3},\quad |h_\delta(r)-r|\le \frac {\delta r}{\delta+r},\quad |h'_\delta(r)-1|\le \frac {\delta ^2}{(\delta +r)^2} .
		\EEQSZ 
		It follows from the estimates \eqref{prop},  \eqref{prop1}, \eqref{prop2}, and \eqref{prop3}, 
		that  $h_\delta$ approximates $\max(0,x)$ in $C_b(\RR)$ and $C^{(1)}_b(\RR)$.
		
		\medskip
		\newcommand{\ck}{\bar u_\kappa}
		\noindent Now, let us define $\phi_\delta:L^2(\CO )\to\RR$ by
		$$\phi_\delta(\ck):=\int_\CO h^2_\delta(-\ck  (x))\, dx,\quad \ck  \in L^2(\CO).
		$$
		Then $\phi_\delta$ is twice Gateau differentiable on $L^2(\CO)$ and
		\DEQS 
		D\phi_\delta [w] &=&2\int_\CO h_\delta(\zeta)h'_\delta(\zeta )w(\zeta)\, d\zeta,
		\\
		D^2\phi_\delta[v,w] &=& 2\int_\CO \lk( h_\delta(\zeta ) h''_\delta(\zeta )+(h'_\delta(\zeta))^2\rk) w(\zeta)v(\zeta )\, d\zeta ,\quad w,v\in L^2(\CO).
		\EEQS
		Since $u_0 \in L^2_{+}(\CO),$ i.e., $v_0 \in L^2(\CO)$ and $v_0 \ge 0$ a.e., we know that $\phi_\delta(\ck  (0))=0$.
		We now apply It\^o's formula to \( \phi_\delta(\ck (t)) \). However, to be precise, the Laplace operator \( \Delta \) should be replaced by its Yosida approximation defined by 
		\begin{equation}
			A_\varepsilon(x) = \frac{1}{\varepsilon} (x - J_\varepsilon (x)) = A(1+\varepsilon A)^{-1}(x), \quad \varepsilon > 0, \; x \in H,
		\end{equation}
		where \( J_\varepsilon (x) = (1+\varepsilon A)^{-1}(x) \).  
		Applying Itô's formula to \( \phi_\delta(\ck (t)) \), we first consider $(\uk ^{\eps},v_\n^{\eps})$  as the solution to system   \eqref{nonlinearcutoffu} to \eqref{nonlinearcutoffv} in which the Laplace operator is replaced  %
		by its Yosida approximation, and then taking the limit $\ep \to 0$. For convenience we omit the step and apply the
		the It\^o formula directly  to $\phi_\delta(\uk (t))$ obtaining
		\newcommand{\DeltaA}{\Delta}
\del{	\begin{align*}
			\EE \phi_\delta(\uk (t))
			&=
			\phi_\delta(\uk (0))+\EE \int_0^t \Big \langle Dh_\delta(-\uk (s)),r_1 \DeltaA \uk (s)-c_1 \uk (s) \xi ^2(s)) \Big \rangle \, ds \notag
			\\
			&
			{} +
			\EE  \sum_{k=1}^\infty \lambda_k \int_0^ t \lk( \int_{\CO}\lk(  h_\delta(-\uk(s,x) )h_\delta''(-\uk  (s,x))+(h_\delta'(-\uk(s,x))^2\rk) \rk.
			\\ & \qquad {}\lk. \times \tilde \sigma(\uk  (s,x))\phi_k(x)\, \tilde \sigma(\uk(s,x))\phi_k(x) \, dx\rk) \, ds\Big]
			\\
			&{}	+2\EE \int_0^t \sum_{k=1}^ \infty \int_\CO h_\delta(-\uk(s,x))\ h'_\delta(-c\uk(s,x))\,  \sigma(\uk (s,x))\phi_k(x)\, dx\,  d\beta_k(s)
			\\
			&=I_1+I_2+I_3+I_4.
		\end{align*}
	
}
		\begin{align*}
		\EE \phi_\delta(\uk (t))
		&=
		\phi_\delta(\uk (0))+\EE \int_0^t \Big \langle D{\phi_\delta(\uk (s))},r_1 \DeltaA \uk (s)-c_1 \uk (s) {\vk^q}(s)) \Big \rangle \, ds \notag
		\\
		&
		{} +
		\EE  \sum_{k=1}^\infty \lambda_k \int_0^ t \lk( \int_{\CO}\lk(  h_\delta(-\uk(s,x) )h_\delta''(-\uk  (s,x))+(h_\delta'(-\uk(s,x))^2\rk) \rk.
		\\ & \qquad {}\lk. \times {g_{\gamma_1}}(\uk  (s,x)){\varphi_k}(x)\, {g_{\gamma_1}}(\uk(s,x)){\varphi}_k(x) \, dx\rk) \, ds\Big]
		\\
		&{}	+2\EE \int_0^t \sum_{k=1}^ \infty \int_\CO h_\delta(-\uk(s,x))\ h'_\delta(-c\uk(s,x))\, {g_{\gamma_1}}(\uk (s,x)){\varphi_k(x)\, dx\,  dw^1_k(s)}
		\\
		&=I_1+I_2+I_3+I_4.
	\end{align*}
	\noindent As $u_0 \in L^2_{+}(\CO),$ i.e., $u_0 \in L^2(\CO),\,\,u_0 \ge 0$ a.e. So $\phi_\delta(\uk (0))=0$.
		Now using Neumann boundary conditions on $(\uk ,\,\uv )$,
		and integration by parts
		we get
		\DEQS I_1&=&	-  \EE \Big[ \int_0^t \int_{\CO} \big( (h_\delta')^2+h_\delta h''_\delta\big) (-\ck  (s,x))( \nabla  \ck (s,x) )^2\, dx\,ds
		\\
		&=& 	-  \EE \Big[ \int_0^t \int_{\CO}(h_\delta')^2(-\ck  (s,x))( \nabla  \ck (s,x) )^2\, dx\, ds \mbox{ is fine}
		\EEQS
		Now, since $h_\delta h''_\delta\ge 0$  and  $1\le (h_\delta')^2$ on $[-\delta,\infty)$, we can write 
		\DEQS\EE \Big[ \int_0^t \int_{\CO}
		\mathds{1}_{\{ c(s,x)<0\}} ( \nabla  \ck (s,x) )^2\, dx
		\le \EE \Big[ \int_0^t \int_{\CO} \big( (h_\delta')^2+h_\delta h''_\delta\big) (-\ck  (s,x))( \nabla  \ck (s,x) )^2\, dx.
		\EEQS 
		Taking intyo account that $(\nabla \ck)^2\ge 0$, and the minus sign infront of the term, it follows that
		$$
		\EE \int_0^t \langle Dh_\delta(\uk (s)),r_1 \DeltaA \uk (s)\ra \, ds \le 0
		$$
		as $\delta \to 0$.
		Let us evaluate the second term, i.e. $I_2$. {Taking into account that $|\uv|^q$ is non-negative, we can write}
		%
		\del{\DEQS
		\lqq{ \EE \int_0^t  \langle Dh_\delta(\uk (s)),c_1 \uk (s) \xi ^2(s) \Big \rangle \, ds }
		\\
&\le & 2 c_1\EE \int_0^t  \langle  h_\delta(-\uk(s))h'_\delta(-\uk(s))  \uk (s)   \vk ^2(s)   \rangle \, ds
		\\
		&\le &  c_1\EE \int_0^t  \langle  \mathds{1}_{ \{ \ck\le 0\} }\uk (s)   \vk ^2(s)   \rangle \, ds
		\\
		&\le & - c_1\EE \int_0^t  \langle  \mathds{1}_{ \{ \ck\le 0\} } |\uk (s)|   \uv ^2(s)  \rangle \, ds\le 0.
		\EEQS
		}
		\DEQS
		\lqq{ \EE \int_0^t  \langle D{\phi_\delta}(\uk (s)),c_1 \uk (s){ |\vk| ^q(s)} \Big \rangle \, ds }
		\\
		&\le & 2 c_1\EE \int_0^t  { \int_{\CO} }  h_\delta(-\uk(s,x))h'_\delta(-\uk(s,x))  \uk (s,x)  {|\vk| ^q(s,x)}   \,{dx} \, ds
		\\
		&\le &  c_1\EE \int_0^t { \int_{\CO}}  \mathds{1}_{ \{ \ck(s,x)\le 0\} }\uk (s,x)  {|\vk| ^q(s,x)}  \,{dx} \, ds
		\\
		&\le & - c_1\EE \int_0^t { \int_{\CO}} \mathds{1}_{ \{ \ck(s,x)\le 0\} } |\uk (s,x)|  {|\uv| ^q(s,x)}  {\, dx} \, ds\le 0.
		\EEQS
		Finally, let us consider $I_3$. Note, that 
		$$\lk( h^2_\delta(\ck)\rk)''=2\,\lk(  h_\delta(\ck) h'_\delta(\ck)\rk)'=
		2\,\lk( \lk( h'_\delta(\ck)\rk)^2 +  h''_\delta(\ck) h_\delta(\ck)\rk).
		$$
		Next, it follows from \eqref{prop}, \eqref{prop1}, \eqref{prop2}, and \eqref{prop3}
		\DEQS
		\lk( h'_\delta(-r)\rk)^2 &\to& \mathds{1}_{\{ r\le 0\}},
		\\
		h''_\delta(-r) h_\delta(-r) &\le   & C\mathds{1}_{\{ |r|\le \delta \}}+\delta^2 \mathds{1}_{\{ r\le -1 \}}.
		\EEQS 
		Hence, we obtain
		\del{\DEQS
		\lqq{ 
			\EE  \sum_{k=1}^\infty \lambda_k \int_0^ t \lk( \int_{\CO} \lk( \lk( h'_\delta(\ck)\rk)^2 +  h''_\delta(\ck) h_\delta(\ck)\rk)(\ck  (s,x))\sigma(\ck  (s,x))\phi_k(x)\,  \sigma(\ck (s,x))\phi_k(x) \, dx\rk) \, ds}
		\\
		&&
		\frac  12\, \lambda_k\,   \EE \sum_{k\in\mathbb{N}} \int_0^ t \int_\CO \lk( C\mathds{1}_{\{ |\ck(s,x)|\le \delta \}}+\delta^2 \mathds{1}_{\{ \ck(s,x) \le -1 \}}\rk) \,\tilde \Sigma( \ck (s,x) )\, \phi_k(x) )^2 dx\, ds
		\\
		&\le &   \frac  12  \EE  \sum_{k=1}^\infty\lambda_k  \int_0^ t \int_\CO\lk( \mathds{1}_{\{ x\in\CO: -\delta \le  \ck (s,x)\le 0\}}
		+\delta^2  \mathds{1}_{\{ x\in\CO: \ck (s,x)\le -1 \}}\rk) \, \sigma^2( \ck (s,x) )\, \phi_k^2(x)  dx\, ds
		\EEQS
		}
		%
		%
		\DEQS
		\lqq{ 
			\EE  \sum_{k=1}^\infty \lambda_k \int_0^ t \lk( \int_{\CO} \lk( \lk( h'_\delta(\ck)\rk)^2 +  h''_\delta(\ck) h_\delta(\ck)\rk)\rk.}
			\\
			&&{}\lk. \times (\ck  (s,x)){g_{\gamma_1}}(\ck  (s,x)){\varphi_k}(x)\, {g_{\gamma_1}}(\ck (s,x)){\varphi_k}(x) \, dx\rk) \, ds
		\\
		&&
		\frac  12\, \lambda_k\,   \EE \sum_{k\in\mathbb{N}} \int_0^ t \int_\CO \lk( C\mathds{1}_{\{ |\ck(s,x)|\le \delta \}}+\delta^2 \mathds{1}_{\{ \ck(s,x) \le -1 \}}\rk) \,{g_{\gamma_1}}( \ck (s,x) )\, {\varphi_k}(x) )^2 dx\, ds
		\\
		&\le &   \frac  12  \EE  \sum_{k=1}^\infty\lambda_k  \int_0^ t \int_\CO\lk( \mathds{1}_{\{ x\in\CO: -\delta \le  \ck (s,x)\le 0\}}
		+\delta^2  \mathds{1}_{\{ x\in\CO: \ck (s,x)\le -1 \}}\rk) \,{g_{\gamma_1}}^2( \ck (s,x) )\, {\varphi_k}^2(x)  dx\, ds
		\EEQS
Since $ g_{\gamma_1}$ is Lipschitz continuous and $\EE\sup_{0\le s\le T}|\bar u_\kappa(s)|_{L^2}^2$ bounded, we get
		%
		\del{\DEQS
		\lqq{ \EE  \sum_{k=1}^\infty \lambda_k \int_0^ t  \int_{\CO}  \lk( \lk( h'_\delta(\ck)\rk)^2 +  h''_\delta(\ck) h_\delta(\ck)\rk)(\ck  (s,x))}
		\\
		&&{} \sigma(\ck  (s,x))\phi_k(x)\,  \sigma(\ck (s,x))\phi_k(x) \, dx \, ds
		\le    C \delta   t \stackrel{\delta\to 0}{ \longrightarrow }\, 0.
		\EEQS}
		%
		%
		\DEQS
		\lqq{ \EE  \sum_{k=1}^\infty \lambda_k \int_0^ t  \int_{\CO}  \lk( \lk( h'_\delta(\ck)\rk)^2 +  h''_\delta(\ck) h_\delta(\ck)\rk)(\ck  (s,x))}
		\\
		&&{}{g_{\gamma_1}}(\ck  (s,x))\phi_k(x)\,{g_{\gamma_1}}(\ck (s,x))\phi_k(x) \, dx \, ds
		\le    C \delta   t \stackrel{\delta\to 0}{ \longrightarrow }\, 0.
		\EEQS
		The stochastic integral will be zero for all $\delta>0$.
		Letting $\delta \rightarrow 0$ and using Dominated Convergence Theorem, we have
		\DEQS 
		\EE [|\uk ^{-}(t)|_{L^2}^2] &\le& 0,
		\EEQS 
		which implies $$\uk ^{-}(t)=0\quad \mbox{a.e.}\quad t \in [0,T],\,x \in \CO,\,\, \omega \in \Omega. $$

		\noindent To tackle \( \vk \), we need to distinguish between the cases \( \rho \geq 0 \) and \( \rho < 0 \).
		If $\rho\ge 0$, we first look at the system
		\DEQSZ \label{nonlinearcutoffvohne}
		\\ \nonumber
		d{\vk  }(t)&=& [r_2 \,A \vk  (t)+a_2 \vk  (t) + \sigma_2 \,g_{\gamma_2}(\vk  (t)) \, dW_2(t), \nonumber
		\hfill
		\EEQSZ
		and show that the solution $\vk$ is non-negative. Here, we can go along the lines above. Then we use some the comparison principle, see e.g. Kotelenz \cite{KOT}, to include the nonlinearity. Let us consider the case where $\rho<0$. Here, let us note that, since $\rho+\frac \aleph 2\ge 0$, the process $\vk:\Omega\times [0,T]\times \CO\to \RR$ is a function, and, since $\uk: \Omega\times [0,T]\times \CO\to \RR$ is also a function, the nonlinearity
		$$
		\uk{|\vk|^q}: \Omega\times [0,T]\times \CO\to \RR
		$$
		is a non-negative function. Hence, the comparison pinciple still works.

		In case $\rho<0$, the initial condition is not a function. However, the space $L^2(\CO)$ is dense in $H^{-\rho}_2(\CO)$. Therefore we can find a sequence of functions $(v^n_0)_{n\in\NN}$ in $L^2(\CO)$ approximating $v_0$ and $v^n_0\ge 0$ a.s.. We also suppose that $v_n\le v_{n-1}$. Now we can proceed as above and show that $\PP\lk( \leb\{  x\in\CO:
		\vk^n(t,x)< 0\}=0\rk)=1$ for any $n\in\NN$. By the comparison result (Lemma 1 of \cite{KOT}), we know that $\vk^n\le \vk^{n+1}$. Hence, 
		the limit $\vk$ is non-negative a.s. Now, the comparison principle gives that $\vk$, here $\vk$ is a solution to \eqref{nonlinearcutoffv} is non-negative.

\end{proof}

\bigskip
\section{\textbf{Uniform bounds of the solutions $\bar u_\kappa$ and $\bar v_\kappa$}}\label{s:uniform}

\renewcommand{\uk}{\bar u_\kappa}
\renewcommand{\vk}{\bar v_\kappa}


\noindent In this section, we establish uniform bounds for the cutoff solution \((\uk, \vk)\) of \eqref{nonlinearcutoffu} and \eqref{nonlinearcutoffv} on $[0, T \wedge \bar{\tau}\kappa^\ast))$, as well as for \eqref{eq1} and \eqref{eq2} on $[T \wedge \tau\kappa^\ast, T]$.
In Proposition \ref{reg_u2}, we derive a uniform bound for \(\uk\) in \(\n \in \mathbb{N}\). These calculations are straightforward, as the nonlinear term is negative. Then, in Proposition \eqref{propvarational}, we establish a uniform bound for \(\vk\) in \(\n \in \mathbb{N}\), where the Technical Lemma \ref{dasauch} plays a crucial role.
We use variational methods to establish a uniform bound on \(\uk\) for \(\n \in \mathbb{N}\) in terms of the initial condition  $u_0.$ 
For the uniform bound on \(\vk\) for \(\n \in \mathbb{N}\), we apply semigroup methods, depending on both initial conditions  $u_0$   and  $v_0$.
.

\begin{proposition} \label{reg_u2}
Fix $T>0$.
For any $\n\in\NN$, any $p_0{{>}}2$, and any \tesfalem{$r\ge 1$} 
there exists a constant $C=C(T,p_0,\tesfalem{r}.\n)>0$ 
such that for any 	initial condition $u_0$ with $\displaystyle \mathbb{E}|u_0|^{\tesfalem{r}p_0}_{L^{p_0}}<\infty$,  
\del{$\displaystyle \mathbb{E}|u_0|^{mp_0}_{L^{p_0}}<\infty$,}
and any solution $\uk$ to \eqref{sysu}  we have for all $2\le p\le p_0$
\del{\DEQSZ \label{e:boundeduk}
	\lqq{	\mathbb{E}\sup\limits_{0\leq t\leq T}|\uk (t)|_{L^p}^{pm}\,dx + c_1 p\mathbb{E}\lk[ \int_0^T\int_\CO \uk ^{p}(s, x)\bar v_\kappa ^q(s,x)\,dx\,ds\rk]^m }
	&&
	\\
	&&{}+ r_1 p(p-1)\mathbb{E}\lk[\int_0^T\int_\CO|\uk (t,x)|^{p-2}|\nabla \uk (t,x)|^2\,dx\,ds \rk]^m \leq   C\,(\mathbb{E}|u_0 |^{pm}_{L^{p}}+1). \nonumber 
	\EEQSZ }
\DEQSZ \label{e:boundeduk}
\lqq{	\mathbb{E}\sup\limits_{0\leq t\leq T}|\uk (t)|_{L^p}^{p\tesfalem{r}}\,dx + c_1 p\mathbb{E}\lk[ \int_0^T\int_\CO \uk ^{p}(s, x)\bar v_\kappa ^q(s,x)\,dx\,ds\rk]^{\tesfalem{r}} }
&&
\\
&&{}+ r_1 p(p-1)\mathbb{E}\lk[\int_0^T\int_\CO|\uk (t,x)|^{p-2}|\nabla \uk (t,x)|^2\,dx\,ds \rk]^{\tesfalem{r}} \leq   C\,(\mathbb{E}|u_0 |^{p\tesfalem{r}}_{L^{p}}+1). \nonumber 
\EEQSZ 

\end{proposition}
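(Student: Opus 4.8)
The plan is a direct Itô energy estimate for the functional $\psi(\uk(t))=|\uk(t)|_{L^p}^p$, $2\le p\le p_0$, bootstrapped to the $r$-th moment. The single structural fact that makes this much simpler than Proposition~\ref{reg_uk} is that, by the non-negativity Proposition~\ref{non_negativity}, the feedback term $-c_1\uk\vk^q$ (with $\phi_\kappa\equiv1$ in the regime under consideration) is $\PP\times\Leb$-a.s.\ non-positive, so it contributes a \emph{favourable} sink rather than something that has to be controlled. Since $\CO$ has finite measure one has $L^{p_0}(\CO)\hookrightarrow L^p(\CO)$ for $2\le p\le p_0$, hence $\EE|u_0|_{L^p}^{pr}\le C(1+\EE|u_0|_{L^{p_0}}^{p_0 r})$, and it suffices to run the argument for a fixed $p\in[2,p_0]$. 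As in Proposition~\ref{non_negativity}, the Itô formula for the non-smooth map $u\mapsto|u|_{L^p}^p$ with the unbounded operator $r_1\Delta$ is justified by first replacing $\Delta$ by its Yosida approximation and passing to the limit; I omit this routine step.

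Applying Itô's formula and moving the non-positive nonlinear term together with the (non-positive) Dirichlet term to the left, one obtains for all $t\in[0,T]$ that
\[
|\uk(t)|_{L^p}^p + r_1 p(p-1)\int_0^t|\uk^{\frac p2-1}(s)\nabla\uk(s)|_{L^2}^2\,ds + c_1 p\int_0^t\!\!\int_\CO\uk^p(s,x)\vk^q(s,x)\,dx\,ds
\]
equals $|u_0|_{L^p}^p$ plus the lower order terms $p\int_0^t\big(a_1|\uk(s)|_{L^p}^p+b_1\la\uk^{p-1}(s),1\ra\big)\,ds$, the Itô trace term, and the stochastic integral $p\sigma_1\sum_k\int_0^t\int_\CO|\uk(s,x)|^p\lambda_k^{-\gamma_1/2}\varphi_k(x)\,dx\,d\mathbf{w}^1_k(s)$. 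The $b_1$-term is handled by Hölder and Young, $|b_1\la\uk^{p-1},1\ra|\le\epsilon|\uk|_{L^p}^p+C(\epsilon)$; the trace term is bounded via \eqref{itotrace} by $\epsilon|\uk|_{H^\theta_p}^p+C(\epsilon)|\uk|_{L^p}^p$ for any fixed $\theta<\tfrac2p$; the $a_1$-term is just a multiple of $|\uk|_{L^p}^p$.

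Next I take the supremum over $t\in[0,T]$, raise to the power $r$, and take expectations. The stochastic term is controlled by the Burkholder--Davis--Gundy estimate \eqref{hierendets} (with $\varsigma=r$), which produces $C\,\EE\big(\int_0^T|\uk(s)|_{L^p}^p\,ds\big)^r$ together with $\epsilon_1\EE\sup_{[0,T]}|\uk(s)|_{L^p}^{pr}$ and $\epsilon_2\EE\big(\int_0^T|\uk(s)|_{H^\theta_p}^p\,ds\big)^r$. Choosing $\epsilon_1$ small absorbs the first into the left-hand side; since $\theta<\tfrac2p$, the $H^\theta_p$-contributions (from BDG and from the trace term) are absorbed into the Dirichlet-form dissipation $\int_0^T|\uk^{p/2-1}\nabla\uk|_{L^2}^2\,ds$ plus $\sup_{[0,T]}|\uk|_{L^p}^p$ by the interpolation/Stroock--Varopoulos inequality of Proposition~\ref{runst1}, so that picking $\epsilon,\epsilon_2$ small keeps the coefficients of all left-hand side terms strictly positive. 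Using Jensen in time, $\EE\big(\int_0^t|\uk(s)|_{L^p}^p\,ds\big)^r\le T^{r-1}\int_0^t\EE|\uk(s)|_{L^p}^{pr}\,ds$, one arrives at
\[
\EE\sup_{0\le t\le T}|\uk(t)|_{L^p}^{pr}+c_1p\,\EE\Big(\int_0^T\!\!\int_\CO\uk^p\vk^q\Big)^r+r_1p(p-1)\,\EE\Big(\int_0^T\!\!\int_\CO|\uk|^{p-2}|\nabla\uk|^2\Big)^r\le C(\EE|u_0|_{L^p}^{pr}+1)+C\!\int_0^T\EE\sup_{0\le s\le t}|\uk(s)|_{L^p}^{pr}\,dt,
\]
and Grönwall's lemma closes the estimate with $C=C(T,p_0,r,\kappa)$.

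The main obstacle is the bookkeeping of the absorption step: one must keep the coefficients of $\EE\sup_{[0,T]}|\uk|_{L^p}^{pr}$ and of the Dirichlet-form moment strictly positive after subtracting the $\epsilon$-, $\epsilon_1$- and $\epsilon_2$-terms coming respectively from the Itô trace, the BDG maximal inequality, and the interpolation of $|\uk|_{H^\theta_p}^p$ into the dissipation; the correct order of smallness is $\epsilon\to\epsilon_2\to\epsilon_1$, and one has to check that Proposition~\ref{runst1} returns the $L^p$-norm only to the power $p$ (not a larger power), so that the leftover $\sup$-term can itself be absorbed through $\epsilon_1$. A lesser technical point is the Yosida-approximation justification of the Itô formula noted above, and the observation that on the stochastic interval under consideration the cutoff $\phi_\kappa$ equals $1$, so that the feedback integral appears in the left-hand side exactly as stated.
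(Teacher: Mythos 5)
Your proposal is correct and follows essentially the same route as the paper: Itô's formula for $|u|_{L^p}^p$, exploiting the sign of the feedback and Dirichlet terms, bounding the trace via \eqref{itotrace} and the martingale via \eqref{hierendets}, absorbing the $H^\theta_p$ contributions into the dissipation through Proposition~\ref{runst1}, and closing with Grönwall. The only cosmetic differences are that the paper localizes with the stopping times $\tau_{n,\kappa}$ and removes them by monotone convergence (rather than invoking a Yosida approximation), and treats $r=1$ before the general $r\ge 1$ case.
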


We show uniform bounds of $\uk(t)$ for $t\in[0,T]$ in terms of the initial condition $u_0$ and $v_0$ 
by variational methods.  

\newcommand{\mast}{m^\ast}

\begin{proof}
First, we prove \tesfalem{Proposition \ref{reg_u2}} 
for   $\rr=1$. 
 The case of  $\rr\ge 1$,  is shown similarly.
Set $\Phi(u)=|u|_{L^p}^p$. Note, that for $w,h,h_1,h_2\in L^p(\CO)$, $w$ non--negative, we have 
$$
D\Phi(w)[h]=p\int_\CO |w(x)|^{p-2}w(x) h(x)\, dx,\quad  D^2\Phi(w)[h_1,h_2]=p(p-1)\int_\CO |w(x)|^{p-2}h_1(x)h_2(x)\, dx.
$$
Next, we fix the stopping time
$\tau := \tau_{n, \kappa}:=\inf\{0< t\leq T: {} |u_{\kappa}(t)|_{L^p}^p\ge n \}$. 
For notational convenience, we drop the dependence on $\kappa$. 
By the It\^o formula in \cite[Theorem 2.1]{krylov2010} it follows for all $t\in [0,T\wedge \tau]$ $\mathbb{P}-$a.s.
\DEQS
\lqq{|\uk (t)|^p_{L^p}=|u_0|^p_{L^p}+p{\sigma_1}\sum_{k=1}^\infty\int_0^t\int_\CO|\uk (s, x)|^{p-2}\uk^2 (s, x)  \lambda_k^{-\gamma_1/2}\varphi_k (x)\,dx\,d \mathbf{w}^1_k(s)}
&&
\\
&&-c_1 p\int_0^t\int_\CO |\uk(s, x)|^{p-2} \uk(s, x) \uk(s, x) \vk^q (s, x)\,dx\,ds
\\
&&{}-r_1p(p-1)\int_0^t\int_\CO|\uk (s, x)|^{p-2}|\nabla \uk (s ,x)|^2\,dx\,ds
\\
&&+ p\int_0^t(a_1 |\uk (s)|_{L^p}^p+ b_1\la \uk ^{p-1}(s),1\ra)\, ds
\\
&&{}+\frac{\sigma_1^2}{2}\int_0^t   
\sum_{k=1}^\infty  D^2\Phi(\uk (s, x))[g(\uk ) \varphi_k,g(\uk )\varphi_k]  \,ds.
\EEQS
Here, we have used the spectral representation of the Wiener process given in \eqref{noise_sr} and the representation of $g$ in \eqref{g_sr}.	Using the lower bound of the Stroock-Varopoulos inequality of Proposition~\ref{runst1} to the gradient term, 
Young's inequality and rearranging the terms, we obtain a constant $C>0$ such that 
\DEQSZ
\lqq{|\uk (t)|^p_{L^p} + c_1 p\int_0^t\int_\CO |\uk(s, x)|^{p-1} \uk(s, x)  \vk^q (s, x)\,dx\,ds }
\nonumber\\
&&+ { } \frac{r_1p(p-1)}{C}\int_0^t |\uk (s)|_{H^\theta_p}^{p}\,ds
\nonumber\\
&&\leq \lqq{|\uk (t)|^p_{L^p} + c_1 p\int_0^t\int_\CO |\uk(s, x)|^p \ \vk^q (s, x)\,dx\,ds }
\nonumber\\
&&+ { } r_1p(p-1)\int_0^t\int_\CO|\uk (s, x)^{[\frac{p}{2}-1]} \nabla \uk (x, x)|^2\,dx\,ds
\nonumber\\
&\leq &|u_0|^p_{L^p} +p{\sigma_1}\sum_{k=1}^\infty\int_0^t\int_\CO|\uk (s, x)|^{p} \lambda_k^{-\gamma_1/2}\varphi_k(x)\,dx\,d\mathbf{w}^1_k(s)
\nonumber\\
&& + { } \frac{\sigma_1^2}{2}\int_0^t   
\sum_{k=1}^\infty  D^2\Phi(\uk (s, x))[g(\uk ) \varphi_k,g(\uk )\varphi_k]  \,ds
+(pa_1+1)\int_0^t |\uk (s)|_{L^p}^{p}\,ds+b_1^p t.\label{e:naked}
\EEQSZ
Next, we go over to the supremum, use the non-negativity, 
and take the expectation. With the help \eqref{itotrace} we estimate the trace term for any $\eps>0$ and the associated constant $C(\eps)>0$ 
\DEQSZ \label{tosimplify}
\lqq{\mathbb{E}\sup_{0\le s\le t\wedge \tau}|\uk (s)|^p_{L^p} + c_1 p\mathbb{E}\int_0^{t\wedge \tau}\int_\CO |\uk(s, x)|^{p-1} \uk(s, x) \vk^q (s, x)\,dx\,ds }
\\
&&  + { }r_1 p(p-1)\mathbb{E}\int_0^{t\wedge \tau}\int_\CO|\uk (s,x)|^{p-2}|\nabla \uk (s,x)|^2\,dx\,ds \nonumber\\
&\le &\mathbb{E}|u_0|_{L^p}^p +p{\sigma_1}\mathbb{E}\sup_{0\le s\le {t\wedge \tau}}\Big|\sum_k\int_0^s\int_\CO|\uk (s_1,x)|^{p} \lambda_k^{-\gamma}\psi_k(x)\,dx\,d\mathbf{w}^1_k(s_1)\Big| \nonumber
\\
&& +C(\eps) \mathbb{E}\Big(\int_0^{t\wedge \tau}|\uk (s)|_{L^p}^{p	}\,ds\Big) +\eps \mathbb{E}\Big(\int_0^{t\wedge \tau}|\uk (s)|_{H^\theta_p}^{p}\,ds\Big)
.\nonumber
\EEQSZ
To deal with the stochastic integral, 
we apply  \eqref{hierendets} and get some other constant $C(\eps) >0$ such that for $\theta<\frac 2p$ 
\DEQSZ\label{sint}
\lqq{
p{\sigma_1}\mathbb{E}\sup_{0\le s\le {t\wedge \tau}}\Big|\sum_k\int_0^s\int_\CO|\uk (s_1,x)|^{p} \lambda_k^{-\gamma}\psi_k(x)\,dx\,d\mathbf{w}^1_k(s_1)\Big|}\nonumber&&
\\
\nonumber &
\le&  C(\eps) \,\EE \Big(\int_0^{t\wedge \tau} \lk|\uk(s)\rk|_{L^p}^{p}\, ds\Big) 
+2\eps \EE \sup_{0\le s\le {t\wedge \tau}} \lk|\uk(s)\rk|_{L^p}^{p}
%
+\eps \EE\lk(
\int_0^{t\wedge \tau} |\uk(s)|_{H^{\theta}_p} ^{p} \,ds\rk).
\EEQSZ
Wrapping up, we have 
\begin{align*}
&\mathbb{E}\Big(\sup_{0\le s\le {t\wedge \tau}}|\uk (s)|^p_{L^p}\Big) + c_1 p\mathbb{E}\int_0^{t\wedge \tau}\int_\CO \tesfalem{|}\uk (s, x)\tesfalem{|}^{p-1}\uk(s,x) \vk^q (s, x)\,dx\,ds\\
&\qquad +  { }\frac{r_1 p(p-1)}{C} \mathbb{E}\Big(\int_0^{t\wedge \tau} |\uk(s)|_{H^\theta_p}^{p}\,ds\Big)\\
&\leq C_1 \Big\{  \mathbb{E}|u_0|_{L^p}^{p} + 2 C(\eps) \EE\Big(\int_0^{t\wedge \tau} |\uk(s)|^p_{L^p} ds\Big) 
+ 2\eps \EE\Big(\sup_{0\le s\le {t\wedge \tau}} |\uk (s)|^p_{L^p}\Big)\\
&\qquad + 2 \eps \mathbb{E}\Big(\int_0^{t\wedge \tau} |\uk(s)|_{H^\theta_p}^{p}\,ds\Big) + b_1^p (t\wedge \tau)\Big\}.
\end{align*}
For $2 \eps < \min\{\frac{r_1 p(p-1)}{C}, 1\}$ we cancel the $H^\theta_p$-term on the left-hand side.
Furthermore, we send the $L^p$-term to the left, renormalise the inequality, and obtain a new constant $C_2, C_3>0$ 
\begin{align}
&\mathbb{E}\Big(\sup_{0\le s\le {t\wedge \tau}}|\uk (s)|^p_{L^p}\Big) + C_2\mathbb{E}\int_0^{t\wedge \tau}\int_\CO \tesfalem{|}\uk (s, x)\tesfalem{|}^{p-1} \uk(s, x) \vk^q (s, x)\,dx\,ds\nonumber\\
&\leq C_3 \Big\{  \mathbb{E}|u_0|_{L^p}^{p} + \int_0^{t} \EE\Big(\sup_{\sigma\in [0, s\wedge \tau]} |\uk(\sigma)|^p_{L^p}\Big) ds  + b_1^p T\Big\}.\label{e:preGron}
\end{align}
First, ignoring the $C_2$-term on the left, we obtain with the help of Grownwall's inequality 
an upper bound 
\[
\mathbb{E}\Big(\sup_{0\le s\le {t\wedge \tau}}|\uk (s)|^p_{L^p}\Big) \leq C_4 \Big(\mathbb{E}|u_0|_{L^p}^{p}+1\Big),
\]
which is subsequently inserted under the integral in \eqref{e:preGron}, which yields a constant $C_5 = C_5(T)$ such that for all $n\in \NN$ we have 
\DEQS			
\mathbb{E}\sup\limits_{0\leq t\leq T\wedge \tau_{n}}|\uk (t)|_{L^p}^{p}\,dx 
+ c_1 p\mathbb{E}\lk[ \int_0^{T\wedge \tau_n} \int_\CO \tesfalem{|}\uk (s, x)\tesfalem{|}^{p-1} \uk(s, x) \vk^q (s, x)\,dx\,ds\rk] 
\leq   C\,(\mathbb{E}|u_0 |^{p}_{L^{p}}+1).
\EEQS
By construction $\tau_n$ a.s.\ monotonically in $n$, the integrands follow suit, and monotone convergence allows us to pass to the limit under the expectation on the left. 

\noindent In the case $\rr>1$, we apply the power $\rr$ on both sides of \eqref{e:naked}, 
we arrive at similar calculations at
\DEQSZ \label{tosimplify2}
\lqq{\mathbb{E}\sup_{0\le s\le t\wedge \tau}|\uk (s)|^{p\rr}_{L^p}+\mathbb{E}\lk\{ c_1 p\int_0^{t\wedge \tau}\int_\CO \tesfalem{|}\uk (s,x)\tesfalem{|}^{p-1} \uk(s, x) \vk^q (s, x)\,dx\,ds\rk\}^{\rr} }
\\
&& { }( p(p-1)r_1)^{\rr}\mathbb{E}\lk\{ \int_0^{t\wedge \tau}\int_\CO|\uk (s,x)|^{p-2}|\nabla \uk (s,x)|^2\,dx\,ds \rk\}^{\rr}\nonumber\\
&\le & C_{\rr}\Big\{  \mathbb{E}|u_0|_{L^p}^{p\rr}
+ C_{\rr} \mathbb{E}\sup_{0\le s\le t\wedge \tau} \lk| \sum_k\int_0^{t\wedge \tau} \int_\CO|\uk (s,x)|^{p} \lambda_k^{-\gamma}\varphi_k(x)\,dx\,d\mathbf{w}^1_k(s) \rk|^{\rr}\nonumber
\\
&& +C\mathbb{E}\lk\{ \int_0^{t\wedge \tau} |\uk (s)|_{L^p}^{p}\,ds\rk\}^{\rr} + Tb_1^p.\nonumber
\EEQSZ
Applying  \eqref{hierendets},  Proposition \ref{runst1} and choose $\eps$ small enough, and renormalize we obtain 
\DEQS
\lqq{ \mathbb{E}\sup_{0\le s\le t\wedge \tau}|\uk (s)|^{p\rr}_{L^p}+C_r \mathbb{E}\lk\{\int_0^{t\wedge \tau}\int_\CO \uk ^{p-1}(s, x) \uk(s, x) \vk^q (s, x)\,dx\,ds\rk\}^{\rr} }
\\
&\le & C_{\rr}\Big\{  \mathbb{E}|u_0|_{L^p}^{p\rr} + b_1^p T+ C(\rr,T) \int_0^{t} \EE\sup_{0\le s_1\le s\wedge \tau} |\uk (s)|_{L^p}^{p\rr}\, ds\Big\} .\nonumber
\EEQS
Again, 	setting $\psi(t):= \mathbb{E}\sup_{0\le s\le t\wedge \tau}|\uk (s)|^{p\rr}_{L^p}$ we can apply the same Grownwall procedure to $\psi(t)$ to get the desired uniform bound on $\psi$. 
Finally sending $n\to \infty$ (in $\tau = \tau_{n, \kappa}$) the estimate above gives \eqref{e:boundeduk} for  $r>1$.

\newcounter{cond}
\stepcounter{cond}
\bigskip

\end{proof} 

\noindent The following Technical Lemma \ref{dasauch} is similar to Proposition \ref{tetaxi}, the key difference is that our goal here is to obtain a uniform bound that is independent of the cut-off.  This provides us with stronger conditions on the parameters. 

\begin{tlemma}\label{dasauch}
For all $1\le m<p<\frac{m}{m-1}$ with $p>q(p-1)$, 
$$
d\lk(\frac 12-\frac 1q\rk) -\frac{\FE}{2} \frac 2{q} \frac {p-m} {m(p-1)} \le \alpha
$$
and
for any $r^\ast\ge 1$,
there exists a generic constant ${C}(T)>0$
and numbers $\delta_1,\delta_2\in(0,1)$ with $\delta_1+\delta_2=1$, such that for any pair of nonnegative processes $\eta,\xi:[0,T]\times \Omega\times \CO \rightarrow[0,+\infty)$ we have
for $p' = \frac{p}{p-1}$
\DEQSZ \label{Eq:Productestimate} \hspace{-8ex}
\EE \| \eta\,\xi^q\|^ {r^\ast}_{L^{m}(0,T;L^1)}
&\le&
C(T)
\lk(\mathbb{E}\|\eta^p\xi^q\|_{L^1(0,T;L^1)}^{\frac{r^\ast}{p-q(p-1)}}\rk)^{\delta_1}\cdot
\lk(\mathbb{E} \|\xi \|_{\BH_{\alpha, \aleph}}^{r^\ast}\rk)^{\delta_2}, 
\EEQSZ
where $\delta_1 = \frac{p - q(p-1)}{p}$ and $\delta_2=\frac {q}{p'}$. In particular, 
\DEQSZ\label{prod:estimate}
\mathbb{E}\| \eta\xi^q\|_{L^\alneu (0,T;L^1)}^{r^\ast}\le \,
C(\ep)\,\mathbb{E}\|\eta ^{p} \xi^\ns \|_{L^1(0,T;L^1)}^{\frac{r^\ast}{p-q(p-1)}}
+\ep \mathbb{E} \|\xi\|_{\mathbb{H}_{\alpha, \aleph}}^{r^\ast} .
\EEQSZ
\end{tlemma}

\begin{proof}
\renewcommand{\pst}{p}
\renewcommand{\ns}{q}
\renewcommand{\LLm}{{r^\ast}}

Observe 
that we have for any $\beta\in[0,1]$
\begin{align*}
\eta (s) \xi ^q(s)
= \eta (s) \xi ^{q\beta } (s)\cdot \xi ^ {q(1-\beta)}(s),\quad s\in [0,T].
\end{align*}
Assume $m<p$ and fix $\beta=\frac 1p$. 
Let $\unbekannt \ge 1$ such that $\frac 1\unbekannt= m \beta = \frac{m}{p}$ and $\unbekannt'=(\frac{1}{m \beta})' = \frac {p}{p-m}$. Since we will need it later, we also note the following identity $\beta p = (1-\beta) p' = 1$.
Then, H\"older's inequality for the conjugate
exponents $\frac{1}{p}+ \frac{1}{p'}\le 1$ yields
\DEQS\notag 
\lk\| \eta  \xi ^ {q}\rk\|_{L^{m}(0,T;L^1)}^\LLm
&= & \lk(\int_0^T \lk( \int_\oO \lk| \eta(s, x) \xi^q(s, x)\rk|\, dx \rk)^{m} ds\rk)^{\frac{r^*}{m}} \\
& \le & \lk( \int_0^ T \lk( \int_\CO  \lk| \eta ^ {p} (s,x) \xi ^ {q\beta p }(s,x)\rk|\, dx\rk)^\frac{m}{p} \lk( \int_\CO  \lk|
\xi^{q(1-\beta )p'}(s,x)\rk|\, dx\rk)^\frac{m}{p'} \, ds\rk)^\frac{\LLm}{ m}.
\EEQS
Since $(1-\beta) p' = 1$, 
we obtain 
\DEQS
\lk\| \eta  \xi ^ {q}\rk\|_{L^{m}(0,T;L^1)}^\LLm
& \le \lk( \int_0^ T \lk( \int_\CO  \lk| \eta ^ {p} (s,x) \xi ^ {q}(s,x)\rk|\, dx\rk)^\frac{m}{p} \lk( \int_\CO  \lk|
\xi^{q}(s,x)\rk|\, dx\rk)^\frac{m}{p'} \, ds\rk)^\frac{\LLm}{ m}\\
& = \lk( \int_0^ T  \lk| \eta ^ {p}(s) \xi^q(s)\rk|_{L^1}^\frac{m}{p} \cdot
\lk|
\xi(s)\rk|_{L^q}^\frac{mq}{p'} \, ds\rk)^\frac{\LLm}{ m}.
\EEQS
An application of  H\"older's  inequality and bearing in mind that $\frac{m}{p} = \frac{1}{\unbekannt}$ then yields
\DEQSZ\label{start001}
\lk\| \eta  \xi ^ {q}\rk\|_{L^{m}(0,T;L^1)}^\LLm
& \le  & \lk(\lk( \int_0^ T \lk| \eta ^ {p}(s) \xi^q(s)\rk|_{L^1}ds \rk)^\frac{m}{p} \cdot
\lk(\int_0^T \lk|
\xi(s)\rk|_{L^q}^\frac{mq \mu'}{p'} \, ds\rk)^\frac{1}{\mu'}\rk)^\frac{\LLm}{ m}\notag 
\\
\notag 
& = &
\lk( \int_0^ T \lk| \eta ^ {\pst   }(s) \xi ^ { {\ns }}(s)\rk|_{L^1}
\, ds \rk)^\frac{r^*}{p}
\cdot\lk(
\int_0^T \lk|
\xi(s)\rk|_{L^q}^\frac{mq \mu'}{p'} \, ds\rk)^\frac{\LLm}{ m \mu'}
\\
&=
& \,\lk\| \eta ^ {\pst   } \xi ^ { {\ns }}\rk\|_{L^1(0,T;L^1)}^\frac {\LLm}{p}\cdot
\lk\|\xi\rk\|_{L^ {m q\frac {p-1}{p-m}} (0,T;L^ {q})}^{\frac{\LLm q}{p'}}.
\EEQSZ
For the last line, we used the following identity
$$
\frac{r^*}{m \mu'} \frac{mq \mu'}{p'} = \frac{r^*q}{p'} . 
$$
Here, it is essential that 
$m< p$ and $q < p'$. Due to the fact that the following inequality holds:
$$
\frac d2 -\alpha <\FEplus \frac {2({p-m})} {qm (p-1)}+\frac dq , \checkmark
$$
we can apply Proposition~\ref{interp_rho} which gives   the embedding 
$$L^{m q\frac {p-1}{p-m}} (0,T;L^{q}(\mathcal{O})) \hookrightarrow 
\mathbb{H}_{\alpha, \aleph} = L^\infty(0,t; H^\rho_2(\CO))\cap L^2(0,t;H^{\rho +\FEplus}_2(\CO)) 
.
$$
Going back to \eqref{start001}, using the embedding above, we get
\DEQS
\lk\| \eta  \xi ^ {q}\rk\|_{L^{m}(0,T;L^1)}^\LLm
& \le&   C \,\lk\| \eta ^ {\pst   } \xi ^ { {\ns }}\rk\|_{L^1(0,T;L^1)}^{\frac {\LLm}{p} }
\lk\|\xi\rk\|_{\BH_{\alpha, \aleph}}^{\LLm\frac{q}{p'}}.
\EEQS
Taking the expectation and applying H\"older's inequality, we have
\begin{align*}
\EE\lk\| \eta  \xi ^ {q}\rk\|_{L^{m}(0,T;L^1)}^\LLm
\leq C \bigg(\EE\Big(\|\eta^p\xi^q\|_{L^1(0,T;L^1)}^{\frac{r^*}{p-q(p-1)}}\Big)\bigg)^\frac{p-q(p-1)}{p}
\cdot \bigg(\EE\Big(\|\xi\|_{\BH_{\alpha, \aleph}}^{r^\ast}\Big)\bigg)^{\frac{q}{p'}}
\end{align*}
we get estimate \ref{Eq:Productestimate}. Finally, using Young's  inequality with $\frac{1}{\delta_1}$ and its convex conjugate, we obtain the following condition, which satisfies:
\begin{align*}
\Big(\frac{1}{\delta_1}\Big)' 
&=  \Big(\frac{p}{p-q(p-1)}\Big)' 
= \frac{\frac{p}{p-q(p-1)}}{\frac{p}{p-q(p-1)}-1}\\
&= \frac{p (p-q(p-1))}{(p-q(p-1))(p-p-q(p-1))}
= \frac{p }{q(p-1)} = \frac{1}{\delta_2}.
\end{align*}
This finishes the proof of \eqref{prod:estimate}.

\end{proof}

\noindent In the next proposition, we establish integrability and regularity of $\vk $, where $(\uk,\vk)$ solves the system \eqref{nonlinearcutoffu}-\eqref{nonlinearcutoffv}.

\begin{proposition}\label{propvarational}
	Let us assume that the tuple $(q,\FE,\alpha)$ satisfy
	$$
	q<\frac {2d}{2d-\FE}, \quad \alpha < \frac \aleph 2  -\frac{d}{2}. 
	$$
	Next, in case $2\le q< {\FE+1}$ and $d=1$, let us put $\tau:=\frac 12-\frac 1q-\alpha$ and $p_1^\ast =\frac \FE{q\tau}$.
In case, $1\le q<2$ let us put $p_1^\ast= 1/(2-q)$.
Then, for any $r^*\geq 1$ there exists positive constant $C>0$ such that for all $\kappa$ 
\DEQS
\mathbb{E}\| \vk \|_{\mathbb{H}_{\alpha, \aleph}}^{r^\ast}
&\leq C\bigg(1 +  \mathbb{E}|\Delta^ {\frac{\alpha}{2} }v_0|_{L^2}^{2r^\ast}
+ \Big(\EE|u_0|_{L^p}^{p_1^\ast {r^\ast}}\Big)^2\bigg)^\frac{1}{2},
\EEQS
where $\Delta^{\frac{\alpha}{2}}$ is a shorthand for the fractional Laplacian $(-\Delta^{-\frac{\alpha}{2}})$.
In case $q=2$, $d=2$, and $\aleph=2$, we get
\begin{equation*}
\begin{split}
&\frac 12\EE \sup_{0\le s\le T} \lvert \vk(s)\rvert^{\pp }_{L^{\pp }}  
\le \left(1+KTe^{KT}\right)  \left( \frac12 \EE  \lvert \uk(0)+\vk(0)\rvert^{\pp }_{L^{\pp }} + 	\frac 12 (r_1+r_2)^2 \frac {c _2^2}{r_2} \EE |u_0|_{L^2}^2+C\right)
.
\end{split}
\end{equation*}

\end{proposition}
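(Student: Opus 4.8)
\textbf{Proof strategy for Proposition~\ref{propvarational}.}
The plan is to estimate $\|\bar v_\kappa\|_{\mathbb{H}_{\alpha,\aleph}}$ in terms of the initial data by applying the It\^o formula to the functional $\Psi(v) = |(-\Delta)^{-\alpha/2} v|_{L^2}^2 = |v|_{H^\alpha_2}^2$ along the solution $\bar v_\kappa$ of \eqref{nonlinearcutoffv}. Since the nonlinearity in the $v$-equation carries the \emph{positive} sign $+c_2 \phi_\kappa u_\kappa v^q$, it cannot be absorbed by dissipativity on the $v$-side alone; the key is that the crucial product $\eta \xi^q$ (with $\eta$ playing the role of $u_\kappa$ and $\xi$ of $v_\kappa$) can be controlled \emph{uniformly in $\kappa$} via Technical Lemma~\ref{dasauch}, at the price of the factor $\mathbb{E}\|\eta^p\xi^q\|_{L^1(0,T;L^1)}^{r^\ast/(p-q(p-1))}$. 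This latter quantity is precisely the mixed space-time $L^1$-norm of $u_\kappa^p v_\kappa^q$, which by Proposition~\ref{reg_u2} (applied with the exponent $p=p_1^\ast$ in the activator estimate \eqref{e:boundeduk}, noting that the cross term $\int_0^T\int_\CO u_\kappa^{p}v_\kappa^q$ appears on the left-hand side there) is bounded by $C(1+\mathbb{E}|u_0|_{L^{p_1^\ast}}^{p_1^\ast r^\ast})$. So the logical chain is: It\^o formula for $|v_\kappa|_{H^\alpha_2}^2$ $\Rightarrow$ duality estimate of the nonlinear term against the dissipative term $|v_\kappa|_{H^{\alpha+\aleph/2}_2}^2$ $\Rightarrow$ Technical Lemma~\ref{dasauch} $\Rightarrow$ Proposition~\ref{reg_u2} to close.

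In detail, I would first write, via the It\^o formula of \cite{krylov2010} (or its fractional-Laplacian analogue) applied to $\Psi(v_\kappa(t))$,
\begin{align*}
|v_\kappa(t)|_{H^\alpha_2}^2 + 2 r_2 \int_0^t |v_\kappa(s)|_{H^{\alpha+\aleph/2}_2}^2\, ds
&= |v_0|_{H^\alpha_2}^2 + 2 a_2 \int_0^t |v_\kappa(s)|_{H^\alpha_2}^2\, ds \\
&\quad + 2 c_2 \int_0^t \langle (-\Delta)^\alpha v_\kappa(s),\, \phi_\kappa u_\kappa(s) v_\kappa^q(s)\rangle\, ds + b_2\text{-term} \\
&\quad + \text{(stochastic integral)} + \text{(It\^o trace term)}.
\end{align*}
The trace term is handled by \eqref{e:dieSpurDieWirBrauchen} from Proposition~\ref{gammaradonv}, producing $\varepsilon |v_\kappa|_{H^{\alpha+\aleph/2}_2}^2 + C(\varepsilon)|v_\kappa|_{H^\alpha_2}^2$; the stochastic integral is estimated by the Burkholder inequality \eqref{Hrhoburkholderqqq} with the same structure. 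For the critical nonlinear term I would use duality $\langle (-\Delta)^\alpha v_\kappa, \phi_\kappa u_\kappa v_\kappa^q\rangle \le |v_\kappa|_{H^{\alpha+\aleph/2}_2}\, |\phi_\kappa u_\kappa v_\kappa^q|_{H^{\alpha-\aleph/2}_2}$, then the embedding $L^r(\CO)\hookrightarrow H^{\alpha-\aleph/2}_2(\CO)$ (valid since $\alpha < \aleph/2 - d/2$ forces $\alpha - \aleph/2 < -d/2$, so even $L^1 \hookrightarrow H^{\alpha-\aleph/2}_2$ works), Young's inequality to split off $\varepsilon |v_\kappa|_{H^{\alpha+\aleph/2}_2}^2$, and finally Technical Lemma~\ref{dasauch} to bound $\mathbb{E}\|u_\kappa v_\kappa^q\|_{L^m(0,T;L^1)}^{r^\ast}$ by $C (\mathbb{E}\|u_\kappa^p v_\kappa^q\|_{L^1(0,T;L^1)}^{r^\ast/(p-q(p-1))})^{\delta_1}(\mathbb{E}\|v_\kappa\|_{\mathbb{H}_{\alpha,\aleph}}^{r^\ast})^{\delta_2}$ with $\delta_1 + \delta_2 = 1$; since $\delta_2 < 1$ a further Young inequality absorbs the $\mathbb{E}\|v_\kappa\|_{\mathbb{H}_{\alpha,\aleph}}^{r^\ast}$ factor into the left-hand side. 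After taking $\sup_{0\le s\le t}$, expectations, choosing $\varepsilon$ small, and invoking Gr\"onwall's lemma in the $|v_\kappa|_{H^\alpha_2}^2$-term, one arrives at
$\mathbb{E}\|v_\kappa\|_{\mathbb{H}_{\alpha,\aleph}}^{r^\ast} \le C(1 + \mathbb{E}|(-\Delta)^{\alpha/2}v_0|_{L^2}^{2r^\ast} + (\mathbb{E}|u_0|_{L^p}^{p_1^\ast r^\ast})^2)^{1/2}$, where the square root and the squared $u_0$-moment are exactly what the two applications of Young's inequality (with exponents $1/\delta_1$ and the conjugate, and the $r^\ast$-vs-$2r^\ast$ bookkeeping) deliver.

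For the degenerate limit case $q=2$, $d=2$, $\aleph=2$, where Technical Lemma~\ref{dasauch} is not available with the required exponents, I would instead follow the classical Gray--Scott trick but \emph{only at the level of $L^{p+1}$-estimates}: apply the It\^o formula to $|u_\kappa(t) + v_\kappa(t)|_{L^{p+1}}^{p+1}$, observe that the $\pm c_i \phi_\kappa u_\kappa v_\kappa^q$ terms partially cancel in the sum (leaving a term controlled by $|u_\kappa|_{L^2}^2$ through the explicit constant $\tfrac12(r_1+r_2)^2 c_2^2/r_2$), use the dissipativity of $r_1\Delta$ and $r_2\Delta$ on the combination, bound the remaining lower-order and trace/stochastic contributions by a Gr\"onwall-type argument, and take suprema; this yields the stated bound $\tfrac12\mathbb{E}\sup_{s}|v_\kappa(s)|_{L^{p+1}}^{p+1} \le (1+KTe^{KT})(\tfrac12\mathbb{E}|u_0+v_0|_{L^{p+1}}^{p+1} + \tfrac12(r_1+r_2)^2\tfrac{c_2^2}{r_2}\mathbb{E}|u_0|_{L^2}^2 + C)$, using $|v_\kappa| \le |u_\kappa + v_\kappa|$ by nonnegativity (Proposition~\ref{non_negativity}) and the $L^2$-bound on $u_\kappa$ from \eqref{e:trouble}.

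\textbf{Main obstacle.} The delicate point is the bookkeeping in the chain of Young inequalities that links the three exponents $r^\ast$, $r^\ast/(p-q(p-1))$, and the $\mathbb{H}_{\alpha,\aleph}$-self-absorption: one must verify that the parameter constraints $q < 2d/(2d-\aleph)$, $\alpha < \aleph/2 - d/2$, and the definitions of $p_1^\ast$ and $\tau$ (for the $d=1$, $2\le q<\aleph+1$ regime) are exactly what is needed for the embedding $L^{mq(p-1)/(p-m)}(0,T;L^q)\hookrightarrow \mathbb{H}_{\alpha,\aleph}$ in Technical Lemma~\ref{dasauch} to hold \emph{and} for $p - q(p-1) > 0$ with $p$ in the admissible window $m < p < m/(m-1)$, and that the final exponent on $\mathbb{E}|u_0|_{L^{p_1^\ast}}$ comes out as $p_1^\ast r^\ast$ (not larger); getting these to line up—rather than any single estimate—is where the real care is required. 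The degenerate case is comparatively routine once the cancellation structure is identified, but needs the explicit tracking of the constant $(r_1+r_2)^2 c_2^2/r_2$.
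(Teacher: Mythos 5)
Your proposal follows essentially the same route as the paper's proof: the It\^o formula for $|\Delta^{\alpha/2}\bar v_\kappa|_{L^2}^2$, duality of the nonlinear term against an interpolated piece of the dissipative norm together with the embedding $L^1(\CO)\hookrightarrow H^{\alpha-\Gamma\aleph/2}_2(\CO)$, Technical Lemma~\ref{dasauch} with a Young absorption of the $\mathbb{E}\|\bar v_\kappa\|_{\mathbb{H}_{\alpha,\aleph}}^{r^\ast}$ factor, Proposition~\ref{reg_u2} to control $\mathbb{E}\|\bar u_\kappa^p\bar v_\kappa^q\|_{L^1(0,T;L^1)}^{r^\ast/(p-q(p-1))}$, and Burkholder--Davis--Gundy, the trace estimate and Gr\"onwall to close; the degenerate case is likewise handled in the paper by the same cancellation in the linear combination $c_2\bar u_\kappa+c_1\bar v_\kappa$ with the constant $(r_1+r_2)^2c_2^2/r_2$ arising from Young's inequality on the cross-gradient term. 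The only (immaterial) difference is that the paper introduces an auxiliary exponent $\Gamma$ with $\Gamma m'=2$ and a H\"older/interpolation step in time rather than pairing directly with $|\bar v_\kappa|_{H^{\alpha+\aleph/2}_2}$.
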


\begin{proof}[Proof of Proposition \ref{propvarational}:]
Applying the It\^o formula to
$\phi(x)=| \Delta^ {\frac{\alpha}{2} } x|^2_{L^2}$
we get
\DEQSZ\label{amanfang}
\lqq{ \qquad \frac 12 \la \Delta^ {\frac{\alpha}{2} } \vk (t) , \Delta^ {\frac{\alpha}{2} } \vk (t)\ra-\frac 12 \la \Delta^ {\frac{\alpha}{2} } v_0 , \Delta^ {\frac{\alpha}{2} } v_0\ra
+r_2\int_0^t |\Delta^ {\frac{\alpha}{2} +\frac{\aleph}{2} } \vk (s)|^2_{L^2}\, ds}
&&
\\\notag 
&=&c_2\int_0^ t\la \Delta^ {\frac{\alpha}{2} } \vk (s), \Delta^ {\frac{\alpha}{2} }(\uk(s)\vk^{q}(s))\ra \, ds +a_2\int_0^t |\Delta^ {\frac{\alpha}{2} }\vk (s)|_{L^2}^2\, ds
+b_2\int_0^ t |\Delta^ {\frac{\alpha}{2} }\vk (s)|_{L^1}
\\
&&{}+\sigma_2  \int_0^ t \la\Delta^ {\frac{\alpha}{2} } \vk (s), \Delta^ {\frac{\alpha}{2} } \vk (s)dW_2(s)\ra
+ \frac{\sigma_2 ^2}{2} \int_0^ t \mbox{Tr}^2[ D^2\phi( \vk (s))]\, ds.\notag 
\EEQSZ
We treat the first term on the right-hand side. 
Duality gives for any {$\Gamma\in[0,1)$} 
\del{\DEQSZ\label{verweis}
\big| \la \Delta^{\frac{\alpha}{2} } \vk (s), \Delta^{\frac{\alpha}{2} }(\uk(s)\vk^{q}(s))\ra \big|
&\le& | \Delta^ {\frac{\alpha}{2} +\Gamma \aleph } \vk (s)|_{L^2} | \Delta^ {\frac{\alpha}{2} -\Gamma \aleph }(\bar u_\kappa (s) \bar v_{\kappa}^q(s))|_{L^2}
\EEQSZ}
\DEQSZ\label{verweis}
\big| \la \Delta^{\frac{\alpha}{2} } \vk (s), \Delta^{\frac{\alpha}{2} }(\uk(s)\vk^{q}(s))\ra \big|
&\le& | \Delta^ {\frac{\alpha}{2} +\Gamma {\frac{\aleph}{2}} } \vk (s)|_{L^2} | \Delta^ {\frac{\alpha}{2} -\Gamma {\frac{\aleph}{2}} }(\bar u_\kappa (s) \bar v_{\kappa}^q(s))|_{L^2}
\EEQSZ
Integration and H\"older's inequality gives
\DEQS
\lqq{ \int_0^T \big| \la \Delta^{\frac{\alpha}{2} } \vk (s), \Delta^{\frac{\alpha}{2} }(\uk(s)\vk^{q}(s))\ra \big|\, ds}
\\
&\le&\int_0^T  | \Delta^ {\frac{\alpha}{2} +\Gamma \frac \aleph 2} \vk (s)|_{L^2} | \Delta^ {\frac{\alpha}{2} -\Gamma {\frac{\aleph}{2}} }(\bar u_\kappa (s) \bar v_{\kappa}^q(s))|_{L^2}\, ds
\\
\del{&\le&\int_0^T  | \Delta^ {\frac{\alpha}{2} +\Gamma   \tesfalem{\aleph }} \vk (s)|_{L^2} | \Delta^ {\frac{\alpha}{2} -\Gamma \aleph }(\bar u_\kappa (s) \bar v_{\kappa}^q(s))|_{L^2}\, ds}
\\
&\le&
\lk( \int_0^T  | \Delta^ {\frac{\alpha}{2} +\Gamma \frac \aleph 2} \vk (s)|^{m'}_{L^2} \,ds\rk)^\frac 1{m'}  \lk(\int_0^ T | \Delta^ {\frac{\alpha}{2} -\Gamma {\frac{\aleph}{2}} }(\bar u_\kappa (s) \bar v_{\kappa}^q(s))|_{L^2}^m\rk)^\frac 1m .
\del{&\le&
\lk( \int_0^T  | \Delta^ {\frac{\alpha}{2} +\Gamma  { \aleph }} \vk (s)|^{m'}_{L^2} \,ds\rk)^\frac 1{m'}  \lk(\int_0^ T | \Delta^ {\frac{\alpha}{2} -\Gamma \aleph }(\bar u_\kappa (s) \bar v_{\kappa}^q(s))|_{L^2}^m\rk)^\frac 1m .}
\EEQS
Young's inequality gives 
\DEQSZ\label{verweis2}\notag 
\lqq{ \int_0^T \big| \la \Delta^{\frac{\alpha}{2} } \vk (s), \Delta^{\frac{\alpha}{2} }(\uk(s)\vk^{q}(s))\ra \big|\, ds}
\\
&\le&
\ep \lk( \int_0^T  | \Delta^ {\frac{\alpha}{2} +\Gamma \frac \aleph 2} \vk (s)|^{m'}_{L^2} \,ds\rk)^\frac 2{m'} +C(\ep)  \lk(\int_0^ T | \Delta^ {\frac{\alpha}{2} -\Gamma {\frac{\aleph}{2}} }(\bar u_\kappa (s) \bar v_{\kappa}^q(s))|_{L^2}^m\rk)^\frac 2m .
\del{&\le&
\ep \lk( \int_0^T  | \Delta^ {\frac{\alpha}{2} +\Gamma   \aleph  } \vk (s)|^{m'}_{L^2} \,ds\rk)^\frac 2{m'} +C(\ep)  \lk(\int_0^ T | \Delta^ {\frac{\alpha}{2} -\Gamma \aleph }(\bar u_\kappa (s) \bar v_{\kappa}^q(s))|_{L^2}^m\rk)^\frac 2m .}
\EEQSZ
For the first term on the right-hand in \eqref{verweis2}, it follows by interpolation that 
for
$\Gamma m'=2$
 there exists a constant $C>0$ such that
$$
\| \Delta^ {\frac{\alpha}{2}+\Gamma{\frac{ \aleph}{2}} }\vk \|_{L^{m'}(0,T;L^ 2)}^2 \le C \|\vk \|_{\BH_{\alpha, \aleph} }^2.
$$
For the second term on the right-hand in \eqref{verweis2}, it follows by the Sobolev embedding
that  for any
$-(\alpha -\Gamma \aleph) + \frac{d}{2} > d$ 
we have  $L^1 (\CO )\hookrightarrow H^{\alpha-\Gamma \aleph}_2(\CO )$.
In particular, 
there exists a constant $C>0$ such that
\DEQSZ \label{e: SobL1est}
| \Delta^ {\frac{\alpha}{2}- \Gamma {\frac{\aleph}{2}} }(\uk (s) \vk ^q(s))|_{L^ 2} &\le& C |\uk (s) \vk^q(s)|_{L^1 }.
\EEQSZ 
%
%
Secondly, first applying H\"older's inequality and then Young's inequality, we obtain
\DEQS
\lqq{ \int_0^t | \Delta^{\frac{\alpha}{2}+\Gamma{\frac{\aleph}{2}} } \vk (s)|_{L^ 2 } | \Delta^ {\frac{\alpha}{2} -\Gamma{\frac{\aleph}{2}} }(\uk (s) \vk^q(s))|_{L^ 2}\, ds}
&&
\\
&\le & C \cdot \Big(  \int_0^t | \Delta^ {{\frac{\alpha}{2}+\Gamma {\frac{\aleph}{2}}} } \vk (s)|_{L^ 2 }^{m'}\,ds \Big)^\frac{1}{m'} \cdot \lk(\int_0^ t |\uk (s) \vk^q(s)|_{L^ 1}^ {m} \, ds\rk)^\frac 1 {m}
\\[5mm]
&\le & C \cdot \| \vk \|_{\BH_{\alpha, \aleph} }  \cdot \|\uk  (s) \vk^q(s)\|_{L^{ {m} }(0,T;{L^1 })}
\\[5mm]
&\le &
\ep_1 \| \vk \|_{\BH_{\alpha, \aleph} } ^{2}  + C(\ep_1)  \|\uk  (s) \vk^q(s)\|_{L^{ {m} }(0,T;{L^1 })}^2.
\EEQS
To handle the first term we have to verify that the parameters $m'$ and $\Gamma $ satisfies $\Gamma  m'=2$.
To have the embedding $L^1(\CO)\hookrightarrow H^{\alpha-\Gamma  \frac \FE2}_2(\CO)$ we need $-(\alpha -\Gamma \frac \FE2)+\frac d2\ge d$, which gives 
\begin{align}\label{e: desigaldades}
	(i)~\alpha < \frac \aleph 2\Gamma  -\frac{d}{2}\quad
	(ii)~m'\Gamma =2. 
\end{align}
To apply the Technical Proposition \ref{dasauch} to the second term we have to verify that the parameters $m$ and $\Gamma$ with  $1<m<\infty$, $\alpha$, and $\Gamma  $  satisfy the following system:
\begin{align}\label{e: desigaldades1}
\quad (iii)~m<p<\frac {m}{m-1}
\quad (iv)~d\lk(\frac 12 -\frac 1q\rk) -\FEplus \frac 2q\frac {p-m}{m(p-1)}\le \alpha.
\end{align}

\noindent Note that since $m'$ and $p'$ are the conjugates to $m$ and $p$ and vice versa, we can write 
\begin{align}\label{rechnung}
\frac {p-m}{m(p-1)} 
=\frac{\frac{p' (m'-1)- m'(p'-1)}{(p'-1)(m'-1)}}{\frac{m'}{m'-1} \frac{1}{p'-1}}
=\frac{p' (m'-1)- m'(p'-1)}{m'} = \frac{m'-p'}{m'} = 1- \frac{p'}{m'}.
\end{align}
Now, we have to fix the constants $p,m,m',\Gamma $.
Observe, the parameter $\alpha$ has to be larger or equal to $\rho$. In case $\alpha$ is smaler than $\rho$, we cannot control the stopping time 
$\tau_\n:=\min\{ \| \vk \|_{\mathbb{H}_{\rho,\aleph}}\ge n\}$, and, conseqeuntly, we would not be able to globalise the solution. That means, we chose a parameter $\Gamma $ close to one, or, which is equivalent, $m'>2$ close to two. 
	
\textbf{Case I: $d=1,2$, $1<q<2$:}
	So, let us start with $m'=2+\ep$, which gives $m=\frac {2+\ep}{1+\ep}=2-\frac \ep{1+\ep}$ and $p=p'=2$.
Then, $1<m<p<\frac m{m-1}$, $q<p'<m'$, and  $1-\frac {p'}{m'}=\frac \ep{1+\ep}$. That means, if $d(\frac 12-\frac 1q)<\alpha$, then there exists some $\ep>0$ such that (iv) is true.
Since $\ep>0$ can be chosen arbitrary small, (i) and (ii) are satisfied.
In this case we have $1/(p-q(p-1))=1/(2-q)$. So, we can put $p^\ast_1:=1/(2-q)$.

\textbf{Case II: $d=1$, $2\le q\le \frac {\FE+d}d$:}
Here, we need to be satisfied $1<m<p<\frac q{q+1}$, i.e., $q<p'<m'$. Let us put $p'=q+\ep$ and $m'=q+2\ep$, where $\ep>0$. Later on, we will see that $\ep=\frac {(q-1)q\tau}{\FE-q\tau}$ is the best choice, where  $\tau$ is given by
$\tau=d(\frac 12 -\frac 1q)<\alpha$.
Then, $1-\frac {p'}{m'}=\frac \ep{q+2\ep}$.  Hence, the relation between $\tau$ and $\ep$ is given by 
$$
\frac \FE q (\frac \ep{q+2\ep})=\tau,
$$
which gives $\ep=\frac {(q-1)q\tau}{\FE-q\tau}$. Finally, since $p'=q+\ep$ and $p=\frac {q+\ep}{q-1+\ep}$, we get 
$$
p-q(p-1)=\frac \ep{q+\ep-1}=\frac {q\tau}\FE.
$$
Hence $p_1^\ast=\frac \FE{q\tau}$.
 {
Note that $(iii)$ implies that $m\in (1, 2)$ and hence $m'>2$, such that the embedding $L^1(\CO)\hookrightarrow H^{\alpha-\frac \FE 2}_2(\CO)$ is valid. 
Also, that, if 
$$
(iv)'\quad d\lk(\frac 12-\frac 1q\rk) \le \alpha, 
$$
then there exists some $\ep>0$ such that (iv) is satisfied.  Moreover, (i) and (iv) gives
$$
d\, \frac {q-1}q<\frac \aleph 2 \frac 2{2+\ep}.
$$
In particular, if 
$
d<\frac \aleph 2\frac q{q-1}
$, then some $\ep>0$ exists, such that the inequality above is satisfied. 
This gives as condition for $q$}
$$
q<\frac {2d}{2d-\FE}.
$$
Hence \eqref{prod:estimate} in Technical Lemma \ref{dasauch} yields for $\eta = \uk$ and $\xi = \vk$ 
that for any $\ep>0$ there is a constant $C(\ep)>0$ and $l>1$ such that
\DEQSZ
\mathbb{E} \|\uk\vk^q\|_{L^{m}(0,T;L^1)}^{r^\ast }
& \leq &C(\ep)\EE\|\uk^p \vk^q\|_{L^1(0,T;L^1)}^{\frac{r^\ast}{p-q(p-1)}}
+ \ep  \EE\|\vk\|_{\BH_{\alpha, \aleph}}^{r^\ast}. 	\label{e:pivot1}
\EEQSZ
By Proposition \eqref{reg_u2}, we get
\begin{equation}\label{e:herzstueck}
\EE\Big(\int_0^T |\bar u_\kappa^p(s) \bar v_\kappa^q(s)|_{L^1} ds\Big)^{\frac{r^\ast}{p-q(p-1)}}
\le C(\ep) \EE|u_0|_{L^p}^{{\frac{r^\ast}{p-q(p-1)}}} +\ep  \EE\|\vk\|_{\BH_{\alpha, \aleph}}^{r^\ast}.   .
\end{equation}
Hence, the first term in \eqref{e:pivot1} is bounded independently of $\kappa$. 
We will see later on that substituting the estimate \eqref{e:herzstueck} in \eqref{amanfang}, we will see that the term 
$\ep  \EE\|\vk\|_{\BH_{\alpha, \aleph}}^{r^\ast}$ can be cancelled.
We continue with a $\kappa$-independent bound on $\EE\|\vk\|_{\BH_{\alpha, \aleph}}^{ r^\ast}$. The stochastic integral can be tackled by the
Burkholder-Davis-Gundy inequality and the Young inequality.
We use \eqref{Hrhoburkholderqqq}. In doing so, there exists for any $\ep>0$ 
a constant $C=C(\ep)>0$ such that we have
\DEQSZ\label{fortrace}
&& \EE \lk\{ \sup_{s\in [0, t]} \int_0^s \la\Delta^ {\frac{\alpha}{2} } \vk (\sigma), \Delta^ {\frac{\alpha}{2} } \vk (\sigma)dW_2(\sigma)\ra\rk\}^{r^\ast}\nonumber\\
&&\le C \EE \lk\{ \int_0^ t |\Delta^ {\frac{\alpha}{2} } \vk (s)|_{L^2}^4 ds\rk\}^\frac {r^\ast}2
\nonumber\\
&&\le C \EE \lk\{\sup_{s\in [0, t]} |\Delta^ {\frac{\alpha}{2} } \vk (s)|_{L^2}^2  \int_0^ t |\Delta^ {\frac{\alpha}{2} } \vk (s)|_{L^2}^2 ds\rk\}^\frac {r^\ast}2
\nonumber\\
&&= C \EE \sup_{s\in [0, t]} |\Delta^ {\frac{\alpha}{2} } \vk (s)|_{L^2}^{r^\ast} \lk\{\int_0^ t |\Delta^ {\frac{\alpha}{2} } \vk (s)|_{L^2}^2 ds\rk\}^\frac {r^\ast}2
\nonumber\\
&&\le C \Big(\EE \sup_{s\in [0, t]} |\Delta^ {\frac{\alpha}{2} } \vk (s)|_{L^2}^{2r^\ast}\Big)^\frac{1}{2} \Big(\EE\lk\{\int_0^ t |\Delta^ {\frac{\alpha}{2} } \vk (s)|_{L^2}^2 ds\rk\}^{r^\ast}\Big)^\frac{1}{2}
\nonumber\\
&&   \lqq{ {\le \ep	 \EE \sup_{0\le s\le t} |\Delta^ {\frac{\alpha}{2} } \vk (s)|_{L^2}^{2r^\ast}
+ C(\eps) 
\,  \EE \lk\{ \int_0^ t |\Delta^ {\frac{\alpha}{2} } \vk (s)|_{L^2}^2 ds\rk\}^ {r^\ast}.}
}\nonumber
\EEQSZ
The third term, i.e. $  \int_0^ t \mbox{Tr}^2[ D^2\phi( \vk (s))]\, ds$ can be handled by \eqref{itotrace} and the Young inequality. Recall the It\^o formula from the beginning of the proof. 
Taking the supremum in time $s\in [0, t]$ we get 
\DEQS
\lqq{ \frac 12 \sup_{s\in [0, t]} |\Delta^ {\frac{\alpha}{2} } \vk (t)|^2 -\frac 12 \la \Delta^ {\frac{\alpha}{2} } v_0 , \Delta^ {\frac{\alpha}{2} } v_0\ra
+r_2\int_0^t |\Delta^ {\frac{\alpha}{2} +\frac{\aleph}{2} } \vk (s)|^2_{L^2}\, ds}
&&
\\
&{=}&c_2\int_0^ t\la \Delta^ {\frac{\alpha}{2} } \vk (s), \Delta^ {\frac{\alpha}{2} }(\uk(s)\vk^{q}(s))\ra \, ds +a_2\int_0^t |\Delta^ {\frac{\alpha}{2} }\vk (s)|_{L^2}^2\, ds
+b_2\int_0^ t |\Delta^ {\frac{\alpha}{2} }\vk (s)|_{L^1}
\\
&&{}+\sigma_2  \sup_{s\in [0, t]}\int_0^s \la\Delta^ {\frac{\alpha}{2} } \vk (s), \Delta^ {\frac{\alpha}{2} } \vk (s)dW_2(s)\ra
+ \frac{\sigma_2 ^2}{2} \int_0^ t \mbox{Tr}^2[ D^2\phi( \vk (s))]\, ds.
\EEQS
Collecting altogether, we have
\DEQS
\lqq{\frac 1{2}\mathbb{E}\sup_{0\le s\le t}  | \Delta^ {\frac{\alpha}{2} } \vk (t)|_{L^2}^{2{r^\ast}}-\frac 12\mathbb{E}| \Delta^ {\frac{\alpha}{2} }v_0|_{L^2}^{{2r^\ast}}
+\frac {r_2^{r^\ast}}2\mathbb{E}\lk( \int_0^ t | \Delta^ {\frac{\alpha}{2} +\frac \aleph2} \vk (s) |_{L^2}^{2}\, ds\rk)^{r^\ast}}
\\
& {\le} & 
C \mathbb{E}\Big\{\frac 12 \sup_{s\in [0, t]}| \Delta^ {\frac{\alpha}{2} } \vk (s)|_{L^2}^{2} -\frac {1}2 | \Delta^ {\frac{\alpha}{2} }v_0|_{L^2}^{2}
+\frac {r_2}2\int_0^ t | \Delta^ {\frac{\alpha}{2} +\frac \aleph2} \vk (s) |_{L^2}^{2}\, ds \Big\}^{r^\ast} 
\\
&\le& c_2^{r^\ast} \ep  \Big(\EE\|\vk\|_{\BH_{\alpha, \aleph}}^{ r^\ast}\Big) 
+ C(\EE|u_0|_{L^p}^{\frac{r^\ast}{p-q(p-1)}} +1)\\
&&+a_2^{r^\ast}\EE\int_0^t |\Delta^ {\frac{\alpha}{2} }\vk (s)|_{L^2}^{2r^\ast}\, ds
+b_2^{r^\ast}\EE\int_0^ t |\Delta^ {\frac{\alpha}{2} }\vk (s)|_{L^2}^{r^\ast}ds
\\
&&{}+
\ep	 \EE \sup_{0\le s\le t} |\Delta^ {\frac{\alpha}{2} } \vk (s)|_{L^2}^{2r^\ast}
+ C(\eps) 
\,  \EE \lk\{ \int_0^ t |\Delta^ {\frac{\alpha}{2} } \vk (s)|_{L^2}^{2r^\ast} ds\rk\}
\\
&\le& {c_2^{r^\ast} \ep  \Big(\EE\|\vk\|_{\BH_{\alpha, \aleph}}^{ r^\ast}\Big) 
+ C(\EE|u_0|_{L^p}^{\frac{r^\ast}{p-q(p-1)}} +1)}\\
&&{{}+a_2^{r^\ast}\int_0^t \EE\sup_{\sigma \in [0, s]} |\Delta^ {\frac{\alpha}{2} }\vk (\sigma)|_{L^2}^{2r^\ast}\, ds
+\frac 12 tb_2^{2r^\ast}
+ \frac 12 \int_0^ t \EE |\Delta^ {\frac{\alpha}{2} }\vk (s)|_{L^2}^{2r^\ast}ds}
\\
&&{}+
{ \ep	 \EE \sup_{0\le s\le t} |\Delta^ {\frac{\alpha}{2} } \vk (s)|_{L^2}^{2r^\ast}
+ C(\eps) 
\,   \int_0^ t \EE \sup_{\sigma\in [0, s]} |\Delta^ {\frac{\alpha}{2} } \vk (\sigma)|_{L^2}^{2r^\ast} ds.}
\\
\EEQS
A rearrangement of terms gives for an appropriate choice of $\ep>0$ some $\kappa$-independent constants $C_1, \dots, C_6>0$ such that  we have 
\DEQSZ\label{heretolook0}
&&\mathbb{E}\sup_{0\le s\le t}  | \Delta^ {\frac{\alpha}{2} } \vk (t)|_{L^2}^{2r^\ast}\nonumber\\
&&\le\mathbb{E}\sup_{0\le s\le t}  | \Delta^ {\frac{\alpha}{2} } \vk (t)|_{L^2}^{2r^\ast}
+C_1\mathbb{E}\lk( \int_0^ t | \Delta^ {\frac{\alpha}{2} +\frac \aleph2} \vk (s) |_{L^2}^{2}\, ds\rk)^{r^\ast}
\nonumber\\
&&\le
C_3\mathbb{E}|\Delta^ {\frac{\alpha}{2} }v_0|_{L^2}^{r^\ast}
+ \ep C_4 \Big(\EE\|\vk\|_{\BH_{\alpha, \aleph}}^{ r^\ast}\Big)
+ C_5(\EE|u_0|_{L^p}^{\frac{r^\ast}{p-q(p-1)}} +1)\nonumber\\
&&\qquad+ C_6\, \int_0^ t \EE  |\Delta^{\frac{\alpha}{2} } \vk (s)|_{L^2}^{2r^\ast} \, ds.\EEQSZ
Rearranging gives for an appropriate choice of $\ep_1$ and $\ep_4$ and for any arbitrary $\tilde \ep_2,\tilde \ep_3>0$
\DEQSZ\label{heretolook1}
&&\frac 18\mathbb{E}\sup_{0\le s\le t}  | \Delta^ {\frac{\alpha}{2} } \vk (t)|_{L^2}^{r^\ast}
+\frac {r_2^{r^\ast}}8\mathbb{E}\lk( \int_0^ t | \Delta^ {\frac{\alpha}{2} +\frac \aleph2} \vk (s) |_{L^2}^{2}\, ds\rk)^{\frac {r^\ast}2}
\nonumber\\
&&\le
\frac 12\mathbb{E}|\Delta^ {\frac{\alpha}{2} }v_0|_{L^2}^{r^\ast}
+C_1 \,\mathbb{E}|u_0|_{L^{p}}^{\frac{pr^\ast}{p-q(p-1)}}
+\tilde \epsilon_2  \EE  \|\bar v_\kappa\|_{\mathbb{H}_{\alpha, \aleph}}^{r^\ast }
+ C_0\,   \int_0^ t \EE \sup_{0\le s_1\le s} |\Delta^{\frac{\alpha}{2} } \vk (s_1)|_{L^2}^{r^\ast} \, ds.
\nonumber\EEQSZ
We neglect the second term on the left-hand side and apply the Grownwall Lemma to $\Phi(t):= \EE \sup_{0\le s_1\le t} |\Delta^ {\frac{\alpha}{2} } \vk (s_1)|_{L^2}^{r^\ast}$,
it follows that for any $\tilde \ep_1,\tilde \ep_2>0$ there exists some $C_0,C_1,C_2>0$ such that
we know
\DEQS
\EE \sup_{0\le s\le T} |\Delta^ {\frac{\alpha}{2} } \bar v_\kappa (s)|_{L^2}^{r^\ast}
&\le& e^{ C_0\, T}\,\Big\{  \frac 12\mathbb{E}|\Delta^{\frac{\alpha}{2}} v_0|_{L^2}^{r^\ast}
+C_1 \,\mathbb{E}|u_0|_{L^{p}}^{\frac{pr^\ast}{p-q(p-1)}}
%
+\tilde \epsilon_3  \EE  \|\bar v_\kappa \|_{\mathbb{H}_{\alpha, \aleph} }^{r^\ast }
\Big\}.
\EEQS
Substituting this estimate in \eqref{heretolook1} to estimate $ \int_0^ t \EE \sup_{0\le s_1\le s} |\Delta^ {\frac{\alpha}{2} } \vk (s_1)|_{L^2}^{2r^\ast} \, ds$, we
know, that for all $\bar \ep_1,\bar \ep_2,\bar \ep_3,\bar \ep_4>0$ there exist constant $\bar C_1,\bar C_2,\bar C_3>0$ such that
\DEQS
c\mathbb{E}\| \vk \|_{\mathbb{H}_{\alpha, \aleph}}^{r^\ast}
&&\le \EE \essup_{0\le s\le T}|\Delta^{\frac{\alpha}{2} }\vk (s)|_{L^2}^{r^\ast}+ r_2\mathbb{E}\lk\{\int_0^T|\Delta^{\frac{\alpha}{2}
+\frac \aleph2}\vk (s)|_{L^2}^2 ds\rk\}^{\frac {r^\ast}2}\\
&&\le  C_1 \mathbb{E}|\Delta^{\frac{\alpha}{2} }v_0|_{L^2}^2
+C_2 \,\mathbb{E}|u_0|_{L^{p}}^{\frac{pr^\ast}{p-q(p-1)}}
+ \bar \ep_2\EE  \|\vk \|_{\mathbb{H}_{\alpha, \aleph}  }^{r^\ast}
\\
&&{} + Te^{C_0T}\, \Big\{ C_4\,\mathbb{E}|u_0|_{L^{p}}^{\frac{pr^\ast}{p-q(p-1)}}
+\bar  \epsilon_4  \EE  \|\vk \|_{\mathbb{H}_{\alpha, \aleph}}^{r^\ast }
\Big\}.
\EEQS
Taking $\bar \ep_4>0$ sufficiently small, the assertion follows.

\newcommand{\gann}{c}
\newcommand{\mO}{\CO}
\renewcommand{\dd}{\,d}
\renewcommand{\pp}{2}

\noindent \textbf{Limit Case: $d=2$, $q=2$ and  $\aleph=2$.}
In this setting,  the following uniform bound can be shown. 	Let $w= \gann_2 \uk  + \gann_1\vk $. 
For any  number $t\in[0,T]$, there exist constants $C,K>0$ such that we have 
\DEQS		
\lqq{ \frac 12\EE\sup_{0\le s\le T} \lvert w(s)\rvert^{\pp }_{L^{\pp }}  
}
\\
&&{}	+\frac 12 r_1\gann ^2_2\EE \int_{0}^{t} \int_{\mO}(\nabla  \uk (r,x))^2\, \dd x \dd r  +\frac 14 r_2\gann ^2_1\EE \int_{0}^{s}\int_{\mO}(\nabla  \vk (r,x))^2\, \dd x\dd r
\\
&	\le &\left(1+KTe^{KT}\right) \left(\frac12 \EE  \lvert w(0)\rvert^{\pp }_{L^{\pp }} + 	\frac 12 (r_1+r_2)^2 \frac {\gann _2^2}{r_2} \EE |u_0|_{L^2}^2+C\right)
.
\EEQS
We fix a number $t\geq 0 $, $t\le \tau^\ast$. For simplicity we omit the stochastic terms. 
We see that $w$ satisfies the following equation
\begin{equation}
\label{eq:bound1}
\begin{split}
\dot{w}(t,x) =  &r_1\gann _2\Delta  \uk (t,x) + r_2\gann _1 \Delta \vk (t,x)+(a_1c_2 \uk+a_2c_1\vk)+(b_1c_2+b_2c_1)
.
\end{split}
\end{equation}
with $w(0,x)=\gann _2 u_0(x)+\gann _1v_0(x)$. It follows that
\begin{equation}
\label{eq:bound2}
\begin{split}
&\lvert w(s)\rvert^{\pp }_{L^{\pp }} = \lvert w(0)\rvert^{\pp }_{L^{\pp }} +\pp (b_1c_2+b_2c_1)\int_{0}^{s}\int_{\mO} w(r,x)\dd x\dd r
\\
& +\pp \int_{0}^{s}
\int_{\mO}\big(r_1\gann _2\Delta  \uk (r,x) + r_2\gann _1 \Delta \vk (r,x)\big)(w(r,x))\dd x\dd r
\\
&{}+2\int_{0}^{s}\int_{\mO}(\gann _2a_1\uk (r,x)+\gann _1a_2\vk(r,x)) (w(r,x))\dd x\dd r
.		\end{split}
\end{equation}
Let us investigate the second term on the right hand side. Integration by parts gives
\begin{align*}
& \int_{\mO}\big(r_1\gann _2\Delta  \uk (r,x) + r_2\gann _1 \Delta \vk (r,x)\big)(w(r,x))\dd x
\\
&=\int_{\mO}\big(r_1\gann _2\Delta  \uk (r,x) + r_2\gann _1 \Delta \vk (r,x)\big)((\gann _2  \uk (r,x) + \gann _1  \vk (r,x))\dd x
\\
&\le -r_1\gann ^2_2\int_{\mO}(\nabla  \uk (r,x))^2\, \dd x  - r_2\gann ^2_1\int_{\mO}(\nabla  \vk (r,x))^2\, \dd x
\\
&\qquad {}		+\gann _2\gann _1( r_2+r_1)  \lk| \int_{\mO}
\nabla  \vk (r,x)\nabla  \uk (r,x)\dd x\rk|.
\end{align*}
The Young inequality gives for any $\ep>0$
\begin{align*}
& \int_{\mO}\big(r_1\gann _2\Delta  \uk (r,x) + r_2\gann _1 \Delta \vk (r,x)\big)(w(r,x))\dd x
\\
&\le -r_1\gann ^2_2\int_{\mO}(\nabla  \uk (r,x))^2\, \dd x  - r_2\gann ^2_1\int_{\mO}(\nabla  \vk (r,x))^2\, \dd x
\\ &{}
+\ep \gann _2\gann _1( r_2+r_1)\int_{\mO}
(\nabla  \vk (r,x))^2  \dd x + \frac 1{4\ep}  \gann _2\gann _1( r_2+r_1) \int_{\mO}( \nabla  \uk (r,x))^2\dd x.
\end{align*}
Choosing $\ep=\frac 12 \frac {r_2\gann _1}{ \gann _2( r_2+r_1)}$, we have
$$
\frac 1 {4\ep} \gann _2\gann _1(r_1+r_2)\le   \frac 12 (r_1+r_2)^2 \frac {\gann _2^2}{r_2} .
$$
Substituting the result in \eqref{eq:bound2},
we get
\begin{align*}
&\frac 12 \lvert w(s)\rvert^{\pp }_{L^{\pp }}  
+r_1\gann ^2_2\int_{0}^{s} \int_{\mO}(\nabla  \uk (r,x))^2\, \dd x \dd r  +\frac 12  r_2\gann ^2_1\int_{0}^{s}\int_{\mO}(\nabla  \vk (r,x))^2\, \dd x\dd r
\\
&\phantom{\lvert w(s)\rvert^{\pp }_{L^{\pp }} }
\le \frac 12	\lvert w(0)\rvert^{\pp }_{L^{\pp }}+\int_{0}^{s}\int_{\mO}(\gann _2a_1\uk (r,x)+\gann _1a_2\vk(r,x)) (w(r,x))\dd x\dd r
\\			
&\phantom{\lvert w(s)\rvert^{\pp }_{L^{\pp }}  =	}{}
+	\frac 12 (r_1+r_2)^2 \frac {\gann _2^2}{r_2}  \int_{\mO}( \nabla  \uk (r,x))^2\dd x
+ (b_1c_2+b_2c_1)\int_{0}^{s}\int_{\mO} w(r,x)\dd x\dd r		.
\end{align*}
Again, applying the Young inequality we know, that there exists a constant $$C=C(r_1,a_1,a_2,b_1,b_2,c_1,c_2)>0$$ such that 
\begin{equation}
\label{eq:bound22}
\begin{split}
&\frac 12\lvert w(s)\rvert^{\pp }_{L^{\pp }}  
+r_1\gann ^2_2\int_{0}^{s} \int_{\mO}(\nabla  \uk (r,x))^2\, \dd x \dd r  +\frac 12  r_2\gann ^2_1\int_{0}^{s}\int_{\mO}(\nabla  \vk (r,x))^2\, \dd x\dd r
\\
&\le  \lvert w(0)\rvert^{\pp }_{L^{\pp }} +  C \int_{0}^{s}|w(r)|_{L^2}^2 \dd r
+	\frac 12 (r_1+r_2)^2 \frac {\gann _2^2}{r_2}  \int_{\mO}( \nabla  \uk (r,x))^2\dd x
.
\end{split}
\end{equation}
Taking expectation, 	it follows, that
\begin{equation*}
\begin{split}
&\frac 12\EE \sup_{0\le s\le t} \lvert w(s)\rvert^{\pp }_{L^{\pp }}  
\le \EE  \lvert w(0)\rvert^{\pp }_{L^{\pp }} +  C \EE \int_{0}^{t}|w(r)|_{L^2}^2 \dd r
\\
&\phantom{=\lvert w(0)\rvert^{\pp }_{L^{\pp }}}
+\EE 	\frac 12 (r_1+r_2)^2 \frac {\gann _2^2}{r_2} \int_{0}^{t} \int_{\mO}( \nabla  \uk (r,x))^2\dd x\dd r
\\
&\le\frac 12 \EE  \lvert w(0)\rvert^{\pp }_{L^{\pp }} +  C \EE \int_{0}^{t}|w(r)|_{L^2}^2 \dd r
+\tilde C	\lk[ \frac 12 (r_1+r_2)^2 \frac {\gann _2^2}{r_2} \EE |u_0|_{L^2}^2+1\rk]
.
\end{split}
\end{equation*}
The last line and taking into account the uniform bounds on $u$, i.e.\ Proposition 
\ref{reg_u2} gives the assertion.
The Gr\"onwall Lemma gives	
\begin{equation*}
\begin{split}
&\frac 12\EE \sup_{0\le s\le T} \lvert w(s)\rvert^{\pp }_{L^{\pp }}  
\le \left(1+KTe^{KT}\right)  \left( \frac12 \EE  \lvert w(0)\rvert^{\pp }_{L^{\pp }} + 	\frac 12 (r_1+r_2)^2 \frac {\gann _2^2}{r_2} \EE |u_0|_{L^2}^2+C\right)
.
\end{split}
\end{equation*}
Together with estimate \eqref {eq:bound22}, the last estimates implies the assertion.
\end{proof}

\bigskip
\section{\textbf{The stochastic convolution process}}\label{appb}

\noindent We refer to \cite{brzezniak,brzezniaGatarek, DaPrZa,janbesov,maxjan,vanneerven}, where the precise definition of the stochastic integral in Hilbert and Banach space can be found. 
For the reader's convenience, we will shortly introduce the main results we are using within the proofs. 	For simplicity, we refrain from stating the results in full generality. The properties gathered are used in Section~\ref{Appendix_A_noise}. 

\medskip

\noindent Let $E$ be a Hilbert space and $A$ be a linear map on $E$ satisfying the following conditions.
\begin{assumption}\label{assum-1}
\begin{enumerate}
\item \label{h2-a}
$-A$ is a positive operator\footnote{See Section I.14.1 in Triebel's monograph \cite{Triebel_1995}.} on $E$ with compact resolvent. In particular, there exists $M>0$ such that
\[ \Lve
(A+\lambda \tesfalem{I})^{-1}\Rve \le \frac{M}{1+\lambda}, \mbox{ for any }\lambda\ge 0;\]
\item \label{h2-b} $A$ is an infinitesimal generator of an analytic semigroup $(e^{tA})_{t\geq 0}$ of contraction type in $E$.
\item \label{h3} $A$ has the BIP (bounded imaginary power) property, i.e. there exists some
constants $K>0$ and $\tesfalem{\vartheta}\in [0,\frac\pi2)$ such that
\DEQSZ 
\Vert A^{is} \Vert &\le& K e^{\vartheta |s|}, \; s \in \mathbb{R}.
\label{2.1}
\EEQSZ 

\end{enumerate}
\end{assumption}
\del{ Let us mention that we have for all $x\in E$, 
$$
| Ae^{tA}x|\le C\frac 1t |x|_E,
$$
and therefore, for all $\alpha\ ge 0$,
\DEQSZ\label{smoothing}}

\noindent The result will be formulated in terms of abstract interpolation
and extrapolation spaces of $D(A)$. In applications, these spaces often correspond to the Sobolev spaces, as we will explain below. Interpolation and extrapolation spaces of $D(A)$ may be defined for any operator $A$ which generates an analytic semigroup of type
$(\omega,\theta,K)$ on a Banach space $X$. For $\alpha\in(0,1)$ we write
$$
D_A(\alpha,p)=(E,D(A))_{\alpha,p},\quad D(A^\alpha):=[E,D(A)]_\alpha,
$$
for the real and complex interpolation spaces.

\noindent For $x\in E$ let us define
$$
\mathcal{I}:[0,T]\ni t\mapsto e^{tA}x\in E.
$$
Then (see Proposition 3.1 \cite{maxjan})
if $
x\in D_A(1-\frac 1p,p)$, then
\DEQSZ\label{fracine01}
\max\tesfalem{\big(}\| \mathcal{I}\|_{H^{1,p}([0,T],E)}, \| \mathcal{I}\|_{L^p(o,T;D(A))}\tesfalem{\big)}\le C\|x\|_{D_A(1-\frac 1p,p)}.
\EEQSZ

\noindent Let us fix two real numbers $q\in (1,\infty)$ and $\mu\geq 0$.
Let us now define the convolution operator \tesfalem{for}  $w:[0,T]\to E$ by
$$
{\mathfrak{F}}_A(w)(t):= \int_0^ t e^{(t-s) A} w (s)\, ds,
$$
and for $\alpha\in(0,1)$ the fractional convolution
$$
{\mathfrak{F}}_A^\alpha (w)(t):= \int_0^ t (t-s)^{-\alpha} e^{(t-s) A} w (s)\, ds,
$$
Then, one can find the following Lemma.
\begin{lemma}\label{L:reg} (Compare Theorem 3.3 \cite{maxjan}, or Lemma  2.4 of \cite{brzezniaGatarek}) Let Assumption \ref{assum-1} be satisfied for $A$.
Suppose that the positive numbers $\alpha, \beta, \gamma, \delta \in (0, 1]$ and $q>1$
satisfy
\DEQSZ 
1  -\frac1q  +  \gamma &>&\beta +\delta.
\label{cond:1}
\EEQSZ 
If $T  \in (0,\infty)$, then  the operator
\DEQSZ 
{\mathfrak{F}}_A
: L^q (0,T;D(A^\gamma)) &\to&
{{C}}^\beta_b ([0,T];D(A^\delta)) \; \label{2.13b}
\EEQSZ 
is bounded.
\end{lemma}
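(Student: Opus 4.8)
The plan is to follow the classical argument for convolutions with analytic semigroups (as in \cite[Theorem 3.3]{maxjan} or \cite[Lemma 2.4]{brzezniaGatarek}), reducing the statement to two elementary estimates. By Assumption~\ref{assum-1}, $A$ has bounded imaginary powers, so the fractional powers $A^\sigma$, $\sigma\in\RR$, are well defined, $D(A^\alpha)=[E,D(A)]_\alpha$ for $\alpha\in(0,1)$, and analyticity of $(e^{tA})_{t\ge 0}$ supplies constants $C_\sigma>0$ with $\Lve A^\sigma e^{tA}\Rve_{\mathcal{L}(E)}\le C_\sigma t^{-\sigma}$ for $t>0$, $\sigma\ge 0$, and $\Lve A^\sigma e^{tA}\Rve_{\mathcal{L}(E)}\le C_\sigma$ for $\sigma\le 0$. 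Writing $v:=A^\gamma w\in L^q(0,T;E)$ and $\mu:=\delta-\gamma$, I would use $A^\delta{\mathfrak{F}}_A(w)(t)=\int_0^t A^{\mu}e^{(t-s)A}v(s)\,ds$, so that it suffices to bound both $\sup_{t\in[0,T]}\Lve A^\delta{\mathfrak{F}}_A(w)(t)\Rve_E$ and the $C^\beta$-seminorm of $t\mapsto A^\delta{\mathfrak{F}}_A(w)(t)$ by $C\Lve v\Rve_{L^q(0,T;E)}$; the case $\mu\le 0$ is strictly easier (then $A^\mu e^{\sigma A}$ is bounded), so I would assume $\mu\in(0,1)$ and set $q'=q/(q-1)$.

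For the supremum bound, H\"older's inequality in $s$ gives
\[
\Lve A^\delta{\mathfrak{F}}_A(w)(t)\Rve_E\le C_\mu\int_0^t(t-s)^{-\mu}\Lve v(s)\Rve_E\,ds\le C_\mu\lk(\int_0^t(t-s)^{-\mu q'}\,ds\rk)^{1/q'}\Lve v\Rve_{L^q(0,T;E)},
\]
which is finite and bounded by $C\,T^{1/q'-\mu}\Lve v\Rve_{L^q(0,T;E)}$ because $\mu q'<1$, i.e.\ $\delta<\gamma+1-\tfrac1q$, a consequence of \eqref{cond:1} since $\beta>0$; one also notes ${\mathfrak{F}}_A(w)(0)=0$.

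For the H\"older seminorm, the plan is to fix $0\le r<t\le T$ and split
\[
A^\delta{\mathfrak{F}}_A(w)(t)-A^\delta{\mathfrak{F}}_A(w)(r)=\int_r^t A^{\mu}e^{(t-s)A}v(s)\,ds+\int_0^r\lk(A^{\mu}e^{(t-s)A}-A^{\mu}e^{(r-s)A}\rk)v(s)\,ds=:J_1+J_2.
\]
The term $J_1$ is handled exactly as the supremum bound, giving $\Lve J_1\Rve_E\le C(t-r)^{1/q'-\mu}\Lve v\Rve_{L^q(0,T;E)}$ with $1/q'-\mu\ge\beta$ by \eqref{cond:1}. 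For $J_2$, using $\tfrac{d}{d\sigma}e^{\sigma A}=Ae^{\sigma A}$ I would write $A^{\mu}e^{(t-s)A}-A^{\mu}e^{(r-s)A}=\int_{r-s}^{t-s}A^{1+\mu}e^{\sigma A}\,d\sigma$, estimate $\Lve A^{1+\mu}e^{\sigma A}\Rve\le C_{1+\mu}\sigma^{-(1+\mu)}$, and interpolate the two trivial bounds $\int_{r-s}^{t-s}\sigma^{-(1+\mu)}\,d\sigma\le C(r-s)^{-\mu}$ and $\le C(t-r)(r-s)^{-(1+\mu)}$ to get $\int_{r-s}^{t-s}\sigma^{-(1+\mu)}\,d\sigma\le C(t-r)^{\beta}(r-s)^{-(\mu+\beta)}$ for $\beta\in(0,1]$; H\"older's inequality in $s$ then yields $\Lve J_2\Rve_E\le C(t-r)^{\beta}\lk(\int_0^r(r-s)^{-(\mu+\beta)q'}\,ds\rk)^{1/q'}\Lve v\Rve_{L^q(0,T;E)}$, with the remaining integral finite exactly when $(\mu+\beta)q'<1$, i.e.\ $1-\tfrac1q+\gamma>\beta+\delta$, which is precisely \eqref{cond:1}. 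Combining the estimates for $J_1$ and $J_2$ gives $[A^\delta{\mathfrak{F}}_A(w)]_{C^\beta_b([0,T];E)}\le C\Lve w\Rve_{L^q(0,T;D(A^\gamma))}$, hence boundedness of \eqref{2.13b}.

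The main obstacle is the estimate for $J_2$: one has to spend exactly the factor $(t-r)^{\beta}$ while leaving the residual singularity $(r-s)^{-(\mu+\beta)}$ $L^{q'}$-integrable in $s$, and this is precisely where condition \eqref{cond:1} is sharp; the remaining ingredients — the fractional-power semigroup bounds, the commutation of $A^\delta$ with the convolution on the relevant domains, and the case $\delta\le\gamma$ — are routine under Assumption~\ref{assum-1}. If a short route is preferred, the statement may instead be quoted verbatim from \cite[Theorem 3.3]{maxjan} or \cite[Lemma 2.4]{brzezniaGatarek}.
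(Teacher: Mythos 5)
The paper offers no proof of Lemma~\ref{L:reg}; it is imported verbatim from \cite[Theorem 3.3]{maxjan} and \cite[Lemma 2.4]{brzezniaGatarek}, and your reconstruction is exactly the standard argument behind those results: reduce to $v=A^\gamma w$, use $\Lve A^\sigma e^{tA}\Rve\le C_\sigma t^{-\sigma}$, split the increment into $J_1+J_2$, and interpolate the two elementary bounds on $\int_{r-s}^{t-s}\sigma^{-(1+\mu)}\,d\sigma$ so that \eqref{cond:1} appears precisely as the integrability threshold $(\mu+\beta)q'<1$. The argument is correct; the only point worth recording for the deferred case $\mu=\delta-\gamma\le 0$ is that the first of your two trivial bounds should then be $C(t-r)^{-\mu}$ (obtained from $(t-s)^{|\mu|}-(r-s)^{|\mu|}\le(t-r)^{|\mu|}$) rather than $C(r-s)^{-\mu}$, after which the same interpolation with exponent $\theta=(\beta+\mu)/(1+\mu)$ (or the first bound alone when $\beta+\mu\le 0$) closes that case as well.
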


\noindent Finally, we present some slight modifications of Proposition 2.2 and Theorem 2.6 from~\cite{brzezniak}, and respectively, from Lemma 3.6 of \cite{Jan1}.
\begin{theorem}
\label{Th:compact}
Let Assumption \ref{assum-1} be satisfied.
Assume that
$ \alpha \in (0,1]$ and $\delta,\gamma\geq  0$ are such that
\DEQSZ 
\alpha+\gamma-\delta-\frac1q  &>&-\frac1r.
\label{cond:2}
\EEQSZ 
Then the   operator
\DEQSZ \label{cond:3} 
A ^{\delta}\, \mathfrak{F}_A^\alpha \, A ^ {-\gamma} : L^q (0,T;E)
&\to&  L^r(0,T;E)
\EEQSZ 
is bounded. Moreover, if the operator $ \mathfrak{F}_A:E\to E$ is   compact, then the
operator in \eqref{cond:3}   is compact.
\end{theorem}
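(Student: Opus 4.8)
\noindent The plan is to view Theorem~\ref{Th:compact} as the ``$L^r$ in time'' counterpart of Lemma~\ref{L:reg} and to prove it along the same lines, adapting Proposition~2.2 and Theorem~2.6 of~\cite{brzezniak} (and Lemma~3.6 of~\cite{Jan1}) to the present exponents. First I would write, for $w\in L^q(0,T;E)$,
\[
\bigl(A^{\delta}\,\mathfrak{F}_A^{\alpha}\,A^{-\gamma}w\bigr)(t)=\int_0^t(t-s)^{-\alpha}\,A^{\delta-\gamma}e^{(t-s)A}\,w(s)\,ds ,
\]
using that $A^{\delta}$ and $A^{-\gamma}$ commute with the semigroup through the functional calculus, the fractional domains being well behaved by the BIP property~\eqref{2.1}. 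By the analytic, contraction-type smoothing estimate $\|A^{\theta}e^{\tau A}\|_{\mathcal{L}(E)}\le C_\theta\,\tau^{-\theta}$ for $\theta\ge0$, $\tau\in(0,T]$, together with plain boundedness of $A^{\theta}e^{\tau A}$ when $\theta\le0$, the operator norm of the kernel is dominated by $C\,(t-s)^{-\mu}$ with $\mu:=\alpha+(\delta-\gamma)_+$, and one reduces the claim to the scalar convolution estimate $\|\,\tau^{-\mu}*|w|\,\|_{L^r(0,T)}\le C\,\|w\|_{L^q(0,T)}$, which holds by Young's / Hardy--Littlewood--Sobolev inequality on the finite interval $[0,T]$. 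To reach the full range of the stated hypothesis one exploits, as in Lemma~\ref{L:reg}, the freedom of splitting the parabolic gain $1-\alpha$ carried by $\mathfrak{F}_A^{\alpha}$ between a Sobolev embedding in time, $W^{s,q}(0,T)\hookrightarrow L^r(0,T)$ with $s=\tfrac1q-\tfrac1r$, and a spatial power handling $A^{\delta-\gamma}$; the resulting bookkeeping is exactly what yields the condition $\alpha+\gamma-\delta-\tfrac1q>-\tfrac1r$.

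\noindent For the compactness assertion I would use that under Assumption~\ref{assum-1}\eqref{h2-a} the resolvent of $A$ is compact, hence $e^{\tau A}\in\mathcal{K}(E)$ for every $\tau>0$, and then peel off the diagonal singularity. For $\varepsilon\in(0,T)$ set $\mathfrak{F}_A^{\alpha,\varepsilon}(w)(t):=\int_0^{(t-\varepsilon)\vee0}(t-s)^{-\alpha}e^{(t-s)A}w(s)\,ds$ and $R_\varepsilon:=\mathfrak{F}_A^{\alpha}-\mathfrak{F}_A^{\alpha,\varepsilon}$, so $R_\varepsilon$ has kernel supported in $\{0<t-s<\varepsilon\}$. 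Since the inequality in the hypothesis is \emph{strict}, the same convolution estimate run on the shrinking interval $(0,\varepsilon)$ gives $\|A^{\delta}R_\varepsilon A^{-\gamma}\|_{L^q\to L^r}\le C\,\varepsilon^{\eta}\to 0$ as $\varepsilon\downarrow0$ for some $\eta>0$. On the other hand $A^{\delta}\mathfrak{F}_A^{\alpha,\varepsilon}A^{-\gamma}$ has kernel $\tau\mapsto \mathbf 1_{\{\tau\ge\varepsilon\}}\,\tau^{-\alpha}\,A^{\delta-\gamma}e^{\tau A}$, which is continuous on $[\varepsilon,T]$ with values in $\mathcal{K}(E)$; approximating it uniformly in operator norm by finite-rank-valued kernels $\sum_j\phi_j(\tau)\,\langle\cdot,a_j\rangle b_j$ reduces matters to the compactness of the scalar convolutions $\phi_j*\colon L^q(0,T)\to C([0,T])\hookrightarrow L^r(0,T)$, which follows from Arzel\`a--Ascoli. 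Thus $A^{\delta}\mathfrak{F}_A^{\alpha,\varepsilon}A^{-\gamma}$ is compact, and letting $\varepsilon\downarrow0$ exhibits $A^{\delta}\mathfrak{F}_A^{\alpha}A^{-\gamma}$ as an operator-norm limit of compact operators, hence compact.

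\noindent The step I expect to be the main obstacle is the exponent arithmetic in the boundedness part: dovetailing the crude kernel bound with the interpolated maximal-regularity estimate so that the precise threshold $\alpha+\gamma-\delta-\tfrac1q>-\tfrac1r$, rather than a strictly stronger one, suffices --- this is where the BIP hypothesis and the full strength of Lemma~\ref{L:reg} (equivalently \cite[Prop.~2.2, Thm.~2.6]{brzezniak}) are needed. The remaining ingredients --- the commutation identity, the finite-interval Young/Hardy--Littlewood--Sobolev inequality, and the finite-rank approximation of the compact-operator-valued kernel --- are routine and I would only sketch them.
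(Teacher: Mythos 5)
The paper does not actually prove Theorem~\ref{Th:compact}: it is introduced verbatim as a ``slight modification of Proposition 2.2 and Theorem 2.6 from \cite{brzezniak}, and respectively, from Lemma 3.6 of \cite{Jan1}'', and no argument is given. Your proposal therefore cannot be compared to an in-paper proof; what you have written is essentially a reconstruction of the standard argument from those references (smoothing estimate for the analytic semigroup, Young's convolution inequality on $(0,T)$ for boundedness; truncation of the kernel near the diagonal plus norm-approximation of a compact-operator-valued kernel for compactness). That is the right route, and the compactness half of your sketch is sound: the strictness of \eqref{cond:2} makes the near-diagonal remainder small in operator norm, the truncated kernel is norm-continuous with values in $\mathcal{K}(E)$ because the semigroup is analytic and the resolvent is compact by Assumption~\ref{assum-1}, and finite-rank approximation is legitimate since $E$ is a Hilbert space. (Note in passing that the hypothesis ``$\mathfrak{F}_A:E\to E$ is compact'' in the statement is evidently a typo for ``$A^{-1}:E\to E$ is compact'', as in Corollary~\ref{C:comp}; your argument correctly uses the latter.)

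The one genuine soft spot is exactly the step you flag as ``bookkeeping'', and it should not be waved away, because as written your computation does not produce the stated threshold. With the paper's displayed kernel $(t-s)^{-\alpha}$ and your bound $\|(t-s)^{-\alpha}A^{\delta-\gamma}e^{(t-s)A}\|\le C(t-s)^{-\alpha-(\delta-\gamma)_+}$, Young's inequality $L^p\ast L^q\to L^r$ with $1+\tfrac1r=\tfrac1p+\tfrac1q$ requires $\alpha+(\delta-\gamma)_+<1+\tfrac1r-\tfrac1q$, i.e.\ $1-\alpha+\gamma-\delta-\tfrac1q>-\tfrac1r$ (for $\delta\ge\gamma$), which is \eqref{cond:2} with $\alpha$ replaced by $1-\alpha$. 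The resolution is that the kernel of $\mathfrak{F}_A^\alpha$ must be $(t-s)^{\alpha-1}$ (the convention of \cite{brzezniak}; the paper's display is a typo, as one also sees by checking Corollary~\ref{C:comp} and Lemma~\ref{L:reg}), and with that kernel the single Young-inequality computation already yields \eqref{cond:2} for $\delta\ge\gamma$ --- no interpolation with a temporal Sobolev embedding is needed, and invoking one obscures rather than closes the gap. You should fix the convention and carry out this one-line exponent count explicitly. Separately, be aware that for $\gamma>\delta$ your kernel bound only uses $(\delta-\gamma)_+=0$, so it proves boundedness under $\alpha-\tfrac1q>-\tfrac1r$, which is strictly stronger than \eqref{cond:2} in that regime; negative powers of $A$ do not improve the temporal singularity, so the full range claimed in \eqref{cond:2} for $\gamma>\delta$ is not reached by this argument (nor, as far as I can see, is it needed anywhere in the paper, where the result is applied with $\gamma\le\delta$). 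Either restrict to $\delta\ge\gamma$ or say explicitly that the case $\gamma>\delta$ follows from the case $\gamma=\delta$ under the stronger hypothesis.
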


\medskip 
\begin{remark}\label{Prop:2.1-remark-F} 
In view of Theorem \ref{Th:compact} 
$${\mathfrak{F}}_A:L ^ p(0,T;E)\to L ^ p(0,T;E)
$$
is a well defined bounded linear operator for $p\in [1,\infty)$.
\end{remark}

\medskip 

\noindent For the study of maximal regularity, we know that if $A$ is the Laplace operator with Neumann boundary conditions and $E$ a UMD Banach space, $p\in (1,\infty)$,  the mapping
\DEQSZ\label{maxjan0}
{\mathfrak{F}}_A:L^p(0,T;E)\longrightarrow L^p(0,T;D(A))
\EEQSZ
and (see, e.g., estimate (3.6) \cite{maxjan})
\DEQSZ\label{compastimate}
\frac d{dt} {\mathfrak{F}}_A:L^p(0,T;E)\longrightarrow L^p(0,T;E)
\EEQSZ
are bounded. Theorem 3.3 of \cite{maxjan} gives that
\DEQSZ\label{maxjan1}
{\mathfrak{F}}_A:L^p(0,T;E)&\longrightarrow & C^0(0,T;D_A({1-\frac 1p},p)).
\EEQSZ
is bounded. Finally, since we also have to verify compactness, we present the following Theorem.
\begin{corollary}\label{C:comp}
Let Assumption \ref{assum-1} be satisfied.
Assume  that  three  non-negative
numbers $\alpha,  \beta, \delta$ satisfy  the  following condition
\DEQSZ 
\alpha - \frac1q &>&\beta + \delta.
\label{cond:1a}
\EEQSZ 
Then the operator
$\mathfrak{F}_A^\alpha : L^q(0,T;E ) \to
{C}^\beta_b([0,T];D(A^\delta))$ is bounded. Moreover, if the
operator $A^{-1}:E\to E$ is  compact, then  the
operator $\mathfrak{F}_A^\alpha : L^q(0,T;E ) \to {C}^\beta_b([0,T];D(A^\delta))$  is also compact.\\
In particular, if $\alpha > \frac1 q$ and the operator $A^{-1}:E\to E$ is compact, the map
$\mathfrak{F}_A^\alpha :L^q(0,T; E) \to C([0,T];E )$ is compact.
\end{corollary}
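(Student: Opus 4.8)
\textbf{Plan of proof for Corollary~\ref{C:comp}.}
The statement is an abstract boundedness-plus-compactness assertion for the fractional convolution operator $\mathfrak{F}_A^\alpha$, so the plan is to reduce everything to the results already collected in this appendix, in particular Lemma~\ref{L:reg} (the $L^q\to C^\beta_b$ mapping property under the condition $1-\tfrac1q+\gamma>\beta+\delta$) and Theorem~\ref{Th:compact} (the $L^q\to L^r$ compactness of $A^\delta\mathfrak{F}_A^\alpha A^{-\gamma}$ when $\mathfrak{F}_A$ is compact on $E$). First I would treat boundedness: applying Lemma~\ref{L:reg} with the choice $\gamma=0$ gives exactly that $\mathfrak{F}_A:L^q(0,T;E)\to C^\beta_b([0,T];D(A^\delta))$ is bounded whenever $1-\tfrac1q>\beta+\delta$, which is condition~\eqref{cond:1a}; the passage from $\mathfrak{F}_A$ to $\mathfrak{F}_A^\alpha$ is standard since the extra kernel factor $(t-s)^{-\alpha}$ with $\alpha\in(0,1)$ only improves (or leaves unchanged) the time regularity up to relabelling of the smoothing exponents, so no new parameter constraint appears beyond~\eqref{cond:1a}.

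Second, for the compactness I would invoke the hypothesis that $A^{-1}:E\to E$ is compact. Since $-A$ generates an analytic semigroup of contraction type (Assumption~\ref{assum-1}), compactness of $A^{-1}$ implies compactness of $e^{tA}$ for every $t>0$, hence in particular compactness of the single operator $\mathfrak{F}_A:E\to E$ that appears as the hypothesis of Theorem~\ref{Th:compact}. I would then combine the bounded map $\mathfrak{F}_A^\alpha:L^q(0,T;E)\to C^\beta_b([0,T];D(A^\delta))$ with a compact embedding: under~\eqref{cond:1a} one has a small amount of ``room'' $\epsilon:=\bigl(1-\tfrac1q\bigr)-(\beta+\delta)>0$, and splitting $\mathfrak{F}_A^\alpha$ through an intermediate space $C^{\beta+\epsilon/2}_b([0,T];D(A^{\delta+\epsilon/2}))$ (still attainable by Lemma~\ref{L:reg}) together with the compact embedding $C^{\beta+\epsilon/2}_b([0,T];D(A^{\delta+\epsilon/2}))\hookrightarrow C^{\beta}_b([0,T];D(A^{\delta}))$ — which is compact precisely because $D(A^{\delta+\epsilon/2})\hookrightarrow D(A^\delta)$ is compact by compactness of $A^{-1}$, via an Arzel\`a--Ascoli / Aubin--Lions type argument in the Hölder-in-time, fractional-domain-in-space scale — yields compactness of $\mathfrak{F}_A^\alpha$. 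Alternatively, and perhaps cleaner, I would deduce it directly from Theorem~\ref{Th:compact}: writing $\mathfrak{F}_A^\alpha=A^{-\delta}\circ\bigl(A^{\delta}\mathfrak{F}_A^\alpha A^{-\gamma}\bigr)\circ A^{\gamma}$ with $\gamma=0$, the middle factor is compact by Theorem~\ref{Th:compact} (its condition $\alpha+\gamma-\delta-\tfrac1q>-\tfrac1r$ reduces to $\alpha-\tfrac1q-\delta>-\tfrac1r$, which for $r=\infty$ is $\alpha-\tfrac1q>\delta$, weaker than~\eqref{cond:1a}), and composing a compact operator with bounded ones (the bounded post-composition being supplied by the Hölder regularity of Lemma~\ref{L:reg}) preserves compactness.

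Third, the ``in particular'' clause: if $\alpha>\tfrac1q$ I take $\beta=\delta=0$, for which~\eqref{cond:1a} reads $\alpha-\tfrac1q>0$, i.e. exactly the stated hypothesis, and the general assertion then gives that $\mathfrak{F}_A^\alpha:L^q(0,T;E)\to C_b([0,T];E)$ is compact; since $C_b([0,T];E)=C^0_b([0,T];D(A^0))$ this is just the special case with trivial exponents, so nothing further is needed.

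The main obstacle I anticipate is the rigorous justification of the compact embedding in the Hölder-continuous-in-time scale, i.e. that $C^{\beta'}_b([0,T];D(A^{\delta'}))\hookrightarrow C^{\beta}_b([0,T];D(A^{\delta}))$ is compact when $\beta'>\beta$ and $D(A^{\delta'})\hookrightarrow D(A^{\delta})$ is compact; this requires an Arzel\`a--Ascoli argument where the equicontinuity in time is supplied by the higher Hölder exponent $\beta'$ and the pointwise precompactness by the compact embedding of the domains, and one must be careful that the two gains are used simultaneously and not sequentially. I expect this to be a short but slightly delicate real-analysis lemma; to avoid it entirely I would lean on the second route above (reduction to Theorem~\ref{Th:compact} with $\gamma=0$, $r=\infty$), in which case the only thing to check is that $A^{-1}$ compact $\Rightarrow$ $\mathfrak{F}_A$ compact on $E$ (standard, via analyticity), and the rest is a one-line composition argument.
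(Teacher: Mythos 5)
The paper offers no proof of Corollary~\ref{C:comp}: it is stated as a slight modification of cited results (Brze\'zniak; van Neerven--Veraar--Weis), so your proposal can only be measured against the standard argument in those references. Your first route --- prove boundedness of $\mathfrak{F}_A^\alpha$ into the H\"older/fractional-domain scale, then use the strict inequality in \eqref{cond:1a} to factor through $C^{\beta+\epsilon}_b([0,T];D(A^{\delta+\epsilon}))$ and conclude by the compact embedding $C^{\beta+\epsilon}_b([0,T];D(A^{\delta+\epsilon}))\hookrightarrow C^{\beta}_b([0,T];D(A^{\delta}))$, which holds because $A^{-1}$ compact implies $A^{-\epsilon}$ compact and an Arzel\`a--Ascoli/interpolation argument in the H\"older-in-time scale --- is exactly that standard route, and the compact-embedding lemma you flag is indeed the only real work.

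There are, however, two genuine flaws. First, the boundedness step as written is wrong: Lemma~\ref{L:reg} with $\gamma=0$ gives the condition $1-\frac1q>\beta+\delta$, which is \emph{not} \eqref{cond:1a}, and your claim that the singular factor in the kernel of $\mathfrak{F}_A^\alpha$ ``only improves (or leaves unchanged) the time regularity'' is backwards: the fractional kernel is \emph{more} singular than the plain convolution kernel, and it is precisely the reason why $\alpha$ (rather than $1$) appears in \eqref{cond:1a}. The correct argument is a direct estimate, not a relabelling of \eqref{cond:1}: use $\lvert A^\delta e^{sA}x\rvert\le C s^{-\delta}\lvert x\rvert$ from analyticity, H\"older's inequality in time with exponent $q'$, which requires $\alpha-\frac1q>\delta$ for the sup bound and the additional margin $\beta$ for the H\"older seminorm; this is where \eqref{cond:1a} actually comes from. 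Second, your ``cleaner'' Route 2 does not work and cannot serve as the fallback you propose: Theorem~\ref{Th:compact} yields compactness of $A^\delta\mathfrak{F}_A^\alpha A^{-\gamma}$ only into $L^r(0,T;E)$ (the case $r=\infty$ is not covered by its statement), and there is no bounded operator from that intermediate space into $C^\beta_b([0,T];D(A^\delta))$ through which $\mathfrak{F}_A^\alpha$ factors, so the ``compact composed with bounded'' principle has nothing to compose; compactness into an $L^r$-in-time space together with boundedness into the H\"older space does not by itself give compactness into the H\"older space. Since you state you would lean on Route 2 to avoid proving the compact embedding, the proposal as it stands has no complete path to the compactness assertion; you must carry out Route 1, i.e.\ prove (or cite) the lemma that $C^{\beta'}_b([0,T];Y)\hookrightarrow C^{\beta}_b([0,T];X)$ is compact whenever $\beta'>\beta$ and $Y\hookrightarrow X$ is compact, which follows from Arzel\`a--Ascoli combined with the interpolation bound $\lvert f(t)-f(s)\rvert_X\,\lvert t-s\rvert^{-\beta}\le\big(\lvert f(t)-f(s)\rvert_X\,\lvert t-s\rvert^{-\beta'}\big)^{\beta/\beta'}\big(2\sup_r\lvert f(r)\rvert_X\big)^{1-\beta/\beta'}$. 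The final ``in particular'' clause ($\beta=\delta=0$) is unproblematic.
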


\noindent Let $\mathfrak{A}=(\Omega, \CF,\BF,\PP)$ be a complete probability space
and filtration $\BF=(\CF_t)_{t\ge 0}$  satisfying the usual conditions.
Let $H_1$ be a Hilbert space and $E$ a UMD-Banach space of type $2$.
$W$ be a in $H_1$ cylindrical Wiener process, ${\Upsilon}(H_1,E)$ the space of linear operators from $H_1$ to $E$ being $\Upsilon$--radonifying
and
\DEQSZ\label{def.MA}
\lqq{ \CN_{\MA}^p({E}):=\Big\{ \xi:[0,T]\times \Omega\to {E},\quad \mbox{such that} \quad }
&&\\
\notag
&& \mbox{$\xi$ is progressively measurable over $\mathfrak{A}$ and}\quad \int_0^T |\xi(s)|^p_{\footnotesize {\Upsilon}(H_1,{E})}\, ds<\infty\Big\}.
\EEQSZ
To handle the stochastic convolution, let us define a process $\xi\in \CN_{\MA}^2({E})$
the operator $\mathfrak{S}_A$ by
\DEQSZ\label{def_st_con}
\mathfrak{S}_A\xi(t)=\int_0^ t e^{(t-s)A} \xi(s)dW(s)
.
\EEQSZ 

\noindent If some non--negative numbers $\beta\in [0,\frac 12)$, $\delta\ge 0$, and $\nu$ satisfy the following inequality
$$
\beta+\delta+\frac 1p<\frac 12,
$$
then any process belonging to  $\CN_{\MA}^q({E})$ possesses a modification $\tilde x$ of the process $[0,T]\ni t\mapsto \mathfrak{S}_A\xi(t)$, such that
$$
\tilde x\in C^{\beta}_b(0,T;D(A^\delta), \quad a.s.
$$
Besides,  there exists some constant $C_T>0$ independent of $\xi$ such that (see \cite[Theorem 3.5]{maxjan})
\DEQSZ\label{maxjanineq}
\EE \|x\|^p_{\CC^{\beta}_b(0,T;D(A^\delta))}   & \le & C_T \EE \int_0^T \|\xi(s)\|^p_{\Upsilon(H_1,{E})}\, ds.
\EEQSZ

\bigskip 
\section{\textbf{Embedding results}}\label{s:embedding}
\noindent {An important part of this article consists in the correct embedding of the nonlinearity, the subsequently gathered results are used for instance in Section~\ref{s:uniform}.}  

\bigskip 
\subsection{\textbf{$L^\ell$-embedding}}\hfill\\

\noindent Recall the Banach space $\mathbb{H}_\alpha:= L^2(0,T; H^{\alpha+\frac{\aleph}{2}}_2(\CO))\cap L^\infty(0,T; {H^{\alpha}_2}(\CO))$
equipped with the norm
\begin{equation}\label{eq:BZeq}
\|\xi\|_{\mathbb{H}_\alpha}:= \|\xi\|_{L^2(0,T; H^{\alpha+\frac{\aleph}{2}}_2)}+ \|\xi\|_{L^\infty(0,T; {H^{\alpha}_2})} ,\quad\xi\in \mathbb{H}_{\alpha, \aleph}.
\end{equation}
Then, we have the following embedding.

\begin{proposition}\label{interp_rho}
Let $l_1,l_2\in(2,\infty)$ with $\frac{d}{2}-\alpha\le (\FEplus) \frac 2{l_1}+\frac d{l_2}$.
Then there exists $C>0$ such that
$$
\|\xi\|_{L^{l_1}(0,T;L^{l_2})}\le C\|\xi\|_{\mathbb{H}_{\alpha, \aleph}},\quad \xi\in\mathbb{H}_{\alpha, \aleph}.
$$
\end{proposition}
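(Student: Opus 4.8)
The plan is to prove Proposition~\ref{interp_rho} by interpolating in time between the two endpoint spaces $L^\infty(0,T;H^\alpha_2)$ and $L^2(0,T;H^{\alpha+\FEplus}_2)$, and then applying a Sobolev embedding in space at each fixed time. First I would reduce to the case of equality in the parameter constraint: since $\mathcal{O}$ is bounded (or the torus) and $T<\infty$, the spaces $L^{l_1}(0,T;L^{l_2})$ are monotone in both exponents, so it suffices to treat $\frac d2-\alpha = \FEplus \cdot \frac{2}{l_1}+\frac{d}{l_2}$; any admissible pair with ``$\le$'' embeds into one with ``$=$'' (after possibly enlarging $l_1$ or $l_2$, which is harmless by the boundedness of the domain and of the time interval).

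Next, fix $\theta\in[0,1]$ by $\frac{1}{l_1} = \frac{1-\theta}{2}$, i.e. $\theta = 1-\frac{2}{l_1}$, so that $\theta\in(0,1)$ because $l_1>2$. By H\"older's inequality in the time variable with exponents $\frac{2}{1-\theta}=l_1$ on the low-regularity factor and $\infty$ on the high-regularity factor, for $\xi\in\mathbb{H}_{\alpha,\aleph}$ one has
$$
\|\xi\|_{L^{l_1}(0,T;H^{s}_2)} \le \|\xi\|_{L^2(0,T;H^{\alpha+\FEplus}_2)}^{1-\theta}\,\|\xi\|_{L^\infty(0,T;H^\alpha_2)}^{\theta},
$$
where $s = (1-\theta)\bigl(\alpha+\FEplus\bigr)+\theta\alpha = \alpha + (1-\theta)\FEplus = \alpha + \FEplus\cdot\frac{2}{l_1}$, using the interpolation inequality $|\xi(t)|_{H^s_2}\le |\xi(t)|_{H^{\alpha+\FEplus}_2}^{1-\theta}|\xi(t)|_{H^\alpha_2}^{\theta}$ for the complex/Bessel-potential scale $H^\sigma_2$ at each fixed $t$. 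In particular the right-hand side is bounded by $\|\xi\|_{\mathbb{H}_{\alpha,\aleph}}$ up to a constant (indeed $a^{1-\theta}b^\theta\le a+b$).

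Then I would invoke the Sobolev embedding $H^s_2(\CO)\hookrightarrow L^{l_2}(\CO)$, which holds precisely when $s-\frac d2 \ge -\frac{d}{l_2}$, i.e. $\frac d2 - s \le \frac d{l_2}$ (with the convention of a continuous embedding in the boundary case, valid here since $\CO$ is a bounded smooth domain or the torus, interpreting $H^s_2=F^s_{2,2}$ as in~\eqref{triebel}). Substituting $s = \alpha+\FEplus\cdot\frac{2}{l_1}$, this condition reads $\frac d2 - \alpha - \FEplus\cdot\frac{2}{l_1}\le \frac{d}{l_2}$, which is exactly the hypothesis $\frac d2-\alpha\le \FEplus\cdot\frac{2}{l_1}+\frac{d}{l_2}$. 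Chaining the two estimates gives $\|\xi\|_{L^{l_1}(0,T;L^{l_2})}\le C\|\xi\|_{L^{l_1}(0,T;H^s_2)}\le C\|\xi\|_{\mathbb{H}_{\alpha,\aleph}}$, which is the claim.

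The only genuinely delicate point is the treatment of the endpoint (equality) case of the Sobolev embedding when $s-\frac d2=-\frac d{l_2}$ with $l_2<\infty$: here $H^s_2=F^s_{2,2}\hookrightarrow F^0_{l_2,2}=L^{l_2}$ is still continuous (this is the standard Sobolev embedding $F^s_{p,q}\hookrightarrow F^{s-d(1/p-1/r)}_{r,q}$ with $q=2$, valid as long as we do not try to reach $l_2=\infty$), so no logarithmic loss occurs; one should simply cite the Triebel--Lizorkin embedding theorem (e.g.\ \cite[Chapter 2]{runst} or \cite{triebel_1983}) rather than the classical $H^1\hookrightarrow L^{2^*}$ statement. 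Everything else is a routine application of H\"older and interpolation, so the writeup is short; I would state the reduction to equality, the two displayed inequalities, and the citation for the embedding, and conclude.
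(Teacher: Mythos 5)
Your proof is correct and uses essentially the same two ingredients as the paper's own argument: the Sobolev embedding $H^{s}_2(\CO)\hookrightarrow L^{l_2}(\CO)$ for $s\ge d(\tfrac12-\tfrac1{l_2})$ combined with interpolation between $L^\infty(0,T;H^{\alpha}_2)$ and $L^2(0,T;H^{\alpha+\aleph/2}_2)$ via H\"older in time, merely performed in the opposite order. If anything, your writeup is the more careful one, since it makes the interpolation exponent $\theta=1-\tfrac{2}{l_1}$ and the endpoint case of the Triebel--Lizorkin embedding explicit, whereas the paper's proof leaves the intermediate $\delta$ implicit (and contains a slip writing $\alpha+1$ where $\alpha+\aleph/2$ is meant).
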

\begin{proof}
First, let us note that for $\delta>0$ such that
\begin{equation}\label{eq:deltaembedding}\frac {1}{l_2}\ge \frac{1}{2}-\frac{\delta}{d},
\end{equation}
we have the embedding $H^{\delta}_2(\CO)\hookrightarrow L^{l_2}(\CO)$, and therefore
$$
\|\xi\|_{L^{l_1}(0,T;L^{l_2})}\le C\|\xi\|_{L^{l_1}(0,T;H^{\delta}_2)},\quad \xi\in L^{l_1}(0,T;H^{\delta}_2).
$$
Due to interpolation, 
\cite[Theorem 6.4.5 (7), p.\ 152-153]{bergh}, the parameter relations 
\begin{equation}\label{eq:delta_theta}
\delta\le\theta\alpha+(1-\theta)(\alpha+1) \mbox{ for some } \theta\in (0,1)
\end{equation}
and 
\[
\frac  1{l_1}\ge \frac{1}{2}(1-\theta)\qquad \Leftrightarrow \qquad 2 \geq (1-\theta)l_1
\]
yields
$$
\|\xi\|_{L^{l_1}(0,T;H^{\delta}_2)}\le  C\|\xi\|_{L^\infty(0,T;H^{\alpha}_2)}^\theta \|\xi\|_{L^2(0,T;H^{\alpha+1}_2)}^{1-\theta},\quad \xi\in\BH_{\alpha, \aleph}.
$$
\end{proof}

\bigskip 
\subsection{\textbf{An interpolated Stroock-Varopoulos inequality}}\hfill\\

\noindent Next, we present a variant of the Stroock-Varopoulos inequality and its proof.
It is used in Appendix~\ref{Appendix_A_noise} to relax conditions on the noise parameter $\gamma_i$. 
See, e.g. \cite[Lemma 3.6]{DGG2020} for another version of this result.

\begin{proposition}\label{runst1}
For any $\gamma>1$, $\theta\in(0,\frac{1}{\gamma})$, there exists a constant $C>0$ such that
\DEQS
\|\eta\|_{L^{2\gamma}(0,T;H^{\theta}_{2\gamma})}^{2\gamma} \le C\int_0^T |\eta(s)^{[\gamma-1]}\nabla \eta(s)|_{L^2}^2\, ds,
\EEQS
where $z^{[\gamma-1]}:=|z|^{\gamma-2}z$ for $z\in\RR$.
\end{proposition}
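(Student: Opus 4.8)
The plan is to reduce the claimed inequality to a pointwise (in time) statement about a single function $\eta(s)$, and then to prove that statement by interpolation between an endpoint $\theta=0$ and the boundary value $\theta=\tfrac1\gamma$. First I would observe that it suffices to show, for a.e.\ fixed $s\in[0,T]$ and for a function $w:=\eta(s)\in H^1(\CO)$, that
$$
\|w\|_{H^\theta_{2\gamma}}^{2\gamma} \le C\,\big| w^{[\gamma-1]}\nabla w\big|_{L^2}^2,
$$
since integrating this over $s\in[0,T]$ yields the assertion. Writing $z:=w^{[\gamma]}=|w|^{\gamma-1}w$, one has $\nabla z = \gamma\,|w|^{\gamma-1}\nabla w = \gamma\,w^{[\gamma-1]}\nabla w$ pointwise (this chain rule being justified first for smooth $w$ and then by the usual truncation/approximation argument, exactly as in the classical Stroock--Varopoulos argument), so the right-hand side is comparable to $|\nabla z|_{L^2}^2 = |z|_{H^1_2}^2$. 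Hence the target becomes $\|w\|_{H^\theta_{2\gamma}}^{2\gamma}\le C\,|z|_{H^1_2}^2 = C\,\||w|^{\gamma-1}w\|_{H^1_2}^2$.

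Next I would set up the interpolation. For $\theta=0$ the inequality reads $\|w\|_{L^{2\gamma}}^{2\gamma} = \|z\|_{L^2}^2 \le C|z|_{H^1_2}^2$, which is immediate from the Poincar\'e/Sobolev embedding $H^1_2(\CO)\hookrightarrow L^2(\CO)$ on the bounded smooth domain $\CO$ (or, more simply, from $\|z\|_{L^2}\le C\|z\|_{H^1_2}$). For the other endpoint, the relevant scaling is $\theta=\tfrac1\gamma$: here one wants $\|w\|_{H^{1/\gamma}_{2\gamma}}^{2\gamma}\le C|z|_{H^1_2}^2$, i.e.\ $\|w\|_{H^{1/\gamma}_{2\gamma}}\le C\||w|^{\gamma-1}w\|_{H^1_2}^{1/\gamma}$, which one recognizes as a Gagliardo--Nirenberg type statement for the map $w\mapsto w^{[\gamma]}$: the fractional derivative of order $1/\gamma$ in $L^{2\gamma}$ of $w$ is controlled by the full gradient in $L^2$ of $w^{[\gamma]}$. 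This can be obtained from the known mapping properties of the power nonlinearity $u\mapsto |u|^{\gamma-1}u$ on Triebel--Lizorkin/Besov scales (as recorded in \cite{runst}, Chapter on superposition operators) combined with the identification $H^s_p=F^s_{p,2}$ from \eqref{triebel}; alternatively one uses a commutator/paraproduct estimate. Then real interpolation (e.g.\ \cite[Theorem 6.4.5]{bergh}) between the spaces $L^{2\gamma}$ (the $\theta=0$ case) and $H^{1/\gamma}_{2\gamma}$ (the $\theta=\tfrac1\gamma$ case) applied to the fixed function $w$, with interpolation parameter $\lambda=\gamma\theta\in(0,1)$, gives $\|w\|_{H^\theta_{2\gamma}}\le C\|w\|_{L^{2\gamma}}^{1-\lambda}\|w\|_{H^{1/\gamma}_{2\gamma}}^{\lambda}$; raising to the power $2\gamma$ and using the two endpoint bounds, both of whose right-hand sides are $|z|_{H^1_2}^2=C|w^{[\gamma-1]}\nabla w|_{L^2}^2$, yields the desired pointwise bound with a $\theta$-dependent constant. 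Finally, integrating in $s$ completes the proof.

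The main obstacle I expect is the $\theta=\tfrac1\gamma$ endpoint, i.e.\ making rigorous the statement that $\|w\|_{H^{1/\gamma}_{2\gamma}}$ is controlled by $\|\nabla(|w|^{\gamma-1}w)\|_{L^2}$. One has to be careful that $|w|^{\gamma-1}w$ only has limited smoothness when $\gamma$ is not an odd integer, so the chain rule and the mapping property of the nonlinearity must be invoked at the correct (low) regularity level; the restriction $\theta<\tfrac1\gamma$ in the statement is precisely what keeps us strictly below this borderline exponent and lets the interpolation be performed with a genuinely interior parameter $\lambda<1$, avoiding the endpoint. A secondary technical point is the justification of $\nabla(|w|^{\gamma-1}w)=\gamma|w|^{\gamma-1}\nabla w$ for $w\in H^1$ only; this is standard (approximate $w$ by smooth functions, or use that $t\mapsto |t|^{\gamma-1}t$ is $C^1$ with locally H\"older derivative and apply the Sobolev chain rule), but it should be stated. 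Everything else — the $\theta=0$ embedding, the interpolation inequality, and the final integration in time via Fubini/Tonelli — is routine.
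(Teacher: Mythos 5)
Your overall architecture coincides with the paper's: reduce to a fixed time $s$, use the chain rule to rewrite the right-hand side as $c\,|\nabla(|w|^{\gamma})|_{L^2}^2$, and then control $|w|_{H^\theta_{2\gamma}}$ via the mapping properties of the fractional power $v\mapsto|v|^{1/\gamma}$ on the Triebel--Lizorkin scale. The paper does this in one shot: the Runst--Sickel estimate gives, for $\mu=1/\gamma$ and $v=|w|^\gamma\in H^1_2$, a bound $|\,|v|^{1/\gamma}|_{F^{1/\gamma}_{2\gamma,\,2\gamma}}\le C|v|^{1/\gamma}_{H^1_2}$, and then one pays an arbitrarily small $\eps>0$ of smoothness to pass from the third index $2\gamma$ down to $2$, landing in $H^{1/\gamma-\eps}_{2\gamma}$; choosing $\eps=1/\gamma-\theta$ finishes the proof for every $\theta<1/\gamma$ with no interpolation between function spaces at all.

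The genuine gap in your proposal is the anchor of your interpolation, namely the endpoint claim $\|w\|_{H^{1/\gamma}_{2\gamma}}\le C\,\||w|^{\gamma-1}w\|_{H^1_2}^{1/\gamma}$. This is exactly what the cited superposition results do \emph{not} give: they yield the critical smoothness $1/\gamma$ only in $F^{1/\gamma}_{2\gamma,\,2\gamma}$, which strictly contains $H^{1/\gamma}_{2\gamma}=F^{1/\gamma}_{2\gamma,2}$ (the $F^s_{p,q}$ spaces grow with $q$), and the embedding you would need goes the wrong way; reaching the $q=2$ scale costs a positive loss of smoothness. Your remark that the hypothesis $\theta<1/\gamma$ ``lets the interpolation avoid the endpoint'' is therefore backwards: your argument \emph{uses} the endpoint as one of its two interpolation anchors, and only the conclusion sits in the interior. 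Once you replace the endpoint by its correct $\eps$-lossy version, the interpolation step becomes superfluous and your proof collapses to the paper's. Two secondary points: real interpolation between $L^{2\gamma}$ and $H^{1/\gamma}_{2\gamma}$ produces Besov spaces $B^{\theta}_{2\gamma,q}$ rather than $H^{\theta}_{2\gamma}$, so you would need complex interpolation or a further (again lossy) embedding to land in the stated space; and the step $|z|_{H^1_2}^2\le C|\nabla z|_{L^2}^2$ requires a Poincar\'e inequality that fails for constants (a defect shared with the paper's own write-up, so I do not count it against you).
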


\begin{proof}
By Subsection~\ref{ss:Besov} combined with \cite[p.~365, Remark 3]{runst} and \cite[p.~365, Subsection 5.4.4, display (3)]{runst}, we have for any  $p\in(1,\infty)$, $s\in(0,1)$, $\mu\in(0,1)$ and $\ep\in(0,s\mu)$,
\begin{equation}\label{from_runst}
| |w|^\mu |_{H^{s\mu-\ep}_{\frac{p}{\mu}}}  =	| |w|^\mu|_{F^{s\mu-\ep}_{\frac p\mu,2}} \le C	| |w|^\mu |_{F_{\frac p\mu,\frac 2\mu}^{s\mu}}\le C	 |w|_{ F_{p, 2 }^s} ^\mu = C| w|_{H^{s}_p}^\mu, \quad w\in H^s_p(\CO).
\end{equation}
From \eqref{from_runst}, we know that  for any  $\gamma>1$, $p\in (1,\infty)$, $\theta\in(0,\frac 1\gamma)$, $\ep>0$, there exists a constant $C>0$ such that
$$
|w|_{H^\theta_{p\gamma}}=||w^{\gamma} |^\frac 1\gamma |_{H^\theta_{p\gamma}}\leq C ||w|^\gamma|^{\frac 1\gamma}_{H^{ \gamma(\theta+\ep)}_{p}}.$$
In particular, for any  $0<\theta<\frac 1 \gamma$, 
$\ep = (1-\theta)/\gamma$ and $p=2$, there exists a constant $C>0$ such that 
$$	
|w|_{H^\theta_{2\gamma}}^\gamma \le C||w|^\gamma|_{H^1_2}.
$$
Since we know by the chain rule and the Poincar\'e inequality,
$$\int_0^ T |\eta^{[\gamma-1]}(s)\nabla \eta(s) |_{L^2}^2\, ds=\int_0^ T |\nabla |\eta(s)| ^{\gamma}|_{L^2}^2\, ds\ge c\int_0^ T ||\eta(s)| ^{\gamma}|_{H^1_2}^2\, ds$$
we know that for any $\theta<\frac 1\gamma$,
$$\int_0^ T |\eta(s)|_{H^\theta_{2\gamma}}^{2\gamma}\, ds\le C \int_0^ T |\eta ^{[\gamma-1]}(s)\nabla \eta(s) |_{L^2}^2\, ds.$$
This finishes the proof. 
\end{proof}


\bigskip
\subsection{\textbf{Besov spaces over bounded domains}}\label{ss:Besov}\hfill\\
In this section, we present several inequalities concerning the multiplication of functions in Besov spaces. First, we summarize various inequalities from the book by Runst and Sikel \cite{runst}. It is important to note that these inequalities are valid on $\mathbb{R}^d$, while we are working on a bounded domain. Therefore, in the second step, we investigate the conditions under which these inequalities hold on a bounded domain.
\begin{theorem}\label{RS2a}
Let $s_1\le s_2$ and $s_1+s_2>d\cdot\max(0,\tesfalem{\frac 2p-1})$. 
\begin{itemize}
\item Assume $s_2>\frac dp$ and $q\ge \max(q_1,q_2)$. Then, if $s_2>s_1$,
$F_{p,q_1}^{s_1}(\RR^d) \cdot F_{p,q_2}^{s_2} (\RR^d)\hookrightarrow F_{p,q_1}^{s_1}(\RR^d)$;
if $s_2=s_1$,
$F_{p,q_1}^{s_1}(\RR^d) \cdot F_{p,q_2}^{s_2} (\RR^d)\hookrightarrow F_{p,q}^{s_1}(\RR^d)$;
\item Let $s_1=s_2=\frac dp$ and $q\ge \max(q_1,q_2)$. If $0<p\le 1$, then  $F_{p,q_1}^{s_1}(\RR^d) \cdot F_{p,q_2}^{s_2} (\RR^d)\hookrightarrow F_{p,q}^{s_1}(\RR^d)$;
\item
%
If $s_2<\frac dp$, then
$F_{p,q_1}^{s_1}(\RR^d) \cdot F_{p,q_2}^{s_2} (\RR^d)\hookrightarrow F_{p,q}^{s_1+s_2-\frac dp}(\RR^d)$.
\end{itemize}

\end{theorem}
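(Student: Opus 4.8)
The statement is a verbatim transcription of the pointwise multiplication theorem for Triebel--Lizorkin spaces taken from the monograph of Runst and Sickel, so the plan is simply to invoke it: the three bullets correspond, in order, to the cases collected in the multiplication results of \cite[Chapter~4]{runst} (see in particular \cite[p.~190/191]{runst}), under the convention $F^s_{p,q}=F^s_{p,q}(\RR^d)$ and the identification $H^s_p=F^s_{p,2}$ already recorded in \eqref{triebel}. In the article this line is all that is needed; the ensuing localisation to a bounded domain $\CO$ is then obtained separately by the usual extension/restriction argument, using a universal (Rychkov-type) extension operator $E\colon F^s_{p,q}(\CO)\to F^s_{p,q}(\RR^d)$ together with the retraction $f\mapsto f|_{\CO}$, so that $\|fg\|_{F^{\sigma}_{p,q}(\CO)}\le \|(Ef)(Eg)\|_{F^{\sigma}_{p,q}(\RR^d)}$ and the $\RR^d$-estimate applies.

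For orientation I indicate the mechanism behind the proof in \cite{runst}, which is the one I would reproduce if a self-contained argument were wanted. First I would decompose the product with Bony's paraproduct, $fg=\Pi_f g+\Pi_g f+R(f,g)$, where $\Pi_f g=\sum_j S_{j-1}f\,\Delta_j g$ is built from the Littlewood--Paley blocks $\Delta_j$ and their partial sums $S_j$, and $R(f,g)=\sum_{|j-k|\le 1}\Delta_j f\,\Delta_k g$ is the resonant part. The two paraproducts are the easy terms: each summand $S_{j-1}f\,\Delta_j g$ has Fourier support in a fixed dyadic annulus, so one bounds the $F^{s_1}_{p,q}$-norm of $\Pi_f g$ using H\"older's inequality in $x$, the Fefferman--Stein vector-valued maximal inequality, and the embedding $F^{s_2}_{p,\infty}\hookrightarrow L^\infty$ (valid precisely when $s_2>\tfrac dp$; in the borderline line one uses instead the embedding into $\mathrm{bmo}$). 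This yields $\|\Pi_f g\|_{F^{s_1}_{p,q}}\lesssim \|f\|_{F^{s_1}_{p,q_1}}\|g\|_{F^{s_2}_{p,q_2}}$ and, symmetrically, the bound for $\Pi_g f$; the constraint $q\ge\max(q_1,q_2)$, and in the critical case $s_1=s_2$ the passage from $q_1$ to $q$, come out of this step. Then I would handle the resonant term $R(f,g)=\sum_k\bigl(\sum_{|j-k|\le 1}\Delta_j f\,\Delta_k g\bigr)$, whose $k$-th block is supported in a ball of radius $\sim 2^k$; estimating the $F^{\sigma}_{p,q}$-norm of such a sum of low-frequency-overlapping pieces is what forces the hypothesis $s_1+s_2>d\max(0,\tfrac 2p-1)$ and, when $s_1+s_2<\tfrac dp$ (third bullet), produces the loss of $\tfrac dp$ derivatives into $F^{s_1+s_2-d/p}_{p,q}$, while the line $s_1=s_2=\tfrac dp$ with $0<p\le 1$ is exactly the remaining case in the second bullet.

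The hard part, were one to insist on a self-contained proof, is precisely this endpoint bookkeeping for the resonant term: the sharp exponent $d\max(0,\tfrac 2p-1)$ for $p\le 1$ and the critical regularity $s_1=s_2=\tfrac dp$, both of which require the precise Jawerth--Franke-type embeddings between $F$- and $B$-spaces and the Fefferman--Stein inequality in the $\ell^q(L^p)$ setting. Since all of this is carried out in full detail in \cite{runst}, I would not reproduce it and would keep the proof to the one-line citation above, together with the extension/restriction remark that transports it to the bounded domain $\CO$.
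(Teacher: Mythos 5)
Your proposal matches the paper exactly: the paper's entire proof of this theorem is the citation ``See \cite[p.\ 190]{runst}'', and the transfer to the bounded domain $\CO$ is handled separately afterwards (Proposition~\ref{mmmext}), just as you indicate. Your paraproduct sketch is a correct account of the mechanism behind the cited result, but it is supplementary; the one-line citation is all the paper itself provides.
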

\begin{proof}
See \cite[p.\ 190]{runst}.\end{proof}

\del{ \begin{theorem}\label{RS2b}
Let $s_1<0<s_2$, $q\ge q_1$, and 
\begin{itemize}
\item $s_1+s_2>0$,
\item 
$\frac 1p\le \frac 1{p_1}+\frac 1{p_2}$,
\item 
$ \frac 1p>\frac 1{p_1}+\frac 1d\lk( \frac d{p_2}-s_2\rk)_+$
\item $s_1+s_2>\frac d{p_1}+\frac d{p_2} -d$
\end{itemize}
Then
$F_{p,q_1}^{s_1}(\RR^d) \cdot F_{p,q_2}^{s_2} (\RR^d)\hookrightarrow F_{p,q}^{s_1}(\RR^d)$.

\end{theorem}

\begin{proof}
See \cite[p.\ 171]{runst}.\end{proof}

\begin{theorem}\label{RSc}
Let $0<s_1\le s_2$, $q\ge q_1$, and 
\begin{itemize}
\item 
$\frac 1p\le \frac 1{p_1}+\frac 1{p_2}$,
\item 
$ \frac dp-s_1>\begin{cases*}
(\frac d{p_1}-s_1)_+ +(\frac d{p_2}-s_2)_+
, &\quad  \mbox{ if }
$			\max (\frac d{p_i}-s_i)>0,$
\\
\max_{i}(\frac d{p_i}-s_i)>0 & \quad \mbox{otherwise}.
\end{cases*}
$
\item $s_1+s_2>\frac d{p_1}+\frac d{p_2} -d$
\end{itemize}
Then
$F_{p,q_1}^{s_1}(\RR^d) \cdot F_{p,q_2}^{s_2} (\RR^d)\hookrightarrow F_{p,q}^{s_1}(\RR^d)$.

\end{theorem}

\begin{proof}
See \cite[p.\ 171]{runst}.\end{proof}
}
\del{ 
\begin{theorem}\label{RSd}
Let $\mu(\frac dp-s)<d$ and $0<s<\min(\frac dp,\mu)$
and put
$$
t=\frac d{s+\mu(\frac dp-s)}.
$$
Then, there exists a constant $C>0$ such that
$$| \, |f|^\mu|_{F_{t,q}^s} \le C|f|^\mu_{F_{p,q}^s}, \quad f\in F_{p,q}^s
$$
provided $q>\frac d{d+s}$	and
$$| \, |f|^\mu|_{B_{t,q}^s} \le C|f|^\mu_{B_{p,q}^s},\quad f\in B_{p,q}^s
$$
provided $q\le \frac d{d+s}$.
\end{theorem}

\begin{proof}
See \cite[p.\ 363]{runst}.\end{proof}
}

\noindent To work on bounded domains, let us introduce the extensions. For a bounded domain $\CO\subset \RR^d$  for $g\in \mathcal{S}'(\RR^d)$, the restriction of $g$ to $\CO$ is an element of $\mathcal{D}(\CO)$ and will be denoted by $g|_{\CO}$.
Let us fix
a function $f:\CO\to\RR$. We call $g\in \mathcal{S}'(\RR^d)$ an extension of $f$ if
$g|_{\CO}=f$.
In case $\CO$ is a $C^\infty$-domain and $d\in\NN$ with $|s|<d$. Then
for $\frac 1d<p<\infty$ and $0<q\le \infty$,
there exists a unique bounded operator $\mbox{ext}$ from $F_{p,q}^s(\CO)$ to $F_{p,q}^s(\RR^d)$. Also,
for $\frac 1d<p\le \infty$ and $0<q\le \infty$,
there exists a unique bounded operator $\mbox{ext}$ from $B_{p,q}^s(\CO)$ to $B_{p,q}^s(\RR^d)$ (see \cite[p. 76]{runst}).
With this definition, we can define the Besov and Lizorin-Triebel spaces on bounded domains.

\begin{definition}
Let $s\in\RR$ and $0<q\le \infty$.
\begin{enumerate}
\item if $0<p<\infty$, then we put
$$
F_{p,q}^s(\CO)=\lk\{ f\in\mathcal{D}'(\CO):\exists \, g\in F^s_{p,q}(\RR^d)\,\mbox{ with } g\big|_{\CO}=f\rk\}
$$
and
$$
|f|_{F_{p,q}^s}:=|f|_{F_{p,q}^s(\CO)}=\inf |g|_{F_{p,q}^s(\RR^d)},
$$
where the infimum is taken over all $g\in B_{p,q}^s(\RR^d)$ such that $g\big|_{\CO}=f$.
\item if $0<p\le \infty$, then we put
$$
B_{p,q}^s(\CO)=\lk\{ f\in\mathcal{D}'(\CO):\exists \, g\in B^s_{p,q}(\RR^d)\,\mbox{ with } g\big|_{\CO}=f\rk\}
$$
and
$$
|f|_{B_{p,q}^s}:=|f|_{F_{p,q}^s(\CO)}=\inf |g|_{B_{p,q}^s(\RR^d)},
$$
where the infimum is taken over all $g\in B_{p,q}^s(\RR^d)$ such that $g\big|_{\CO}=f$.
\end{enumerate}
\end{definition}
\noindent We aim at showing that Theorem~\ref{RS2a} is also valid on a bounded domain.
First, note that the multiplication with $1_\CO$ is a bounded linear operator from ${B^s_{p,q}(\RR^d)}$ into itself.
In particular, by Theorem 3, \cite[p. 201]{runst}, we have
$$
|f \, {1}_{\CO}|_{B^s_{p,q}(\RR^d)}\le  |f|_{B^s_{p,q}(\RR^d)}\lk( | {1}_{\CO}|_{B^\frac dp_{p,\infty}(\RR^d)}+| {1}_{\CO}|_{B^\frac dp_{p,\infty}(\RR^d)}\rk).
$$
Secondly, note that  for any function $h:\CO\to\RR$, the function
$$
1_\CO(x) h(x) =\begin{cases} h(x),& \mbox{ if } x\in\CO
\\
0 & \mbox{otherwise},
\end{cases}
$$
is an extension.
Now, we can formulate the following proposition.
\begin{proposition}\label{mmmext}
If $s<\frac dp$, then there exists a number $C>0$ such that
$$|f|_{B^s_{p,q}(\CO)}\le |f1_\CO|_{B^s_{p,q}(\RR^d)}
\le C |f|_{B^s_{p,q}(\CO)}
,\quad \forall \, f\in B^s_{p,q}(\CO).
$$
\end{proposition}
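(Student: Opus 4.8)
The plan is to prove Proposition~\ref{mmmext}, which asserts that for $s<\frac dp$ the extension-by-zero operator $f\mapsto f\mathds{1}_\CO$ is bounded from $B^s_{p,q}(\CO)$ into $B^s_{p,q}(\RR^d)$, with norm comparable to $|f|_{B^s_{p,q}(\CO)}$. The lower bound $|f|_{B^s_{p,q}(\CO)}\le |f\mathds{1}_\CO|_{B^s_{p,q}(\RR^d)}$ is immediate: by the very definition of the restriction norm on $\CO$, since $(f\mathds{1}_\CO)\big|_\CO = f$, the function $f\mathds{1}_\CO$ is one admissible extension of $f$, so the infimum defining $|f|_{B^s_{p,q}(\CO)}$ is at most $|f\mathds{1}_\CO|_{B^s_{p,q}(\RR^d)}$. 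The content is therefore entirely in the upper bound.

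For the upper bound, first I would fix an arbitrary admissible extension $g\in B^s_{p,q}(\RR^d)$ with $g\big|_\CO = f$, and observe that $g\mathds{1}_\CO = f\mathds{1}_\CO$ as elements of $\mathcal{S}'(\RR^d)$ (both vanish off $\CO$ and agree with $f$ on $\CO$; this uses that $\partial\CO$ is null, which is fine for the $C^\infty$ domain or the torus considered here). Then I would invoke the multiplier result for the indicator function, namely Theorem~3 on p.~201 of \cite{runst}, which gives a constant $C>0$ such that
$$
|h\,\mathds{1}_\CO|_{B^s_{p,q}(\RR^d)}\le C\, |h|_{B^s_{p,q}(\RR^d)}\lk(|\mathds{1}_\CO|_{B^{d/p}_{p,\infty}(\RR^d)}+|\mathds{1}_\CO|_{B^{d/p}_{p,\infty}(\RR^d)}\rk),\qquad h\in B^s_{p,q}(\RR^d),
$$
which is applicable precisely because $s<\frac dp$ (this is the hypothesis that makes $\mathds{1}_\CO$ a pointwise multiplier on $B^s_{p,q}$, since $\mathds{1}_\CO$ itself lies in $B^{d/p}_{p,\infty}$ for a domain with smooth boundary, as recalled just above the statement in the excerpt). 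Applying this with $h=g$ yields $|f\mathds{1}_\CO|_{B^s_{p,q}(\RR^d)}=|g\mathds{1}_\CO|_{B^s_{p,q}(\RR^d)}\le C'\,|g|_{B^s_{p,q}(\RR^d)}$, where $C'=C\,(|\mathds{1}_\CO|_{B^{d/p}_{p,\infty}}+|\mathds{1}_\CO|_{B^{d/p}_{p,\infty}})$ depends only on $\CO$, $s$, $p$.

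Finally, since $g$ was an arbitrary admissible extension of $f$, I would take the infimum over all such $g$ on the right-hand side; by the definition of $|f|_{B^s_{p,q}(\CO)}$ this gives
$$
|f\mathds{1}_\CO|_{B^s_{p,q}(\RR^d)}\le C'\,\inf_{g|_\CO=f}|g|_{B^s_{p,q}(\RR^d)}=C'\,|f|_{B^s_{p,q}(\CO)},
$$
which is the desired upper bound, completing the chain $|f|_{B^s_{p,q}(\CO)}\le |f\mathds{1}_\CO|_{B^s_{p,q}(\RR^d)}\le C'|f|_{B^s_{p,q}(\CO)}$. The only genuinely delicate point — and hence the ``main obstacle'' — is verifying the applicability of the indicator-multiplier estimate: one must check that $\mathds{1}_\CO\in B^{d/p}_{p,\infty}(\RR^d)$ for the domains under consideration (smooth bounded domain, or torus) and that the condition $s<\frac dp$ indeed places $B^s_{p,q}$ in the range where Theorem~3 of \cite{runst} applies; both are standard facts about characteristic functions of smooth domains, and on the torus $\mathds{1}_\CO$ is just the constant $1$, for which the claim is trivial. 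Everything else is bookkeeping with the definition of the restriction norm.
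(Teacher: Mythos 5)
Your proposal is correct and follows essentially the same route as the paper: the lower bound comes for free because $f\mathds{1}_\CO$ is itself an admissible extension, and the upper bound combines $g\mathds{1}_\CO=f\mathds{1}_\CO$ for any extension $g$ with the pointwise-multiplier estimate for $\mathds{1}_\CO$ (Theorem~3, p.~201 of \cite{runst}, valid since $s<\tfrac dp$ and $\mathds{1}_\CO\in B^{d/p}_{p,\infty}(\RR^d)$), followed by optimizing over $g$. Your explicit appeal to the multiplier theorem at the key step is in fact cleaner than the paper's wording there, but it is the same argument.
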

\begin{proof}
The direction
$|f|_{B^s_{p,q}(\CO)}\le|f1_\CO|_{B^s_{p,q}(\RR^d)}$ is straightforward,  since
we look
at the infimum. It remains to show that $ |f1_\CO|_{B^s_{p,q}(\RR^d)}
\le C |f|_{B^s_{p,q}(\CO)}$.
Note,  since the Dirac distribution  $\delta$ belongs to $ B^{s}_{p,q}(\RR^d)$, where $s<\frac dp-d$ or $s=\frac dp-d$ and $q=\infty$ (see Remark 3 \cite[p. 34]{runst}), we know that $1_{\CO}\in B^{s}_{p,q}(\RR^d)$, where $s<\frac dp$ or $s=\frac dp$ and $q=\infty$.
Since $1_\CO f$ is an extension of $f$, we know that
\DEQS
|f|_{B^s_{p,q}(\CO)}\le |f \, {1}_{\CO}|_{B^s_{p,q}(\RR^d)}.
\EEQS
Now, let $\ep>0$. Then, due to the definition of the norm of ${B^s_{p,q}(\CO)}$,
there exists a function $g:\RR^d\to\RR$ being an extension of $f$ and
$$
|g|_{B^s_{p,q}(\RR^d)}\le (1+\ep)|f|_{B^s_{p,q}(\CO)}.
$$
Next, since $g=f=1_\CO \,f$ on $\CO$ and by the equivalence characterisation of $B_{p,q}^s(\RR^d)$ in \cite[p. 8]{triebel_1983} or \cite[p. 54]{runst}, we know that
$$
|f \, {1}_{\CO}|_{B^s_{p,q}(\RR^d)}\le |g|_{B^s_{p,q}(\RR^d)}
. $$
Hence, it follows that there exists some constant $C>0$ such that
$$
|f \, {1}_{\CO}|_{B^s_{p,q}(\RR^d)}
\le C|f|_{B^s_{p,q}(\CO)},$$
which was the assertion.
\end{proof}

\bigskip
\section{\textbf{A compactness result}}\label{dbouley-space}

\noindent {In this section, we present several results regarding compactness in both time and space for functions. The compactness criteria in Theorem~\ref{th-gutman} and Theorem~\ref{th-gutman2} are applied to show the compactness of the fixed point operator in Subsection~\ref{ss:compactness}. }

\noindent First, let us introduce the following spaces. Let $B_1$ be a separable Banach space.
Given $p\in (1,\infty)$, $\alpha\in(0,1)$, let $\WW ^ {\alpha}_p (0,T;B_1)$ be the Sobolev space
of all $u\in L^p(0,T;B_1)$ such that
$$
\int_0^T \int_0^T \,{|u(t)-u(s)|_{B_1} ^ p\over |t-s|^{\alpha p}}\,ds\,dt<\infty
$$
equipped with the norm
$$
\lk\| u\rk\|_{ \WW^ {\alpha}_p(0,T ;B_1)}:=\lk( \int_0^T \int_0^T \,{|u(t)-u(s)|_{B_1} ^ p\over |t-s| ^ {\alpha p}}\,ds\,dt\rk) ^ \frac 1p.
$$
Next, let us briefly introduce an important Lemma.
Let $B_0\subset B\subset B_1$ be Banach spaces, $B_0$ and $B_1$ reflexive, with compact embedding of $B_0$ to $B$. Let $p\in(1,\infty)$ and $\alpha\in(0,1)$ be given. Then, we know from the Aubins-Lions-Simon Lemma that the embedding
$$
L^p(0,T;B_0)\cap \WW ^{\alpha}_p (0,T;B_1)
\hookrightarrow
L^p(0,T;B)
$$
is compact. Applying the Aubins-Lions-Simon Lemma together with the following propositions will give us compactness.

\begin{theorem}\label{th-gutman}
Let $B_0\subset B\subset B_1$ be Banach spaces, $B_0$ and $B_1$ reflexive, with compact embedding of $B_0$ in $B$. Let $p\in(1,\infty)$ and $\alpha\in(0,1)$ be given. Let $X$ be the space
$$
X=L^ p([0,T];B_0)\cap \WW _p^{\alpha} (0,T;B_1).
$$
Then the embedding of $X$ in $L ^ p( [0,T];B)$
is compact.
In particular, for any $R>0$ the set
\[
\lk\{ x\in  L^p(0,T;B) : |x|_{L^p(0,T;B_0)}+|x|_{\WW ^{\alpha}_p (0,T;B_1)}\le R\rk\}
\]
is compact in $L^p(0,T;B)$.

\end{theorem}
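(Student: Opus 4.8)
\textbf{Proof plan for Theorem~\ref{th-gutman}.} The statement is the classical Aubin--Lions--Simon compactness lemma in fractional-Sobolev-in-time form, so the plan is to reduce it to the standard reference results rather than reprove them from scratch. First I would recall the two ingredients: (a) the Aubin--Lions--Simon theorem in its abstract version (as in \cite{Simon, Aubin, Lions}), which states that if $B_0 \hookrightarrow\hookrightarrow B \hookrightarrow B_1$ with $B_0$, $B_1$ reflexive, then for $p \in (1,\infty)$ and $\alpha \in (0,1)$ the embedding $L^p(0,T;B_0) \cap \WW^\alpha_p(0,T;B_1) \hookrightarrow L^p(0,T;B)$ is compact; and (b) the elementary fact that a bounded set in a Banach space $X$ that embeds compactly into a Banach space $Y$ is relatively compact (precompact) in $Y$, and in fact, when one works with the closed ball, one obtains a compact set after taking closure. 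The first assertion of the theorem is then literally statement (a), and the ``in particular'' clause follows from (a) combined with (b).

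The steps I would carry out, in order, are the following. Step (I): fix the setting, noting that $X := L^p(0,T;B_0) \cap \WW^\alpha_p(0,T;B_1)$ is a Banach space under the natural sum norm $|x|_X := |x|_{L^p(0,T;B_0)} + |x|_{\WW^\alpha_p(0,T;B_1)}$, and that the reflexivity of $B_0$ and $B_1$ together with $p \in (1,\infty)$ ensures $X$ is reflexive (this is needed for the version of Aubin--Lions--Simon that allows fractional time regularity; see \cite[Corollary 5]{Simon}). Step (II): invoke the compact embedding $X \hookrightarrow\hookrightarrow L^p(0,T;B)$. The key analytic input here is an interpolation-type inequality: for every $\eta > 0$ there is $C_\eta > 0$ with $|x|_{L^p(0,T;B)} \le \eta |x|_{L^p(0,T;B_0)} + C_\eta |x|_{\WW^\alpha_p(0,T;B_1)}$ for all $x \in X$ \emph{after} passing to a weakly convergent subsequence, combined with the compactness of $B_0 \hookrightarrow B$; this is exactly the mechanism of the Simon argument. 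Step (III): deduce the ``in particular'' statement. Given $R > 0$, the set $K_R := \{ x : |x|_{L^p(0,T;B_0)} + |x|_{\WW^\alpha_p(0,T;B_1)} \le R \}$ is a closed bounded set of $X$; by Step (II) it is relatively compact in $L^p(0,T;B)$. To get that $K_R$ is \emph{compact} (not merely relatively compact) in $L^p(0,T;B)$, I would observe that $K_R$ is closed in $L^p(0,T;B)$: if $x_n \in K_R$ and $x_n \to x$ in $L^p(0,T;B)$, then by reflexivity of $X$ and boundedness of $(x_n)$ in $X$ a subsequence converges weakly in $X$ to some $\tilde x$ with $|\tilde x|_X \le \liminf |x_n|_X \le R$ (weak lower semicontinuity of the norm), and the weak $X$-limit must coincide with the strong $L^p(0,T;B)$-limit $x$, hence $x \in K_R$. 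A closed relatively compact set is compact, which finishes the argument.

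The main obstacle — really the only non-bookkeeping point — is justifying in Step (II) that the fractional-in-time space $\WW^\alpha_p(0,T;B_1)$ is an admissible replacement for the integer-order space $W^{1,p}(0,T;B_1)$ in the Aubin--Lions--Simon scheme. This is handled by the refined version of Simon's theorem: the crucial lemma is that a set $F \subset L^p(0,T;B)$ is relatively compact provided (i) $F$ is bounded in $L^p(0,T;B_0)$ with $B_0 \hookrightarrow\hookrightarrow B$, and (ii) $F$ has uniformly small $L^p(0,T;B)$-translation modulus, i.e. $\|\tau_h f - f\|_{L^p(0,T-h;B)} \to 0$ as $h \to 0$ uniformly over $f \in F$; and the $\WW^\alpha_p$-bound directly controls this translation modulus, since $\|\tau_h f - f\|_{L^p(0,T-h;B_1)}^p \le C h^{\alpha p} |f|_{\WW^\alpha_p(0,T;B_1)}^p$ by a standard computation with the Gagliardo seminorm. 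Combining this translation estimate with the compactness $B_0 \hookrightarrow\hookrightarrow B$ via the Fréchet--Kolmogorov criterion (applied slicewise and then integrated, interpolating $B$ between $B_0$ and $B_1$) yields the compact embedding. I would cite \cite[Theorem 5 and Corollary 5]{Simon} for exactly this statement and keep the exposition at the level of references, since all of this is classical; the contribution of the lemma here is purely to package it in the form needed for the tightness argument in Subsection~\ref{ss:compactness}.
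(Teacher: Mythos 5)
Your proposal is correct and follows essentially the same route as the paper, which states Theorem~\ref{th-gutman} as a direct consequence of the Aubin--Lions--Simon lemma (citing \cite{Aubin,Simon,Lions}) and offers no further proof. Your Step (III) — upgrading relative compactness of the ball to genuine compactness via reflexivity of $X$ and weak lower semicontinuity of the norm — correctly supplies a detail the paper leaves implicit.
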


\noindent A similar result can be derived for the space of continuous functions.
Again, we need a compact containment condition and regularity in time. Here, it is essential for us that the spaces where we have to show the compact containment condition and the spaces where we have to show the regularity in time can be different, similar to the case of $ Lp (0,T;B)$.

\begin{theorem}\label{th-gutman2}
A set $\mathcal{A}\subset  C_b^{(0)}((0,T]; B)$ is tight, if there exists two separable Banach spaces $B_0$ and $B_1$ such that
$B_0\subset B\subset B_1$ and $B_0\hookrightarrow B$ compactly, $B\hookrightarrow B_1$ densely, such that
\begin{itemize}
\item the range of $\mathcal{A}$ is bounded in $B$. In particular, there exists a constant $R>0$ such that
$$\sup_{\xi\in\mathcal{A}}\sup_{0\le s\le T}|\xi(s)|_B\le R.
$$
\item the compact containment condition holds,
In particular, 
there exists a constant there exists a constant $C>0$ such that
$$\sup_{0\le t\le T}\sup_{\xi\in\mathcal{A}} |\xi(t)|_{B_0}\le C.
$$
\item
there exist  constants $C,\alpha >0$ and such that
$$\sup_{\xi\in\mathcal{A}}|\xi|_{C^{(\alpha)}_b([0,T];B_1)}\le C.
$$
\end{itemize}
\end{theorem}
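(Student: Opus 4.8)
The plan is to show that $\mathcal{A}$ is relatively compact in $C([0,T];B)$ --- equivalently in $C_b^{(0)}((0,T];B)$, since the uniform H\"older bound in $B_1$ together with the boundedness estimate below forces each $\xi\in\mathcal{A}$ to extend continuously to $t=0$ --- and then to read off the asserted tightness from the vector-valued Arzel\`a--Ascoli theorem. Recall that a subset $\mathcal{A}\subset C([0,T];B)$ has compact closure if and only if (i) for each $t\in[0,T]$ the section $\{\xi(t):\xi\in\mathcal{A}\}$ is relatively compact in $B$, and (ii) $\mathcal{A}$ is equicontinuous as a family of maps $[0,T]\to B$. It therefore suffices to verify (i) and (ii).

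Statement (i) is immediate from the hypotheses: by the compact containment condition there is $C>0$ with $|\xi(t)|_{B_0}\le C$ for all $\xi\in\mathcal{A}$ and all $t$, so each section is a bounded subset of $B_0$, hence relatively compact in $B$ because $B_0\hookrightarrow B$ compactly.

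For (ii) I would argue by contradiction. Suppose $\mathcal{A}$ were not equicontinuous in $B$; then there exist $\varepsilon>0$, functions $\xi_n\in\mathcal{A}$ and times $s_n,t_n\in[0,T]$ with $|t_n-s_n|\to 0$ yet $|\xi_n(t_n)-\xi_n(s_n)|_B\ge\varepsilon$. The sequences $(\xi_n(t_n))_n$ and $(\xi_n(s_n))_n$ are bounded in $B_0$ by the compact containment condition, so after passing to a subsequence $\xi_n(t_n)\to x$ and $\xi_n(s_n)\to y$ in $B$ for some $x,y\in B$. At the same time the uniform H\"older bound yields $|\xi_n(t_n)-\xi_n(s_n)|_{B_1}\le C|t_n-s_n|^{\alpha}\to 0$, and since $B\hookrightarrow B_1$ continuously we also have $\xi_n(t_n)\to x$ and $\xi_n(s_n)\to y$ in $B_1$; hence $x=y$ in $B_1$, and injectivity of the embedding $B\hookrightarrow B_1$ gives $x=y$ in $B$. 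Passing to the limit in $|\xi_n(t_n)-\xi_n(s_n)|_B\ge\varepsilon$ then produces $0=|x-y|_B\ge\varepsilon$, a contradiction. This establishes (ii), and with (i) and Arzel\`a--Ascoli the relative compactness of $\mathcal{A}$ in $C([0,T];B)$, hence the tightness, follows.

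I expect the only genuinely delicate point to be step (ii): the boundedness in $B$ alone gives no modulus of continuity, so the proof must \emph{bootstrap} the compactness of the embedding $B_0\hookrightarrow B$ (accessed through the compact containment bound) against the quantitative time-modulus that is available only in the weaker space $B_1$. One must be careful that the sections are merely pre-compact, so that convergent subsequences --- not a single a priori limit --- have to be extracted, and that the embedding $B\hookrightarrow B_1$ is used as an injection to identify the two limits. The extension of the members of $\mathcal{A}$ from $(0,T]$ to $[0,T]$, with the limit continuous into $B$ at $t=0$, follows from the same subsequence argument (bounded in $B_0$, Cauchy in $B_1$, hence Cauchy in $B$) and is routine.
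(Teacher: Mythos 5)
Your proof is correct, but it takes a genuinely different route from the paper. The paper proves precompactness directly by constructing a finite $\varepsilon$-net of piecewise-linear interpolants over a time grid; the key quantitative step is the multiplicative interpolation bound $|\xi(t_i)-\xi(s)|_B\le|\xi(t_i)-\xi(s)|_{B_0}^{\theta}\,|\xi(t_i)-\xi(s)|_{B_1}^{1-\theta}$, which rests on the additional (and only implicitly invoked) hypothesis that $B\hookrightarrow[B_0,B_1]_\theta$ for some $\theta\in(0,1)$ — a property that does not follow automatically from the stated embeddings. You instead establish the two Arzel\`a--Ascoli conditions: pointwise relative compactness comes, as in the paper, from the uniform $B_0$-bound plus the compact embedding $B_0\hookrightarrow B$, but equicontinuity is obtained by a soft subsequence-extraction argument in which the limits of $\xi_n(t_n)$ and $\xi_n(s_n)$ are identified through the weaker space $B_1$ using the H\"older bound and the injectivity of $B\hookrightarrow B_1$. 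This is a valid contradiction argument and it uses exactly the stated hypotheses, so in that sense your proof is cleaner and slightly more general; what it gives up is the explicit modulus of continuity and explicit covering that the paper's constructive argument produces. (A middle road worth knowing: Ehrling's lemma, $|w|_B\le\eta|w|_{B_0}+C_\eta|w|_{B_1}$, holds under precisely the stated compact-embedding and injectivity assumptions and yields quantitative equicontinuity directly via $|\xi(t)-\xi(s)|_B\le 2C\eta+C_\eta C|t-s|^{\alpha}$, avoiding both the contradiction argument and the interpolation-space hypothesis.) Your remark on extending the functions continuously to $t=0$ is tangential but harmless.
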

\begin{proof}
Note, that there exists some $\theta\in(0,1)$ such that $ B\hookrightarrow [B_0,B_1]_\theta$.
A set $\mathcal{A}$ is praecompact if for any $\ep>0$ a finite $\ep$-covering exists.
First, let $\{t_0<t_1<\ldots t_{n-1}<t_n=T\}$ a time grid such that
$$
\sup_{\xi\in\mathcal{A}} \,\max_{i=2,\ldots,n} \sup_{t_{i-1}\le s\le t_i}\max(|\xi(s)-\xi(t_{i})|,|\xi(t_{i-1})-\xi(s)|)\le \frac \ep4\frac 1{R+\frac \ep 4} ,
$$
and
$$ \max_{2\le i\le n}(2C)^\theta|t_i-t_{i-1}|^{\alpha(1-\theta)}\le \frac \ep4.
$$
Then, for any $i=1,\ldots,n$ let $A(i):=\{ x^i_1,\ldots ,x^i_{j_i}\}$ such that $A(i)$ is a $\frac \ep 4$-covering of the set $\{ \xi(t_i):\xi\in\mathcal{A}\}$. Now, the set of functions
$$
\hat{ \mathcal{A}}:=\lk\{ \eta: \eta(t_i)\in A(i), \, \mbox{ $\eta$ is linear on $[t_{i-1},t_i]$, $i=2,\ldots,n$ }\rk\}
$$
is an $\ep$-covering of $\mathcal{A}$.
This is simple to show. Let $\ep>0$ and $\xi\in\mathcal{A}$ and $s\in[0,T]$. Then we have to show that there exists a $\hat\eta\in  \hat{ \mathcal{A}}$ such that
$|\hat \eta(s)-\xi(s)|_B\le \ep$. First, for a given $\xi$ let $\hat\eta$ be a function, such that for any $i=1,\ldots,n$ we put
$\hat\eta(t_i):=\tilde\eta$, where $\tilde\eta\in A(i)$ with
$|\tilde\eta-\xi(t_i)|_B\le \frac \ep4$. Due to the construction of $A(i)$, such an element $\tilde \eta$ exists.
Additionally, $\hat\eta$ is linearly interpolated between the grid points.
By the triangle inequality we can write for $s\in[t_{i-1},t_i]$
\DEQS
|\hat \eta(s)-\xi(s)|_B\le|\hat \eta(s)-\hat \eta(t_i)|_B+|\hat \eta(t_i)-\xi(t_i)|_B+|\xi(t_i)-\xi(s)|_B.
\EEQS
Since $\hat\eta$ is linear, and $\mathcal{A}$ bounded in $B$ by $R$, therefore $A(i)$ and $A(i-1)$ bounded in $B$ by $R+\frac \ep4$, we can write
$$ |\hat \eta(s)-\hat \eta(t_i)|_B\le |s-t_i|\lk(R+\frac \ep 4\rk)\le \ep 4
.
$$
Since for any $i$ the set $A(i)$ is a $\frac \ep 4$ covering of the set  $\{ \xi(t_i):\xi\in\mathcal{A}\}$, we know that $|\hat \eta(t_i)-\xi(t_i)|_B\le \frac \ep 4$. Finally, $|\xi(t_i)-\xi(s)|_B\le |\xi(t_i)-\xi(s)|_{B_0}^\theta|\xi(t_i)-\xi(s)|_{B_1}^{1-\theta}$.
First, note that we have $|\xi(t_i)-\xi(s)|_{B_1}^{1-\theta}\le |s-t_i|^{\alpha(1-\theta)}$.
Secondly, we know $|\xi(t_i)-\xi(s)|_{B_0}^\theta\le (2C)^\theta$.
Collecting altogether, we have shown that $|\hat \eta(s)-\xi(s)|_B\le\ep$. Hence, for any $\ep>0$  an $\ep$--covering
exists and, therefore, $\mathcal{A}$ is precompact.
\end{proof}

Let $\mathfrak{A}=(\Omega,\CF,(\CF_t)_{t\ge 0},\PP)$ a filtered probability space, $E$ be a Banach space, and  $\{ \xi_n:[0,T]\times \Omega\longrightarrow B\mid n \in\NN\}$ be a set of processes. Here, in this section, we are interested under which condition the family of probability measure $\{ \Law(\xi_n )\mid n \in\NN\}$ over
$L^p(0,T,B)$ or $C_b^{(0)}(0,T;B)$ is tight.
In doing so, we will characterise a compact subset of $L^p$-spaces and the continuous function space. From this characterization, we can then derive
characterizations of tightness of  the family given by  $\{ \Law(\xi_n)\mid n\in\NN\}$.

%


\end{document}